\documentclass[a4paper,10pt,oneside]{articleHJ}
\usepackage{amsfonts,amssymb,amsmath,amsthm}
\usepackage[a4paper, total={6in,9in}]{geometry}
\usepackage[utf8]{inputenc}
\usepackage{graphicx}

\usepackage{subcaption}
\usepackage{tikz}
\usepackage{enumerate}
\usepackage{cases}
\usepackage{multicol}
\usepackage{sidecap}
\usepackage{float}
\usepackage{xcolor}
\definecolor{blue2}{rgb}{0.67, 0.9, 0.93}
\usepackage[color=blue2]{todonotes}
\usepackage{cases}
\usepackage{comment}
\normalfont
\usepackage[T1]{fontenc}
\usepackage[utf8]{inputenc}	
\usepackage[normalem]{ulem}
\usepackage[hidelinks=true]{hyperref}
\usepackage{fancyhdr}

\numberwithin{equation}{section}

\newtheorem{thm}{Theorem}[section]
\newtheorem{lemma}[thm]{Lemma}
\newtheorem{proposition}[thm]{Proposition}
\newtheorem{cor}[thm]{Corollary}
	
\theoremstyle{definition}

\newcommand{\N}{{\mathbb N}}
\newcommand{\R}{{\mathbb R}}
\newcommand{\C}{{\mathbb C}}
\newcommand{\Z}{\mathbb{Z}}

\newcommand{\Zs}{{\mathbb{Z}^2_{\times}}}
\newcommand\norm[1]{\left|\left|{#1}\right|\right|}

\usepackage{setspace}

\begin{document}

\begin{frontmatter}
\title{Curvature-driven front propagation through planar lattices in oblique directions} 

\journal{...}
\author[LD1]{M. Juki\'c\corauthref{coraut}},
\author[LD2]{H. J. Hupkes},
\corauth[coraut]{Corresponding author. }
\address[LD1]{
  Mathematisch Instituut - Universiteit Leiden \\
  P.O. Box 9512; 2300 RA Leiden; The Netherlands \\ Email:  {\normalfont{\texttt{m.jukic@math.leidenuniv.nl}}}
}
\address[LD2]{
  Mathematisch Instituut - Universiteit Leiden \\
  P.O. Box 9512; 2300 RA Leiden; The Netherlands \\ Email:  {\normalfont{\texttt{hhupkes@math.leidenuniv.nl}}}
}

\date{\today}

\begin{abstract}
\singlespacing
In this paper we investigate the long-term behaviour of solutions
to the discrete Allen-Cahn equation posed on a two-dimensional lattice. 
We show that front-like initial conditions
evolve towards a planar travelling wave modulated
by a
phaseshift $\gamma_l(t)$ that depends on the coordinate $l$
transverse to the primary direction of propagation.  This direction
is allowed to be general, but rational, generalizing earlier known results
for the horizontal direction. We show that the behaviour of $\gamma$ can be asymptotically linked to the behaviour of a suitably discretized
mean curvature flow. This allows us to show
that travelling waves propagating in rational directions are nonlinearly 
stable with respect to perturbations that are asymptotically periodic in the transverse direction.
\end{abstract}

\begin{subjclass}
\singlespacing
34K31 \sep 37L15.
\end{subjclass}

\begin{keyword}
\singlespacing
Travelling waves, 
bistable reaction-diffusion systems,
spatial discretizations,
discrete curvature flow,
nonlinear stability. 
\end{keyword}

\end{frontmatter}

\section{Introduction}\label{sec:intro}

The main goal of this paper is to study the behaviour
of curved wavefronts under the dynamics of the Allen-Cahn lattice differential equation (LDE)
\begin{equation}\label{eqn:intro:AC}
    \dot{u}_{i,j} = 
    u_{i+1,j} + u_{i,j+1} + u_{i-1, j} + u_{i,j-1} - 4 u_{i,j} + g(u_{i,j};a), 
\end{equation}
posed on the planar lattice $(i,j)\in \Z^2$. For concreteness, we consider
the standard bistable nonlinearity
\begin{equation*}
    g(u;a) = u(u-a)(1-u),
    \qquad \qquad a \in (0,1),
\end{equation*}
throughout this introduction. We are interested in 
fronts that move in the rational direction
$(\sigma_h, \sigma_v)\in \Z^2$, which motivates the introduction of the
parallel and transverse coordinates 
\begin{equation}\label{eqn:intro:coordinate}
    n = n(i,j) =  i\sigma_h + j \sigma_v, \qquad 
    \qquad
    l = l(i,j) = i\sigma_v - j\sigma_h
\end{equation}
that we use interchangeably with $(i,j)$; see Figure~\ref{fig:intro:coordinate}.

Our main results state that
initial conditions that are `front-like' in the rough sense that
\begin{equation}
\label{eq:int:init:cond:lde:gen}
    u_{i,j}(0) <a - \epsilon  \qquad \hbox{ for } n(i,j) \ll -1, \qquad  \qquad
    u_{i,j}(0) > a + \epsilon  \qquad \hbox{ for } n(i,j) \gg 1
\end{equation}
holds for some $\epsilon > 0$,
evolve towards an interface of the form
\begin{equation}
\label{eq:int:interface:formation:disc}
    u_{i,j}(t) = \Phi\big( n(i,j) - \gamma_{l(i,j)}(t) \big).
\end{equation}
Here the special case $\gamma_l(t) = ct$ represents the well-known planar travelling wave solution 
to \eqref{eqn:intro:AC} that travels in the direction $(\sigma_h, \sigma_v)$ and connects the two stable equilibria
\begin{equation}
\label{eq:int:phi:bc}
    \lim_{\xi \to - \infty}
    \Phi(\xi) = 0,
    \qquad \qquad
    \lim_{\xi \to + \infty} \Phi(\xi) = 1.
\end{equation}
In general however we show that the dynamics of the phase $\gamma_l$ can be well-approximated by a discrete mean-curvature flow.
This generalizes the results from \cite{jukic2019dynamics} where we only considered the horizontal direction
and extends the known basin
of attraction for planar travelling waves beyond the settings considered
in \cite{hoffman2015multi,hoffman2017entire}.
The misalignment of the propagation direction with the underlying lattice causes
several mathematical intricacies that we resolve throughout this work.

\paragraph{Modelling background}
Lattice differential equations arise in numerous problems in which the underlying discrete spatial topology plays an important role. For example,  in~\cite{bell1981some, bell1984threshold, keener1987propagation}, the authors use LDEs to model \textit{saltatory conduction}, which describes the `hopping' behaviour of action potentials propagating through myelinated nerve axons.    In population dynamics, 
two-dimensional LDEs are used  to model the strong Allee effect on patchy landscapes; see \cite{keitt2001allee, taylor2005allee}. In both of these examples it is necessary to include the spatial heterogeneity of the domain into the model in order to simulate effects such as wave-pinning. 
Lattice models have also been used in many other fields, such as material science, morphology and statistical mechanics \cite{cahn1960theory, cook1969model, merks2007canalization, bates1999discrete}. For a more extensive list of references we refer the reader to the book by Keener and Sneyd~\cite{keener1998mathematical} or the  surveys~\cite{hupkes2018traveling,kevrekidis2011non}.

\paragraph{Motivation}
In order to set the stage, we briefly discuss the continuous counterpart of~\eqref{eqn:intro:AC}.
This is
the well-known Allen-Cahn PDE
\begin{equation}\label{eqn:intro:AC_continuous}
    u_t = \kappa \big[ u_{xx} + u_{yy} \big] + g(u;a),
\end{equation}
where we have included a diffusion constant $\kappa > 0$. Planar travelling front solutions of the form
\begin{equation}
\label{eq:int:pde:trv:wave}
u(x,y,t) = \Phi(x \cos\theta + y \sin \theta - ct)
\end{equation}
play a key role towards understanding the global behaviour of \eqref{eqn:intro:AC_continuous} \cite{aronson1978multidimensional}. They can be found 
\cite{fife2013mathematical} by solving the travelling wave ODE
\begin{equation}\label{eqn:intro:wave_ode}
    -c\Phi'(\xi) = \kappa \Phi''(\xi) + g(\Phi(\xi);a),
\end{equation}
which does not depend on the direction of propagation $(\cos \theta, \sin \theta)$.
In addition, the dependence on the diffusion coefficient $\kappa$ can be eliminated through the spatial rescaling
\begin{equation}
\label{eq:int:resc:kappa}
    \xi \mapsto   \xi / \sqrt{\kappa}, \qquad c \mapsto c/\sqrt{\kappa}.
\end{equation}

This was recently exploited by
 Matano, Mori \& Nara~\cite{matano2019asymptotic}, who studied an anisotropic version
 of \eqref{eqn:intro:AC_continuous} by allowing the diffusion coefficients
 to depend on $\nabla u$. In terms of the travelling wave
 ODE
 \eqref{eqn:intro:wave_ode},
 this effectively introduces a direction-dependence $\kappa = \kappa(\theta)$.
 The spatial rescalings \eqref{eq:int:resc:kappa} subsequently point to a natural anistropic metric that can be used to analyze the long-time evolution of expansion waves. Indeed,
 for initial conditions $u_0$ that satisfy
\begin{equation}
\label{eq:int:ic:comp}
    \min_{|(x,y)|\leq L} u_0(x,y) > a ,
    \qquad
    \qquad
    \limsup_{|(x,y)|\to\infty} u_0(x, y) < a
\end{equation}
for some $L \gg 1$,
the asymptotic behaviour of the level set
$$\Gamma(t):=\left\{(x,y)\in \R^2: u(x, y, t) = a \right\}$$  is well approximated by the boundary of the Wulff shape~\cite{cerf2006wulff, osher1997wulff, wul1901frage}
associated to this metric,
expanding at a speed of $c - [ct]^{-1}$. This latter term can be seen as a correction for curvature-driven effects and also appears in the earlier isotropic 
studies \cite{uchiyama1985asymptotic,jones1983spherically,ROUSSIER2003}. The key point is that the expanding Wulff shape
is a self-similar solution to
an anisotropic mean curvature flow that also 
underpins the large-time behaviour of curved wavefronts.

Returning to our LDE
\eqref{eqn:intro:AC}, we emphasize that anisotropic effects are a natural consequence of the broken rotational symmetry,
but they cannot be readily transformed away by spatial rescalings such as \eqref{eq:int:resc:kappa}. Nevertheless, initial numerical experiments such as those in~\cite{BachBrenda} indicate that the Wulff shape
also 
plays an important role
in the long-term evolution 
of initial conditions such as 
\eqref{eq:int:ic:comp}, but that the behaviour near the corners is rather subtle.
One of our main longer term goals is to gain a detailed understanding of this expansion mechanism. A key intermediate step that we pursue in this paper is to understand how discretized curvature flows interact
with the dynamics of \eqref{eqn:intro:AC}.

\paragraph{Curved PDE fronts}

From a technical point of view, our work is chiefly inspired by
the results obtained in~\cite{Matano} by Matano and Nara. They considered the Cauchy problem for equation~\eqref{eqn:intro:AC_continuous} with an initial condition that roughly satisfies 
\begin{equation*}
     %
     u(x,y,0) <a - \epsilon  \qquad \hbox{ for } x \ll -1, \qquad  \qquad
    u(x,y,0) > a + \epsilon  \qquad \hbox{ for } x \gg 1,
\end{equation*}
again with $\epsilon > 0$.
The authors show that for $t\gg 1$ the solution $u$ becomes monotone around $\Phi(0) = \frac{1}{2}$, which, via the implicit theorem argument, 
allows a phase $\gamma(y,t)$ to be defined via the requirement
\begin{equation}\label{eqn:intro:phase}
    u(\gamma(y,t), y, t) = \Phi(0). 
\end{equation}
This phase 
is particularly convenient because it determines the large time behaviour of the solution $u$ via the asymptotic limit
\begin{equation}\label{eqn:intro:u_approx}
    \lim_{t\to\infty} |u(x,y,t) - \Phi\big(x - \gamma(y,t)\big)| = 0.
\end{equation}
 Moreover,  the authors showed that the phase $\gamma$ can be closely tracked by solutions $\theta$ to the PDE
\begin{equation}\label{eqn:intro:theta}
    \theta_t =  \theta_{yy} + \frac{c}{2}\theta_y^2 + c,
\end{equation}
by constructing super- and sub-solutions to~\eqref{eqn:intro:AC_continuous} of the form 
\begin{equation}\label{eqn:intro:super_sub_cont}
    u^\pm (x,y,t) = \Phi\left(\dfrac{x-\theta(y,t)}{\sqrt{1+\theta_y^2}} \pm Z(t)\right) \pm z(t),
\end{equation}
where $Z$ and $z$ are small correction terms compensating for the initial differences in phase and amplitude. 
The main advantage of the PDE~\eqref{eqn:intro:theta} is that it transforms into a standard heat equation via the Cole-Hopf transformation, which leads to explicit expressions for the solution. 

Describing the phase $\gamma$ with the dynamics of the PDE~\eqref{eqn:intro:theta}
has  two main advantages~\cite{Matano}. First, 
the solution $\theta$ approximates solutions of the mean curvature flow with a drift term $c$, allowing for a physical interpretation of the phase $\gamma$. Second, 
this description
can be used to
establish convergence results for initial conditions $u^0$
that are uniquely ergodic,
which includes
the case that $u^0$ is periodic or almost-periodic
in the transverse direction.
These results are hence part of an ever-increasing family of stability results for travelling fronts in dissipative PDEs,
which include the classic one-dimensional papers
\cite{fife1977approach,SATTINGER1977}
and their higher-dimensional counterparts
\cite{kapitula1997multidimensional,XIN1992,LEVXIN1992}.

\paragraph{Discrete setting}
Substituting the planar wave Ansatz
\begin{equation}
\label{eq:int:plane:wave:lde}
    u_{ij}(t) = \Phi( n (i,j)- c t )
\end{equation}
into the LDE \eqref{eqn:intro:AC}, we
see that the wave pair
$(c, \Phi)$
must satisfy the
mixed functional differential equation (MFDE)
\begin{equation}\label{eqn:intro:mfde}
    -c\Phi'(\xi) = \Phi(\xi + \sigma_h) + \Phi(\xi-\sigma_h) + \Phi(\xi + \sigma_v) + \Phi(\xi-\sigma_v) -  4\Phi(\xi) + g\big(\Phi(\xi);a\big),
\end{equation}
which we consider together with the boundary conditions \eqref{eq:int:phi:bc}. This MFDE has been well-studied by now and various detailed existence and uniqueness results can be found in the seminal paper \cite{Mallet-Paret1999} and the survey \cite{hupkes2018traveling}. For now we simply point out the qualitative differences between the $c = 0$ and $c \neq 0$ cases and the explicit dependence on the propagation direction, which can be rather delicate. Indeed, for a single fixed $a \in (0,1)$ certain directions can support freely travelling waves with smooth profiles, while others only feature pinned step-like profiles \cite{CMPVV99,hoffman2010universality}.

\begin{figure}
    \centering
    \includegraphics[width=\linewidth]{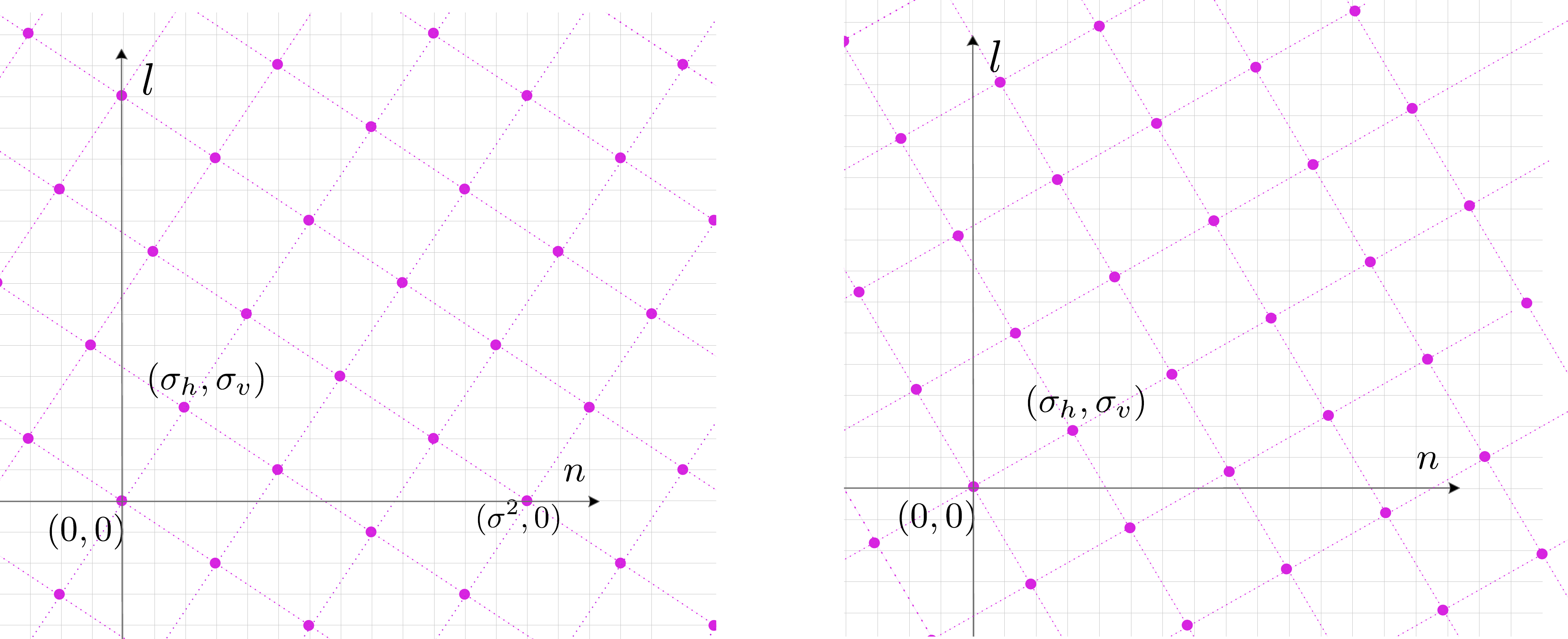}
    \caption{Both panels show the sublattice $\Zs$ obtained after the coordinate transformation~\eqref{eqn:intro:coordinate}, for the rational direction $(\sigma_h, \sigma_v) =(2, 3)$ on the left and the irrational angle  $\pi/6$ on the right. 
    We see that the left lattice is a proper subset of $\Z^2$. On the right however the purple dots only coincide with $\Z^2$ at the origin. Moreover,  the sets $\{i\sigma_h + j\sigma_v: (i,j)\in \Z^2\}$ and $\{i\sigma_v - j\sigma_h: (i,j)\in \Z^2\}$ are both dense in $\R$. This feature significantly differentiates the analysis between the rational and irrational directions. }
    \label{fig:intro:coordinate}
\end{figure}

For our purposes in this paper, the main consequence of the spatial discreteness is that it is no longer possible to construct sub- and super-solutions by
applying relatively straightforward phase modulations to the profile $\Phi$ as in \eqref{eqn:intro:super_sub_cont}. Indeed, the shifts
in \eqref{eqn:intro:mfde} prevent us from simply factorizing out a common factor $\Phi'(\xi)$ from the associated residuals as was possible in the series \cite{matano2019asymptotic,NARAMATANO2011,BHM}. Inspired by normal form theory, we circumvent this problem by using a super-solution Ansatz of the form
\begin{multline}\label{eqn:intro:super_sol_ansatz}
     u^+_{n,l}(t) = \Phi\big(n-\theta_l(t) + Z(t)\big)  +  \sum_{k=-N}^N p_k\big(n-\theta_l(t) + Z(t)\big)\big(\theta_{l+k}(t) - \theta_l(t)\big) \\
       + \sum_{k=-N}^N \sum_{k'=-N}^N q_{k, k'}\big(n-\theta_l(t) + Z(t)\big)\big(\theta_{l+k}(t) - \theta_l(t)\big)\big(\theta_{l+k'}(t) - \theta_l(t)\big)  + z(t),     
\end{multline}
in which $N = 2 \max\{|\sigma_h|, |\sigma_v|\}$. The 
auxiliary functions $(p_k)$, and $(q_{k,k'})$ are chosen in such a way that the dangerous slowly decaying terms caused by the lattice anisotropy are cancelled. To achieve this, it is necessary to carefully analyze the spectral stability properties of the underlying planar wave $(c,\Phi)$ and exploit the Fredholm theory for linear MFDEs that was developed by Mallet-Paret \cite{Mallet-Paret1999_Fredholm}.

The Ansatz \eqref{eqn:intro:super_sol_ansatz} (but with 
different functions $p$, $q$
and $\theta$) first appeared
in \cite{hoffman2017entire} - where it was used to study the evolution of initial conditions of the form
\begin{equation}
    u_{i,j}(0) = \Phi\big(n(i,j)\big) + v^0_{i, j},
    \qquad \qquad
     \lim_{|i|+|j| \to \infty} |v^0_{i,j}| \to 0.
\end{equation}
The authors established algebraic decay rates for the convergence
\begin{equation}
    u_{ij}(t) \to \Phi(n(i,j) - ct),
\end{equation}
hence establishing the stability of the planar wave
\eqref{eq:int:plane:wave:lde} under localized perturbations,
which form a (restrictive) subset of the  general class
\eqref{eq:int:init:cond:lde:gen} considered here.

The main novel aspect compared to \cite{hoffman2017entire} is that we need to incorporate nonlinear terms in the evolution of $\theta$ in order to capture the curvature-driven interface dynamics resulting from the non-local nature of the perturbations.
Indeed, our evolution equation
for $\theta$ takes the form
\begin{equation}\label{eqn:intro:theta_nonlinear}
    \dot{\theta}_l(t) = \dfrac{1}{d}\sum_{k=-N}^{N}  a_k\left(e^{d(\theta_{l+k}(t) - \theta_l(t))} -1 \right)  + c,
\end{equation}
for a set of coefficients $(a_k)$ that is prescribed by the normal form analysis discussed above.
For now, we simply mention that the parameter $d$ can be directly expressed in terms of
important geometric and spectral quantities associated to the wave $(c, \Phi)$. As we discuss in the sequel, this will allow us to make the connection between \eqref{eqn:intro:theta_nonlinear} and a discretized mean curvature flow.

As in the continuous case, solutions to \eqref{eqn:intro:theta_nonlinear} can be used to approximate the behaviour of the phase
$\gamma$ appearing
in \eqref{eq:int:interface:formation:disc}. This control is sufficiently strong to 
establish the convergence
$\gamma(t) \to c t+ \mu$
for initial conditions of the form
\begin{equation}
    u_{i,j}(0) = \Phi\big(n(i,j) - \kappa_l\big) + v^0_{i, j},
    \qquad \qquad
     \lim_{|i|+|j| \to \infty} |v^0_{i,j}| \to 0,
\end{equation}
where $\kappa_l$ is an arbitrary periodic sequence.
The main significance
compared to the earlier
results in \cite{hoffman2015multi,hoffman2017entire} is that this corresponds to an `infinite-energy' shift in the underlying wave position, during which the periodic wrinkles are flattened out under the flow of
\eqref{eqn:intro:theta_nonlinear}.

In our earlier work
\cite{jukic2019dynamics} we restricted attention to the horizontal direction $(\sigma_h, \sigma_v) = (1,0)$, which greatly simplified the analysis of \eqref{eqn:intro:super_sol_ansatz} and \eqref{eqn:intro:theta_nonlinear}. Indeed, we were able to choose $N=1$, with $a_1 = a_{-1} = 1$ and $p_{-1} = p_1 = 0$, which means that the linear terms reduce
to the standard discrete heat equation. Solutions could hence be represented explicitly in terms of modified Bessel functions of the first kind, for which detailed bounds are available in the literature.
In addition, the remaining auxiliary functions satisfied the useful identities
\begin{equation}
    q_{-1,+1} = q_{+1,-1} = 0,
    \qquad \qquad
    q_{-1,-1} = q_{+1,+1},
\end{equation}
allowing the quadratic terms in the super-solution residual to be analyzed in a transparant fashion.

For general rational directions, some of the coefficients $a_k$ can become negative, in which case
\eqref{eqn:intro:theta_nonlinear} no longer admits a comparison principle.
In addition, we can no longer represent our solutions
in terms of special functions for which powerful off-the-shelf estimates are 
available.
We resolve these issues in {\S}\ref{sec:lde}-\ref{sec:theta} by developing 
an approximate comparison principle and using
the saddle-point method to extract the necessary decay rates on the Green's function
for the linear part of 
\eqref{eqn:intro:theta_nonlinear}.

\paragraph{Mean curvature flows}
Matano and Nara proved in~\cite{Matano} that the solution $\theta(t)$ to the PDE~\eqref{eqn:intro:theta} can be approximated by solutions $\Gamma$ to the PDE
\begin{equation}
\label{eq:mn:cv:flow:gm:t}
 \dfrac{\Gamma_t}{\sqrt{1+\Gamma_y^2}} 
 = \dfrac{\Gamma_{yy}}
 {(1+\Gamma_y^2)^{3/2}}
 + c.
\end{equation}
This equation is known as a mean curvature flow equation with an additional drift term $c$. Indeed, writing
$\nu(y,t)$ for the rightward-pointing normal vector of the interfacial graph $\{\Gamma(y,t), y) \}$,
together with $V(y,t)$ for the horizontal velocity vector
and $H(y,t)$ for the curvature,
we can make the identifications
\begin{equation}
    \nu = \big[ 1 + \Gamma_y^2 \big]^{-1/2} (1 , - \Gamma_y),
    \qquad \qquad
    V = (\Gamma_t, 0 \big),
    \qquad \qquad
    H = \big[ 1 + \Gamma_y^2 \big]^{-3/2} \Gamma_{y y}.
\end{equation}
In particular, 
\eqref{eq:mn:cv:flow:gm:t}
can be written in the form
\begin{equation}
  \label{eq:int:simpl:mean:curv:PDE}
    V\cdot \nu = H + c,
\end{equation}
which reflects the rotational invariance of the wavespeed $c$.

In the discrete setting there is no `canonical' notion of a mean curvature flow due to the absence of a suitable normal vector for the interface
$(\Gamma_l, l)$.
Indeed, for a fixed index $l \in \Z$ one can
consider the angle

\begin{equation}
    \varphi_{l;k}(\Gamma)= \arctan{\dfrac{\Gamma_{l} - \Gamma_{l+k}}{k}},
\end{equation}
for any $k \in \Z$,
which measures the orientation of the vector that is transverse to the connection
between $(\Gamma_l,l)$
and $(\Gamma_{l+k}, l+k)$;
see Figure~\ref{fig:intro:zoom}. These can all be considered as normal directions in some sense.

However, it is possible and natural to apply appropriate discretization schemes to \eqref{eq:int:simpl:mean:curv:PDE}. In order to take the lattice anisotropy into account, we start by
writing
$c_{\varphi}$ for the
wavespeed associated to the planar wave solutions
\begin{equation}
    u_{ij}(t) = \Phi_{\varphi}\big(n \cos \varphi + l \sin \varphi - c_{\varphi} t \big)
\end{equation}
to \eqref{eqn:intro:AC}
that travel at an \textit{additional} angle of $\varphi$ relative to our original planar wave 
\eqref{eq:int:plane:wave:lde}. This allows us to define the
directional dispersion
    $$\mathcal{D}(\varphi) = \dfrac{c_{\varphi}}{\cos \varphi},$$
which measures the speed at which level sets of 
the wave $(c_{\varphi}, \Phi_{\varphi})$ move along the $n$-direction.

Setting out to discretize the terms in \eqref{eq:int:simpl:mean:curv:PDE}, we first introduce
the average
\begin{equation}
\label{eq:int:c:gamma}
    [\overline{c}_{\Gamma}]_l = 
    \dfrac{1}{2N} \sum_{0<|k|\leq N} c_{\varphi_{l;k}(\Gamma)},
\end{equation}
where we use $2N$ neighbours in order to account for all the interactions present in 
\eqref{eqn:intro:theta_nonlinear}.
In addition, 
we introduce the notation
\begin{equation}
\label{eq:int:lap:beta:gamma}
    [\beta_{\Gamma}]_l = \sqrt{1+\sum_{0 < |k| \le N} \dfrac{A_k}{k^2}{(\Gamma_{l+k} - \Gamma_l)^2} },
    \qquad \qquad 
    [\Delta_\Gamma]_l
    =\sum_{0 < |k| \le N} \frac{2 B_k}{ k^2}  (\theta_{l+k} - \theta_l),
\end{equation}
which depends on two
sequences
$(A_k)$ and $(B_k)$. These must satisfy the normalization conditions
\begin{equation}
    \label{eq:int:nrm:A:C}
\sum_{0 < |k| \le N} A_k = 1,
\qquad \qquad
\sum_{0 < |k| \le N} B_k = 1,
\qquad \qquad
\sum_{0 < |k| \le N} B_k/k = 0
\qquad \qquad
\end{equation}
in order to ensure
that $\beta_{\Gamma}$ and $\Delta_{\Gamma}$
reduce formally
to the symbols $\sqrt{1 + \Gamma_y^2}$
and $\Gamma_{yy}$ in the continuum limit.

These sequences weigh the contributions of each of the normal directions $\varphi_{l;k}$
to the components of our discrete curvature flow, which we formulate as
\begin{equation}
\label{eq:int:disc:cv:flow}
    \beta_{\Gamma}^{-1} \dot{\Gamma}
    = \kappa_H \beta_{\Gamma}^{-3} \Delta_\Gamma +
    \overline{c}_{\Gamma} .
\end{equation}
It turns out that \eqref{eq:int:disc:cv:flow}
and \eqref{eqn:intro:theta_nonlinear} can be matched
up to cubic terms
if and only if the parameters
are chosen as
\begin{equation}
\label{eq:int:cond:kappa:d}
    \kappa_H = \dfrac{1}{2}\sum_{k=-N}^N k^2 a_k ,
    \qquad \qquad
    d = \frac{ [\partial_{\varphi}^2 \mathcal{D}(\varphi)]_{\varphi = 0}}{2\kappa_H}.
\end{equation}
The latter expression precisely matches the choice
that comes from the technical considerations that lead to  
\eqref{eqn:intro:theta_nonlinear} during the construction of our super-solution \eqref{eqn:intro:super_sol_ansatz}.
It also plays a key role in the related studies
\cite{HARAG2006ANISO,hupkes2019travelling}
that concern travelling corner solutions
in anisotropic media.

\paragraph{Outlook}
In this paper we have restricted our attention to rational directions, primarily
due to the fact that we lose the periodicity of the Fourier
transform for irrational directions. In fact, the relevant Fourier symbol becomes quasi-periodic, making it very cumbersome
to extract the necessary decay estimates. We are working on further reduction steps to bypass this issue, which could eventually allows us to consider general rounded interfaces. On the other hand, we do believe that the approach developed here is already strong enough to handle further questions such as the stability of the corner solutions constructed
in \cite{hupkes2019travelling}
or the propagation of wavefronts through structured networks.

\paragraph{Organization}
This paper is organized as follows. After stating our main results in {\S}\ref{sec:main},
we discuss the asymptotic formation of interfaces
in {\S}\ref{sec:omega}
and {\S}\ref{sec:gamma} by exploiting the properties of $\omega$-limit points. These sections simplify the ideas in \cite{jukic2019dynamics} and adapt them to the more general setting considered in this paper. We proceed in
{\S}\ref{sec:lde} by studying
the linearization of our phase
LDE \eqref{eqn:intro:theta_nonlinear}. In particular, we use techniques inspired by the saddle-point method to extract our required decay rates and establish a quasi-comparison principle. These are 
used in {\S}\ref{sec:theta} to incorporate the nonlinear terms in \eqref{eqn:intro:theta_nonlinear} and build the bridge
with the discrete curvature flow \eqref{eq:int:disc:cv:flow}. These ingredients allow
us to construct  sub- and super-solutions 
for \eqref{eqn:intro:AC}
in {\S}\ref{sec:sub:sup}, 
which are subsequently used in {\S}\ref{sec:asymp}
to establish our final stability results.

\paragraph{Acknowledgments}

Both authors acknowledge support from the Netherlands Organization for Scientific Research (NWO) (grant 639.032.612).

\begin{figure}
    \centering
    \includegraphics[width = \linewidth]{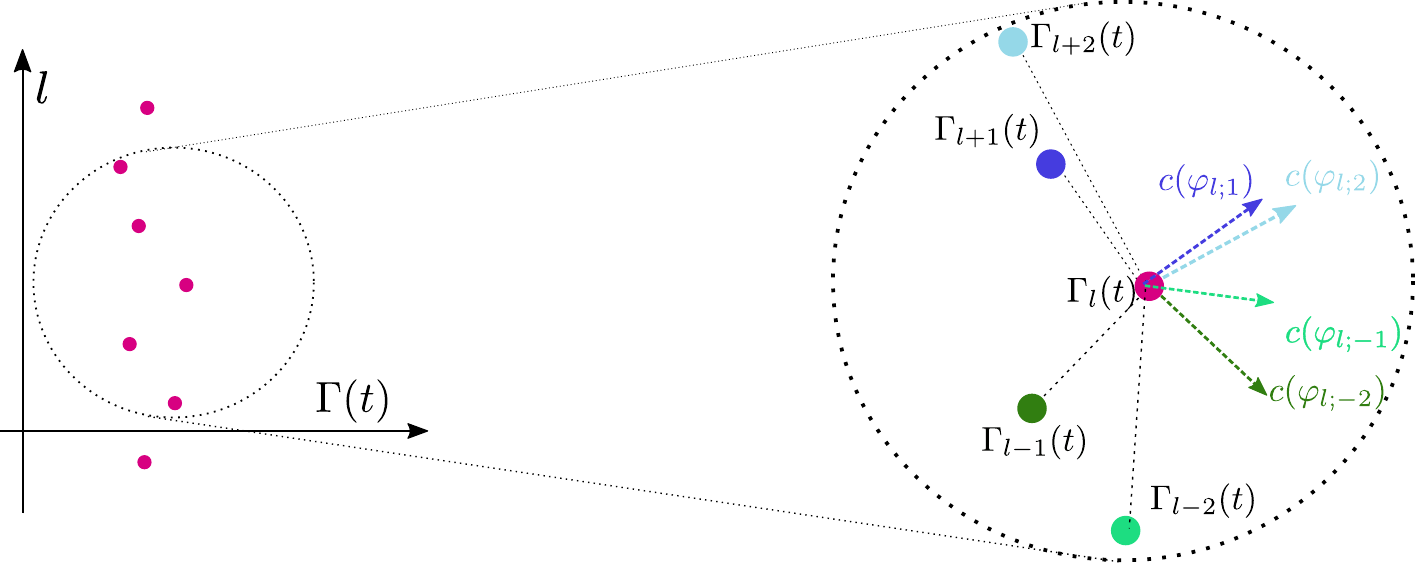}
    \caption{Here we provide the geometric motivation behind the
    definition~\eqref{eq:int:c:gamma} 
    for $\overline{c}_\Gamma$ with $N=2$.
    Since there is no uniquely defined normal direction for discrete graphs, we take the average of the velocities
     associated to the directions 
     transverse to the
     connecting lines between $(\Gamma_l,l)$  and $(\Gamma_{l+k} , l+k)$. Here we consider each $0 < |k| \le 2$.  }
    \label{fig:intro:zoom}
\end{figure}

\section{Main results}\label{sec:main}

In this paper we are interested in the discrete Allen-Cahn equation
\begin{equation}\label{eqn:main:AC_equation}
    \dot{u}_{i,j}(t) = [\Delta^+ u(t)]_{i,j} + g\big(u_{i,j}(t)\big)
\end{equation}
posed on the planar lattice $\Z^2$. The plus-shaped discrete Laplacian $\Delta^+:\ell^\infty(\Z^2)\to \ell^\infty(\Z^2)$ acts as a sum of differences over the nearest neighbors
\begin{equation}\label{eqn:main:laplace}
    \left[\Delta^+ u\right]_{i,j} = u_{i+1,j} + u_{i,j+1} + u_{i-1,j} + u_{i,j-1}  - 4u_{i,j},
\end{equation}
while the nonlinear function $g$
satisfies the following standard bistability condition.
\begin{itemize}
    \item[(Hg)]
    The nonlinearity $g:\R\to\R$ is  $C^3$-smooth  and there exists $a\in (0,1)$ such that
    \begin{equation*}
        g(0) = g(a) = g(1) = 0, \qquad \qquad g'(0) = g'(1) < 0.
    \end{equation*}
    In addition, we have the inequalities
    \begin{equation*}
       g(x)>0 \ \text{for}\ x\in (-\infty,0) \cup (a,1), \quad g(x)<0 \ \text{for}\ x\in (0, a) \cup (1, \infty).
    \end{equation*}
\end{itemize}
In this paper we focus on travelling waves propagating in rational directions. That is, we pick a direction $(\sigma_h, \sigma_v)\in \Z^2$ with $\mathrm{gcd}(\sigma_h, \sigma_v) = 1$ and consider wave-profiles $\Phi_*$
that connect the two stable equilibria of the nonlinear function $g$,
while traveling with the speed $c_*$ in the direction $(\sigma_h, \sigma_v)$.

It is convenient to pass to a new $(n,l)$-coordinate system
that is oriented parallel $(n)$ and transverse $(l)$ to the direction of wave-propagation.
In particular, we write
\begin{align*}
    n = i\sigma_h + j\sigma_v,
    \qquad \qquad 
    l = i\sigma_v - j\sigma_h
\end{align*}
and introduce the notation
\begin{align*}
    \Zs &= \left\{(n,l)\in \Z^2: \exists (i,j)\in \Z^2: n = i\sigma_h + j\sigma_v, \ l = i\sigma_v - j\sigma_h \right\}
    \subset \Z^2
\end{align*}
for the image of the original grid $\Z^2$.
Upon introducing the quantities
\begin{equation}
    \sigma_*= \sqrt{\sigma_h^2+\sigma_v^2},
    \qquad \qquad
    \sigma_\infty = \max \{ |\sigma_h|, |\sigma_v| \},
\end{equation}
we point out the mappings
\begin{equation}
    (i+\sigma_h, j+\sigma_v) \mapsto 
    (n+\sigma_*^2, l),
    \qquad \qquad
    (i+\sigma_v, j - \sigma_h)
    \mapsto
    (n, l+\sigma_*^2),
\end{equation}
which implies that for any $(n,l)\in \Zs$ the point $(n+a\sigma_*^2, l+b\sigma_*^2)$ is also an element of $\Zs$ for any $(a, b)\in \Z^2$, see Figure~\ref{fig:intro:coordinate}.


In this new coordinate system the discrete Laplace operator~\eqref{eqn:main:laplace} transforms as
\begin{equation}\label{eqn:main:laplace_new}
[\Delta^\times u ]_{n,l} = u_{n+\sigma_h, l+\sigma_v} + u_{n+\sigma_v, l-\sigma_h} + u_{n-\sigma_h, l-\sigma_v}  + u_{n-\sigma_v, l+\sigma_h} - 4u_{n,l}.
\end{equation}
In particular, the initial value problem that we consider in this paper can be written in the form
\begin{align}
    \dot{u}_{n,l}(t) &= [\Delta^\times u(t)]_{n,l} + g\big(u_{n,l}(t)\big), 
    \qquad (n,l)\in {\Zs}, \qquad t>0, \label{eqn:main:discrete_AC_new}\\
    u_{n,l}(0) &=u_{n,l}^0 , \label{eqn:main:initial_condition_new} 
\end{align}
for some initial condition $u^0 \in \ell^\infty(\Z^2_\times)$.
 Our second assumption
 imposes a `front-like' property
 on this initial condition $u^0$. 
\begin{itemize}
    \item[(H$0$)] The initial condition $u^0\in \ell^\infty(\Zs)$ satisfies
    \begin{equation}
        \limsup_{n\to-\infty} \sup_{l\in \Z:(n,l)\in \Zs} u^0_{n,l} <a, \qquad \qquad  \liminf_{n\to+\infty} \inf_{l\in \Z: (n,l)\in \Zs} u^0_{n,l} > a .
    \end{equation}
\end{itemize}

\subsection{Travelling waves}

A travelling wave solution is any solution of the form
\begin{equation}
\label{eq:mr:def:planar:wave}
    u_{n,l}(t) = \Phi_*(n-c_*t)
\end{equation}
for some wave-profile $\Phi_*$ and speed $c_*\in \R$. Any such pair must necessarily satisfy
the MFDE
\begin{equation}\label{eqn:main:mdfe}
    -c_* \Phi_*'(\xi) = \Phi_*(\xi+\sigma_h) + \Phi_*(\xi+\sigma_v) + \Phi_*(\xi-\sigma_h) + \Phi_*(\xi - \sigma_v) - 4 \Phi_*(\xi) + g\big(\Phi_*(\xi)\big),
\end{equation}
which we augment with the boundary
conditions
\begin{equation}\label{eqn:main_results_boundary_cond}
    \lim_{\xi\to-\infty} \Phi_*(\xi) = 0, \qquad \lim_{\xi\to\infty} \Phi_*(\xi) = 1. 
\end{equation}
The existence of such pairs $(c_*, \Phi_*)$
was established by Mallet-Paret in
\cite{Mallet-Paret1999}, both for rational and irrational directions. The
wave-speed $c_*$ is unique once the direction $(\sigma_h, \sigma_v)$ and the detuning parameter $a$ have been fixed, while the wave-profile $\Phi_*$ is monotonically increasing and unique up to translations provided that $c_* \neq 0$. In contrast to the continuous setting, there can be a range
of values for $a$ where $c_* = 0$ holds;
see \cite{hupkes2019travelling} for a detailed discussion.
The assumption below ensures that we are outside of this so-called pinning regime.

\begin{itemize}
    \item[(H$\Phi$)] There exists a wave-speed $c_*\neq0$ and a monotone wave profile $\Phi_*$ that satisfy the MFDE
    \eqref{eqn:main:mdfe}
    together with the boundary conditions
  \eqref{eqn:main_results_boundary_cond}
  and the phase normalization $\Phi_*(0) = \frac{1}{2}$.
\end{itemize}




To examine the stability properties of the wave-pair $(\Phi_*, c_*)$ under the dynamics of \eqref{eqn:main:discrete_AC_new},
one usually starts by considering  the linear variational problem
\begin{equation*}
    \dot{v}_{n,l}(t) = [\Delta^\times v(t)]_{n,l} + g'\big(\Phi_*(n-c_*t)\big) v_{n,l}(t).
\end{equation*}
Taking the discrete Fourier transform along
the transverse direction $l$, 
the problem decouples
into the set of one-dimensional LDEs 
\begin{equation}\label{eqn:main:ft}
\begin{aligned}
    \dot{v}_n(t)&= e^{i\omega \sigma_v} v_{n+\sigma_h}(t) + e^{-i\omega \sigma_h} v_{n+\sigma_v}(t) + e^{-i\omega \sigma_v} v_{n-\sigma_h}(t) + e^{i\omega \sigma_h}  v_{n-\sigma_v}(t) - 4v_n(t) \\
    & \qquad   + g'\big(\Phi_*(n-c_*t )\big)v_n(t),
\end{aligned}
\end{equation}
indexed by the frequency variable $\omega\in [-\pi, \pi]$.
As shown in~\cite[\S 2]{HS13},  there is a close relationship
between the Green’s function for each of the LDEs~\eqref{eqn:main:ft} and 
their associated linear operators
\begin{equation*}
    \mathcal{L}_\omega: W^{1, \infty}(\R;\C) \to L^\infty(\R;\C) , \qquad \omega\in [-\pi, \pi]
\end{equation*}
which act as
\begin{equation}
\label{eq:mr:def:l:omega}
\begin{aligned}
        \left[\mathcal{L}_\omega p\right](\xi) &= c_*p'(\xi) + e^{i\omega \sigma_v} p(\xi+\sigma_h) + e^{-i\omega \sigma_h} p(\xi+\sigma_v) + e^{-i\omega \sigma_v} p(\xi-\sigma_h) + e^{i\omega \sigma_h}  p(\xi-\sigma_v) \\
        &\qquad - 4p(\xi) + g'\big(\Phi_*(\xi)\big) p(\xi) .
        \end{aligned}
\end{equation}

A special role is reserved
for the operator $\mathcal{L}_0$,
which encodes the linearized behaviour
of the wave $\Phi_*$ under perturbations that are homogeneous in the transverse direction.
We briefly summarize several key Fredholm properties of this operator that were obtained
by Mallet-Paret in the classic paper \cite{Mallet-Paret1999_Fredholm}.
\begin{lemma}[see \cite{Mallet-Paret1999_Fredholm}]
\label{lem:mr:fred:props:l:0}
Assume that (Hg) and $(H\Phi)$ are satisfied.
Then the operator $\mathcal{L}_0:W^{1, \infty}(\R;\C) \to L^\infty(\R;\C)$ is  Fredholm with index zero. It has a 
one-dimensional kernel spanned by
the strictly positive function $\Phi_*'$.
In addition, its range admits the characterization
\begin{equation}\label{eqn:main:range_L0}
        \mathcal{R}(\mathcal{L}_0) = \left\{ f\in L^{\infty}(\R;\R): \int_\R \psi_*(\xi) f(\xi) \, d\xi = 0 \right\}
    \end{equation}
for some strictly positive bounded function\footnote{In fact, $\psi_*$ spans
the kernel of the formal adjoint $\mathcal{L}_0^*$ that arises from $\mathcal{L}_0$ by flipping the sign of $c$.}
$\psi_* \in C^2(\R;\R)$ that we normalize to have
\begin{equation}
    \int_\R \psi_*(\xi) \Phi_*'(\xi) d\xi = 1.
\end{equation}
\end{lemma}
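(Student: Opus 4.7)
The plan is to invoke the general Fredholm theory for mixed-type functional differential equations developed by Mallet-Paret in \cite{Mallet-Paret1999_Fredholm}, which was designed precisely for operators of the form \eqref{eq:mr:def:l:omega}. The point of departure is that $\mathcal{L}_0$ is an asymptotically autonomous MFDE: as $\xi \to \pm\infty$, the coefficient $g'(\Phi_*(\xi))$ converges to $g'(1)$ and $g'(0)$ respectively, and (Hg) gives $g'(0) = g'(1) < 0$, so the two limiting constant-coefficient operators $\mathcal{L}_0^{\pm}$ coincide.

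First I would verify hyperbolicity by computing the common characteristic function
\[
\Delta(z) = c_* z + 2\cosh(z\sigma_h) + 2\cosh(z\sigma_v) - 4 + g'(0)
\]
and checking that
\[
\mathrm{Re}\,\Delta(i\eta) = 2\cos(\eta\sigma_h) + 2\cos(\eta\sigma_v) - 4 + g'(0) \le g'(0) < 0
\]
for every $\eta \in \R$, so $\Delta$ has no roots on the imaginary axis. By the main theorem of \cite{Mallet-Paret1999_Fredholm}, this hyperbolicity guarantees that $\mathcal{L}_0$ is Fredholm, and because the two asymptotic operators are identical the Fredholm index is automatically zero (there is no spectral flow between the two limits).

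Next I would identify the kernel. Differentiating the wave equation \eqref{eqn:main:mdfe} with respect to $\xi$ shows immediately that $\Phi_*' \in \ker \mathcal{L}_0$, and (H$\Phi$) together with standard monotonicity arguments for bistable fronts yields $\Phi_*' > 0$ everywhere. To rule out further linearly independent kernel elements I would use a sliding / maximum-principle argument: any bounded element of $\ker \mathcal{L}_0$ decays exponentially at $\pm\infty$ by the hyperbolicity established above, so a suitable multiple can be brought tangent to $\Phi_*'$ from above, at which point the cooperative structure of $\Delta^\times$ forces equality. I expect this to be the main obstacle of the argument, since the nonlocal shifts $p(\xi \pm \sigma_{h,v})$ block any direct invocation of a classical Sturm-type oscillation theorem; one must rely on the nonlocal positivity/comparison machinery from \cite{Mallet-Paret1999_Fredholm}.

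Finally, for the range characterization I would turn to the formal adjoint
\[
[\mathcal{L}_0^* \psi](\xi) = -c_* \psi'(\xi) + \psi(\xi+\sigma_h) + \psi(\xi+\sigma_v) + \psi(\xi-\sigma_h) + \psi(\xi-\sigma_v) - 4\psi(\xi) + g'(\Phi_*(\xi))\psi(\xi),
\]
which coincides with $\mathcal{L}_0$ up to the sign flip $c_* \mapsto -c_*$ indicated in the footnote (the shifts are symmetric under $\pm$, so their adjoints reproduce the same combination). Applying the same Fredholm package to $\mathcal{L}_0^*$ yields a one-dimensional kernel, and combining this with the index-zero property identifies $\ker \mathcal{L}_0^*$ with the cokernel of $\mathcal{L}_0$ via the $L^1$--$L^\infty$ duality, producing \eqref{eqn:main:range_L0}. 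Strict positivity of the spanning function $\psi_*$ is obtained by repeating the nonlocal maximum-principle argument on $\mathcal{L}_0^*$; since $\Phi_*' > 0$ and $\psi_* > 0$ the integral $\int_\R \psi_* \Phi_*' \, d\xi$ is strictly positive, so the normalization to $1$ is a harmless rescaling. The $C^2$ regularity of $\psi_*$ follows by bootstrapping: given $\psi_* \in L^\infty$, the equation $\mathcal{L}_0^* \psi_* = 0$ algebraically determines $\psi_*'$ in $L^\infty$, and differentiating once more together with the $C^3$ smoothness of $g$ yields $\psi_*'' \in L^\infty$.
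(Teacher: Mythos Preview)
Your sketch is sound and aligns with the standard route through the Mallet-Paret Fredholm machinery: hyperbolicity of the asymptotic system via the characteristic function, index zero from equal limits, $\Phi_*' \in \ker \mathcal{L}_0$ by differentiating \eqref{eqn:main:mdfe}, and the range characterization via the adjoint kernel. Note, however, that the paper does not supply its own proof of this lemma at all; it is stated purely as a citation of \cite{Mallet-Paret1999_Fredholm} (and the companion existence paper \cite{Mallet-Paret1999}), so there is nothing in the text to compare your argument against beyond the attribution itself.
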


Since clearly $\Phi_*' \notin \mathcal{R}(\mathcal{L}_0)$ we see that $\lambda = 0$ is a simple eigenvalue of the operator $\mathcal{L}_0$.
The following result
states that this property extends to a branch of simple  eigenvalues $\lambda_\omega$ for the operators $\mathcal{L}_\omega$ with $\omega\approx 0$. 

\begin{lemma}[{see \cite[Prop. 2.2]{hoffman2015multi}}]
\label{lemma:main_results:lambda_omega}
Assume that (H$g$) and (H$\Phi$) are satisfied. Then there exists a constant $0<\omega_0\ll 1$ together with  pairs
\begin{equation*}
    (\lambda_\omega, \phi_\omega) \in \C\times   W^{1, \infty}(\R;\C),
\end{equation*}
defined for each $\omega \in (-\omega_0, \omega_0)$, that satisfy the following properties.
\begin{enumerate} [(i)]
    \item For each $\omega\in (-\omega_0, \omega_0)$ we have the characterization
\begin{equation*}
    \mathrm{Ker}(\mathcal{L}_{\omega} - \lambda_\omega) =  \mathrm{span}\left\{\phi_\omega \right\} ,
\end{equation*}
together with the algebraic simplicity condition
\begin{equation*}
\phi_\omega \notin \mathcal{R}(\mathcal{L}_{\omega} - \lambda_\omega)   .
\end{equation*}
\item We have $\lambda_0 = 0$, $\phi_0 = \Phi_*'$ and the maps $\omega\mapsto \lambda_\omega$, $\omega\mapsto \phi_\omega$ are analytic.
\item\label{item:main:phi_psi_integral}  
For each $\omega\in (-\omega_0, \omega_0)$ we have the normalization
\begin{equation*}
    \langle \phi_\omega, \psi_* \rangle_{L^2} = 1.
\end{equation*}
\end{enumerate}
\end{lemma}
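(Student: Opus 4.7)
The plan is to recast the eigenvalue problem as a root-finding equation and apply the analytic implicit function theorem, leveraging the Fredholm structure provided by Lemma~\ref{lem:mr:fred:props:l:0}. To this end, I would define the map
$$F:\R \times \C \times W^{1,\infty}(\R;\C) \to L^\infty(\R;\C) \times \C$$
by $F(\omega,\lambda,\phi) = \bigl((\mathcal{L}_\omega - \lambda)\phi,\; \int_{\R} \psi_*(\xi)\phi(\xi)\,d\xi - 1\bigr)$. Since the coefficients $e^{\pm i\omega\sigma_h}, e^{\pm i\omega\sigma_v}$ appearing in the definition~\eqref{eq:mr:def:l:omega} of $\mathcal{L}_\omega$ are entire in $\omega$, the map $F$ is jointly analytic in all three arguments. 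Lemma~\ref{lem:mr:fred:props:l:0} together with the normalization $\int \psi_*\Phi_*' = 1$ gives $F(0,0,\Phi_*') = 0$, providing the seed solution.

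Next, I would compute the partial derivative at the seed point, which acts as $D_{(\lambda,\phi)}F[\mu,h] = (\mathcal{L}_0 h - \mu\Phi_*',\,\int \psi_* h)$, and verify it is a bijection. Given target data $(f,s)\in L^\infty(\R;\C)\times \C$, the range characterization~\eqref{eqn:main:range_L0} forces the scalar choice $\mu = -\int \psi_* f$, ensuring $f + \mu\Phi_*' \in \mathcal{R}(\mathcal{L}_0)$; the equation $\mathcal{L}_0 h = f+\mu \Phi_*'$ then has a one-parameter family of solutions modulo $\mathrm{span}\{\Phi_*'\}$, and the scalar normalization condition pins down the free parameter via $\langle \Phi_*',\psi_*\rangle = 1$. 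Openness of the isomorphism class plus the analytic implicit function theorem then yield analytic curves $\omega \mapsto (\lambda_\omega,\phi_\omega)$ on an interval $(-\omega_0,\omega_0)$ satisfying $(\mathcal{L}_\omega - \lambda_\omega)\phi_\omega = 0$, $\lambda_0 = 0$, $\phi_0 = \Phi_*'$, and the normalization~(iii).

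It remains to upgrade the kernel statement to full algebraic simplicity. For this I would run the same implicit function argument for the formal adjoint $\mathcal{L}_\omega^*$ (obtained by flipping $c_*$ and conjugating the coefficients), which has $\psi_*$ in its kernel at $\omega=0$; this produces an analytic family $\psi_\omega$ with $\mathcal{L}_\omega^* \psi_\omega = \overline{\lambda_\omega}\psi_\omega$, staying close to $\psi_*$. Then $\langle \phi_\omega,\psi_\omega\rangle$ is analytic in $\omega$ and equals $1$ at $\omega=0$, hence is nonzero for $\omega_0$ small enough. Combined with the persistence of the Fredholm index under small bounded perturbations, this forces $\mathrm{Ker}(\mathcal{L}_\omega - \lambda_\omega)$ to remain one-dimensional and $\phi_\omega \notin \mathcal{R}(\mathcal{L}_\omega - \lambda_\omega)$, establishing item~(i).

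The main obstacle I anticipate is the Fredholm-theoretic bookkeeping: although Mallet-Paret's framework in~\cite{Mallet-Paret1999_Fredholm} provides Fredholmness of $\mathcal{L}_\omega$ for each fixed $\omega$, one must verify that the index and kernel dimension are stable under the perturbation $\mathcal{L}_\omega - \mathcal{L}_0$, which is not compact but only small in operator norm for $|\omega|\ll 1$ thanks to the Lipschitz dependence of the exponentials $e^{\pm i\omega\sigma_{h,v}}$ on $\omega$. This step is precisely the content of \cite[Prop.~2.2]{hoffman2015multi}, so in practice I would lean on that citation to handle the perturbation argument cleanly rather than redoing the MFDE Fredholm theory from scratch.
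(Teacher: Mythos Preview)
The paper does not actually provide its own proof of this lemma: it is stated with the citation ``see \cite[Prop.~2.2]{hoffman2015multi}'' and no further argument is given. Your sketch via the analytic implicit function theorem, using the Fredholm structure of $\mathcal{L}_0$ from Lemma~\ref{lem:mr:fred:props:l:0} and perturbing the adjoint kernel to obtain algebraic simplicity, is precisely the standard route and matches what the cited reference does; there is nothing to add.
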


Our following assumption states
that the map 
$\omega\mapsto \lambda_\omega$ touches
the origin in a quadratic tangency, opening
up to the left of the imaginary axis.
This is a rather standard condition
that was also used 
in~\cite{hoffman2015multi}
and~\cite{hoffman2017entire}
to show that transverse phase deformations
decay at the standard rates prescribed by the heat equation.
We remark that Lemma 6.3 in \cite{hoffman2015multi} 
guarantees that this condition
is satisfied whenever the
propagation direction is close to horizontal
or diagonal. Furthermore, numerical experiments in~\cite[\S 6]{hoffman2015multi} suggest that this extends to all
directions where the wavespeed does not vanish.

\begin{itemize}
    \item[(HS$)_1$] The branch of eigenvalues $(\lambda_\omega)_{\omega\approx 0}$ 
    satisfies the inequality
    \begin{equation*}
       [\partial_\omega^2  \lambda_\omega]_{\omega = 0} < 0.
    \end{equation*}
\end{itemize}

Our final spectral assumption is far less standard and requires some technical preparations. To this end,
we introduce the set
of shifts
\begin{equation}
\label{eq:mr:def:shifts:tau}
    (\tau_1, \tau_2, \tau_3, \tau_4) = (\sigma_h, \sigma_v, -\sigma_h, -\sigma_v)
\end{equation}
and their associated translation operators
$T_{\nu}$ that act as 
\begin{equation}
\label{eq:mr:def:T:nu}
    [T_\nu h](\xi) = h(\xi + \tau_\nu), \qquad \nu \in \left\{1, 2, 3, 4\right\}
\end{equation}
for any function $h\in C(\R)$.
These can be used to define
a collection of functions
$p^\diamond$, $p^{\diamond \diamond}$ and $q^{\diamond \diamond}$ that play
a key role in  {\S}\ref{sec:sub:sup}
where we construct  sub- and super-solutions for \eqref{eqn:main:discrete_AC_new}. For our purposes here, we are chiefly interested in the associated coefficients
$\alpha^\diamond_p$, $\alpha^{\diamond \diamond}_p$ and $\alpha^{\diamond \diamond}_q$ that are related
to the solvability condition
\eqref{eqn:main:range_L0}.

\begin{lemma}[{see \S\ref{sec:theta}}]
\label{lemma:main_results:coeff_diamond}
Assume that $(Hg)$ and $(H\Phi)$ both  hold. Then for every $\nu, \nu'\in \left\{1, 2, 3, 4\right\}$ there exist bounded functions 
\begin{equation*}
    p_\nu^\diamond, p_{\nu \nu'}^{\diamond \diamond}, q_{\nu \nu'}^{\diamond \diamond} : \R \to \R
\end{equation*}
that satisfy the identities
\begin{equation}\label{eqn:main:eqns_for_p}
  \begin{array}{lcl}
    \left[\mathcal{L}_0 p^\diamond_\nu\right](\xi) &=& [T_{\nu}\Phi'](\xi)  - \alpha^\diamond_{p;\nu}\Phi'(\xi),
    \\[0.3cm]
    [\mathcal{L}_0p^{\diamond \diamond}_{\nu \nu'}](\xi) &=&    \alpha_{p;\nu'}^\diamond p^\diamond_\nu(\xi) - [T_{\nu'} p_\nu^\diamond](\xi) - \alpha^{\diamond\diamond}_{p;\nu\nu'} \Phi'(\xi),
    \\[0.3cm]
     [\mathcal{L}_0q^{\diamond \diamond}_{\nu \nu'}](\xi) &=& - \alpha_{p;\nu}^\diamond \dfrac{d}{d\xi}p_\nu^\diamond(\xi)  + [T_{\nu'} \dfrac{d}{d\xi}p_\nu^\diamond](\xi)  -\dfrac{1}{2} g''\big(\Phi_*(\xi)\big)  p_\nu^\diamond(\xi) p_{\nu'}^\diamond(\xi) \\
     & & \qquad  - \dfrac{1}{2}\mathbf{1}_{\nu = \nu'}[T_\nu \Phi_*''](\xi) - \alpha^{\diamond\diamond}_{q;\nu\nu'} \Phi'(\xi).\\
\end{array}
\end{equation}
Here the  coefficients $\alpha_{p;\nu}^\diamond$, $\alpha_{p;\nu \nu'}^{\diamond \diamond}$
and
$\alpha_{q;\nu\nu'}^{\diamond \diamond }$ are given by
\begin{equation}\label{eqn:main:eqn_for_alpha_diamonds}
  \begin{array}{lcl}
    \alpha_{p;\nu}^\diamond & = &
   \int_\R [T_\nu \Phi'](\xi) \psi_*(\xi) d\xi ,
     \\[0.3cm]
   \alpha^{\diamond \diamond}_{p;\nu \nu'} &=& \int_\R \left[ \alpha_{p;\nu}^\diamond p^\diamond_{\nu'}(\xi) - [T_{\nu'} p_\nu^\diamond] (\xi)\right]\psi_*(\xi) d\xi ,
   \\[0.3cm]
   \alpha^{\diamond \diamond}_{q;\nu \nu'} 
    &=&
    \int_{\R}\left( - \alpha_{p;\nu}^\diamond \dfrac{d}{d\xi}p_{\nu'}^\diamond(\xi)  + [T_{\nu'} \dfrac{d}{d\xi}p_\nu^\diamond](\xi)  -\dfrac{1}{2} g''\big(\Phi_*(\xi)\big)  p_\nu^\diamond(\xi)p_{\nu'}^\diamond(\xi) \right)\psi_*(\xi) d\xi \\
    & & 
    \qquad 
    -  \dfrac{1}{2} \int_{\R} \mathbf{1}_{\nu = \nu'}[T_\nu \Phi_*''](\xi)\psi_*(\xi) d\xi. \\
\end{array}
\end{equation}
Moreover, the functions $p^{\diamond}_{\nu}, p^{\diamond \diamond}_{\nu \nu'}$ and $q^{\diamond \diamond}_{\nu \nu'}$ can be chosen in such a way that 
\begin{equation}\label{eqn:main:ip_function_p_psi}
    \langle p^{\diamond}_{\nu}, \psi_* \rangle_{L^2} = 0, \qquad \langle p^{\diamond\diamond}_{\nu\nu'}, \psi_* \rangle_{L^2} =0, \qquad \langle q^{\diamond\diamond}_{\nu\nu'}, \psi_* \rangle_{L^2} =0.
\end{equation}
\end{lemma}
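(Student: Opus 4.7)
The approach is a direct application of the Fredholm theory for $\mathcal{L}_0$ recorded in Lemma~\ref{lem:mr:fred:props:l:0}: for every $h \in L^\infty(\R)$ with $\langle h, \psi_*\rangle_{L^2} = 0$, there exists a solution $f \in W^{1,\infty}(\R)$ to $\mathcal{L}_0 f = h$, unique modulo $\mathrm{span}\{\Phi_*'\}$. I therefore plan to solve the three families of equations sequentially, at each step choosing the scalar coefficient $\alpha$ precisely so that the solvability condition holds, and then normalising the resulting solution by adding the appropriate multiple of $\Phi_*'$.

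For the first family, $\Phi_*'$ is bounded (immediate from the MFDE \eqref{eqn:main:mdfe} together with the boundedness of $\Phi_*$), so the right-hand side $T_\nu \Phi_*' - \alpha_{p;\nu}^\diamond \Phi_*'$ lies in $L^\infty$. Pairing with $\psi_*$ and using the normalisation $\langle \Phi_*', \psi_*\rangle_{L^2} = 1$, the solvability condition forces $\alpha_{p;\nu}^\diamond = \int_\R [T_\nu\Phi_*'](\xi)\psi_*(\xi)\,d\xi$, which is exactly the formula displayed in \eqref{eqn:main:eqn_for_alpha_diamonds}. Fredholm theory then produces $p_\nu^\diamond \in W^{1,\infty}$, and subtracting $\langle p_\nu^\diamond,\psi_*\rangle_{L^2}\Phi_*'$ enforces \eqref{eqn:main:ip_function_p_psi} without disturbing the equation since $\Phi_*' \in \mathrm{Ker}(\mathcal{L}_0)$. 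The $p^{\diamond\diamond}$ and $q^{\diamond\diamond}$ families are handled in the same fashion: their right-hand sides are bounded functions built from the already-constructed data, where I additionally invoke $\tfrac{d}{d\xi}p_\nu^\diamond \in L^\infty$ (from $p_\nu^\diamond \in W^{1,\infty}$) together with $\Phi_*'' \in L^\infty$ (obtained by differentiating \eqref{eqn:main:mdfe} once more). Imposing $\langle \mathrm{RHS},\psi_*\rangle_{L^2} = 0$ then dictates the values of $\alpha_{p;\nu\nu'}^{\diamond\diamond}$ and $\alpha_{q;\nu\nu'}^{\diamond\diamond}$, and the orthogonality normalisation is again achieved by a scalar shift along $\Phi_*'$.

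The main piece of bookkeeping is to verify that these enforced values coincide with the displayed formulas in \eqref{eqn:main:eqn_for_alpha_diamonds}; the key observation is that the previously enforced orthogonalities $\langle p_\nu^\diamond,\psi_*\rangle = \langle p_{\nu'}^\diamond,\psi_*\rangle = 0$ make certain terms inside the integrals drop out, which is precisely what allows the symmetrised-looking right-hand side in \eqref{eqn:main:eqn_for_alpha_diamonds} to match the minimal solvability requirement. The only genuinely delicate preliminary point is ensuring that the pairings $\langle \cdot, \psi_*\rangle_{L^2}$ make sense for merely bounded test functions; this reduces to exponential decay of $\psi_*$, which follows from its characterisation as the kernel element of the formal adjoint of $\mathcal{L}_0$ together with the asymptotic hyperbolicity results in \cite{Mallet-Paret1999_Fredholm}. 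Once decay of $\psi_*$ is in hand the construction is a clean triangular recursion with no further obstacles.
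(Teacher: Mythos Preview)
Your proposal is correct and follows essentially the same approach as the paper: verify that each choice of $\alpha$ puts the right-hand side into $\mathcal{R}(\mathcal{L}_0)$ via the characterisation in Lemma~\ref{lem:mr:fred:props:l:0}, solve, and then normalise along $\Phi_*'$. You are in fact somewhat more careful than the paper in flagging the need for exponential decay of $\psi_*$ to make the $L^2$ pairings with bounded functions well-defined, and in noting that the already-enforced orthogonality $\langle p_\nu^\diamond,\psi_*\rangle=0$ is what reconciles the displayed coefficient formulas with the raw solvability conditions.
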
   

Upon introducing the convenient
notation
\begin{equation}
\label{eq:mr:def:shifts:sigma}
    (\sigma_1, \sigma_2, \sigma_3, \sigma_4) = (\sigma_v, -\sigma_h, -\sigma_v, \sigma_h),
\end{equation}
we now use the coefficients
\eqref{eqn:main:eqn_for_alpha_diamonds} to introduce the function
$f_{(\sigma_h, \sigma_v)}:[-\pi, \pi]\to \R$
that acts as
\begin{equation}
\begin{array}{lcl}
\label{eq:mr:def:f:sh:sv}
    f_{(\sigma_h, \sigma_v)}(\omega) 
    &=& \sum_{\nu=1}^4 \alpha_{p;\nu}^\diamond (\cos{(\sigma_\nu \omega)} - 1) 
    \\[0.2cm]
    & & \qquad + \sum_{\nu, \nu'=1}^4 \alpha_{p;\nu \nu'}^{\diamond\diamond} \Big(\cos{\big((\sigma_\nu  + \sigma_{\nu'})\omega\big)} - \cos{(\sigma_\nu \omega)} - \cos{(\sigma_{\nu'} \omega)}+ 1\Big)  .
\end{array}
 \end{equation}
For the sequel, it is convenient
to rewrite this expression in a more compact form. To this end, 
we write $N = \max_{\nu, \nu'\in \{1, 2, 3, 4\}} \left\{ \sigma_\nu, \sigma_\nu + \sigma_{\nu'}\right\}$
and introduce the sequence
\begin{equation}\label{eqn:main:a_k}
    a_k = \sum_{\nu=1}^4 \alpha_{p;\nu}^\diamond   \textbf{1}_{\{k = \sigma_\nu\}} + \sum_{\nu, \nu'=1}^4 \alpha_{p;\nu \nu'}^{\diamond \diamond }\big(\textbf{1}_{\{k = \sigma_\nu +\sigma_{\nu'}\}}- \textbf{1}_{\{k = \sigma_\nu\}} - \textbf{1}_{\{k = \sigma_{\nu'}\}}\big),
\end{equation}
which allows us to rewrite \eqref{eq:mr:def:f:sh:sv} as
\begin{equation}\label{eqn:main:function_f}
   f_{(\sigma_h, \sigma_v)}(\omega) = \sum_{k=-N}^N a_k (\cos{(k \omega)} - 1).  
\end{equation}
This function will appear later as the real part of the Fourier symbol associated to the linear dynamics of the transverse phase of the planar wave $(c_*, \Phi_*)$.

In the horizontal case $(\sigma_h, \sigma_v) = (1,0)$ we can take $N=1$, $a_{-1} = 1$, $a_1 = 1$ and 
\begin{equation}
    f_{(1,0)}(\omega) = 2(\cos \omega -1),
\end{equation}
but in general the coefficients $a_k$ can be negative. In order to ensure that our phase dynamics can be controlled, our final assumption 
requires the function $f$ to be strictly negative for all non-zero $\omega$.
\begin{itemize}
    \item [(HS$)_2$] 
    The inequality
               $f_{(\sigma_h, \sigma_v)}(\omega) < 0$
               holds for all $\omega\in [-\pi, \pi]\backslash\{0\}$.   
\end{itemize}

We now set out to obtain
some geometric intuition concerning
the coefficients \eqref{eqn:main:eqns_for_p}
and the Fourier symbol \eqref{eqn:main:function_f}. 
We first note that the pair $(  c_*,\Phi_*)$ can be perturbed in order to yield waves travelling in  directions that are `close' to $(\sigma_h, \sigma_v)$.
In particular, we follow the approach from~\cite{hupkes2019travelling}
and look for solutions to the Allen-Cahn equation~\eqref{eqn:main:discrete_AC_new} of the form
\begin{equation}\label{eqn:main:perturbed:ansatz}
    u_{n,l}(t) = \Phi_\varphi (n\cos{\varphi} + l\sin{\varphi} - c_\varphi t),
\end{equation}
which travel at an angle $\varphi$ through the rotated lattice $\Z^2_{\times}$.
Inserting this Ansatz into~\eqref{eqn:main:discrete_AC_new},
we find that the pair $(c_\varphi, \Phi_\varphi)$ must satisfy the MFDE
\begin{equation}\label{eqn:main:perturbed:mfde}
    \begin{aligned}
        -c_\varphi \Phi_\varphi'(\xi) &= \Phi_\varphi(\xi + \sigma_h \cos{\varphi} + \sigma_v \sin{\varphi}) + \Phi_\varphi(\xi + \sigma_v \cos{\varphi} - \sigma_h \sin{\varphi}) \\
        &\qquad  +\Phi_\varphi(\xi - \sigma_h \cos{\varphi} - \sigma_v \sin{\varphi})  +\Phi_\varphi(\xi - \sigma_v \cos{\varphi} + \sigma_h \sin{\varphi}) 
        \\
        & \qquad -4 \Phi_\varphi(\xi) + g\big(\Phi_\varphi(\xi) \big).
    \end{aligned}
\end{equation}
Using standard bifurcation arguments
one can show that the pair $(\Phi_*, c_*)$
can be embedded into a smooth branch
of waves $(c_\varphi, \Phi_\varphi)$
for $\varphi\approx 0$.
\begin{lemma}[{see \cite[Prop. 2.2]{hupkes2019travelling} and \cite[Thm. 2.7]{hoffman2015multi}}]
\label{lem:mr:dir:dep:waves}
Assume that (H$g$) and (H$\Phi$) are satisfied. Then there exists a constant $\delta_\varphi>0$ together with pairs 
\begin{equation*}
    (c_\varphi, \Phi_\varphi)\in\R\times W^{1, \infty} (\R;\R),
\end{equation*}
defined for every $\varphi \in (-\delta_\varphi, \delta_\varphi)$, such that the following holds true.
\begin{enumerate} [(i)]
    \item For every $\varphi \in (-\delta_\varphi, \delta_\varphi)$ the pair $(c_\varphi, \Phi_\varphi)$ satisfies the MFDE~\eqref{eqn:main:perturbed:mfde}
    together with the boundary conditions
    \eqref{eqn:main_results_boundary_cond}.
    \item For every $\varphi \in (-\delta_\varphi, \delta_\varphi)$
    we have the normalization 
    $\langle \Phi_{\varphi} - \Phi_* , \psi_* \rangle = 0$.
    \item The maps $\varphi\mapsto c_\varphi$ and $\varphi \mapsto \Phi_\varphi$ are $C^{2}$-smooth, 
    with $(c_0, \Phi_0) = (c_*, \Phi_*)$. 
\end{enumerate}
\end{lemma}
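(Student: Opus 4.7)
The plan is to apply the implicit function theorem in suitable Banach spaces, with $\varphi$ playing the role of parameter. Concretely, I would set $X = \{ v \in C^2_b(\R;\R) : v, v', v'' \text{ uniformly continuous}, \ \langle v, \psi_* \rangle_{L^2} = 0 \}$ with its usual norm and $Y = L^\infty(\R;\R)$, and define the nonlinear map $F : \R \times X \times (-\delta, \delta) \to Y$ by letting $F(c, v, \varphi)(\xi)$ be the residual obtained when substituting $(\Phi_\varphi, c_\varphi) = (\Phi_* + v, c)$ into the MFDE \eqref{eqn:main:perturbed:mfde}. By (H$\Phi$) we have $F(c_*, 0, 0) = 0$. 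The smoothness assumption (H$g$) together with a standard bootstrap on \eqref{eqn:main:mdfe} gives $\Phi_* \in C^4_b(\R)$, ensuring $F$ is well-defined.

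A direct computation yields the linearization
\begin{equation*}
    D_{(c,v)} F(c_*, 0, 0)[\dot c, \dot v](\xi) = -\dot c\, \Phi_*'(\xi) - [\mathcal{L}_0 \dot v](\xi),
\end{equation*}
where $\mathcal{L}_0$ is the operator from \eqref{eq:mr:def:l:omega} at $\omega = 0$. By Lemma~\ref{lem:mr:fred:props:l:0} and the normalization $\int_\R \Phi_*' \psi_*\, d\xi = 1$, any $h \in L^\infty(\R)$ splits uniquely as $h = \alpha\, \Phi_*' + h_0$ with $\alpha = \int_\R h\, \psi_*\, d\xi$ and $h_0 \in \mathcal{R}(\mathcal{L}_0)$. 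Taking $\dot c = -\alpha$ absorbs the $\Phi_*'$-component, after which $\mathcal{L}_0 \dot v = -h_0$ has a unique solution with $\langle \dot v, \psi_* \rangle_{L^2} = 0$, the kernel direction $\Phi_*'$ being excluded by the constraint built into $X$. Hence $D_{(c,v)} F(c_*, 0, 0)$ is a bounded bijection, and an isomorphism by the open mapping theorem. The implicit function theorem then yields the desired $C^2$-smooth branch $(c_\varphi, v_\varphi)$ with $(c_0, v_0) = (c_*, 0)$; setting $\Phi_\varphi = \Phi_* + v_\varphi$ delivers (i)--(iii) at once, the normalization in (ii) being built into $X$.

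The main technical hurdle lies in verifying the $C^2$-dependence on $\varphi$. The angle enters \eqref{eqn:main:perturbed:mfde} only through the shift amounts $s_\nu(\varphi) = \tau_\nu \cos\varphi + \sigma_\nu \sin\varphi$, and the evaluation map $(s, f) \mapsto f(\cdot + s)$ from $\R \times C^k_b(\R)$ into $L^\infty(\R)$ is of class $C^k$ precisely because the derivatives of $f$ up to order $k$ are uniformly continuous. This is why $X$ is defined with control on $v''$ and why the $C^3$-regularity of $g$ (hence $C^4$-regularity of $\Phi_*$) is important: two $\varphi$-derivatives of $F$ must land in $Y = L^\infty(\R)$, which requires bounded second derivatives of $\Phi_* + v$ that remain uniformly continuous.

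Finally, the boundary conditions \eqref{eqn:main_results_boundary_cond} for $\Phi_\varphi$ are preserved since $v_\varphi$ belongs to $X$ and hence decays at infinity; in fact the exponential decay of $\Phi_\varphi - \Phi_*$ can be recovered a posteriori from the hyperbolicity of the asymptotic linear MFDE at $\xi = \pm \infty$, following the Fredholm framework of \cite{Mallet-Paret1999_Fredholm}.
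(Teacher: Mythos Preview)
The paper does not give its own proof of this lemma; it is stated with a citation to \cite[Prop.~2.2]{hupkes2019travelling} and \cite[Thm.~2.7]{hoffman2015multi} and the remark that it follows from ``standard bifurcation arguments''. Your implicit-function-theorem strategy is precisely that standard argument, so in spirit you are on the right track.

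There is, however, a genuine gap in your choice of spaces. With $X\subset C^2_b(\R)$ and $Y=L^\infty(\R)$, the linearization $D_{(c,v)}F(c_*,0,0)$ fails to be surjective: for a generic $h\in L^\infty$, Lemma~\ref{lem:mr:fred:props:l:0} yields a preimage $\dot v\in W^{1,\infty}$, and bootstrapping through $c_*\dot v' = h_0 - T_\nu\dot v + 4\dot v - g'(\Phi_*)\dot v$ gives $\dot v'\in L^\infty$ but nothing more, since $h$ need not even be continuous. Hence $\dot v\notin C^2_b$ in general and the open-mapping step collapses. This is the familiar loss-of-derivative phenomenon: each $\varphi$-derivative of a shift costs one $\xi$-derivative, so $C^2$-smoothness in $\varphi$ asks for the domain to be two derivatives smoother than the target, while Fredholmness of the first-order operator $\mathcal{L}_0$ wants exactly one. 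The standard repair, used in the cited references, is to work on the scale $(BUC^{k+1},BUC^k)$: the IFT delivers a $C^1$ branch into $BUC^{k+1}$ for every $k$, uniqueness forces all these branches to agree, and $C^2$-smoothness then follows by differentiating the MFDE and iterating. A smaller slip: membership in $X$ does \emph{not} force decay at infinity (constants lie in $C^2_b$), so the clause ``hence decays at infinity'' is wrong as stated; your subsequent remark about recovering exponential decay a~posteriori from the asymptotic hyperbolicity of the MFDE is the correct way to secure the boundary conditions.
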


Our next result shows that there is a close link between the coefficients 
\eqref{eqn:main:eqns_for_p},
the pairs $(\lambda_{\omega}, \phi_{\omega})$
constructed in 
Lemma~\ref{lemma:main_results:coeff_diamond}
and the waves
$(c_{\varphi}, \Phi_{\varphi})$
described in Lemma \ref{lem:mr:dir:dep:waves}.
These identities can be stated in a compact fashion by virtue
of the choices 
\eqref{eq:mr:def:shifts:tau} and
\eqref{eq:mr:def:shifts:sigma}.

\begin{lemma}[{see \S\ref{sec:theta}}]
\label{lemma:main:expressions_c_lambda}
Assume that (H$g$) and (H$\Phi$) are satisfied.
Then the following identities hold. 
\begin{enumerate}[(i)]
    \item\label{item:analysis_theta:c} $c_* = -\sum_{\nu=1}^4\tau_\nu \alpha_{p;\nu}^\diamond$,
    \item\label{item:analysis_theta:partial_c} $[ \partial_\varphi c_\varphi]_{\varphi = 0} =  -\sum_{\nu=1}^4 \sigma_\nu \alpha_{p;\nu}^\diamond  = -\sum_{k=-N}^N a_k k  $,
    \item\label{item:analysis_theta:c_2nd_derivative} $[ \partial^2_\varphi c_\varphi]_{\varphi = 0} =   - c_* + 2\sum_{\nu=1}^4 \sum_{\nu'=1}^4\sigma_\nu \sigma_{\nu'} \alpha_{q;\nu\nu'}^{\diamond \diamond}$,
   
    \item\label{item:analysis_theta:partial_phi} $[\partial_\varphi \Phi_\varphi]_{\varphi = 0} = - \sum_{\nu=1}^4 \sigma_\nu p_{\nu}^\diamond$,
    \item\label{item:analyis_theta:partial_lambda} $[\partial_{\omega} \lambda_\omega]_{\omega = 0} = \sum_{\nu=1}^4 \sigma_\nu \alpha_{p;\nu}^\diamond $,
    \item\label{item:analyis_theta:partial_lambdax2} $[\partial_{\omega}^2 \lambda_\omega]_{\omega = 0} = - \sum_{\nu=1}^4 \alpha_\nu^\diamond \sigma_\nu^2  - \sum_{\nu, \nu'=1}^4  2\alpha_{p;{\nu\nu'}}^{\diamond\diamond}\sigma_\nu\sigma_{\nu'} = - \sum_{k=-N}^N a_k k^2$.
\end{enumerate}

\end{lemma}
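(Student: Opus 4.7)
All six identities follow from differentiating one of two one-parameter families---the perturbed travelling-wave equation \eqref{eqn:main:perturbed:mfde} in the direction parameter $\varphi$, and the eigenvalue equation $\mathcal{L}_\omega \phi_\omega = \lambda_\omega \phi_\omega$ in the Fourier variable $\omega$---and then pairing the resulting identities with $\psi_*$ to invoke the Fredholm orthogonality relation from Lemma~\ref{lem:mr:fred:props:l:0}. The bookkeeping is streamlined by two observations: using the notation \eqref{eq:mr:def:shifts:tau}--\eqref{eq:mr:def:shifts:sigma}, the $\varphi$-perturbed shifts in \eqref{eqn:main:perturbed:mfde} can be written compactly as $\tau_\nu^\varphi = \tau_\nu \cos\varphi + \sigma_\nu \sin\varphi$, so that $\partial_\varphi \tau_\nu^\varphi |_{\varphi = 0} = \sigma_\nu$ and $\partial_\varphi^2 \tau_\nu^\varphi |_{\varphi = 0} = -\tau_\nu$, while the operator \eqref{eq:mr:def:l:omega} can be recast as
\begin{equation*}
    [\mathcal{L}_\omega p](\xi) = c_* p'(\xi) + \sum_{\nu=1}^4 e^{i\omega \sigma_\nu} [T_\nu p](\xi) - 4 p(\xi) + g'\big(\Phi_*(\xi)\big) p(\xi),
\end{equation*}
whose $k$-th $\omega$-derivative at zero is $i^k \sum_\nu \sigma_\nu^k T_\nu$.

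Identity (i) is an isolated computation: using $\mathcal{L}_0 \Phi_*' = 0$ together with the product rule inside $\mathcal{L}_0(\xi \Phi_*')$, one finds $\mathcal{L}_0 \big(\xi \Phi_*'(\xi)\big) = c_* \Phi_*'(\xi) + \sum_\nu \tau_\nu [T_\nu \Phi_*'](\xi)$, and pairing with $\psi_*$ gives $c_* + \sum_\nu \tau_\nu \alpha_{p;\nu}^\diamond = 0$. For (ii) and (iv), differentiating \eqref{eqn:main:perturbed:mfde} once at $\varphi = 0$ produces
\begin{equation*}
    \mathcal{L}_0 \dot\Phi = -\dot c \, \Phi_*' - \sum_{\nu=1}^4 \sigma_\nu [T_\nu \Phi_*']
\end{equation*}
with $\dot c := [\partial_\varphi c_\varphi]_{\varphi = 0}$ and $\dot\Phi := [\partial_\varphi \Phi_\varphi]_{\varphi = 0}$. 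Pairing with $\psi_*$ yields (ii); then, invoking the defining equation $\mathcal{L}_0 p_\nu^\diamond = [T_\nu \Phi_*'] - \alpha_{p;\nu}^\diamond \Phi_*'$, one obtains $\mathcal{L}_0(\dot\Phi + \sum_\nu \sigma_\nu p_\nu^\diamond) = 0$, and the normalizations $\langle p_\nu^\diamond, \psi_*\rangle = 0$ together with $\langle \dot\Phi, \psi_*\rangle = 0$ (the latter from differentiating Lemma~\ref{lem:mr:dir:dep:waves}(ii)) force (iv). Identity (v) follows by the exact same calculation applied to the eigenvalue equation, and the analogous Fredholm argument simultaneously identifies $\partial_\omega \phi_\omega|_{0}$ as a linear combination of the $p_\nu^\diamond$.

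Identities (iii) and (vi) are obtained from the second derivatives of the same equations at the respective base points. For (iii), twice differentiating \eqref{eqn:main:perturbed:mfde} at $\varphi = 0$ and pairing with $\psi_*$ produces
\begin{equation*}
    \ddot c = -\sum_\nu \tau_\nu \alpha_{p;\nu}^\diamond - \sum_\nu \sigma_\nu^2 \langle T_\nu \Phi_*'', \psi_*\rangle - 2 \sum_\nu \sigma_\nu \langle T_\nu \dot\Phi', \psi_*\rangle - 2 \dot c \langle \dot\Phi', \psi_*\rangle - \langle g''(\Phi_*) \dot\Phi^2, \psi_*\rangle.
\end{equation*}
The first term collapses to $-c_*$ by identity (i); substituting $\dot\Phi = -\sum_\nu \sigma_\nu p_\nu^\diamond$ from (iv) and $\dot c = -\sum_\nu \sigma_\nu \alpha_{p;\nu}^\diamond$ from (ii), and then matching the remaining four terms against $2 \sigma_\nu \sigma_{\nu'} \alpha^{\diamond\diamond}_{q;\nu\nu'}$ using the integral formula in \eqref{eqn:main:eqn_for_alpha_diamonds}, reproduces the stated right-hand side. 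Identity (vi) follows by the analogous second $\omega$-derivative of the eigenvalue equation, with the defining integral for $\alpha^{\diamond\diamond}_{p;\nu\nu'}$ replacing that for $\alpha^{\diamond\diamond}_{q;\nu\nu'}$; the second equality in (vi) is immediate from the definition \eqref{eqn:main:a_k} of $a_k$ via the expansion $(\sigma_\nu + \sigma_{\nu'})^2 - \sigma_\nu^2 - \sigma_{\nu'}^2 = 2\sigma_\nu \sigma_{\nu'}$. The main obstacle throughout is organizational rather than conceptual: keeping all Leibniz cross-terms straight and verifying that they pair off correctly against the specific combinations appearing in the definitions \eqref{eqn:main:eqn_for_alpha_diamonds}.
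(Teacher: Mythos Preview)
Your plan is correct and mirrors the paper's own proof almost step for step: item (i) via $\mathcal{L}_0(\xi\Phi_*')$, items (ii)--(iv) from the first and second $\varphi$-derivatives of \eqref{eqn:main:perturbed:mfde} paired against $\psi_*$, with the normalizations from Lemmas~\ref{lemma:main_results:coeff_diamond} and~\ref{lem:mr:dir:dep:waves} pinning down (iv). The only difference is that for (v)--(vi) the paper simply cites \cite[Lemma~5.6]{hoffman2017entire}, whereas you spell out the parallel computation on the eigenvalue equation $\mathcal{L}_\omega\phi_\omega=\lambda_\omega\phi_\omega$; your route is more self-contained but otherwise identical in spirit. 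One bookkeeping slip: in your displayed formula for $\ddot c$ the term $-\sum_\nu\tau_\nu\alpha_{p;\nu}^\diamond$ should carry a plus sign (it arises from $\partial_\varphi^2\tau_\nu^\varphi|_0=-\tau_\nu$ on the right-hand side of the MFDE), which then indeed collapses to $-c_*$ via (i) as you claim.
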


Combining item~\textit{(\ref{item:analyis_theta:partial_lambdax2})}
and \eqref{eq:mr:def:f:sh:sv}, we readily see that
\begin{equation*}
    f''_{(\sigma_h, \sigma_v)}(0) = [\partial_\omega^2 \lambda_\omega]_{\omega=0}.
\end{equation*}
This identity in combination with (HS$)_1$ implies that the function $f_{(\sigma_h, \sigma_v)}$ looks like a downwards parabola locally around $\omega=0$. This information
was sufficient to obtain the `localized' stability results in~\cite{hoffman2015multi}
and~\cite{hoffman2017entire},
but our more general setup here requires 
global information on the function $f_{(\sigma_h,\sigma_v)}$.
An important role is reserved for the parameter
\begin{equation}\label{eqn:main:d}
d =  
- \dfrac{ \partial^2_{\varphi} [ c_{\varphi} / \cos \varphi]_{\varphi = 0}} {[\partial_\omega^2  \lambda_\omega]_{\omega = 0}}
= 
-\dfrac{c_* + [\partial_\varphi^2  c_\varphi]_{\varphi = 0} }{ [\partial_\omega^2  \lambda_\omega]_{\omega = 0}}
= \dfrac{2\sum_{\nu=1}^4 \sum_{\nu'=1}^4\sigma_\nu \sigma_{\nu'} \alpha_{q;\nu\nu'}^{\diamond \diamond}}
{\sum_{\nu=1}^4 \alpha_\nu^\diamond \sigma_\nu^2  + \sum_{\nu, \nu'=1}^4  2\alpha_{p;{\nu\nu'}}^{\diamond\diamond}\sigma_\nu\sigma_{\nu'}}
,
\end{equation}
which is well-defined on account of assumption (HS$)_1$. 
It measures the ratio between the quadratic terms in the directional dispersion $c_{\varphi}/\cos\varphi$ and the
branch of eigenvalues $\lambda_{\omega}$.
This parameter also played a crucial role
throughout the construction of travelling corners for \eqref{eqn:main:AC_equation};
see \cite[Eqs. (7.38) and (7.76)]{hupkes2019travelling} where it appears as the quadratic coefficient on the center manifold that governs the transverse dynamics.

\subsection{Interface formation}\label{sec:interface formation}

In this subsection we provide a construction
for the set of phases $\big(\gamma_l(t)\big)_{l\in \Z}$ that should be seen as an \textit{approximation}
for the level set $u = \frac{1}{2}$. 
Indeed, due to the 
discreteness of the lattice one cannot 
necessarily find integers $n_*(l,t)$
for which $u_{n_*(l,t), l}(t) = \frac{1}{2}$ holds exactly - even when restricted to large times $t \gg 1$. Instead,
we establish the following monotonicity result, which for fixed $l$ and large $t \gg 1$ allows us to capture the
`crossing' of $u$ through $\frac{1}{2}$
between $n = n_*(l,t)$ and
$n = n_*(l,t) + \sigma_*^2$.

\begin{proposition}[{see $\S$\ref{sec:gamma}}] \label{prop:main:interface} Suppose that (H$g$), (H$\Phi$) and (H$0$) are satisfied.
There exists a time $T>0$ such that for every $l\in \Z$ and $t\geq T$ there exists a unique $n_* = n_*(l,t)$ with the property
\begin{equation}
\label{eq:mr:props:n:star}
    0<u_{n_*,l}(t)\leq\dfrac{1}{2}, 
    \qquad \qquad
    u_{n_*+\sigma_*^2,l}(t) > \dfrac{1}{2}.
\end{equation}
\end{proposition}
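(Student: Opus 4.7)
The plan rests on two ingredients: a spreading bound coming from standard Fife--McLeod sub- and super-solutions, and an $\omega$-limit identification of the transition region with a translate of the monotone wave $\Phi_*$. Combined with the strict monotonicity of $\Phi_*$, these yield the required discrete crossing statement.

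First, starting from (H$0$) and applying the comparison principle to \eqref{eqn:main:discrete_AC_new}, I would construct modulated travelling-wave sub- and super-solutions of the form $\Phi_*\big(n - c_* t - \xi_\pm(t)\big) \pm z(t)$, producing a bracket
\begin{equation*}
    \Phi_*\big(n - c_* t - \xi_+(t)\big) - z(t) \,\leq\, u_{n,l}(t) \,\leq\, \Phi_*\big(n - c_* t - \xi_-(t)\big) + z(t),
\end{equation*}
valid for $t \geq T_0$ and $(n,l) \in \Zs$, with $z(t) \to 0$ exponentially and $\xi_\pm(t)$ uniformly bounded in $t$ and $l$. This is a direct adaptation of the construction in \cite{jukic2019dynamics}, and uses (H$g$) (exponential stability of $0$ and $1$) to absorb the bounded $l$-dependence of the initial data into finite phase shifts. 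In particular, for each $l$ and $t \geq T_0$, along the fiber one already has $u_{n,l}(t) < 1/4$ for $n$ sufficiently negative and $u_{n,l}(t) > 3/4$ for $n$ sufficiently positive.

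Next, I would invoke the $\omega$-limit analysis of $\S$\ref{sec:omega} to upgrade these rough bounds. For any $l$ and any diverging sequence $t_k \to \infty$, pick fiber-compatible shifts $m_k \in \sigma_*^2 \Z$ that center the $1/2$-crossing at $n = 0$. The LDE provides a uniform Lipschitz bound in $t$, which together with the $\ell^\infty$ bound allows an Arzel\`a--Ascoli extraction to an entire solution $u^\infty$ of \eqref{eqn:main:discrete_AC_new} squeezed between the brackets above. Using (H$\Phi$) and the wave-uniqueness results from \cite{Mallet-Paret1999}, this forces $u^\infty_{n,l'}(t) = \Phi_*(n - c_* t - \mu)$ for some $\mu \in \R$, independent of $l'$.

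Finally, strict monotonicity of $\Phi_*$ yields a constant $c_0 := \min_{|\xi| \leq 2\sigma_*^2}\Phi_*'(\xi) > 0$, so that $\Phi_*(\xi + \sigma_*^2) - \Phi_*(\xi) \geq c_0 \sigma_*^2$ on $|\xi| \leq \sigma_*^2$. Transferring this jump to $u$ via the convergence just established, for sufficiently large $t$ and each $l$, the fiber sequence $k \mapsto u_{n_0 + k \sigma_*^2, l}(t)$ is strictly increasing across the $1/2$-level (and lies close to $0$ resp.\ $1$ outside it). This yields a unique $n_*$ satisfying \eqref{eq:mr:props:n:star}; the lower bound $u_{n_*, l}(t) > 0$ follows because $u_{n_*, l}(t) \geq \Phi_*(-\sigma_*^2) - o(1)$, which is bounded away from zero. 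The main obstacle is the $\omega$-limit identification step: ruling out non-front accumulation points requires the comparison principle together with the wave-uniqueness guaranteed by (H$\Phi$), since in principle the LDE could admit other bounded entire solutions trapped between $0$ and $1$.
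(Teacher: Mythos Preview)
Your proposal is essentially the paper's approach: Fife--McLeod bounds (Lemma~\ref{lemma:omega_limit_points:upper and lower bounds}) to localise the interface, $\omega$-limit identification with a shifted wave (Propositions~\ref{cor:omega_limit_points:construction2}--\ref{prop:trapped_entire_sol:every_trapped_entire_solution_is_a_traveling_wave}), and then strict monotonicity of $\Phi_*$ to extract the unique crossing, which is precisely the content of Proposition~\ref{prop:large_time_behaviour:interface_prop} and Lemma~\ref{lemma:monotonicity_of_u_interface_region}.

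One point to tighten: as written, your second step fixes $l$ and extracts a subsequence along $t_k\to\infty$, which in principle gives a threshold time depending on $l$. To obtain a single $T$ valid for \emph{all} $l\in\Z$ simultaneously, the argument should be phrased as a contradiction over sequences $(n_k,l_k,t_k)$ with $t_k\to\infty$ and $l_k$ allowed to vary; this is exactly how the paper runs it (see the proof of Lemma~\ref{lemma:monotonicity_of_u_interface_region}), and the $\omega$-limit machinery of \S\ref{sec:omega} is already set up for such general sequences via the set $\mathcal{S}$ in \eqref{eqn:omega:set_S}. With that reformulation, your sketch matches the paper line for line.
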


We now set out use an interpolation argument
to construct $\gamma_l(t)$ from
the quantities in \eqref{eq:mr:props:n:star}.
The main consideration is 
that for exact travelling waves 
$u_{n,l}(t) = \Phi_*(n - c_* t+\mu)$ we wish
to recover the standard phase $\gamma_l(t) = c_* t-\mu$, in view of the fact that $\Phi_*(0) = \frac{1}{2}$. To achieve this, we define the phases
\begin{equation}
    \theta_l^-(t)
    = \Phi_*^{-1}\big( u_{n_*(l,t), l}(t) \big),
    \qquad \qquad
    \theta_l^+(t)
    = \Phi_*^{-1}\big( u_{n_*(l,t) + \sigma_*^2, l}(t) \big)
\end{equation}
associated to the two values
\eqref{eq:mr:props:n:star}. 
Upon writing
\begin{equation}\label{eqn:main:zeta}
    \vartheta_*(l, t) =  -\sigma_*^2 \theta_l^-(t)/[ \theta_l^+(t) - \theta_l^-(t)],
\end{equation}
we note that the linear interpolation
\begin{equation}
    \theta_{\mathrm{lin};l,t}(\xi) = \sigma_*^{-2} \theta_l^+(t) \xi   -\sigma_*^{-2}   \theta_l^-(t) (\xi  - \sigma_*^2)
\end{equation}
satisfies
\begin{equation}
    \theta_{\mathrm{lin};l,t}(0) = \theta_l^-(t),
    \qquad \qquad
    \theta_{\mathrm{lin};l,t}\big(  \vartheta_*(l,t) \big)
     = 0,
    \qquad \qquad
    \theta_{\mathrm{lin};l,t}( \sigma_*^2) = \theta_l^+(t).
\end{equation}
This motivates the phase-interpolated definition 
\begin{equation}\label{eqn:main:def_of_gamma}
    \gamma_l(t) =  n_*(l,t) +  \vartheta_*(l,t),
\end{equation}
which ensures that the `stretched' profile
$\tilde{\Phi}(\xi) = \Phi_*\Big(\theta_{\mathrm{lin};l,t}\big(\xi - n_*(l,t) \big)\Big)$ satisfies
\begin{equation}
    \tilde{\Phi}\big(n_*(l,t)\big) = u_{n_*(l,t),l}(t),
    \qquad \qquad
    \tilde{\Phi}\big(\gamma_{l}(t)\big) =
    \frac{1}{2},
    \qquad \qquad 
    \tilde{\Phi}\big(n_*(l,t) + \sigma_*^2 \big) = u_{n_*(l,t) + \sigma_*^2,l}(t).
\end{equation}

\begin{figure}
    \centering
    \includegraphics[width=\linewidth]{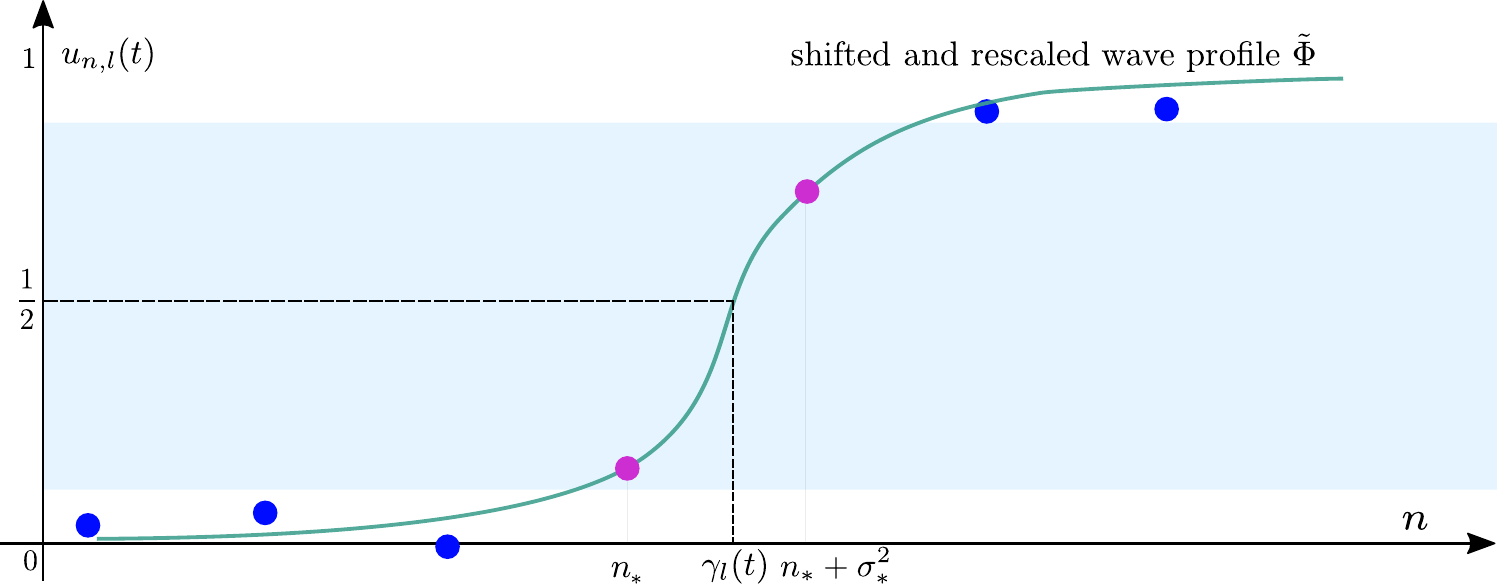}
    \caption{In order to construct the phase $\gamma_l(t)$ for a fixed pair $(l,t)$ we first identify an interfacial region around the value $\Phi_*(0) = \frac{1}{2}$ (shaded in blue) where the (discrete) function $ n \mapsto u_{n,l}(t) $ is monotone. We subsequently stretch the waveprofile to match the (pink) points $\left(n_*, u_{n_*, l}(t)\right)$ and $\left(n_*+\sigma_*^2, u_{n_*+\sigma_*^2, l}(t)\right)$ introduced in \eqref{eq:mr:props:n:star}.  }
    \label{fig:intro:gamma}
\end{figure}

Notice indeed that for the special case $u_{n,l}(t) = \Phi_*(n - c_* t + \mu)$ we have 
\begin{equation}
    \theta_l^-(t) = n_*(l,t) - c_*t + \mu,
    \qquad \qquad
    \theta_l^+(t) = n_*(l,t) + \sigma_*^2 - c_*t + \mu,
\end{equation}
which gives $
    \vartheta_*(l,t) = - \theta_l^-(t)$
and hence $\gamma_l(t) = c_*t - \mu$, as we desired. 

The result below states that our phase indeed tracks the behaviour of $u$ in an asymptotic sense.
We emphasize that there are several other choices for the phase that lead to 
similar results.
For example, 
our previous construction in~\cite{jukic2019dynamics} did not stretch the wave and merely aligned
it with $u$ at the point $n_*(l,t)$.
Our more refined approach here
allows us to streamline our arguments
and avoid the discontinuities in $\gamma_l(t)$ that complicated our previous analysis at times. 

\begin{proposition}
[{see $\S$\ref{sec:gamma}}]
\label{thm:main_results:gamma_approx_u} 
Suppose that (H$g$), (H$\Phi$) and (H$0$) are satisfied. Then we have the limit
\begin{equation}\label{eqn:main:gamma_approx_u}
    \lim_{t\to\infty} \sup_{(n,l)\in \Zs} |u_{n,l}(t) - \Phi_*\big(n-\gamma_l(t)\big)| = 0.
\end{equation}
\end{proposition}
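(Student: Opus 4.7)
My plan is to argue by contradiction using an $\omega$-limit extraction along a blow-up sequence. Suppose the conclusion fails: then there exist $\epsilon>0$ together with sequences $t_k\to\infty$ and $(n_k,l_k)\in\Zs$ for which
\begin{equation*}
    \big|u_{n_k,l_k}(t_k)-\Phi_*\big(n_k-\gamma_{l_k}(t_k)\big)\big|\;\ge\;\epsilon.
\end{equation*}
Define the translated solutions $U^{(k)}_{n,l}(t)=u_{n+n_k,\,l+l_k}(t+t_k)$, which by translation invariance of \eqref{eqn:main:discrete_AC_new} also solve the LDE. Uniform $\ell^\infty$ bounds on $u$ follow from (H$g$) by comparison with the scalar ODE $\dot{w}=g(w)$, and the right-hand side of \eqref{eqn:main:discrete_AC_new} is globally Lipschitz on bounded sets; hence $\dot{U}^{(k)}$ is uniformly bounded and a standard Arzel\`a--Ascoli argument on $\ell^\infty(\Zs)\times C_{\mathrm{loc}}(\R)$ allows me to extract a subsequence along which $U^{(k)}\to U_\infty$, where $U_\infty$ is an entire solution of \eqref{eqn:main:discrete_AC_new}.

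Next I would control the translation sequence $\xi_k:=n_k-\gamma_{l_k}(t_k)$. The interface formation result Proposition \ref{prop:main:interface} together with (H$0$) forces the function $n\mapsto u_{n,l}(t)$ to be monotone in an interfacial window about $\tfrac12$ for all $t$ large, and outside an $O(1)$ neighbourhood of the interface it stays $\epsilon'$-close to $0$ or $1$. In particular if $\xi_k\to+\infty$ then $u_{n_k,l_k}(t_k)\to 1$ and $\Phi_*(\xi_k)\to 1$, contradicting the $\epsilon$-separation; the case $\xi_k\to-\infty$ is symmetric. Passing to a further subsequence I may therefore assume $\xi_k\to\xi_\infty\in\R$, and may further assume (along a subsequence) that $n_*(l_k,t_k)-n_k$ converges in $\{-\sigma_*^2,-\sigma_*^2+1,\dots,0\}$, since $\vartheta_*(l_k,t_k)\in[0,\sigma_*^2]$.

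The key step is then to identify $U_\infty$ as a translate of the planar wave. Using the $\omega$-limit techniques developed in \S\ref{sec:omega}--\S\ref{sec:gamma} adapted from \cite{jukic2019dynamics}, one shows that $U_\infty$ is front-like in the $n$-direction uniformly in $l$, monotone in $n$ across the interface (inherited from Proposition \ref{prop:main:interface}), and squeezed between sub- and super-solutions that force it into the rigid form
\begin{equation*}
    U_\infty;_{n,l}(t)\;=\;\Phi_*\big(n-c_*t+\mu\big)
\end{equation*}
for some $\mu\in\R$; here one invokes uniqueness up to translation of monotone bistable fronts for the MFDE \eqref{eqn:main:mdfe} established by Mallet-Paret \cite{Mallet-Paret1999}. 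Evaluating at $(n,l,t)=(0,0,0)$ and combining with the definition \eqref{eqn:main:def_of_gamma} of $\gamma_{l_k}(t_k)$ via linear interpolation between $u_{n_*(l_k,t_k),l_k}(t_k)$ and $u_{n_*(l_k,t_k)+\sigma_*^2,l_k}(t_k)$, both of which converge to values of $\Phi_*$, yields $u_{n_k,l_k}(t_k)\to\Phi_*(\xi_\infty)$ and $\Phi_*(\xi_k)\to\Phi_*(\xi_\infty)$. This contradicts the $\epsilon$-separation and proves the claim.

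The main obstacle I anticipate is showing that the $\omega$-limit $U_\infty$ really is a planar travelling wave rather than some more exotic entire solution. In the one-dimensional or horizontal setting this follows from a fairly direct monotonicity/uniqueness sandwich, but for a genuine rational direction $(\sigma_h,\sigma_v)$ the sublattice $\Zs$ is coarser than $\Z^2$ and the nonlocal interactions in \eqref{eqn:main:laplace_new} couple neighbouring $l$-slices in a nontrivial way, so the rigidity argument must be performed jointly in $(n,l)$ rather than one $l$-slice at a time. The \S\ref{sec:omega} machinery is exactly what I would invoke to bridge this gap, since it is designed to handle precisely this joint $(n,l)$ compactness and front-identification in the oblique setting.
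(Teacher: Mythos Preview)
Your proof is essentially the same as the paper's: both argue by contradiction along a sequence $(n_k,l_k,t_k)$ with $t_k\to\infty$, show that $n_k-c_*t_k$ (equivalently your $\xi_k=n_k-\gamma_{l_k}(t_k)$, since $\gamma-c_*t$ is bounded by Lemma~\ref{lemma:phase_gamma:gamma-ct_bounded}) stays bounded, apply the $\omega$-limit result Proposition~\ref{prop:omega_limit_points:main_prop} to identify the shifted limit as a planar wave $\Phi_*(\cdot+\theta_0)$, and then use the interpolation formula for $\gamma$ to see that both $u_{n_k,l_k}(t_k)$ and $\Phi_*(n_k-\gamma_{l_k}(t_k))$ converge to $\Phi_*(\theta_0)$. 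The only cosmetic difference is that the paper invokes Proposition~\ref{prop:omega_limit_points:main_prop} as a black box where you re-sketch its Arzel\`a--Ascoli and rigidity ingredients; your concern about the ``main obstacle'' is exactly what Propositions~\ref{cor:omega_limit_points:construction2}--\ref{prop:trapped_entire_sol:every_trapped_entire_solution_is_a_traveling_wave} are designed to handle.
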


\subsection{Interface asymptotics}
We are now ready to discuss
the main technical results of this paper.
These concern the asymptotic behaviour
of the phase $\gamma(t)$
that we introduced in~\eqref{eqn:main:def_of_gamma},
which can be approximated by
solutions
to the scalar nonlinear LDE
\begin{equation}
\label{eqn:main:main_eqn_for_theta}
\dot{\theta}(t) = \Theta_\mathrm{ch}\big(\theta(t)\big).
\end{equation}
Here the function
$\Theta_\mathrm{ch}:\ell^\infty(\Z)\to\ell^\infty(\Z)$ acts as
\begin{equation}\label{eqn:main:Theta}
    [\Theta_\mathrm{ch}(\theta)]_l = \begin{cases}
    \dfrac{1}{d} \sum_{k=-N}^N a_k \left(e^{d\big(\theta_{l+k}(t) - \theta_l(t)\big)} - 1 \right)   + c_*, &\quad d\neq 0,
    \\[0.3cm]
    \sum_{k=-N}^N a_k \big(\theta_{l+k}(t) - \theta_l(t) \big)   + c_*, &\quad d=0,
\end{cases}
\end{equation}
where we have recalled the 
coefficients $(a_k)$ and parameter $d$ 
that were introduced in 
\eqref{eqn:main:a_k}
respectively
\eqref{eqn:main:d}. The label `ch' refers to the fact that a Cole-Hopf transformation can be used to recast the nonlinear system for $d \neq 0$ into the linear system prescribed for $d = 0$. This reduction is essential
for our analysis  in {\S}\ref{sec:theta},
where we obtain decay rates for solutions to \eqref{eqn:main:main_eqn_for_theta}, 
based on the linear theory 
that we develop in {\S}\ref{sec:lde}.

The decision to use \eqref{eqn:main:Theta} is
hence primarily based on technical considerations. Nevertheless, 
it is possible to build a bridge back to
the discrete curvature flow 
\eqref{eq:int:disc:cv:flow}. 
To this end, we
recall the definitions
\eqref{eq:int:c:gamma}
and \eqref{eq:int:lap:beta:gamma} and introduce the operator
$\Theta_{\mathrm{dmc}}:\ell^\infty(\Z) \mapsto \ell^\infty(\Z) $ 
that acts as 
\begin{equation}\label{eqn:main:Theta_dmc}
\Theta_{\mathrm{dmc}}(\theta) =  \kappa_H \frac{\Delta_\theta}{\beta_\theta^2} + \beta_\theta \overline{c}_{\theta} ,
\end{equation}
which depends on the sequences
\eqref{eq:int:nrm:A:C}
and the curvature coefficient $\kappa_H > 0$.

The result below shows that $\Theta_{\mathrm{dmc}}$ 
can be tailored to agree with
$\Theta_{\mathrm{ch}}$ up to terms
that are cubic in the first-differences
\begin{equation*}
    [\partial \theta]_l = \theta_{l+1} - \theta_l.
\end{equation*}
We will see in {\S}\ref{sec:theta}
that such terms decay at a rate of $O(t^{-3/2})$, which in theory is sufficiently fast to be absorbed by our error terms. However,
due to the loss of the comparison principle 
we did not attempt to compare the actual solutions to the respective LDEs as was possible in \cite[Prop. 8.2]{jukic2019dynamics}.

\begin{proposition}[{see $\S$\ref{sec:theta}}]\label{prop:main:mcf} 
Assume that (H$g$), (H$\Phi$),  (H0), (HS$)_1$, (HS$)_2$ all hold. Assume furthermore that
$\kappa_H = - {[\partial^2_{\omega} \lambda_{\omega}]_{\omega = 0}}/{2}$. Then
there exists a unique set of coefficients $(A_k, B_k)_{k=-N}^N$
that satisfy
the identities~\eqref{eq:int:nrm:A:C}
and allow us to find a constant $K>0$ for which
\begin{equation}\label{eqn:main:Theta_ch-Theta_dmc}
\norm{\Theta_\mathrm{ch}(\theta) - \Theta_{\mathrm{dmc}}(\theta)}_{\ell^\infty} \leq K \norm{\partial \theta}^3_{\ell^\infty}
\end{equation}
holds for all sequences $\theta\in \ell^\infty(\Z)$ with $\norm{\partial \theta}_{\ell^\infty}\leq 1$.
On the other hand, such coefficients do not exist if 
\eqref{eq:int:cond:kappa:d}
is violated.
\end{proposition}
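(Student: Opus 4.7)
The plan is to Taylor-expand both $\Theta_{\mathrm{ch}}(\theta)$ and $\Theta_{\mathrm{dmc}}(\theta)$ to quadratic order in the first differences $\delta_k := \theta_{l+k} - \theta_l$, match the linear and quadratic coefficients one $k$ at a time to solve uniquely for $A_k$ and $B_k$, and then recognize that the three normalizations in \eqref{eq:int:nrm:A:C} are precisely the conditions on $\kappa_H$ and $d$ appearing in the hypotheses and in \eqref{eq:int:cond:kappa:d}. The cubic remainder estimate \eqref{eqn:main:Theta_ch-Theta_dmc} will follow at the end from uniform Taylor bounds on $e^{d \delta_k}$, $\arctan$, $\sqrt{1 + x}$ and $\varphi \mapsto c_\varphi$ on the range $\|\partial \theta\|_{\ell^\infty} \le 1$, absorbing all factors of $|k| \le N$ into the constant $K$.

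For $\Theta_{\mathrm{ch}}$, expanding $e^{d \delta_k} - 1$ produces, valid also in the limit $d \to 0$,
\begin{equation*}
[\Theta_{\mathrm{ch}}(\theta)]_l = c_* + \sum_{k=-N}^{N} a_k \delta_k + \frac{d}{2}\sum_{k=-N}^{N} a_k \delta_k^2 + O(\|\partial \theta\|_{\ell^\infty}^3).
\end{equation*}
For $\Theta_{\mathrm{dmc}}$ I would use $\varphi_{l;k}(\theta) = -\delta_k/k + O(\delta_k^3/k^3)$ together with $c_\varphi = c_* + c_1 \varphi + \tfrac{1}{2} c_2 \varphi^2 + O(\varphi^3)$ (writing $c_j := [\partial_\varphi^j c_\varphi]_{\varphi = 0}$), $\beta_\theta = 1 + O(\delta^2)$, $\beta_\theta^{-2} = 1 + O(\delta^2)$ and $\Delta_\theta$ linear in $\delta$ to obtain
\begin{equation*}
[\Theta_{\mathrm{dmc}}(\theta)]_l = c_* + \sum_{0 < |k| \le N} \Big( \frac{2 \kappa_H B_k}{k^2} - \frac{c_1}{2 N k} \Big) \delta_k + \sum_{0 < |k| \le N} \Big( \frac{c_2}{4 N k^2} + \frac{c_* A_k}{2 k^2} \Big) \delta_k^2 + O(\|\partial \theta\|_{\ell^\infty}^3),
\end{equation*}
where I use that the cross-term $\beta_\theta \cdot (\overline c_\theta - c_*)$ is cubic. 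Matching each $\delta_k$-coefficient produces $B_k = \frac{k^2 a_k}{2 \kappa_H} + \frac{k c_1}{4 N \kappa_H}$, and matching each $\delta_k^2$-coefficient (possible because $c_* \ne 0$ by (H$\Phi$)) produces $A_k = \frac{k^2 d\, a_k}{c_*} - \frac{c_2}{2 N c_*}$, both uniquely.

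The three normalizations in \eqref{eq:int:nrm:A:C} are then verified using Lemma~\ref{lemma:main:expressions_c_lambda}. Antisymmetrization in $k$ together with item~(ii), which gives $\sum_k k a_k = -c_1$, yields $\sum_k B_k/k = 0$ automatically. The identity $\sum_k B_k = 1$ reduces to $\sum_k k^2 a_k = 2 \kappa_H$, which by item~(vi) is precisely the hypothesis $\kappa_H = -\tfrac{1}{2}[\partial_\omega^2 \lambda_\omega]_{\omega = 0}$. Finally $\sum_k A_k = 1$ reduces to $2 d \kappa_H = c_* + c_2$, which by item~(iii) is exactly the formula for $d$ recorded in \eqref{eqn:main:d}, equivalently in \eqref{eq:int:cond:kappa:d}. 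Conversely, if either equation in \eqref{eq:int:cond:kappa:d} fails, the corresponding normalization cannot be satisfied and no valid $(A_k, B_k)$ exists, proving the last assertion of the proposition.

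The main obstacle I expect lies not in the matching itself but in the algebraic dictionary that turns the three normalization equations into the intrinsic spectral/geometric quantities of Lemma~\ref{lemma:main:expressions_c_lambda} — this is the step where the definition of $d$ in \eqref{eqn:main:d} must emerge naturally, exhibiting the bridge between the Cole--Hopf LDE and the discrete mean curvature flow. A secondary technical point is securing uniform \emph{cubic} (rather than $o(\delta^2)$) control of the remainders: since Lemma~\ref{lem:mr:dir:dep:waves} only records $C^2$-regularity of $\varphi \mapsto c_\varphi$, I would first bootstrap the underlying implicit function argument to $C^3$ using the $C^3$-smoothness of $g$ in (H$g$), after which the exponential, $\arctan$ and $\sqrt{\cdot}$ remainders combine with the Taylor remainder of $c_\varphi$ to yield \eqref{eqn:main:Theta_ch-Theta_dmc}.
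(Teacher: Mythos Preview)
Your proposal is correct and follows essentially the same approach as the paper: Taylor-expand both nonlinearities to quadratic order in the differences $\delta_k$, match coefficients to determine $A_k$ and $B_k$ uniquely, and then identify the three normalization conditions with the spectral identities of Lemma~\ref{lemma:main:expressions_c_lambda}. The paper carries out the expansion in the slightly more general setting of $\widetilde{\Theta}_{\mathrm{dmc}}$ with arbitrary weights $(C_k)$ satisfying \eqref{eq:th:nrm:C} and then specializes to $C_k = 1/(2N)$, but the resulting formulas and verification steps coincide with yours; your observation about needing $C^3$-regularity of $\varphi \mapsto c_\varphi$ for a genuine cubic remainder is a valid technical point that the paper leaves implicit.
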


Our main result below makes the asymptotic connection between $\gamma$ and 
solutions $\theta$ to \eqref{eqn:main:main_eqn_for_theta} fully precise. This allows us to gain detailed control over the long-term dynamics of the phase $\gamma(t)$, which can be used
to provide stability results outside the 
`local' regimes treated in \cite{hoffman2015multi}
and~\cite{hoffman2017entire}.

\begin{thm} [{see {\S \ref{sec:asymp}}}] \label{thm:main_result:gamma_mcf}
Assume that (H$g$), (H$\Phi$),  (H0), (HS$)_1$, (HS$)_2$ all hold and let $u$ be a solution of \eqref{eqn:main:discrete_AC_new} with the initial condition \eqref{eqn:main:initial_condition_new}. Then for every $\epsilon >0$, there exists a constant $\tau_\epsilon > 0$ so that 
for any $\tau \ge \tau_{\epsilon}$, the solution $\theta$ of LDE~\eqref{eqn:main:main_eqn_for_theta} with the initial value 
$\theta(0) = \gamma(\tau)$
satisfies 
\begin{equation}
\label{eq:phase:gamma:apx:by:v}
    \norm{\gamma(t) - \theta(t-\tau)}_{\ell^\infty} \leq \epsilon, 
    \qquad \qquad 
    t\geq \tau. 
\end{equation}
\end{thm}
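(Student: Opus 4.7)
The plan is to exploit a sandwich argument based on the sub- and super-solution construction that the authors advertise in Section~\ref{sec:sub:sup}. Fix $\epsilon>0$. The Ansatz \eqref{eqn:intro:super_sol_ansatz} produces functions $u^{\pm}$ of the form $\Phi_*\bigl(n-\theta_l(t)\pm Z(t)\bigr)$ plus higher-order corrections built from the auxiliary maps $p^{\diamond}_{\nu},p^{\diamond\diamond}_{\nu\nu'},q^{\diamond\diamond}_{\nu\nu'}$ from Lemma~\ref{lemma:main_results:coeff_diamond}, where $\theta$ solves \eqref{eqn:main:main_eqn_for_theta} and $(Z,z)$ are small amplitude/phase correctors. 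By Proposition~\ref{thm:main_results:gamma_approx_u}, there exists $\tau_\epsilon\gg 1$ so that at any time $\tau\ge\tau_\epsilon$, the solution $u(\tau)$ is uniformly $o(1)$-close to $\Phi_*(n-\gamma_l(\tau))$; moreover the monotonicity from Proposition~\ref{prop:main:interface} ensures $u$ is well-approximated by a profile of this shape across the interfacial region.

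First I would initialize the LDE \eqref{eqn:main:main_eqn_for_theta} with $\theta(0)=\gamma(\tau)$, and choose the small correctors $Z(0),z(0)$ so that the corresponding $u^{-}(\tau)\le u(\tau)\le u^{+}(\tau)$ holds uniformly on $\Zs$. This matching step uses that $\Phi_*$ is strictly monotone on the transition region together with the uniform closeness provided by Proposition~\ref{thm:main_results:gamma_approx_u}; outside the interfacial window the bistable tails dominate and the inequality is automatic. Next, invoking the comparison principle for \eqref{eqn:main:discrete_AC_new} (standard for cooperative discrete parabolic systems, since $\Delta^{\times}$ is off-diagonal positive), the sandwich $u^{-}(t)\le u(t)\le u^{+}(t)$ persists for all $t\ge\tau$. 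The nonlinear decay/boundedness estimates for $\theta$ developed in {\S}\ref{sec:lde}--{\S}\ref{sec:theta} (saddle-point decay and quasi-comparison) guarantee that $\|\partial\theta\|_{\ell^\infty}$ stays small and the correctors $Z,z$ remain uniformly $O(\epsilon)$ on $[\tau,\infty)$, which is precisely what is needed for $u^{\pm}$ to be genuine super/sub-solutions.

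From the sandwich, one obtains $\sup_{n,l}\bigl|u_{n,l}(t)-\Phi_*\bigl(n-\theta_l(t-\tau)\bigr)\bigr|\le C\epsilon$ for $t\ge\tau$. Combining this with Proposition~\ref{thm:main_results:gamma_approx_u} yields
\[
\sup_{n,l}\Bigl|\Phi_*\bigl(n-\gamma_l(t)\bigr)-\Phi_*\bigl(n-\theta_l(t-\tau)\bigr)\Bigr|\le 2C\epsilon.
\]
Evaluating near $n\approx\gamma_l(t)$ where $\Phi_*'$ is uniformly bounded below (by monotonicity together with Lemma~\ref{lem:mr:fred:props:l:0}, which says $\Phi_*'>0$), one can invert this profile inequality into the phase inequality \eqref{eq:phase:gamma:apx:by:v} up to rescaling $\epsilon$.

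The main obstacle, in my view, is the initialization/matching step. We need to pass from the qualitative closeness provided by Proposition~\ref{thm:main_results:gamma_approx_u} to an actual pointwise sandwich in $\Zs$ that respects the carefully designed super-solution Ansatz \eqref{eqn:intro:super_sol_ansatz}: the correctors $(p,q)$ are tuned to kill the lattice-anisotropic resonant terms and introduce phase-differences $\theta_{l+k}-\theta_l$, so one must verify that $\gamma(\tau)$ has sufficiently small first differences $\partial\gamma$ on approach to time $\tau_\epsilon$ for the Ansatz to work (and for \eqref{eqn:main:main_eqn_for_theta} to keep $\|\partial\theta\|_{\ell^\infty}$ small going forward). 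A secondary difficulty is that under $(\mathrm{HS})_2$ the coefficients $a_k$ need not be positive, so the comparison machinery for $\theta$ itself is only approximate; here one relies on the Cole--Hopf reduction and the quasi-comparison principle of {\S}\ref{sec:lde}--{\S}\ref{sec:theta} to carry out the argument.
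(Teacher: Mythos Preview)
Your proposal is correct and follows essentially the same route as the paper: Lemma~\ref{lem:phase:theta:vs:gamma} carries out exactly this sandwich argument, with the initialization handled via item~(\ref{item:super_sub_sols:z0_nu}) of Proposition~\ref{prop:super_and_sub_sol:supersolution_proposition} (the gap $\nu_\epsilon$ is designed to absorb both the $o(1)$ closeness from Proposition~\ref{thm:main_results:gamma_approx_u} and the $p,q$-correction terms), and the flatness obstacle you correctly flag is resolved by Proposition~\ref{prop:large_time_behaviour:l_differences_gamma}, which gives $\norm{\partial\gamma(t)}_{\ell^\infty}\to 0$. The final inversion from the profile inequality~\eqref{eqn:asymp:phi:bnds} to the phase bound~\eqref{eq:phase:gamma:apx:by:v} is deferred in the paper to \cite[Prop.~8.1]{jukic2019dynamics}, but your sketch of that step (evaluate near $n\approx\gamma_l(t)$ and use the uniform lower bound on $\Phi_*'$ there) is the right idea.
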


Our final result should be seen as an example of an asymptotic analysis that is made possible
by the phase tracking \eqref{eq:phase:gamma:apx:by:v}.
In particular, we
show that the planar travelling wave 
\eqref{eq:mr:def:planar:wave}
is stable with asymptotic phase under localized perturbations from a front-like background state that
is periodic in $l$. Indeed, 
such an assumption provides sufficient control on the solution $\theta$ to~\eqref{eqn:main:main_eqn_for_theta} to 
establish the uniform convergence $\theta_l \to c_* t + \mu$.
We emphasize that the case $P = 1$ encompasses the
stability results from~\cite{hoffman2015multi} and \cite{hoffman2017entire}. The key point is that an asymptotic global phaseshift $\mu \neq 0$ for the case $P \ge 2$ can be seen as
an `infinite-energy' shift of the underlying planar wave. In such cases
the  quadratic terms
in \eqref{eqn:main:main_eqn_for_theta}
can no longer be absorbed into higher-order residuals as in~\cite{hoffman2015multi} and \cite{hoffman2017entire}.

 \begin{thm}[{see {\S \ref{sec:asymp}}}] \label{thm:mr:periodicity+decay:stability}
Assume that (H$g$), (H$\Phi$),  (H0), (HS$)_1$, (HS$)_2$ all hold and let $u$ be a solution of \eqref{eqn:main:discrete_AC_new} with the initial condition \eqref{eqn:main:initial_condition_new}.
Suppose furthermore that there exists a 
sequence $u^{0;\mathrm{per}} \in \ell^\infty(\Z^2_\times)$
so that the following two properties hold.
\begin{itemize}
    \item[(a)]{
      We have the limit
      \begin{equation}
      \label{eq:mr:lim:per:j:pm:infty}
    u^0_{n,l} - u^{0;\mathrm{per}}_{n,l} \to 0, \quad \text{as}\quad  |n|+|l|\to\infty .
\end{equation}
    }
    \item[(b)]{
      There exists an integer $P \ge 1$ so that
      \begin{equation}
          u^{0;\mathrm{per}}_{n, l+\sigma_*^2 P}
          =  u^{0;\mathrm{per}}_{n, l}
          \qquad \qquad
          \hbox{ for all }  (n,l) \in \Z^2_\times.
      \end{equation}
     }
\end{itemize}
Then there exists a constant $\mu\in \R$ for which
we have the limit
\begin{equation}
  \label{eqn:periodicity+decay:stability}
    \lim_{t\to\infty} \sup_{(n,l)\in \Zs} |u_{n,l}(t) - \Phi_*(n-c_*t -\mu)| = 0. 
\end{equation}
\end{thm}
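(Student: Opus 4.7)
The plan is to leverage Theorem~\ref{thm:main_result:gamma_mcf} to reduce the problem to an asymptotic analysis of the phase LDE~\eqref{eqn:main:main_eqn_for_theta}, then to use a Cole--Hopf substitution to convert this into a linear discrete heat equation whose long-time behaviour on periodic-plus-vanishing initial data can be extracted from the strict negativity of the Fourier symbol $f_{(\sigma_h,\sigma_v)}$ away from the origin.

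I would first introduce the reference solution $u^{\mathrm{per}}$ of~\eqref{eqn:main:discrete_AC_new} launched from $u^{0;\mathrm{per}}$. Hypothesis~(a) together with~(H0) shows that $u^{0;\mathrm{per}}$ is itself front-like, so Propositions~\ref{prop:main:interface} and~\ref{thm:main_results:gamma_approx_u} provide an associated phase $\gamma^{\mathrm{per}}(t)$; the $l$-periodicity of the initial data together with the translation-invariance of~\eqref{eqn:main:discrete_AC_new} under $l\mapsto l+\sigma_*^2 P$ forces $\gamma^{\mathrm{per}}_{l+\sigma_*^2 P}(t)=\gamma^{\mathrm{per}}_l(t)$. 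I would then verify that $\gamma_l(\tau)-\gamma^{\mathrm{per}}_l(\tau)\to 0$ as $|l|\to\infty$ for every large $\tau$: a standard Allen--Cahn comparison propagates the decay of $u^0-u^{0;\mathrm{per}}$ to $u(\tau)-u^{\mathrm{per}}(\tau)$, which, combined with the two tracking identities from Proposition~\ref{thm:main_results:gamma_approx_u} evaluated at indices $n$ near the respective interfaces where $\Phi_*'$ is bounded away from $0$, converts pointwise convergence of the solutions into the desired convergence of the phases.

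The core step is to show that, for every sufficiently large $\tau$, the solution $\theta$ of~\eqref{eqn:main:main_eqn_for_theta} with $\theta(0)=\gamma(\tau)$ satisfies $\theta_l(s)-c_* s\to\mu(\tau)$ uniformly in $l$ as $s\to\infty$, for some $\mu(\tau)\in\R$. Introducing $w_l(s)=\exp\!\bigl(d(\theta_l(s)-c_* s)\bigr)$ (or $w_l=\theta_l-c_* s$ when $d=0$), a direct computation shows that $w$ obeys the linear discrete heat equation
\begin{equation*}
    \dot w_l=\sum_{k=-N}^N a_k (w_{l+k}-w_l),
\end{equation*}
whose Fourier symbol $\sum_k a_k(e^{ik\omega}-1)$ has real part $f_{(\sigma_h,\sigma_v)}(\omega)$ that is strictly negative on $[-\pi,\pi]\setminus\{0\}$ by $(\mathrm{HS})_2$. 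Splitting the initial datum as $w(0)=w^{\mathrm{per}}(0)+\tilde r$ with $w^{\mathrm{per}}(0)$ periodic of period $\sigma_*^2 P$ and $\tilde r$ vanishing at infinity, the finite Fourier expansion on $\Z/(\sigma_*^2 P\Z)$ shows that the zero mode of $w^{\mathrm{per}}$ is conserved while every nonzero mode decays exponentially, so $w^{\mathrm{per}}_l(s)$ tends uniformly to a strictly positive spatial average $\bar w$. The tail contribution $\tilde r(s)$ vanishes uniformly as $s\to\infty$ by approximating $\tilde r$ with compactly supported sequences and invoking the $\ell^\infty\to\ell^\infty$ Green's function decay developed in~{\S}\ref{sec:lde}. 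Inverting Cole--Hopf gives $\mu(\tau)=d^{-1}\log\bar w(\tau)$.

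Finally, combining the previous step with Theorem~\ref{thm:main_result:gamma_mcf} applied to an arbitrary $\epsilon>0$ yields
\begin{equation*}
    \limsup_{t\to\infty}\sup_{l}\bigl|\gamma_l(t)-c_* t-\bigl(\mu(\tau)-c_*\tau\bigr)\bigr|\le\epsilon
\end{equation*}
for all $\tau\ge\tau_\epsilon$. A triangle-inequality comparison between two such $\tau_1,\tau_2$ shows that $\mu(\tau)-c_*\tau$ is Cauchy as $\tau\to\infty$, hence converges to some $\mu\in\R$, and the same estimate then upgrades to $\gamma_l(t)-c_* t\to\mu$ uniformly in $l$. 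Substituting this into~\eqref{eqn:main:gamma_approx_u} and exploiting the uniform continuity of $\Phi_*$ delivers~\eqref{eqn:periodicity+decay:stability}. The main obstacle is the linear heat step: since the coefficients $a_k$ may be negative, the classical comparison principle is unavailable and both the exponential decay of the nonzero Fourier modes of the periodic part and the uniform vanishing of the tail must be obtained from the saddle-point Green's function bounds in~{\S}\ref{sec:lde} rather than from maximum-principle arguments.
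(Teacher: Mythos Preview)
Your proposal is correct and follows essentially the same route as the paper, which defers to \cite[\S 9]{jukic2019dynamics}: reduce via Theorem~\ref{thm:main_result:gamma_mcf} to the phase LDE, linearize through Cole--Hopf, split the resulting initial datum into a periodic part (handled by finite Fourier expansion and the strict negativity from (HS$)_2$) and a $c_0$-tail (handled by the Green's function estimates of \S\ref{sec:lde}), and close with a Cauchy argument on the asymptotic phase shifts $\mu(\tau)-c_*\tau$. Your identification of the absence of a comparison principle as the main obstacle, and your replacement of maximum-principle reasoning by the $\ell^1$/$\ell^\infty$ kernel bounds, is precisely the adaptation this paper makes relative to the horizontal case.
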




\subsection{Numerical results}


\begin{figure}[t]
\centering
 \includegraphics[width = \linewidth]{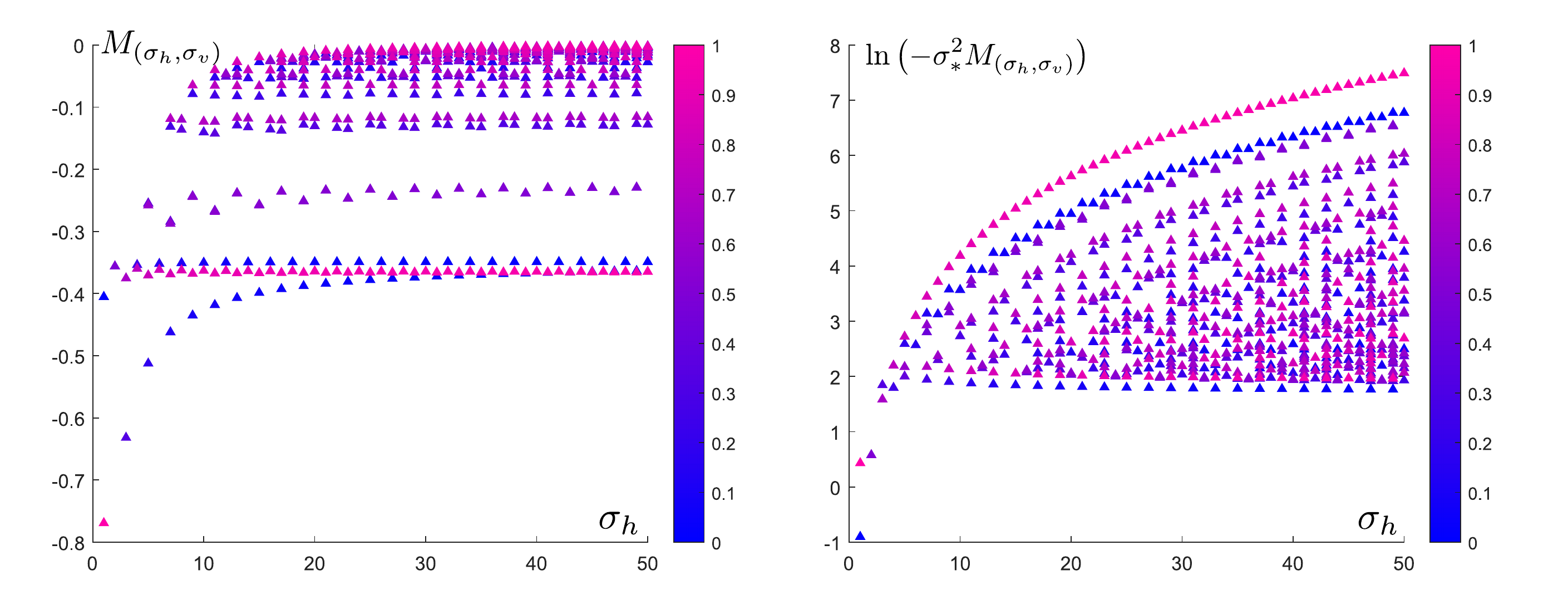}
    \caption{These plots represent the outcome of our numerical computations for the values $M_{(\sigma_h, \sigma_v)}$,
    where we used $g(u;a) = 6 u(u-1)(u-a)$ with   $a = 0.45$. For each fixed $\sigma_h$ (horizontal) we computed these values for each integer $1 \le \sigma_v \le \sigma_h$
    that has $\mathrm{gcd}(\sigma_h, \sigma_v) =1$,
    using the color code to represent
    the fraction ${\sigma_v}/{\sigma_h}$.
    On the left we see the formation of horizontal bands of the same color, suggesting the
    possibility to take limits along 
    convergent subsequences $ ({\sigma_v^{(n)}}/{\sigma_h^{(n)}})_{n > 0}$;
    see also Figure \ref{fig:main:numerics2}.
    The $\sigma_*^2$-scaling on the right 
    shows that our condition requiring $M_{(\sigma_h, \sigma_v)}$ to be negative 
    can be confirmed in a robust fashion.
     } 
 \label{fig:main:numerics1}
 \end{figure}
\begin{figure}[t]
\centering
 \includegraphics[width = \linewidth]{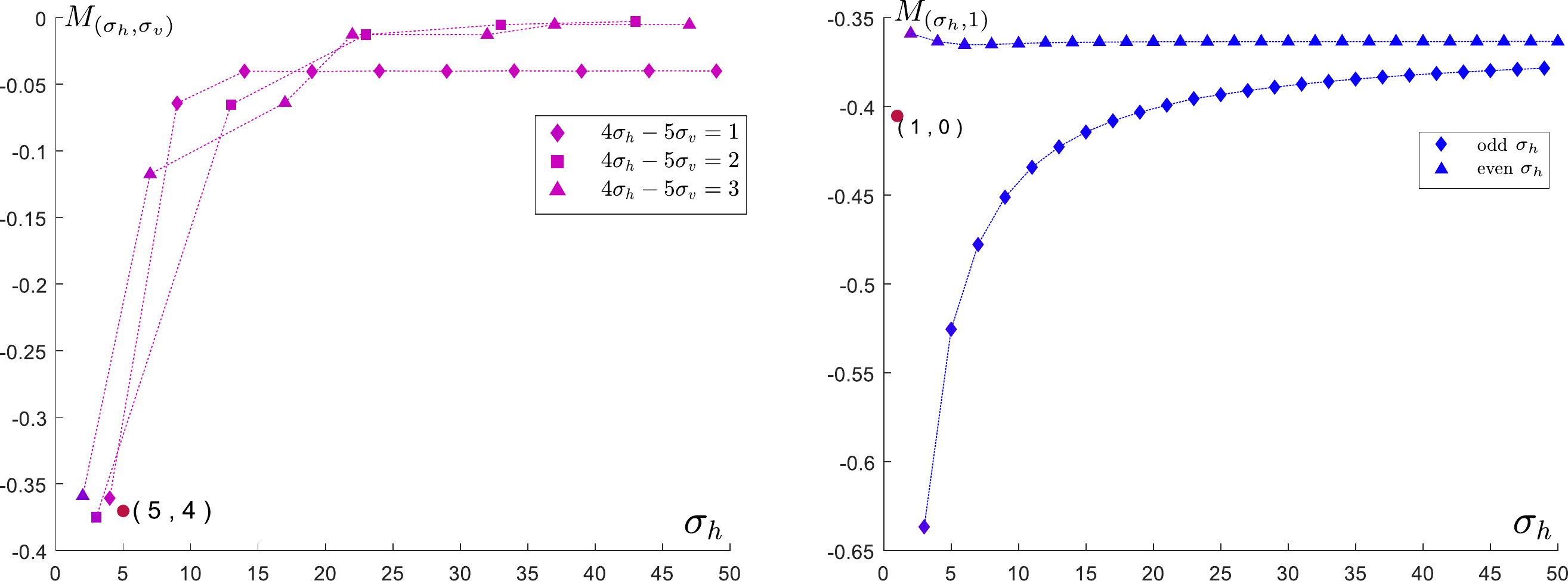}
    \caption{ 
    These plots track the values of
    $M_{(\sigma_h, \sigma_v)}$ along several
    subsequences of
    fractions $\sigma_v /  \sigma_h $
    that converge to $4/5$ (left)
    or zero (right). In all cases the limits
    are strictly above the
    values $M_{(5,4)}$ (left) and $M_{(1,0)}$ (right)
    corresponding to the limiting angles,
    supporting the inequality
    \eqref{eqn:main:numerics:ineq}.
    } 
 \label{fig:main:numerics2}
 \end{figure}

\begin{table}[t]
\centering
\begin{tabular}{||c||c|c|c|c|c|c|c|c|c|c|}
\hline
$k$   & 1   & 2     & 3    & 4     &5    & 6     & 7     & 8      & 9    & 10   \\ \hline
$a_{k}$ &0 & 0.896 & 0.195 &  0.068 & 0.966 & 0 &-0.143 & 0 & 0 & 0.05 \\ \hline
$a_{-k}$ & 0& 0.912 & 0.0925 & 0.179 & 1.005 & 0 & -0.199 & 0 &  0 & 0.03 \\ \hline
\end{tabular}
\caption{\label{tab:numerics:a_k} Numerically computed values for the coefficients $(a_k)$ defined in~\eqref{eqn:main:a_k} for the 
propagation direction $(\sigma_h, \sigma_v) = (2,5)$, with the nonlinearity $g(u;a) = 6 u(u-1)(u-0.45)$.  We computed these coefficients for a large range of angles and used them to calculate
the values $M_{(\sigma_h, \sigma_v)}$
depicted in Figures \ref{fig:main:numerics1}-\ref{fig:main:numerics2}.
}
\end{table} 
Our goal here is to numerically investigate the condition (HS$)_2$. 
In order to compensate for the fact that $f$ is locally quadratic around $0$,
we calculated the values $$ M_{(\sigma_h, \sigma_v)}: = \sup_{0 < |\omega| \le \pi} \frac{f_{(\sigma_h, \sigma_v)}(\omega)}{\omega^2}$$
for a large range of parameters $(\sigma_h, \sigma_v)\in \Z^2$. 

As a first step, we 
numerically solved the coupled set of equations
\begin{align}
    -c_* \Phi_*'(\xi) &= \Phi_*(\xi+\sigma_h) + \Phi_*(\xi-\sigma_h) + \Phi_*(\xi+ \sigma_v) + \Phi_*(\xi- \sigma_v) - 4 \Phi_*(\xi) + g\big(\Phi_*(\xi)\big), \label{eqn:numerical:phi}\\[0.3cm]
    c_* \psi_*'(\xi) &= \psi_*(\xi+\sigma_h) + \psi_*(\xi-\sigma_h) + \psi_*(\xi+ \sigma_v) + \psi_*(\xi- \sigma_v) - 4 \psi_*(\xi) + g'\big(\Phi_*(\xi)\big) \psi_*(\xi) \label{eqn:numerical:psi}
\end{align}
on a domain $[-L, L]$ for some large $L\gg1$, using the boundary conditions
\begin{equation*}
    \Phi_*(-L) = 0, \qquad \Phi_*(L) = 1, \qquad \psi_*(\pm L) = 0.
\end{equation*}
Due to the fact that the solutions are shift-invariant, we also fixed $\Phi(0) = \frac{1}{2}$ and $\psi(0) = 1$. 
In order to overcome the issue
that $L$ needs to be very large
when $\sigma_h$ or $\sigma_v$  is large,
we used the representation
$$\sigma_* = \sqrt{\sigma_h^2 +\sigma_v^2}, \qquad (\sigma_h, \sigma_v) = \sigma_*(\cos\zeta_*, \sin \zeta_*)$$
to introduce the rescaled functions
\begin{align*}
    \tilde{\Phi} (\xi) := \Phi_*({\xi}/{\sigma_*}), \qquad \tilde{\psi} (\xi) := \psi_*({\xi}/{\sigma_*}).
\end{align*}
These must satisfy the 
equivalent system of equations
\begin{align}
    -\frac{c_*}{\sigma_*} \tilde{\Phi}'(\xi) &= \tilde{\Phi}(\xi+\cos \zeta_*) + \tilde{\Phi}(\xi-\cos \zeta_*) + \tilde{\Phi}(\xi+\sin \zeta_*)+ \tilde{\Phi}(\xi-\sin \zeta_*)  + g\big(\tilde{\Phi}(\xi)\big) \label{eqn:numerical:phi2}\\[0.3cm]
    \frac{c_*}{\sigma_*} \tilde{\psi}'(\xi) &= \tilde{\psi}(\xi+\cos \zeta_*) + \tilde{\psi}(\xi-\cos \zeta_*) + \tilde{\psi}(\xi+\sin \zeta_*)+ \tilde{\psi}(\xi-\sin \zeta_*)  + g'\big(\tilde{\Phi}(\xi)\big)\tilde{\psi}(\zeta), \label{eqn:numerical:psi2}
\end{align}
which allowed us to keep $L$ fixed and use
a continuation approach to vary the angle $\zeta_*$.

We discretized the domain by dividing the segment $[-L, L]$ into $N_L$ parts of size $\Delta \xi$ for some integer $N_L \gg1$ and step size $\Delta \xi \ll 1$,
discretizing the first 
derivatives in \eqref{eqn:numerical:phi2}-\eqref{eqn:numerical:psi2} by the fourth-order central difference scheme. We proceeded by
using a nonlinear system solver to obtain the speed $c$ and the values $\big(\tilde{\Phi}(\xi_n),\tilde{\psi}(\xi_n)\big)$ in the nodes $\xi_n = -L + n \Delta \xi $, for $n = 0, \dots N_L$.
We subsequently used these values
to solve the systems~\eqref{eqn:main:eqns_for_p} and 
compute the coefficients needed to construct the function $f_{(\sigma_h, \sigma_v)}$ defined in~\eqref{eqn:main:function_f}. 
As an example, in 
Table \ref{tab:numerics:a_k}
we present the values of  $(a_k)$ for the angle of propagation $(2,5)$,
noting that both positive and negative values occur.

Our full results are visualized in Figures~\ref{fig:main:numerics1} and~\ref{fig:main:numerics2}.
 In all cases the value $M_{(\sigma_h, \sigma_v)}$ was  negative,
 hence validating (HS$)_2$.
 In addition, we observed
 that if we pick a sequence of angles $(\sigma_h^n, \sigma_v^n)$ 
 for which we have the convergence
 \begin{equation}
     \lim_{n \to \infty} \sigma_v^n / \sigma_h^n = \sigma_v^* / \sigma_h^*
 \end{equation}
 for some pair $(\sigma_h^*, \sigma_v^*) \in \mathbb{Z}^2$ not contained
 in this sequence, then
 \begin{equation}\label{eqn:main:numerics:ineq}
     \liminf_{n\to\infty}  M_{(\sigma_h^n, \sigma_v^n)} >   M_{(\sigma_h^*, \sigma_v^*)}.
 \end{equation}
 This behaviour closely resembles the crystallographic pinning phenomenon
 discussed in 
 \cite{hoffman2010universality,MPCP},
 where the role of $M$ is played 
 by the direction-dependent 
 boundary of the parameters $a$ where the 
 wave is pinned ($c_* = 0$).

\section{Omega limit points}
\label{sec:omega}

Both the construction of the phase $\gamma$ as well as the proof of Proposition~\ref{thm:main_results:gamma_approx_u} rely heavily on the  properties of so-called $\omega$-limit points. Intuitively, these track the long-time behaviour of $u$ after correcting for the velocity of the planar wave.  
To be more precise, 
let us consider a sequence
in $\Zs\times\R$
that is taken from the subset
\begin{equation}\label{eqn:omega:set_S}
    \mathcal{S} = \left\{(n_k, l_k, t_k)_{k\ge 0}: 0<t_1<t_2<\dots\to\infty, \ | n_k-ct_k|\leq M \text{ for some } M>0 \right\}.
\end{equation}
For any solution $u\in C^1\left([0, \infty), \ell^\infty(\Z_\times^2)\right)$, 
our goal is to analyze the limiting behaviour of the shifted solutions $u_{n+n_k, l+l_k}(t+t_k)$.
In the special case
that $u$ is the exact planar wave solution
$$u_{n,l}(t) = \Phi_*(n-c_*t),$$
the fact that the sequence $n_k-c_* t_k$ is bounded allows us to find a constant $\theta_0\in \R$ for which the convergence
\begin{equation}\label{eqn:omega_limit_points:perturbed_wave_ex}
    u_{n+n_k, l+l_k}(t+t_k)
    = \Phi_*(n+n_k -c_*t- c_*t_k )
    \to \Phi_*(n-c_*t+\theta_0)
\end{equation}
holds on some subsequence.
The limiting function is hence
equal to our planar wave, albeit with a perturbed phase $\theta_0$. 

Our main result here states that the convergence result~\eqref{eqn:omega_limit_points:perturbed_wave_ex} continues to hold for a much larger set of solutions of the discrete Allen-Cahn equation~\eqref{eqn:main:discrete_AC_new}. This generalizes our earlier results in~\cite{jukic2019dynamics} where we only considered horizontal directions. Although some minor technical obstacles need to be resolved, the main principles are comparable. In fact, 
we actually sharpened the setup slightly by avoiding the superfluous 
usage of the floor and ceiling functions in \cite[Prop. 3.1]{jukic2019dynamics}.  
This allows for a more efficient and readable analysis here and in the sequel.

\begin{proposition}\label{prop:omega_limit_points:main_prop}
Suppose that (H$g$), (H$\Phi$) and (H$0$) hold and let $u$ be a solution of \eqref{eqn:main:discrete_AC_new} with the initial condition \eqref{eqn:main:initial_condition_new}.
Then for any sequence $(n_k, l_k, t_k)_{k \ge 0} \in \mathcal{S}$  there exists a subsequence $(n_{i_k}, l_{i_k}, t_{i_k})_{k \ge 0}$ and a shift $\theta_0\in\R$ such that
 \begin{equation}
      u_{n + n_{i_k} , l+ l_{i_k} }(t+ t_{i_k}) \to \Phi_*(n-c_*t+ \theta_0) \quad \text{in} \quad C_{\mathrm{loc}}(\Z_\times^2\times \R).
 \end{equation}
\end{proposition}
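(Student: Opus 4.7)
The plan is to combine an Arzel\`a--Ascoli extraction with a standard sliding/squeezing argument from the theory of bistable reaction--diffusion equations, adapted to the obliquely-oriented lattice. Everything needed is either classical for bistable LDEs or stated earlier in the excerpt.

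\textbf{Compactness.} Because $u^0 \in \ell^\infty(\Zs)$, the comparison principle for \eqref{eqn:main:discrete_AC_new} yields a uniform bound $\|u(t)\|_{\ell^\infty} \le C$, and the LDE itself then bounds $\|\dot u(t)\|_{\ell^\infty}$ by a constant. The shifted sequence $v^{(k)}_{n,l}(t) := u_{n+n_k,\,l+l_k}(t+t_k)$ is therefore pointwise bounded and uniformly Lipschitz in $t$. A diagonal Arzel\`a--Ascoli extraction gives a subsequence converging in $C_{\mathrm{loc}}(\Zs \times \R)$ to some $u^\infty$, which by continuity of $g$ is an \emph{entire} solution of \eqref{eqn:main:discrete_AC_new}.

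\textbf{Trapping.} Using (H$0$), standard bistable sub/super-solutions of the form $\Phi_*(n - c_* t + \theta^{\pm} \pm Z e^{-\beta t}) \pm z e^{-\beta t}$ (see e.g.\ \cite{hoffman2015multi}) trap $u$ for all sufficiently large $t$. Shifting by $(n_k,l_k)$ and $t_k$, the parallel-shift $n_k - c_* t_k$ is bounded in $[-M,M]$, so along a subsequence it converges to some $\mu$; since $t_k\to\infty$ kills the exponential corrections, passing to the limit yields the two-sided trap
\begin{equation*}
   \Phi_*(n - c_*t + \mu + \theta^-) \;\le\; u^\infty_{n,l}(t) \;\le\; \Phi_*(n - c_* t + \mu + \theta^+)
\end{equation*}
for all $(n,l,t) \in \Zs \times \R$.

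\textbf{Rigidity.} Define $\theta_\star(t) = \sup\{\theta \in \R : \Phi_*(n - c_* t + \theta) \le u^\infty_{n,l}(t) \text{ on } \Zs\}$ and $\theta^\star(t)$ as the corresponding infimum-from-above; Step~2 makes both finite. The strong comparison principle for the LDE --- whose off-diagonal couplings are non-negative on the connected lattice $\Zs$ (connectivity following from $\gcd(\sigma_h,\sigma_v)=1$) --- together with the strict monotonicity of $\Phi_*$ forces the gap $\theta^\star(t) - \theta_\star(t)$ to be non-increasing in $t$. Since $u^\infty$ is entire, this monotonicity can be run both forward and backward in time, pinching the gap to zero. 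Hence $u^\infty_{n,l}(t) = \Phi_*(n - c_*t + \theta_0)$ for a single $\theta_0 \in \R$, which completes the proof.

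The main obstacle is the rigidity step. Unlike the horizontal case treated in \cite{jukic2019dynamics}, the entire solution $u^\infty$ a priori depends non-trivially on both lattice coordinates, so equality of the two shifts must be deduced at the full two-dimensional level. The crucial input is that the kernel of $\mathcal L_0$ is one-dimensional and spanned by the strictly positive $\Phi_*'$ (Lemma~\ref{lem:mr:fred:props:l:0}), which precludes non-trivial transverse deformations of the wave and, via strong-maximum-principle propagation through the connected sublattice $\Zs$, forces $u^\infty$ to be $l$-independent and hence to collapse onto the one-dimensional wave profile.
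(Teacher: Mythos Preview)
Your compactness and trapping steps are fine and essentially match the paper's Proposition~\ref{cor:omega_limit_points:construction2} (the paper's trapping bounds come from Lemma~\ref{lemma:omega_limit_points:upper and lower bounds}, which is exactly the Fife--McLeod sub/super-solution you invoke).

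The rigidity step, however, contains a genuine gap. From the forward comparison principle you can indeed conclude that $\theta_\star(t)$ is non-decreasing and $\theta^\star(t)$ non-increasing, hence the gap $G(t)=\theta^\star(t)-\theta_\star(t)$ is non-increasing. But the sentence ``this monotonicity can be run both forward and backward in time, pinching the gap to zero'' is not a valid argument: the LDE is not time-reversible, so there is no backward comparison principle, and a bounded non-increasing function on $\R$ need not be identically zero (a constant positive gap is perfectly compatible with everything you have written). What is missing is precisely the hard part: ruling out a strictly positive limiting gap. This requires a sliding argument that exploits the sign of $g'$ near the stable equilibria, together with a strong maximum principle on \emph{half-spaces} of the moving frame, neither of which you carry out. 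Your appeal to the one-dimensional kernel of $\mathcal{L}_0$ is a red herring: that is a statement about the linearisation and does not by itself yield nonlinear Liouville-type rigidity for entire solutions.

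The paper's route (Proposition~\ref{prop:trapped_entire_sol:every_trapped_entire_solution_is_a_traveling_wave}) is different and avoids this difficulty. Rather than comparing $\omega$ to translates of the wave, it slides $\omega$ against its \emph{own} space-time shifts $v^\rho_{n,l}(t)=\omega_{n+N,l+L}(t+N/c_*+\rho/c_*)$ for arbitrary $(N,L)\in\Zs$. Using the half-space maximum principles of Lemmas~\ref{lemma:trapped_entire_sol:max_principle}--\ref{lemma:trapped_entire_sol:max_principle2} in the regions where $\omega$ is close to $0$ or $1$ (so that $g$ is monotone), one shows that the optimal sliding parameter satisfies $\rho_*\le 0$. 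Since $(N,L)$ is arbitrary this forces $\omega$ to depend only on $n-c_*t$, after which the uniqueness of the wave profile for $c_*\neq 0$ finishes the job. This is the discrete analogue of the Berestycki--Hamel argument, and the structure of $g$ in the tails is what actually drives the rigidity --- not the spectral simplicity of $\mathcal{L}_0$.
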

\vspace{0.5cm}
The proof follows directly by combining the two main ingredients
that we state below.
First, in Proposition~\ref{cor:omega_limit_points:construction2},  we use Arzela-Ascoli to construct a solution $\omega\in C^1\left(\R; \ell^\infty(\Z_\times^2)\right)$ to the discrete Allen-Cahn equation~\eqref{eqn:main:discrete_AC_new} on $\Z_\times^2\times\R$ as a limit of the sequence $u_{n+n_k, l+l_k}(t+t_k)$. Furthermore, we show that this solution $\omega$ lies between two travelling waves. Proposition~\ref{prop:trapped_entire_sol:every_trapped_entire_solution_is_a_traveling_wave} subsequently states that this latter property is sufficient to guarantee  that $\omega$ is a travelling wave itself. 
This transfers the comparable result in~\cite{berestycki2007generalized} 
from the continuous to the discrete setting. 

\begin{proposition}\label{cor:omega_limit_points:construction2}
Consider the setting of Proposition~\ref{prop:omega_limit_points:main_prop}
and pick a sequence $(n_k, l_k, t_k)_{k \ge 0} \in \mathcal{S}$. Then there exists a subsequence $(n_{i_k}, l_{i_k}, t_{i_k})_{k \ge 0}$ and a function $\omega\in C^1(\R; \ell^\infty(\Z_\times^2))$ 
that satisfy the following claims.
 \begin{enumerate} [(i)]
     \item  We have the convergence 
    \begin{equation}
            u_{n + n_{i_k} , l+ l_{i_k} }(t+ t_{i_k}) \to  \omega_{n,l} (t)  \quad \text{in} \ C_{\text{loc}}(\Z_\times^2\times \R). 
    \end{equation}
    \item The function $\omega$ satisfies the discrete Allen-Cahn equation~\eqref{eqn:main:discrete_AC_new} on $\Zs\times \R$.
   \item There exists a constant $\theta\in \R$ such that 
    \begin{equation}\label{eqn:omega_limit_points:bounds_on_omega}
       \Phi_*(n-c_*t -\theta) \leq \omega_{n,l}(t) \leq \Phi_*(n-c_*t +\theta), \quad \text{for all}\ (n,l)\in \Z_\times^2.
    \end{equation}
 \end{enumerate} 
  \end{proposition}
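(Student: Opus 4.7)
My plan is to combine an Arzela-Ascoli extraction with a Fife-McLeod-style spreading estimate for front-like initial data. For parts~(i) and~(ii), I would first observe that the solution $u$ to \eqref{eqn:main:discrete_AC_new} is uniformly bounded in $\ell^\infty(\Z_\times^2)$: this follows from a standard comparison argument with spatially constant sub- and super-solutions, using (Hg) together with the boundedness of $u^0$. The right-hand side of \eqref{eqn:main:discrete_AC_new} then gives a uniform bound on $\dot{u}$, so for each fixed $(n,l)\in\Z_\times^2$ the shifted functions $t \mapsto u_{n+n_k,\,l+l_k}(t+t_k)$ form a bounded and equicontinuous family on every compact $t$-interval. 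A diagonal extraction over a countable exhaustion of $\Z_\times^2 \times \R$ yields a subsequence $(n_{i_k}, l_{i_k}, t_{i_k})$ along which the shifts converge in $C_{\mathrm{loc}}(\Z_\times^2 \times \R)$ to some limit $\omega$. Passing to the limit in the integral form of \eqref{eqn:main:discrete_AC_new} using dominated convergence and the continuity of $g$ shows that $\omega$ itself satisfies the discrete Allen-Cahn equation on $\Z_\times^2 \times \R$; smoothness in $t$ then follows from the equation.

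The core of the argument is part~(iii), which requires the spreading-type estimate that there exist constants $\theta_0 > 0$ and $t_0 > 0$ for which
\begin{equation*}
\Phi_*(n - c_* t - \theta_0) \;\leq\; u_{n,l}(t) \;\leq\; \Phi_*(n - c_* t + \theta_0), \qquad (n,l) \in \Z_\times^2, \; t \geq t_0.
\end{equation*}
I would prove this by constructing sub- and super-solutions of the form $\Phi_*\big(n - c_* t + \zeta^\pm(t)\big) \pm z^\pm(t)$, where the auxiliary functions $\zeta^\pm$ and $z^\pm$ are chosen to decay exponentially and to compensate both for the $\epsilon$-gap in (H$0$) and for the residuals caused by the shifts appearing in the MFDE \eqref{eqn:main:mdfe}. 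The Fredholm structure of $\mathcal{L}_0$ recalled in Lemma~\ref{lem:mr:fred:props:l:0}, combined with the comparison principle for the bistable LDE \eqref{eqn:main:discrete_AC_new}, then traps $u$ between the two profiles for $t \geq t_0$. Once this sandwich is in place, the constraint $|n_k - c_* t_k| \leq M$ built into $\mathcal{S}$ gives $n + n_k - c_*(t+t_k) = n - c_* t + \delta_k$ with $|\delta_k| \leq M$; monotonicity of $\Phi_*$ then lets me replace $\theta_0$ by $\theta := \theta_0 + M$ to obtain a bound uniform in $k$, and passing to the limit along the subsequence yields \eqref{eqn:omega_limit_points:bounds_on_omega}.

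The main obstacle is the spreading estimate in the oblique setting: in contrast with the horizontal case of \cite{jukic2019dynamics}, the shifts $\pm\sigma_h, \pm\sigma_v$ in \eqref{eqn:main:mdfe} force us to evaluate $\Phi_*$ and its derivatives at non-integer offsets, and the residuals generated by the modulated Ansatz no longer factor neatly through $\Phi_*'$. Nevertheless, for this purely $l$-independent construction the required cancellations are governed solely by the solvability condition against $\psi_*$, which is far milder than the transverse bookkeeping required in \eqref{eqn:intro:super_sol_ansatz}. Thus the argument here can be viewed as the simplest prototype of the full normal-form construction that drives the later sections, and the spectral ingredients of Lemma~\ref{lem:mr:fred:props:l:0} suffice for it to go through.
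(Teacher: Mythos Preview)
Your overall strategy matches the paper's proof: Arzel\`a--Ascoli plus diagonalisation for (i)--(ii), and a Fife--McLeod spreading estimate (the paper's Lemma~\ref{lemma:omega_limit_points:upper and lower bounds}) for (iii). However, your discussion of the ``main obstacle'' misidentifies the difficulty. For the $l$-independent Ansatz $\Phi_*\big(n - c_* t + \zeta(t)\big) \pm z(t)$, the shifts $\pm\sigma_h,\pm\sigma_v$ cause no trouble whatsoever: since $\Phi_*$ satisfies the MFDE~\eqref{eqn:main:mdfe} at \emph{every} $\xi\in\R$, the discrete Laplacian contribution cancels exactly against $-c_*\Phi_*' - g(\Phi_*)$, leaving the residual $\Phi_*'(\xi)\dot{\zeta} + \dot{z} + g(\Phi_*(\xi)) - g(\Phi_*(\xi)\pm z)$, which is identical in form to the horizontal case and factors cleanly. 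No Fredholm structure, no solvability condition against $\psi_*$, and none of Lemma~\ref{lem:mr:fred:props:l:0} is needed here---the paper's proof of Lemma~\ref{lemma:omega_limit_points:upper and lower bounds} simply cites the horizontal argument from \cite{jukic2019dynamics} with a one-line modification. The spectral machinery you invoke only becomes relevant for the transversally modulated super-solutions of \S\ref{sec:sub:sup}, not for this preliminary trapping step.
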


\begin{proposition}\label{prop:trapped_entire_sol:every_trapped_entire_solution_is_a_traveling_wave}
	Assume that (H$g$) and (H$\Phi$) are satisfied
	and consider a
	function $\omega \in C^1\left(\R; \ell^\infty( \Z_\times^2)
	\right)$
	that satisfies
	the Allen-Cahn LDE~\eqref{eqn:main:discrete_AC_new} for all $t \in \R$.
	Assume furthermore that there exists a constant $\theta$ for which the bounds
	\begin{equation}\label{eqn:trapped_entire_sol:function_trapped_between_two_tw}
	 \Phi_*(n-c_*t -\theta) \leq \omega_{n,l} (t) \leq \Phi_*(n-c_*t +\theta) 
	\end{equation}
	hold for 
	all $(n,l) \in  \Z_\times^2$
	and $t\in \R$.
	Then there exists a constant $ \theta_0\in [-\theta, \theta] $  so that
	\begin{equation}
	\omega_{n,l}(t) = \Phi_*(n-c_*t +\theta_0), \quad \hbox{for all } (n,l)\in \Z_\times^2, \, t\in \R.
	\end{equation}
\end{proposition}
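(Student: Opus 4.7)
The plan is to run a sliding argument of the Berestycki--Nirenberg / Chen type, adapted to the lattice parabolic setting, in order to squeeze the entire solution $\omega$ between two coincident translates of $\Phi_*$. First I would introduce the extremal shifts
\[
    \theta^+ = \inf\{\eta \in \R : \omega_{n,l}(t) \le \Phi_*(n - c_*t + \eta) \text{ for all } (n,l) \in \Z^2_\times,\ t \in \R\},
\]
\[
    \theta^- = \sup\{\eta \in \R : \omega_{n,l}(t) \ge \Phi_*(n - c_*t + \eta) \text{ for all } (n,l) \in \Z^2_\times,\ t \in \R\}.
\]
The sandwich hypothesis \eqref{eqn:trapped_entire_sol:function_trapped_between_two_tw} forces $-\theta \le \theta^- \le \theta^+ \le \theta$, and continuity and strict monotonicity of $\Phi_*$ guarantee that both extremal envelopes are actually attained pointwise. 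The proposition reduces to proving $\theta^+ = \theta^-$; the desired constant $\theta_0$ is then their common value.

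The non-negative gap $v_{n,l}(t) := \Phi_*(n - c_*t + \theta^+) - \omega_{n,l}(t)$ satisfies, by direct subtraction of the LDE~\eqref{eqn:main:discrete_AC_new} from the corresponding MFDE~\eqref{eqn:main:mdfe}, the linear parabolic lattice equation
\[
    \dot v_{n,l}(t) = [\Delta^\times v(t)]_{n,l} + b_{n,l}(t)\, v_{n,l}(t),
\qquad b_{n,l}(t) = \int_0^1 g'\big(s\Phi_*(n - c_*t + \theta^+) + (1-s)\omega_{n,l}(t)\big)\,ds,
\]
with bounded coefficient $b_{n,l}$. Because $\Delta^\times$ induces an irreducible four-neighbour connectivity on $\Z^2_\times$ and $b$ is bounded, the strong maximum principle for parabolic LDEs, combined with backward uniqueness of the linear ODE $\dot v = L(t) v$ in $\ell^\infty$, gives the dichotomy that either $v \equiv 0$ on $\Z^2_\times \times \R$ or $v > 0$ strictly there. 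The same dichotomy applies to $\tilde v_{n,l}(t) := \omega_{n,l}(t) - \Phi_*(n - c_*t + \theta^-) \ge 0$.

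If either $v$ or $\tilde v$ vanishes identically, the conclusion follows with $\theta_0 = \theta^+$ or $\theta_0 = \theta^-$. Otherwise both are strictly positive, and I would derive a contradiction by producing a uniform Hopf-type lower bound $v \ge \alpha\, W$ with $W_{n,l}(t) := \Phi_*'(n - c_* t + \theta^+)$ and some $\alpha > 0$. Note that $W$ satisfies the linearized equation with coefficient $g'(\Phi_*(n - c_*t + \theta^+))$, which differs from $b$ by an $O(v)$ perturbation. In the front region $|n - c_* t + \theta^+| \le R$, the Arzel\`a--Ascoli construction of Proposition~\ref{cor:omega_limit_points:construction2} applied to $\omega$ together with the strong maximum principle prevents $v/W$ from having zero infimum: any minimizing sequence $(n_k, l_k, t_k)$ would produce, after translation, an entire $\omega$-limit that saturates the upper envelope at $(0,0,0)$ and is therefore pinned to be that planar wave by the maximum principle, which can be shown to contradict the strict positivity of $\tilde v$ after a comparable extraction for the lower envelope. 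In the tail regions $|n - c_* t + \theta^+| \gg 1$, both $v$ and $W$ decay exponentially at rates dictated by the spectra of the linearizations of the MFDE at $0$ and $1$; exploiting the Fredholm structure and exponential asymptotics encoded in Lemma~\ref{lem:mr:fred:props:l:0}, I would build explicit sub-solutions proportional to $\Phi_*'$ and conclude $v \ge \alpha W$ there. Gluing the two regions via maximum-principle comparison, a Taylor expansion in the phase then yields $\omega_{n,l}(t) \le \Phi_*(n - c_*t + \theta^+ - \alpha')$ uniformly for some $\alpha' > 0$, contradicting the definition of $\theta^+$ as an infimum.

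The main obstacle is the Hopf-type lower bound $v \ge \alpha W$. The front-region contribution is a relatively clean compactness-plus-maximum-principle exercise driven by Proposition~\ref{cor:omega_limit_points:construction2}, but the tail analysis is genuinely delicate: the profile $\Phi_*$ flattens so the natural scale of $v$ degenerates there, and one must carefully balance the exponential rates of $\Phi_*(\xi)$, $1-\Phi_*(\xi)$ and $\Phi_*'(\xi)$ against the action of the non-local shifts in $\Delta^\times$, which do not localize as cleanly as a pointwise Laplacian. This is the step where spectral information at the asymptotic equilibria, rather than merely at the wave itself, has to be brought in.
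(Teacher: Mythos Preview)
Your approach --- a Hopf-type lower bound $v\ge\alpha\,\Phi_*'$ followed by a Taylor-in-phase improvement of $\theta^+$ --- is genuinely different from the paper's, and the front-region step has a real gap. You claim that if $\inf v/W=0$ over $\{|n-c_*t+\theta^+|\le R\}$, then a minimising sequence produces a translated limit $\omega^\infty$ that equals the upper planar wave by the strong maximum principle. That conclusion is correct, but it yields no contradiction: $\omega^\infty=\Phi_*(\cdot+\theta^+)$ is perfectly compatible with $\theta^+>\theta^-$ for the original $\omega$, because strict positivity of $v$ and $\tilde v$ does not pass to the limit. Your ``comparable extraction for the lower envelope'' only produces $\tilde v^\infty=\Phi_*(\cdot+\theta^+)-\Phi_*(\cdot+\theta^-)>0$, which is \emph{consistent} with $\tilde v>0$, not contrary to it. Without the front bound the Hopf machinery stalls, and the delicate tail analysis you flag is never reached. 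In the continuous setting this step is normally supplied by a parabolic Harnack inequality on the (non-compact) front slab; no such tool is developed here.

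The paper takes a different and cleaner route: it compares $\omega$ not with translates of $\Phi_*$ but with its \emph{own} space--time shifts $v^\rho_{n,l}(t)=\omega_{n+N,l+L}(t+N/c_*+\rho/c_*)$, for arbitrary $(N,L)\in\Zs$, and slides $\rho$ down to zero (Lemmas~\ref{lemma:trapped_entire_sol:first_lemma}--\ref{lemma:trapped_entire_sol:sigma leq 0}). In the half-spaces where both $\omega$ and $v^\rho$ sit in $[1-\delta,1]$ or $[0,\delta]$, the monotonicity of $g$ turns the difference into a supersolution of a linear problem, and the half-space maximum principles of Lemmas~\ref{lemma:trapped_entire_sol:max_principle}--\ref{lemma:trapped_entire_sol:max_principle2} apply directly --- no Hopf bound or tail spectral input is needed. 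Compactness is still invoked, but only to contradict a \emph{strictly positive} auxiliary infimum $\epsilon^*$ (see the proof of Lemma~\ref{lemma:trapped_entire_sol:first_lemma}), which is a well-posed target unlike the zero infimum you aim at. Once $\rho_*\le 0$ holds for every $(N,L)$, applying it to both $(N,L)$ and $(-N,-L)$ forces $\omega_{n+N,l+L}(t+N/c_*)=\omega_{n,l}(t)$, so $\omega$ depends only on $n-c_*t$; the Mallet-Paret uniqueness of monotone profiles for $c_*\neq0$ then pins down $\omega=\Phi_*(\cdot+\theta_0)$.
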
 
 \begin{proof}[Proof of Proposition~\ref{prop:omega_limit_points:main_prop}]
 The claim follows directly  from  Propositions~\ref{cor:omega_limit_points:construction2} and~\ref{prop:trapped_entire_sol:every_trapped_entire_solution_is_a_traveling_wave}. 
 \end{proof}
 \subsection{Construction of \texorpdfstring{$\omega$}{Lg}}
Our first result provides preliminary upper and lower bounds for the solution  $u$. It is based upon a
standard comparison principle argument that can be traced back to Fife and Mcleod~\cite{fife1977approach}.

\begin{lemma}\label{lemma:omega_limit_points:upper and lower bounds}
Assume that (Hg), $(H\Phi)$ and (H0) are satisfied. 
Then there exists a time $T>0$ together with constants 
\begin{equation}
    q_1\in(0,a), \qquad q_2\in(0,1-a), 
    \qquad  \theta_1\in \R , 
    \qquad 
    \theta_2\in \R,
    \qquad \mu > 0, \qquad C > 0
\end{equation} 
so that the solution $u$ to \eqref{eqn:main:discrete_AC_new} with the initial condition \eqref{eqn:main:initial_condition_new} satisfies the estimates
\begin{align}
u_{n,l}(t) \leq \Phi_*\left(n+ \theta_1 - c_*(t-T) + Cq_1\big(1-e^{-\mu (t-T)}\big)\right) + q_1 e^{-\mu (t-T)}, \quad \forall t\geq T, \label{eqn:super_and_sub_solutions_for_the_discrete_AC:upper_bound} \\
u_{n,l}(t) \geq \Phi_*\left(n-\theta_2 - c_*(t-T) - Cq_2\big(1-e^{-\mu (t-T)}\big)\right) - q_2 e^{-\mu (t-T)}, \quad \forall t\geq T. \label{eqn:super_and_sub_solutions_for_the_discrete_AC:lower_bound}
\end{align}
\end{lemma}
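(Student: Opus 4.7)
The plan is to adapt the classical Fife--McLeod super- and sub-solution construction \cite{fife1977approach} to the discrete lattice setting, in the spirit of \cite{hoffman2015multi, hoffman2017entire}. The first step is to use (H0) to obtain $\epsilon > 0$ and $M > 0$ with $u^0_{n,l} \le a - \epsilon$ whenever $n \le -M$ and $u^0_{n,l} \ge a + \epsilon$ whenever $n \ge M$, uniformly in $l$. Comparing $u$ from above and below with the spatially homogeneous ODE $\dot\beta = g(\beta)$ in the two outer regions (whose solutions decay exponentially to $0$ respectively $1$ by (Hg)) and using the trivial $[0,1]$-bound on the bounded middle strip, I obtain after a short time $T_0 > 0$ that $u_{n,l}(T_0)$ is uniformly close to $0$ for $n \le -M_1$ and uniformly close to $1$ for $n \ge M_1$. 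A secondary comparison with translated copies of $\Phi_*$ then yields, for any preassigned $q_0 \in (0, \min(a, 1-a))$, a time $T \ge T_0$ together with constants $\theta_1, \theta_2 \in \R$ such that
\begin{equation*}
\Phi_*(n - \theta_2) - q_0 \;\le\; u_{n,l}(T) \;\le\; \Phi_*(n + \theta_1) + q_0, \qquad (n,l) \in \Zs.
\end{equation*}

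For $t \ge T$ I would take the Fife--McLeod Ansatz
\begin{equation*}
u^+_{n,l}(t) = \Phi_*\bigl(n + \theta_1 - c_*(t-T) + C q_1 (1 - e^{-\mu(t-T)})\bigr) + q_1 e^{-\mu(t-T)}
\end{equation*}
with $q_1 := q_0$, together with its symmetric sub-solution $u^-$. Substituting into \eqref{eqn:main:discrete_AC_new} and using the MFDE \eqref{eqn:main:mdfe} to cancel the travelling wave residue, the super-solution inequality reduces to verifying
\begin{equation*}
q_1 e^{-\mu(t-T)} \bigl[\, C \mu\, \Phi_*'(\xi) - \mu + g'(\Phi_*(\xi))\,\bigr] \;+\; R(t,\xi) \;\ge\; 0,
\end{equation*}
where $\xi$ is the argument of $\Phi_*$ appearing in $u^+$ and $R = O(q_1^2 e^{-2\mu(t-T)})$ collects Taylor remainders from expanding $g(u^+)$ around $g(\Phi_*(\xi))$. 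The verification splits into two regimes: on the tails where $\Phi_*(\xi) \in [-\delta, \delta] \cup [1-\delta, 1+\delta]$ the bistability bound $g'(s) \le -2\mu_0 < 0$ coming from (Hg) makes the bracket strictly negative (in the correct sign) for $\mu < \mu_0$; on the compact middle zone the strict positivity of $\Phi_*'$ together with a sufficiently large $C > 0$ makes the $C\mu \Phi_*'(\xi)$ term dominate. Shrinking $q_1$ absorbs $R$ uniformly in both regimes.

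The main technical obstacle lies in the discrete, non-local nature of $\Delta^\times$: unlike the PDE case, one cannot factor a common $\Phi_*'(\xi)$ out of the residual, so the differences $\Phi_*(\xi + \tau_\nu) - \Phi_*(\xi)$ must be controlled via the mean value theorem combined with the uniform exponential decay of $\Phi_*'$ at $\pm\infty$, a consequence of the hyperbolic asymptotics that underlie Lemma \ref{lem:mr:fred:props:l:0}. This ensures the remainder $R$ is uniformly $O(q_1^2 e^{-2\mu(t-T)})$ independently of $\xi \in \R$. Once $u^\pm$ are certified as super- respectively sub-solutions that match $u$ at $t = T$ via the Step~1 sandwich, the comparison principle for \eqref{eqn:main:discrete_AC_new}---valid since $\Delta^\times$ is cooperative and $g$ is locally Lipschitz on the a priori range $[-1-q_0, 2+q_0]$ where $u$ and $u^\pm$ live---yields the claimed bounds \eqref{eqn:super_and_sub_solutions_for_the_discrete_AC:upper_bound}--\eqref{eqn:super_and_sub_solutions_for_the_discrete_AC:lower_bound} after setting $q_1 = q_2 = q_0$.
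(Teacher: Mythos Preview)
Your overall Fife--McLeod strategy is the right one and matches what the paper does (by reference to \cite[Lemma~3.5]{jukic2019dynamics}). The Step~2 verification that $u^+$ is a super-solution is essentially correct: since your Ansatz is independent of $l$, the discrete Laplacian $\Delta^\times u^+$ reduces exactly to the shift combination appearing in the MFDE~\eqref{eqn:main:mdfe}, and the travelling-wave residue cancels cleanly. Two minor points here: the bracketed coefficient should read $C\mu\,\Phi_*'(\xi) - \mu - g'(\Phi_*(\xi))$ (you have a sign slip on $g'$; with $-g'$ the tail argument works as you describe), and your remark that ``one cannot factor a common $\Phi_*'(\xi)$ out of the residual'' does not apply to this $l$-independent barrier---that obstruction only arises for the $l$-modulated super-solutions of \S\ref{sec:sub:sup}.

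The genuine gap is in Step~1. Comparing with the spatially homogeneous ODE $\dot\beta = g(\beta)$ ``in the two outer regions'' is not a valid application of the comparison principle: the operator $\Delta^\times$ couples each site to neighbours with different $n$-coordinates, so a constant-in-space function $\beta(t)$ satisfying $u^0_{n,l}\le\beta(0)$ only for $n\le -M$ is \emph{not} a super-solution on that half-lattice---at the boundary layer $n\approx -M$ the Laplacian sees values of $u$ with $n>-M$ that may exceed $\beta(t)$. A purely spatially homogeneous comparison therefore cannot produce the conclusion that $u_{n,l}(T_0)$ is close to $0$ for $n\le -M_1$; it only yields the global bound $u(t)\in[-\delta,1+\delta]$, which is not enough to sandwich by $\Phi_*(n\pm\theta)\pm q_0$ with $q_0<\min(a,1-a)$. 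The paper's fix (inherited from \cite{jukic2019dynamics}) is to use a spatially \emph{inhomogeneous} barrier
\[
w_{n,l}(t) = d + M e^{|c_*|(n+\alpha t)},
\]
with $d\in(0,a)$ small, $M$ large, and $\alpha$ chosen via the inequality
\[
\alpha|c_*| - 2(\cosh(\sigma_h c_*) - 1) - 2(\cosh(\sigma_v c_*) - 1) \ge \frac{2K}{a-d}.
\]
This function is a genuine global super-solution that equals $d$ as $n\to -\infty$ and blows up as $n\to +\infty$, so it dominates $u^0$ everywhere once $M$ is large; its leftward drift then forces $u_{n,l}(t)\le d+q_0$ for $n\ll c_*t$, which is exactly the missing ingredient for your sandwich. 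You should replace your regional ODE comparison with this exponential barrier (and its mirror image for the lower bound).
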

\begin{proof}
The result can be shown by following the procedure outlined in the proof of Lemma 3.5 in~\cite{jukic2019dynamics}, using the inequality 
\begin{equation*}
    \alpha |c_*| -2(\cosh {\sigma_h c_*}-1) -2(\cosh {\sigma_v c_*}-1) \geq \dfrac{2K}{a-d}
\end{equation*}
to replace (3.14) in~\cite{jukic2019dynamics} and 
modifying the definition (3.16) in~\cite{jukic2019dynamics} to read
\begin{equation*}
    w_{n,l}(t) =  d + Me^{|c_*|(n + \alpha t)}.
\end{equation*}
\end{proof}
\begin{proof}[Proof of Proposition~\ref{cor:omega_limit_points:construction2}]
Fix an integer $T\in \N$ and denote by $M_T$ the number of points in $\Z_\times^2 $ that are also contained in the square $[-T, T]^2$, i.e. $M_T = \#\left\{(n,l)\in \Z_\times^2\cap [-T,T]^2 \right\}$. Consider the functions
\begin{equation*}
    u^k \in C\left([-T,T]; \R^{M_T\times M_T} \right)
\end{equation*}
that are defined by
\begin{equation*}
    u^k_{n,l}(t) = u_{n+n_k, l+l_k} (t+t_k)
\end{equation*}
for all sufficiently large $k$. From Lemma~\ref{lemma:omega_limit_points:upper and lower bounds} it follows that the solution $u$ and consequently the functions $u^k$ are globally bounded, which in view of~\eqref{eqn:main:discrete_AC_new} implies that the same holds for the derivative $\dot{u}$. The sequence $u^k$ hence satisfies the conditions of the Arzela-Ascoli theorem and is thus relatively compact in $C\left([-T,T]; \R^{M_T\times M_T} \right)$.
Applying~\eqref{eqn:main:discrete_AC_new} and using a standard diagonalisation argument, we obtain a subsequence $u^{i_k}$ and a function $\omega:\R\to\ell^\infty(\Z_\times^2)$ for which the convergence
\begin{equation}\sup_{(n,l,t)\in K}|u^{i_k}_{n,l}(t) - \omega_{n,l}(t) | + |\dot{u}^{i_k}_{n,l}(t) - \dot{\omega}_{n,l}(t)| \to 0
\end{equation}
holds for every compact $K\subset \Z_\times^2\times\R$. This  yields items \textit{(i)} and \textit{(ii)}, while item~\textit{(iii)} follows from Lemma~\ref{lemma:omega_limit_points:upper and lower bounds}.
\end{proof}

\subsection{Trapped entire solutions}
The main aim of this subsection is to establish Proposition~\ref{prop:trapped_entire_sol:every_trapped_entire_solution_is_a_traveling_wave}, which states that every entire solution of the discrete Allen-Cahn equation on ${\Z^2_\times} \times\R$ trapped between two travelling waves is a travelling wave itself. At the heart of the proof lies a version of the maximum principle for LDEs which we provide below in Lemmas~\ref{lemma:trapped_entire_sol:max_principle} 
and~\ref{lemma:trapped_entire_sol:max_principle2}. As a preparation, we define the quantities
\begin{equation}
    \sigma_\infty : =\max\{|\sigma_h|, |\sigma_v| \}, \qquad  m_* : = \sigma_\infty - 1. 
\end{equation}

\begin{lemma}\label{lemma:trapped_entire_sol:max_principle}
Pick $\kappa\in \R$ and let $E_\kappa\subset\Z^2_\times \times \R$  be defined as 
\begin{equation}\label{eqn:trapped_entire_sol:def_of_E_kappa}
E_\kappa = \left\{(n,l,t)\in \Z^2_\times \times \R: n-c_*t \geq \kappa \right\}.
\end{equation} 
Pick $B\in \R$ and $\epsilon>0$ and assume that the function $z\in C^1\big(\R, \ell^\infty (\Z^2_\times)\big)$ satisfies the conditions
\begin{enumerate} [(i)]
    \item \label{item:omega:comparison_principle:z>0} $z_{n,l}(t)\geq 0$ for all $(n,l,t)\in E_\kappa$;
    \item \label{item:omega:comparison_principle:bc}  $z_{n,l}(t)\geq \epsilon$  for all $(n,l,t)\in E_\kappa$ with $ n-c_*t\in [\kappa, \kappa + m_*]$;
    \item \label{item:omega:comparison_principle:diff_ineq}
$
        \dot z_{n,l}(t) - (\Delta^\times z)_{n,l}(t) + B z_{n,l}(t) \geq 0
$ for all $(n,l,t)\in E_\kappa$.

\end{enumerate}
Then, in fact $z_{n,l}(t)>0$ for all  $(n,l,t) \in E_\kappa$. 
\end{lemma}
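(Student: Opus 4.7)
The plan is to implement a lattice strong maximum principle tailored to the moving half-space $E_\kappa$. The first move is the exponential rescaling $w_{n,l}(t) = e^{(B+4)t} z_{n,l}(t)$, which absorbs both the diagonal $-4$ coming from $\Delta^\times$ and the linear $Bz$ term, recasting hypothesis (iii) as
\begin{equation*}
\dot w_{n,l}(t) \;\geq\; w_{n+\sigma_h,\,l+\sigma_v}(t) + w_{n+\sigma_v,\,l-\sigma_h}(t) + w_{n-\sigma_h,\,l-\sigma_v}(t) + w_{n-\sigma_v,\,l+\sigma_h}(t)
\end{equation*}
on $E_\kappa$, in which all four neighbor-coupling coefficients equal $+1$. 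Hypotheses (i) and (ii) translate into $w \geq 0$ on $E_\kappa$ and $w \geq \epsilon\, e^{(B+4)t}$ on the strip. The uniform positivity of these couplings is what drives the maximum principle in the steps that follow.

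Suppose, toward a contradiction, that $w_{n_0,l_0}(t_0) = 0$ for some $(n_0,l_0,t_0) \in E_\kappa$. By (ii) this touching point must sit strictly above the strip, so $n_0 - c_* t_0 > \kappa + m_*$. For this fixed lattice site, the set $\{t : (n_0,l_0,t) \in E_\kappa\}$ is an open half-line with $t_0$ strictly interior (regardless of the sign of $c_*$), along which $w_{n_0,l_0} \geq 0$. Since $w$ is $C^1$ and attains its minimum value $0$ at the interior point $t_0$, I obtain $\dot w_{n_0,l_0}(t_0) = 0$. Substituting this into the rescaled differential inequality yields
\begin{equation*}
0 \;\geq\; \sum_{\nu=1}^4 w_{n_0 + \tau_\nu,\, l_0 + \sigma_\nu}(t_0),
\end{equation*}
so that whenever all four Laplacian neighbors lie in $E_\kappa$---which is the case as soon as $n_0 - c_* t_0 \geq \kappa + \sigma_\infty$---every summand is non-negative and therefore zero. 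Each such neighbor then becomes a new extremal touching point to which the same argument applies.

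I then iterate this collapse to propagate the zero of $w$ through a descending chain of lattice sites, choosing at each stage the neighbor whose $n$-coordinate decreases by $\sigma_{\min} := \min\{|\sigma_h|,|\sigma_v|\}$. The width calibration $m_* = \sigma_\infty - 1 \geq \sigma_{\min}$, combined with the coprimality $\gcd(\sigma_h,\sigma_v) = 1$, is arranged precisely so that such a chain cannot skip over the strip $[\kappa,\kappa + m_*]$ and therefore enters it after finitely many steps. At the terminal site of the chain we would have $w = 0$ inside the strip, in direct contradiction with the lower bound $w \geq \epsilon\, e^{(B+4)t_0} > 0$ furnished by (ii).

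The main obstacle is the transition layer $n - c_* t \in (\kappa + m_*,\, \kappa + \sigma_\infty)$, in which the Laplacian neighbor obtained by shifting $n$ by $-\sigma_\infty$ drops out of $E_\kappa$ and the clean neighbor collapse from the second step no longer concludes that every in-$E_\kappa$ neighbor vanishes. The resolution exploits the complementary fact that, inside this same layer, the neighbor obtained by shifting $n$ by $-\sigma_{\min}$ already sits in the strip, so that the rescaled inequality takes the form $0 \geq \epsilon\, e^{(B+4)t_0} + w_{\mathrm{out}}(t_0) + (\text{non-negative terms})$; this forces a pronounced negative value on the outside neighbor that must in turn be shown incompatible with the continuity of $z$ along its characteristic line together with the strip lower bound in (ii).
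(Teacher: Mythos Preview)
Your overall plan---locate a zero of $z$ in $E_\kappa$, use the interior-minimum condition $\dot z=0$ together with (iii) to collapse the four neighbor values, and iterate until the chain reaches the boundary strip---matches the paper's argument closely. The exponential rescaling is unnecessary: at a touching point both $Bz$ and the diagonal $-4z$ piece of $\Delta^\times z$ vanish, so (iii) directly gives $\sum_\nu z_{n_0+\tau_\nu,\,l_0+\sigma_\nu}(t_0)\le 0$ without any preprocessing. The paper then iterates using the $-\sigma_\infty$ neighbor (not the $-\sigma_{\min}$ one) and runs the simple dichotomy ``that neighbor lands in the strip'' (contradicting (ii)) versus ``it is still above the strip, repeat.''

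The real gap is your final paragraph. You correctly derive that the outside neighbor must satisfy $w_{\mathrm{out}}(t_0)\le -\epsilon\,e^{(B+4)t_0}$, but the mechanism you propose to contradict this---continuity of $z$ along its characteristic line combined with the strip bound (ii)---is not a valid argument. The point $(n_0-\sigma_\infty,\,l',\,t_0)$ lies outside $E_\kappa$, so neither (i) nor (ii) constrains $z$ there; continuity in $t$ only says that $z_{n_0-\sigma_\infty,\,l'}(\cdot)$ passes continuously from a value $\le-\epsilon$ at $t_0$ to a value $\ge\epsilon$ at the \emph{different} time when that lattice site enters the strip, and since no bound on $\|\dot z\|_{\ell^\infty}$ is assumed this is perfectly consistent with the hypotheses. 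The paper's proof does not engage with this boundary case explicitly---it simply asserts that the neighbors are non-negative and proceeds---so the subtlety you raise is genuine, but your resolution does not work as written.
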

\begin{proof}
Assume to the contrary that there exists $( n_0,l_0, t_0 )\in E_\kappa$ for which $z_{n_0, l_0} (t_0) =0$.  Since the function $z$ attains its minimum at this interior point, we know that $\dot{z}_{n_0,l_0}(t_0)= 0$. In addition, assumption \textit{(\ref{item:omega:comparison_principle:bc})} ensures that $(\Delta^\times z)_{n_0,l_0}(t_0) \geq 0$. On the other hand, assumption \textit{(\ref{item:omega:comparison_principle:diff_ineq})} gives
 \begin{equation*}
    0\leq  \dot{z}_{n_0,l_0}(t_0) - (\Delta^\times z)_{n_0,l_0}(t_0) + B z_{n_0,l_0}(t_0) = - (\Delta^\times z)_{n_0,l_0}(t_0) \leq 0.
 \end{equation*}
 Therefore, the equality $(\Delta^\times z)_{n_0,l_0}(t_0) = 0$ must hold. In particular, we have 
 \begin{equation*}
     z_{n_0-\sigma_h, l-\sigma_v}(t_0) = z_{n_0+\sigma_h, l+\sigma_v}(t_0) = z_{n_0-\sigma_v, l+\sigma_h}(t_0) =  z_{n_0+\sigma_v, l-\sigma_h}(t_0) = 0 .
 \end{equation*} 
 We note that the inclusion $$n_0 - \sigma_\infty \in [\kappa, \kappa+m_* ]$$ would immediately contradict property \textit{(\ref{item:omega:comparison_principle:bc})}. On the other hand, if $$n_0 - \sigma_\infty ~\geq \kappa +  m_* + 1$$ we can repeat this procedure with $n_0-\sigma_\infty$ until the desired contradiction
 is reached. 
\end{proof}

\begin{lemma}\label{lemma:trapped_entire_sol:max_principle2}
Pick $\kappa\in \R$ and let $F_\kappa\subset\Z^2_\times \times \R$ be defined as 
\begin{equation}\label{eqn:trapped_entire_sol:def_of_F_kappa}
F_\kappa = \left\{(n,l,t)\in \Z^2_\times \times \R: n-c_*t \leq \kappa \right\}.
\end{equation} 
Pick $B\in \R$ and $\epsilon>0$ and assume that the function $z\in C^1(\R, \ell^\infty \big(\Z^2_\times)\big)$ satisfies the conditions
\begin{enumerate} [(i)]
    \item $z_{n,l}(t)\geq 0$ for  $(n,l, t)\in F_\kappa$;
\item $z_{n,l}(t)\geq \epsilon $ for all $(n,l,t)\in F_\kappa$ with $n-c_*t\in [\kappa - m_*, \kappa ]$;
    \item 
$
        \dot z_{n,l}(t) - (\Delta^\times z)_{n,l}(t) + B z_{n,l}(t) \geq 0
$ for all $(n,l,t)\in F_\kappa$.

\end{enumerate}
Then, in fact $z_{n,l}(t)>0$ on $F_\kappa$. 
\end{lemma}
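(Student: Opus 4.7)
The plan is to mirror the proof of Lemma~\ref{lemma:trapped_entire_sol:max_principle} verbatim, simply reversing the direction of propagation in $n$: instead of propagating the zero-set of $z$ downward inside $E_\kappa$, I propagate it upward inside $F_\kappa$ toward the boundary strip $\{n - c_* t \in [\kappa - m_*, \kappa]\}$ where condition (ii) will deliver the contradiction.

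I would argue by contradiction, assuming that $z_{n_0, l_0}(t_0) = 0$ at some $(n_0, l_0, t_0) \in F_\kappa$. Hypothesis (ii) immediately rules out $n_0 - c_* t_0 \in [\kappa - m_*, \kappa]$, so the offending point in fact satisfies $n_0 - c_* t_0 < \kappa - m_*$. Since $z \geq 0$ on $F_\kappa$ by (i) with equality at the interior point $(n_0, l_0, t_0)$, calculus in $t$ gives $\dot{z}_{n_0, l_0}(t_0) = 0$. Provided the four $\Delta^\times$-neighbors of $(n_0, l_0)$ all lie in $F_\kappa$ — which holds whenever $n_0 + \sigma_\infty - c_* t_0 \leq \kappa$ — hypothesis (i) forces $[\Delta^\times z]_{n_0, l_0}(t_0) \geq 0$. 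Combining this with the differential inequality (iii) then pinches $[\Delta^\times z]_{n_0, l_0}(t_0) = 0$, and since it is a sum of four non-negative terms, each neighbor value of $z$ at time $t_0$ must vanish.

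I would then iterate by choosing, among the four available shifts $\pm \sigma_h, \pm \sigma_v$, the one achieving the maximal increment $+\sigma_\infty$ in the $n$-coordinate, and apply the same argument at the new base point. Each iteration raises $n - c_* t_0$ by $\sigma_\infty$, and the choice $m_* = \sigma_\infty - 1$ is tailored precisely so that after finitely many steps the iterate lands in the boundary strip $[\kappa - m_*, \kappa]$; at that moment (ii) delivers $z \geq \epsilon > 0$, which contradicts the zero value propagated along the iteration chain.

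The main obstacle — identical to the one in the proof of Lemma~\ref{lemma:trapped_entire_sol:max_principle} — is the bookkeeping at the upper boundary of $F_\kappa$: one must ensure that each neighbor used at every stage of the iteration still lies in $F_\kappa$ so that (i) and (iii) remain applicable. This is exactly the balancing role played by $m_* = \sigma_\infty - 1$ against the step size $\sigma_\infty$, and since the argument is structurally the mirror image of the preceding lemma, no new technical difficulty arises beyond this reflection.
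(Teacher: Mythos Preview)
Your proposal is correct and takes essentially the same approach as the paper: the paper's own proof consists of the single sentence ``The proof is almost identical to that of Lemma~\ref{lemma:trapped_entire_sol:max_principle}'', and the mirror-image argument you spell out---propagating the zero of $z$ upward in $n$ by increments of $\sigma_\infty$ until hitting the boundary strip $[\kappa - m_*,\kappa]$---is precisely what is intended.
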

\begin{proof}
The proof is almost identical to that  of Lemma~\ref{lemma:trapped_entire_sol:max_principle}.
\end{proof}

\begin{lemma}\label{lemma:trapped_entire_sol:first_lemma}
Consider the setting of Proposition~\ref{prop:trapped_entire_sol:every_trapped_entire_solution_is_a_traveling_wave} and pick a  sufficiently small $\delta>0$. 
Choose a pair $(N,L)\in \Z_\times^2 $ together with a constant $\rho\in \R$. Suppose for some $\kappa \in \Z$ that the function 
\begin{equation}\label{eqn:trapped_entire_sols:v_sigma}
    v^\rho_{n,l}(t) = \omega_{n+N, l+L}\left(t+{N}/{c_*} + {\rho}/{c_*}\right)
\end{equation}
satisfies the inequality
\begin{equation}
\label{eq:trp:ineq:v:sig:omega}
v^\rho_{n,l}(t) \leq \omega_{n,l}(t)
\end{equation}
whenever  $n-c_*t\in [\kappa , \kappa +m_*]$. 
Then the following claims holds true. 
\begin{enumerate}[(i)]
    \item\label{item:omega:vrho_lemma:item1} If  $\omega_{n,l}(t) \geq 1-\delta$ 
    whenever $n-c_*t\geq \kappa $, then 
    in fact \eqref{eq:trp:ineq:v:sig:omega}
    holds for all $n-c_*t\geq \kappa $.
    \item\label{item:omega:vrho_lemma:item2} If $v^\rho_{n,l}(t) \leq \delta $ whenever $n-c_*t\leq \kappa+ m_* $, then 
    in fact \eqref{eq:trp:ineq:v:sig:omega}
    holds for all  $n-c_*t\leq \kappa + m_*$.
\end{enumerate}
\end{lemma}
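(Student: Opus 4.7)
My plan is to apply a strong discrete maximum principle to $z := \omega - v^\rho$. Since $v^\rho$ is a lattice-time translate of $\omega$, both solve the Allen-Cahn LDE~\eqref{eqn:main:discrete_AC_new}, so $z$ satisfies
\[
    \dot z_{n,l}(t) = (\Delta^\times z)_{n,l}(t) + g'\big(\xi_{n,l}(t)\big)\, z_{n,l}(t),
\]
with $\xi_{n,l}(t)$ lying between $\omega_{n,l}(t)$ and $v^\rho_{n,l}(t)$. The trapping bounds~\eqref{eqn:trapped_entire_sol:function_trapped_between_two_tw} guarantee $\omega, v^\rho \in [0, 1]$. I would argue (i) by contradiction; claim (ii) is analogous.

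Suppose $I := \inf_{E_\kappa} z < 0$. The hypothesis $z \geq 0$ on the boundary strip $\{n - c_*t \in [\kappa, \kappa + m_*]\}$, combined with the fact that the trapping forces $\omega, v^\rho \to 1$ (hence $z \to 0$) as $n - c_*t \to +\infty$, together imply that any minimizing sequence $(n_k, l_k, t_k) \in E_\kappa$ with $z_{n_k,l_k}(t_k) \to I$ has $n_k - c_*t_k$ both bounded above and strictly above $\kappa + m_*$. Passing to a subsequence with $n_k - c_*t_k \to m$, I would set $\tilde\omega_k(n,l,t) := \omega_{n+n_k,l+l_k}(t+t_k)$ and $\tilde v^\rho_k$ analogously, then invoke Arzela-Ascoli together with a diagonal extraction (valid because the solutions and their time derivatives are uniformly bounded) to obtain $C_\mathrm{loc}$ limits $\omega_\infty, v^\rho_\infty$ that still solve the LDE, preserve the shift relation $v^\rho_\infty(n,l,t) = \omega_\infty(n+N, l+L, t+N/c_*+\rho/c_*)$, and satisfy the trapping bounds shifted by $m$.

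Writing $z_\infty := \omega_\infty - v^\rho_\infty$, one has $z_\infty(0,0,0) = I$ by construction, and the translated points $(n+n_k, l+l_k, t+t_k)$ eventually lie in $E_\kappa$, so $z_\infty(n,l,t) \geq I$ in a neighborhood of the origin, making $(0,0,0)$ a local minimum. Thus $\dot z_\infty(0,0,0) = 0$ and $(\Delta^\times z_\infty)(0,0,0) \geq 0$. The hypothesis of (i) passes to the limit as $\omega_\infty(0,0,0) \geq 1 - \delta$, and $I < 0$ forces $v^\rho_\infty(0,0,0) > 1 - \delta$, so the intermediate value $\xi_\infty(0,0,0)$ lies in $[1 - \delta, 1]$, where $g' < 0$ for $\delta$ small by $g'(1) < 0$. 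Evaluating the linear equation at the origin then gives
\[
    0 = (\Delta^\times z_\infty)(0,0,0) + g'\big(\xi_\infty(0,0,0)\big) \cdot I \ \geq\ g'\big(\xi_\infty(0,0,0)\big) \cdot I \ > \ 0,
\]
a contradiction. For (ii) the argument on $F_\kappa$ is identical, with $v^\rho \leq \delta$ combined with $I < 0$ forcing both limit values into $[0, \delta]$ where $g' < 0$ by $g'(0) < 0$. The hardest part will be producing an actual local minimum on the non-compact domain: Lemma~\ref{lemma:trapped_entire_sol:max_principle} cannot be applied directly because its positivity hypothesis is precisely what we are trying to prove, so the translation-compactness extraction is essential, together with careful bookkeeping to ensure the limiting origin is far enough from the region boundary that all of its discrete Laplacian neighbors remain in the translated region.
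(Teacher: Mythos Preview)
Your proposal is correct and follows essentially the same route as the paper. Both set up the sliding quantity (the paper's $\epsilon^* = \inf\{\epsilon>0: v^\rho \le \omega+\epsilon \text{ on }E_\kappa\}$ is precisely your $-I$), assume it is positive, extract a limiting entire solution along a minimizing sequence via Arzel\`a--Ascoli (using that $n_k - c_*t_k$ is bounded, from the trapping at $+\infty$ and the strip hypothesis near $\kappa$), and then derive a contradiction from a maximum-principle argument applied to the limit. The only real difference is in how the contradiction is closed: you evaluate the linearized equation $\dot z = \Delta^\times z + g'(\xi)z$ directly at the limiting minimum and use $g'(\xi)<0$ for $\xi\in[1-\delta,1]$ to force a sign violation, whereas the paper applies the packaged comparison Lemma~\ref{lemma:trapped_entire_sol:max_principle} to $z = \omega^\infty - v^{\rho,\infty} + \epsilon^*$ on the translated half-space $E_0$, verifying its three hypotheses (the monotonicity of $g$ near $1$ furnishing the differential inequality~(iii)) and then invoking the propagation step to reach the boundary strip where $z\ge\epsilon^*$. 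The boundary bookkeeping you flag---making sure the discrete Laplacian neighbours of the limiting minimum remain in the region where $z$ is controlled---is exactly what the width-$m_* = \sigma_\infty-1$ strip condition in Lemma~\ref{lemma:trapped_entire_sol:max_principle} is designed to absorb.
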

\begin{proof}
We follow the outline of the proof from~\cite[\S 4]{jukic2019dynamics}, which can be seen as a spatially discrete version of~\cite[\S 3]{berestycki2007generalized} where continuous travelling waves were considered. 
We only establish~\textit{(\ref{item:omega:vrho_lemma:item1})}, since~\textit{(\ref{item:omega:vrho_lemma:item2})} can be obtained in a similar fashion using the set $F_\kappa$ from Lemma~\ref{lemma:trapped_entire_sol:max_principle2} instead of the set $E_{\kappa}$ from Lemma~\ref{lemma:trapped_entire_sol:max_principle}.

Due to the global bounds on the functions $\omega$ and $v^\rho$, the quantity
\begin{equation}
    \epsilon^* = \inf\left\{\epsilon>0: v^\rho \leq \omega+\epsilon \ \text{in} \ E_\kappa\right\}
\end{equation}
is finite and by continuity we have
\begin{equation}\label{eqn:omega_limit_points:trapped:ineqality}
    v^\rho \leq \omega+\epsilon^* \quad \text{in} \ E_\kappa.
\end{equation}
To prove the claim we must show that $\epsilon^*=0$. 

Assuming to the contrary that $\epsilon^* > 0$, we find sequences $(n_k, l_k, t_k)\in E_\kappa$ and $\epsilon_k \nearrow \epsilon^*$ such that  
\begin{equation}\label{eqn:trapped_waves:ineq_omega_eps_v}
    \omega_{n_k, l_k}(t_k) + \epsilon_k < v^\rho_{n_k, l_k}(t_k) \leq \omega_{n_k, l_k}(t_k) +\epsilon^*.
\end{equation}
The right inequality above together with the bounds~\eqref{eqn:trapped_entire_sol:function_trapped_between_two_tw} implies that the sequence $n_k-c_*t_k$ is bounded.  Applying a similar construction to that in the proof of Corollary~\ref{cor:omega_limit_points:construction2},  we obtain a function $\omega^\infty \in C^1\left(\R;\ell^{\infty}(\Zs)\right)$ for which we have the limits
\begin{equation}\label{eqn:omega_limit_points:trapped:limit}
    \begin{aligned}
&\lim_{k\to\infty} \omega_{n+n_k , l + l_k} (t+t_k) =     \omega^\infty_{n, l}(t),
\\
&\lim_{k\to\infty} v^\rho_{n+n_k, l + l_k } (t+t_k) =     \omega^\infty_{n+N  , l+L}\left(t+N/c_*+ \rho/c_*\right).
    \end{aligned}
\end{equation}

We define the function $z\in C^1\left(\R, \ell^\infty(\Zs)\right)$ by
\begin{equation}
    z_{n,l}(t) = \omega^\infty_{n,l}(t) - \omega^\infty_{n+N,l+L}(t) + \epsilon^*
\end{equation}
and claim that $z$ satisfies conditions~\textit{(\ref{item:omega:comparison_principle:z>0})}-\textit{(\ref{item:omega:comparison_principle:diff_ineq})} of Lemma~\ref{lemma:trapped_entire_sol:max_principle} on the set $$E_0=\left\{(n,l,t)\in \Zs\times \R: n-c_*t \geq 0\right\}.$$
To see this, we first note that 
$n+n_k - c_*t -c_*t_k\geq \kappa$ holds 
 by construction on the set $E_0$. 
 Since the inequality~\eqref{eqn:omega_limit_points:trapped:ineqality} survives the limit~\eqref{eqn:omega_limit_points:trapped:limit}, we have $z_{n,l}(t)\geq 0$ on $E_0$, verifying~\textit{(\ref{item:omega:comparison_principle:z>0})}.  Turning to~\textit{(\ref{item:omega:comparison_principle:bc})}, we note that  the inequality~\eqref{eq:trp:ineq:v:sig:omega} implies that
    $$
    z_{n,l}(t) \geq \epsilon^*>0, \ \qquad \text{for} \ n-c_*t\in [0,  m_*].
    $$
    To establish~\textit{(\ref{item:omega:comparison_principle:diff_ineq})}, we pick $\delta>0$ in such a way that the function $g$ is non-increasing on the interval $[1-\delta, 1]$. Recalling that $\omega^\infty \in [1-\delta, 1]$ on $E_0$ and that $g$ is locally Lipschitz, we obtain the bound
 \begin{align*}
     \dot{z}_{n,l}(t) - (\Delta^\times z)_{n,l}(t) &= g\big(\omega^\infty_{n,l}(t)\big) - g\big(\omega^\infty_{n+N,l+L}(t)\big) \\
     & \geq g(\omega^\infty_{n,l}(t)+\epsilon^*) - g\big(\omega^\infty_{n+N,l+L}(t)\big) 
     \\
     &\geq -B z_{n,l}(t)
 \end{align*}
for any $(n,l,t) \in E_0$. We may hence apply  Lemma~\ref{lemma:trapped_entire_sol:max_principle} and conclude that $z>0$ on $E_0$. However, the inequalities~\eqref{eqn:trapped_waves:ineq_omega_eps_v} imply that
$        z_{0,0}(0) = 0 $,
 which is a contradiction. Therefore $\epsilon^*=0$ must hold, as desired. 
\end{proof}
\begin{lemma}\label{lemma:trapped_entire_sol:sigma leq 0}
 Consider the setting of Propostion~\ref{prop:trapped_entire_sol:every_trapped_entire_solution_is_a_traveling_wave},
 fix an arbitrary pair $(N,L)\in \Z^2$
 and recall the functions $v^\rho$ defined in \eqref{eqn:trapped_entire_sols:v_sigma}.
Then the quantity 
   \begin{equation}
       \rho_*:= \inf \left\{\rho\in \R: v^{\tilde{\rho}}\leq \omega  \text{ in } {\Z^2_\times}\times\R  \text{ for all }  \tilde{\rho} \geq \rho    \right\}
   \end{equation}
satisfies 
$\rho_*\leq 0$. 
\end{lemma}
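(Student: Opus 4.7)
My plan is to proceed via a sliding method adapted to the discrete lattice, in the spirit of Berestycki and Hamel~\cite{berestycki2007generalized}. First I would use the trapping bounds~\eqref{eqn:trapped_entire_sol:function_trapped_between_two_tw} on $\omega$ to verify that $\rho_*$ is well-defined with $\rho_* \leq 2\theta$. Writing $\xi = n - c_*t$ for the moving-frame coordinate, observe that for every $\rho \geq 2\theta$ the chain
\begin{equation*}
    v^\rho_{n,l}(t) \leq \Phi_*(\xi - \rho + \theta) \leq \Phi_*(\xi - \theta) \leq \omega_{n,l}(t)
\end{equation*}
holds by the monotonicity of $\Phi_*$, so the admissible set in the definition of $\rho_*$ contains $[2\theta, \infty)$. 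Continuity of $\rho \mapsto v^\rho_{n,l}(t)$ for fixed $(n,l,t)$, which follows from the uniform Lipschitz continuity of $\omega$ in $t$ obtained from~\eqref{eqn:main:discrete_AC_new}, then yields $v^{\rho_*} \leq \omega$ pointwise after passing to the limit $\rho \searrow \rho_*$.

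To establish $\rho_* \leq 0$, I would argue by contradiction. Assume $\rho_* > 0$ and fix a small $\epsilon > 0$ with $\tilde\rho := \rho_* - \epsilon > 0$, aiming to show that $v^{\tilde\rho} \leq \omega$ on $\Zs \times \R$ and thereby violate the minimality of $\rho_*$. The key reduction uses Lemma~\ref{lemma:trapped_entire_sol:first_lemma}: pick $\kappa_+$ large enough that $\Phi_*(\kappa_+ - \theta) \geq 1 - \delta$, which forces $\omega \geq 1 - \delta$ on $\{\xi \geq \kappa_+\}$, and pick $\kappa_-$ small enough that $\Phi_*(\kappa_- - \tilde\rho + \theta) \leq \delta$, which forces $v^{\tilde\rho} \leq \delta$ on $\{\xi \leq \kappa_-\}$. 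Applying items~\eqref{item:omega:vrho_lemma:item1} and~\eqref{item:omega:vrho_lemma:item2} of that lemma, verification of $v^{\tilde\rho} \leq \omega$ on all of $\Zs \times \R$ reduces to its verification on the compact-in-$\xi$ strip
\begin{equation*}
    \Sigma = \bigl\{(n,l,t) \in \Zs \times \R : \kappa_- - m_* \le n - c_* t \le \kappa_+ + m_* \bigr\}.
\end{equation*}

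The decisive step is to exhibit a uniform bound $\omega - v^{\rho_*} \geq \eta > 0$ on $\Sigma$, after which the Lipschitz continuity of $\omega$ in $t$ yields $\omega - v^{\tilde\rho} \geq \eta/2$ on $\Sigma$ for $\epsilon$ sufficiently small, closing the contradiction. To produce $\eta$, I would assume instead that the infimum of $\omega - v^{\rho_*}$ on $\Sigma$ vanishes and extract a sequence $(n_k, l_k, t_k) \in \Sigma$ along which this difference tends to zero. Because $\xi_k = n_k - c_*t_k$ lies in a compact interval, the triples $(n_k, l_k, t_k)$ fall into the class $\mathcal{S}$ of~\eqref{eqn:omega:set_S}, so a diagonal construction analogous to Proposition~\ref{cor:omega_limit_points:construction2} produces limits $\omega^\infty \geq v^{\rho_*,\infty}$ that are entire solutions of~\eqref{eqn:main:discrete_AC_new} trapped between two shifts of $\Phi_*$ and touching at an interior point lying in the wave region $0 < \omega^\infty < 1$. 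Setting $z = \omega^\infty - v^{\rho_*,\infty}$, the linear LDE satisfied by $z$ (obtained by subtraction and a mean-value expansion of $g$) falls into the framework of Lemmas~\ref{lemma:trapped_entire_sol:max_principle}--\ref{lemma:trapped_entire_sol:max_principle2}, and the discrete strong maximum principle forces $z \equiv 0$ across a half-space. Unfolding the definition of $v^\rho$ yields a strict translation identity for $\omega^\infty$ in the direction $(N, L, N/c_* + \rho_*/c_*)$, which when iterated conflicts with the wave asymptotics $\omega^\infty \to 0, 1$ as $\xi \to \mp\infty$ whenever $\rho_* > 0$.

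The main obstacle is the non-compactness of $\Sigma$ in the transverse coordinate $l$ and time $t$, which prevents a direct attained-minimum argument on $\omega - v^{\rho_*}$. Passing through the $\omega$-limit construction converts the non-compact minimization into a touching analysis for a pair of limiting entire solutions, at which point the discrete strong maximum principle together with the lattice translation structure can be exploited to reach the contradiction.
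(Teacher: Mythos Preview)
Your proposal is correct and follows essentially the same sliding-method strategy that the paper invokes, via its reference to \cite[Lemma~4.4]{jukic2019dynamics} and ultimately to Berestycki--Hamel~\cite{berestycki2007generalized}: establish $\rho_*\le 2\theta$ from the trapping bounds, assume $\rho_*>0$, use Lemma~\ref{lemma:trapped_entire_sol:first_lemma} to reduce to a strip that is compact in $\xi=n-c_*t$, obtain a uniform gap $\omega-v^{\rho_*}\ge\eta>0$ on that strip via an $\omega$-limit/touching argument, and then slide by a small amount to contradict minimality.

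One minor point of imprecision: your appeal to Lemmas~\ref{lemma:trapped_entire_sol:max_principle}--\ref{lemma:trapped_entire_sol:max_principle2} for the touching step is slightly off, since those lemmas require a boundary layer where $z\ge\epsilon>0$, which you do not have. What you actually use is the elementary propagation argument contained in their proofs: at the touching point $z_{0,0}(0)=0$ is a global minimum in space \emph{and} time (since $v^{\rho_*}\le\omega$ globally), so $\dot z_{0,0}(0)=0$, whence $(\Delta^\times z)_{0,0}(0)=0$ forces all four lattice neighbours to vanish; iterating over the connected lattice $\Zs$ gives $z_{\cdot,\cdot}(0)\equiv 0$, and then uniqueness for the Allen--Cahn LDE yields $\omega^\infty\equiv v^{\rho_*,\infty}$ on all of $\Zs\times\R$ (not merely a half-space). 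The iteration-plus-asymptotics contradiction then goes through exactly as you describe.
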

   
\begin{proof} 
One can obtain this result by following the outline presented in the proof of~\cite[Lemma 4.4]{jukic2019dynamics}. Instead of~\cite[Lemma 4.3]{jukic2019dynamics},  one now needs to employ  Lemma~\ref{lemma:trapped_entire_sol:first_lemma}.   
\end{proof}

\begin{proof}[Proof of Proposition~\ref{prop:trapped_entire_sol:every_trapped_entire_solution_is_a_traveling_wave}]
From Lemma~\ref{lemma:trapped_entire_sol:sigma leq 0}, it follows that 
    \begin{equation}
        \omega_{n,l}(t)\geq \omega_{n+N,l+L}(t+{N}/{c_*}) \quad \text{on }{\Z^2_\times}\times\R. 
    \end{equation}
Since the pair $(N,L)\in \Z_\times^2$ is arbitrary, we can conclude that the function $\omega$ depends only on the difference $n-c_*t$. 
In particular, there exists a function $\varphi$ such that $\omega_{n,l}(t) = \varphi(n-c_*t)$. The result now follows directly from the fact that solutions to the travelling wave problem \eqref{eqn:main:mdfe}-\eqref{eqn:main_results_boundary_cond} for  $c_* \neq 0$ 
are unique up to translation; see~\cite{Mallet-Paret1999}.   
\end{proof}

\section{Large time behaviour of  \texorpdfstring{$u$}{Lg}}\label{sec:gamma}

In this section we establish Proposition~\ref{thm:main_results:gamma_approx_u} by studying the qualitative large time behaviour of the solution $u$ within the interfacial region 
\begin{equation}
    I_t = \left\{(n,l)\in \Zs: \Phi_*\left(-\sigma_*^2-1 \right)\leq u_{n,l}(t) \leq \Phi_*\left(\sigma_*^2+1\right) \right\}, 
\end{equation}
which represents the points at which a solution $u$ is close to $\Phi_*(0) = 1/2$.  The boundary values $\Phi_*(-\sigma_*^2-1)$  and $\Phi_*(\sigma_*^2+1)$  were carefully chosen to ensure that $I_t$ is nonempty for large $t$, which we show in  Proposition~\ref{prop:large_time_behaviour:interface_prop}. In addition, we  show that for a fixed pair $(l,t)$ the map $n \mapsto u_{n,l}(t)$ is monotone within $I_t$, in the sense that the differences $u_{n+\sigma_*^2, l}(t) - u_{n, l}(t) $ are  bounded from below uniformly in time. 

In addition to the monotonicity within $I_t$, the map $n \mapsto u_{n,l}(t)$ cannot exit throughout the lower boundary once it enters the interfacial region from below. Similarly, it cannot reenter the interval once it has left through the upper boundary. All together, these
results provide sufficient control in the crucial
region away from the stable equilibria zero and one
to uniquely define the phase $\gamma$ by the procedure described in \S\ref{sec:interface formation}.

The results of this section are a generalization of the results presented in~\cite[\S 5]{jukic2019dynamics},
requiring us to take into account several technical differences that arise due to the additional complexities of working with $\Zs$ rather than $\Z^2$. Moreover, our construction of the phase $\gamma(t)$ here
is more refined than the setup in ~\cite{jukic2019dynamics}, which also causes several modifications to the proofs.

\begin{proposition}\label{prop:large_time_behaviour:interface_prop}
Consider the setting of Proposition~\ref{thm:main_results:gamma_approx_u}. Then there exists $T>0$ such that the following claims hold true.
\begin{enumerate}[(i)]
    \item\label{item:large_time:interface_prop:nonempty} For each $t\geq T$ and $l\in \Z$ there exists $n_* = n_*(l,t)\in \Z$ for which      
 \begin{equation}\label{eqn:phase gamma:nonempty set around 1/2}
      \Phi_*(-\sigma_*^2-1) < u_{n_*,l}(t) \leq \frac{1}{2}.  
  \end{equation}
\item We have the inequality 
\begin{equation}\label{eqn:phase gamma:derivative bounded from below}
	\begin{aligned} 
		&\inf_{t \ge T, \,(n, l)\in I_t} u_{n+\sigma_*^2, l}(t) - u_{n,l} (t) > 0. \\	
	\end{aligned}
	\end{equation}
\item Consider any $t\geq T$ and $(n,l) \in \Zs$ for which
$u_{n,l}(t) \leq \Phi_*(-\sigma_*^2-1)$ holds.
Then we also have $u_{n-\sigma_*^2,l}(t) \leq \Phi_*(-\sigma_*^2-1)$.
\item  Consider any $t\geq T$ and $(n,l) \in \Zs$
for which $u_{n,l}(t) \geq \Phi_*(\sigma_*^2+1)$
holds. Then we also have
$u_{n+\sigma_*^2,l}(t) \geq \Phi_*(\sigma_*^2+1)$.
\end{enumerate}
\end{proposition}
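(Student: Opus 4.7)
The plan is to establish all four claims through a single $\omega$-limit contradiction scheme, in each case extracting a sequence $(n_k, l_k, t_k)$ with $t_k \to \infty$ along which the stated inequality fails, invoking Proposition~\ref{prop:omega_limit_points:main_prop} to produce a limiting planar travelling wave $\Phi_*(n - c_* t + \theta_0)$, and deriving a numerical contradiction from the strict monotonicity of $\Phi_*$ at the specific offsets $-\sigma_*^2 - 1$, $-\sigma_*^2$, $0$, $\sigma_*^2$ and $\sigma_*^2 + 1$. The preliminary step common to all four items is the verification that any such sequence belongs to the admissible set $\mathcal{S}$ from \eqref{eqn:omega:set_S}, i.e.\ that $|n_k - c_* t_k|$ is uniformly bounded. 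This follows directly from Lemma~\ref{lemma:omega_limit_points:upper and lower bounds}: whenever $u_{n_k, l_k}(t_k)$ is bounded away from both stable equilibria, the sandwich by translates of $\Phi_*(\cdot - c_* t)$ up to exponentially small error terms forces $n_k - c_* t_k$ into a fixed compact interval for all sufficiently large $t_k$.

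For item~\textit{(i)}, assume it fails along some sequence $(t_k, l_k)$ with $t_k \to \infty$. By (H0) and Lemma~\ref{lemma:omega_limit_points:upper and lower bounds}, the map $n \mapsto u_{n, l_k}(t_k)$ takes values below $\Phi_*(-\sigma_*^2 - 1)$ for $n$ sufficiently negative and values above $\tfrac{1}{2}$ for $n$ sufficiently positive; if it avoids the target interval $(\Phi_*(-\sigma_*^2 - 1), \tfrac{1}{2}]$ entirely, we can locate consecutive lattice sites $n_k$ and $n_k + \sigma_*^2$ in the row $l = l_k$ of $\Zs$ with $u_{n_k, l_k}(t_k) \le \Phi_*(-\sigma_*^2 - 1)$ and $u_{n_k + \sigma_*^2, l_k}(t_k) > \tfrac{1}{2}$. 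Passing to the $\omega$-limit yields $\Phi_*(\theta_0) \le \Phi_*(-\sigma_*^2 - 1)$ and $\Phi_*(\sigma_*^2 + \theta_0) \ge \Phi_*(0)$, so strict monotonicity forces simultaneously $\theta_0 \le -\sigma_*^2 - 1$ and $\theta_0 \ge -\sigma_*^2$, a contradiction. For item~\textit{(ii)}, a failing sequence $(n_k, l_k, t_k)$ with $(n_k, l_k) \in I_{t_k}$ and $u_{n_k + \sigma_*^2, l_k}(t_k) - u_{n_k, l_k}(t_k) \to 0$ is automatically admissible by the definition of $I_{t_k}$, and the limit delivers the identity $\Phi_*(\theta_0) = \Phi_*(\sigma_*^2 + \theta_0)$, again contradicting strict monotonicity.

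Items~\textit{(iii)} and \textit{(iv)} are handled identically. A counterexample to \textit{(iii)} produces $(n_k, l_k, t_k)$ with $u_{n_k, l_k}(t_k) \le \Phi_*(-\sigma_*^2 - 1)$ and $u_{n_k - \sigma_*^2, l_k}(t_k) > \Phi_*(-\sigma_*^2 - 1)$, leading in the limit to $\theta_0 \le -\sigma_*^2 - 1$ together with $-\sigma_*^2 + \theta_0 \ge -\sigma_*^2 - 1$, i.e.\ $\theta_0 \ge -1$, which is incompatible with $\sigma_*^2 \ge 1$. Item~\textit{(iv)} follows by the dual argument using the upper threshold $\Phi_*(\sigma_*^2 + 1)$, yielding $\theta_0 \ge \sigma_*^2 + 1$ and $\theta_0 \le 1$. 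The main obstacle I anticipate is the bookkeeping required to verify admissibility in $\mathcal{S}$ for items~\textit{(iii)} and \textit{(iv)}, where only a one-sided bound on $u_{n_k, l_k}(t_k)$ relative to the threshold is directly available; the remedy is to exploit the neighbouring index, at which $u$ lies on the opposite side of the threshold, to extract the missing bound on $n_k - c_* t_k$ from Lemma~\ref{lemma:omega_limit_points:upper and lower bounds}, after which the $\omega$-limit machinery applies uniformly across all four cases.
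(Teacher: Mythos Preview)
Your proposal is correct and follows essentially the same $\omega$-limit contradiction scheme as the paper. The only organisational difference is that the paper first isolates the monotonicity statement on strips $\mathcal{I}(T,R)$ as a separate lemma (Lemma~\ref{lemma:monotonicity_of_u_interface_region}) and then derives items~\textit{(iii)} and~\textit{(iv)} from that monotonicity, whereas you run the direct limit argument for each item; both routes are equivalent in content and difficulty.
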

\begin{proof}[Proof of Proposition~\ref{prop:main:interface}]
The statement follows directly from item~\ref{item:large_time:interface_prop:nonempty} of Proposition~\ref{prop:large_time_behaviour:interface_prop}. 
\end{proof}

In the following proposition we provide an asymptotic flatness result for the phase $\gamma$. This feature is a crucial property that allows us to construct the super- and sub-solutions 
that we use in the proof of Theorem~\ref{thm:main_result:gamma_mcf} and consequently Theorem~\ref{thm:mr:periodicity+decay:stability}.   
 
\begin{proposition}\label{prop:large_time_behaviour:l_differences_gamma}
Consider the setting of the Proposition~\ref{prop:large_time_behaviour:interface_prop} and recall the phase $\gamma:[T, \infty) \to \ell^\infty(\Z)$ defined in~\eqref{eqn:main:def_of_gamma}. Then we have the limit
\begin{equation*}
    \lim_{t\to\infty} \sup_{l\in \Z} |\gamma_{l+1}(t) - \gamma_l(t)| = 0.
\end{equation*}
\end{proposition}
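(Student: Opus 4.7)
The plan is to argue by contradiction using the $\omega$-limit point machinery from Section~\ref{sec:omega}. Suppose the conclusion fails; then there is some $\epsilon_0>0$ together with sequences $t_k \to \infty$ and $l_k \in \Z$ for which $|\gamma_{l_k+1}(t_k) - \gamma_{l_k}(t_k)| \geq \epsilon_0$. We will show that this difference actually tends to zero along an appropriate subsequence, producing the contradiction. The starting point is to verify that $\big(n_*(l_k,t_k),l_k,t_k\big)_k$ lies in the admissible set $\mathcal{S}$ from \eqref{eqn:omega:set_S}. This requires the uniform bound $|n_*(l,t)-c_*t|\leq M$, which we obtain by combining the super- and sub-solutions from Lemma~\ref{lemma:omega_limit_points:upper and lower bounds} with the defining inequalities $u_{n_*,l}(t)\leq \tfrac12 < u_{n_*+\sigma_*^2,l}(t)$ and the fact that $\Phi_*^{-1}(\tfrac12 + o(1))$ is bounded.

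With this in hand, Proposition~\ref{prop:omega_limit_points:main_prop} yields (along a subsequence) a shift $\theta_0 \in \R$ so that $u_{n + n_*(l_k,t_k),\, l+l_k}(t+t_k) \to \Phi_*(n - c_* t + \theta_0)$ in $C_{\mathrm{loc}}(\Zs \times \R)$. Evaluating at the lattice points $(n,l,t) = (0,0,0)$ and $(\sigma_*^2,0,0)$, and applying the continuous function $\Phi_*^{-1}$ (whose arguments stay in a compact subset of $(0,1)$ by the definition of $n_*$), we obtain $\theta_{l_k}^-(t_k) \to \theta_0$ and $\theta_{l_k}^+(t_k) \to \sigma_*^2 + \theta_0$. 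Plugging these into \eqref{eqn:main:zeta} gives $\vartheta_*(l_k,t_k) \to -\theta_0$, i.e.\ $\gamma_{l_k}(t_k) - n_*(l_k,t_k) \to -\theta_0$.

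To handle the adjacent level $l_k+1$ we avoid introducing $n_*(l_k+1,t_k)$ and instead compare the $\omega$-limit directly with Proposition~\ref{thm:main_results:gamma_approx_u}. Since $\gcd(\sigma_h,\sigma_v) = 1$, Bezout supplies an integer $n_0$ with $(n_0, 1) \in \Zs$. On the one hand, the $\omega$-limit evaluated at $(n,l,t) = (n_0,1,0)$ yields $u_{n_*(l_k,t_k)+n_0,\, l_k+1}(t_k) \to \Phi_*(n_0 + \theta_0)$. On the other hand, Proposition~\ref{thm:main_results:gamma_approx_u} applied at $(n_*(l_k,t_k)+n_0, l_k+1) \in \Zs$ gives $u_{n_*(l_k,t_k)+n_0,\, l_k+1}(t_k) = \Phi_*\big(n_*(l_k,t_k)+n_0 - \gamma_{l_k+1}(t_k)\big) + o(1)$. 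Since the limit $\Phi_*(n_0+\theta_0)$ lies strictly inside $(0,1)$, strict monotonicity and continuity of $\Phi_*^{-1}$ force $\gamma_{l_k+1}(t_k) - n_*(l_k,t_k) \to -\theta_0$. Subtracting from the previous paragraph delivers $\gamma_{l_k+1}(t_k) - \gamma_{l_k}(t_k) \to 0$, contradicting our standing assumption.

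The only real subtlety we anticipate is the sublattice book-keeping: because $\Zs \subsetneq \Z^2$ whenever $\sigma_*^2 > 1$, one must ensure every pair at which the $\omega$-limit is probed actually belongs to $\Zs$. This is immediate for the shifts $(0,0)$ and $(\sigma_*^2, 0)$ relative to the base point $(n_*(l_k,t_k),l_k) \in \Zs$, and for the $l_k+1$ step it is handled by the Bezout choice of $n_0$; closure of $\Zs$ under addition then guarantees $(n_0 + n_*(l_k,t_k),\, 1 + l_k) \in \Zs$. Beyond that, the argument only uses local invertibility of $\Phi_*$ on a compact subset of $(0,1)$ and the uniform approximation of Proposition~\ref{thm:main_results:gamma_approx_u}, both of which are already in place.
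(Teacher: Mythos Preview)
Your argument is correct, and it takes a genuinely different route from the paper's own proof. The paper first establishes an auxiliary result (Lemma~\ref{lemma:phase_asymp:diff_of_n}) showing that $|n_*(l+1,t)-n_*(l,t)|\le \sigma_*^2$ for large $t$; it then passes to a subsequence with $n_*(l_k+1,t_k)=n_*(l_k,t_k)+m$ for a fixed integer $m$, applies the $\omega$-limit of Proposition~\ref{prop:omega_limit_points:main_prop} to compute $\vartheta_*(l_k,t_k)\to -\theta_0$ and $\vartheta_*(l_k+1,t_k)\to -m-\theta_0$ directly from the definition~\eqref{eqn:main:zeta}, and concludes $\gamma_{l_k+1}(t_k)-\gamma_{l_k}(t_k)=m+\vartheta_*(l_k+1,t_k)-\vartheta_*(l_k,t_k)\to 0$. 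By contrast, you avoid introducing $n_*(l_k+1,t_k)$ altogether: for the neighbour $l_k+1$ you combine the $\omega$-limit at a Bezout point $(n_0,1)\in\Zs$ with the already-established uniform approximation of Proposition~\ref{thm:main_results:gamma_approx_u}, and then invert $\Phi_*$. This is legitimate because Proposition~\ref{thm:main_results:gamma_approx_u} is proved earlier in \S\ref{sec:gamma} and does not depend on the present statement, so there is no circularity. Your route is slightly more economical in that it dispenses with Lemma~\ref{lemma:phase_asymp:diff_of_n}; the paper's route is more self-contained at the level of the raw interface data $n_*$ and $\vartheta_*$, and yields the boundedness of $n_*(l+1,t)-n_*(l,t)$ as a by-product. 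One cosmetic point: your justification ``whose arguments stay in a compact subset of $(0,1)$ by the definition of $n_*$'' is not quite how the upper bound for $u_{n_*+\sigma_*^2,l}$ is obtained --- it follows instead from the convergence $u_{n_*(l_k,t_k)+\sigma_*^2,l_k}(t_k)\to\Phi_*(\sigma_*^2+\theta_0)\in(0,1)$ --- but this does not affect the validity of the argument.
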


\subsection{Phase construction}
In this subsection we prove Proposition~\ref{prop:large_time_behaviour:interface_prop}, mainly by relying on the convergence results
from  Proposition~\ref{prop:omega_limit_points:main_prop}. 
As a preparation,
we define the set
\begin{equation}
    \mathcal{I}(T,R) : = \left\{ (n,l,t) \in \Zs\times [T, \infty): |n-c_*t |\leq R \right\}
\end{equation}
for any pair of positive constants $T$ and $R$
and we also remind the reader of the set of sequences $\mathcal{S}$ defined in~\eqref{eqn:omega:set_S}. 
\begin{lemma}\label{lemma:monotonicity_of_u_interface_region}
Consider the setting of Proposition~\ref{prop:large_time_behaviour:interface_prop} and pick a constant $R>0$. Then there exists a constant $T>0$ such that
\begin{equation*}
       \inf_{ (n,l,t) \in \mathcal{I}(T,R)} u_{n+\sigma_*^2,l}(t) - u_{n,l}(t) >0.
\end{equation*}
\end{lemma}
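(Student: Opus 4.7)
The plan is to argue by contradiction and exploit the $\omega$-limit characterization from Proposition~\ref{prop:omega_limit_points:main_prop}. Suppose the conclusion fails: then for every integer $k\geq 1$ there exists a triple $(n_k,l_k,t_k)\in\mathcal{I}(k,R)$ such that
\begin{equation*}
 u_{n_k+\sigma_*^2,l_k}(t_k) - u_{n_k,l_k}(t_k) \leq \tfrac{1}{k}.
\end{equation*}
By construction $t_k\geq k\to\infty$ and $|n_k-c_* t_k|\leq R$, so after passing to a subsequence we may assume $0<t_1<t_2<\dots\to\infty$. In particular $(n_k,l_k,t_k)_{k\geq 1}\in\mathcal{S}$ (with the bounding constant $M=R$).

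Applying Proposition~\ref{prop:omega_limit_points:main_prop} yields a further subsequence, which I will still denote by $(n_k,l_k,t_k)$, together with a shift $\theta_0\in\R$ such that
\begin{equation*}
 u_{n+n_k,\,l+l_k}(t+t_k)\;\longrightarrow\;\Phi_*\bigl(n-c_* t+\theta_0\bigr) \qquad \text{in } C_{\mathrm{loc}}(\Z_\times^2\times\R).
\end{equation*}
Evaluating this convergence at the two fixed points $(n,l,t)=(0,0,0)$ and $(n,l,t)=(\sigma_*^2,0,0)$ gives
\begin{equation*}
 u_{n_k,l_k}(t_k)\to \Phi_*(\theta_0),\qquad u_{n_k+\sigma_*^2,l_k}(t_k)\to \Phi_*(\sigma_*^2+\theta_0).
\end{equation*}

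By (H$\Phi$) together with the Mallet-Paret uniqueness/monotonicity result \cite{Mallet-Paret1999} (which gives \emph{strict} monotonicity of $\Phi_*$ whenever $c_*\neq 0$), we have
\begin{equation*}
 \Phi_*(\sigma_*^2+\theta_0)-\Phi_*(\theta_0)>0.
\end{equation*}
Taking the limit $k\to\infty$ in the defining inequality for our sequence hence produces
\begin{equation*}
 0 < \Phi_*(\sigma_*^2+\theta_0)-\Phi_*(\theta_0)
  = \lim_{k\to\infty}\bigl(u_{n_k+\sigma_*^2,l_k}(t_k)-u_{n_k,l_k}(t_k)\bigr)\leq 0,
\end{equation*}
a contradiction.

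The argument is essentially self-contained once Proposition~\ref{prop:omega_limit_points:main_prop} is invoked; the only delicate ingredient is the verification that $(n_k,l_k,t_k)\in\mathcal{S}$, which is guaranteed by the bound $|n_k-c_* t_k|\leq R$ built into $\mathcal{I}(T,R)$. The strict positivity of the limiting difference $\Phi_*(\sigma_*^2+\theta_0)-\Phi_*(\theta_0)$ is what drives the contradiction, and it is exactly here that the assumption $c_*\neq 0$ (and hence the strict monotonicity of $\Phi_*$) plays its role. The main conceptual point is therefore the principle that the long-time dynamics of $u$ along bounded excursions from the characteristic $n=c_*t$ collapses onto a genuine travelling wave, so any pointwise monotonicity enjoyed by $\Phi_*$ is inherited by $u$ in a uniform way, independent of $l$ and $t$ for $t$ sufficiently large.
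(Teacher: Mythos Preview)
Your proof is correct and follows essentially the same approach as the paper's own proof: both argue by contradiction, extract a sequence in $\mathcal{S}$, invoke Proposition~\ref{prop:omega_limit_points:main_prop} to pass to a travelling-wave limit, and then derive a contradiction from the strict monotonicity of $\Phi_*$. The only cosmetic difference is that the paper writes the contradictory inequality as $\Phi_*(\sigma_*^2+\theta_0)-\Phi_*(\theta_0)\leq 0$ directly, whereas you phrase it as a two-sided estimate, but the substance is identical.
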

\begin{proof}
Assume to the contrary that there exists a constant $R>0$ such that 
\begin{equation}\label{eqn:monotonicity_interface_region}
    \inf_{(n,l,t)\in \mathcal{I}(T,R)} u_{n+\sigma_*^2,l}(t) - u_{n,l}(t) \leq 0
\end{equation}
holds for every $T>0$. That implies that we can find a sequence $(n_k, l_k, t_k)_{k \ge 0} \in \mathcal{S}$ such that
\begin{equation}\label{eqn:large_behaviour_u:inf_in_interface_ineq}
    u_{n_k+\sigma_*^2,l_k}(t_k) - u_{n_k,l_k}(t_k) \leq \dfrac{1}{k}.
\end{equation}
By virtue of Proposition~\ref{prop:omega_limit_points:main_prop}, we can find $\theta_0\in \R$ and pass to a subsequence for which we have the convergence
\begin{equation*}
    u_{n+n_k, l+l_k}(t+t_k)\to \Phi_*(n-c_*t +\theta_0) \text{ in } C_{\text{loc}}(\Zs\times\R).
\end{equation*}
Therefore, letting $k\to\infty$ in~\eqref{eqn:large_behaviour_u:inf_in_interface_ineq} leads to  
\begin{equation}\label{eqn:large_behaviour_u:omega_ineq}
     \Phi_*(\sigma_*^2 + \theta_0) - \Phi_*(\theta_0) \leq 0,
\end{equation}
which contradicts the monotonicity of the function $\Phi_*$.
\end{proof}

\begin{proof}[Proof of Proposition~\ref{prop:large_time_behaviour:interface_prop}]
We first establish~\textit{(iv)}. Arguing by contradiction, assume that there exists a sequence $(n_k, l_k, t_k)_{k \ge 0}$, $0<t_1<t_2<...\to \infty$ such that
\begin{equation}\label{eqn:large_behaviour_u:contradiction_item_3}
    u_{n_k, l_k}(t_k) \geq \Phi_*(\sigma_*^2+1) \quad \text{and} \quad   u_{n_k+\sigma_*^2, l_k}(t_k) < \Phi_*(\sigma_*^2+1). 
\end{equation}
The bounds in Lemma~\ref{lemma:omega_limit_points:upper and lower bounds} imply that the sequence $n_k - c_*t_k$ is bounded by some constant $R$. We can now apply Lemma~\ref{lemma:monotonicity_of_u_interface_region} to conclude 
$u_{n_k+\sigma_*^2, l_k}(t_k) -  u_{n_k, l_k}(t_k) > 0 $, which contradicts~\eqref{eqn:large_behaviour_u:contradiction_item_3} due to the strict monotonicity of the function $\Phi_*$. Items \textit{(i)} and 
\textit{(iii)} follow in a similar way. 

To prove item \textit{(ii)}, we 
choose a $T$ that satisfies \textit{(i)}, \textit{(iii)} and \textit{(iv)}  and pick $t\geq T$ together with $(n,l)\in I_t$. Upon further increasing $T$, Lemma~\ref{lemma:omega_limit_points:upper and lower bounds} implies that $n-c_*t $ is bounded by some constant $R>0$ that only depends on $T$. Therefore, we have shown that
\begin{equation*}
    \left\{(n,l,t):t\geq T, (n,l)\in I_t\right\}\subseteq \mathcal{I}(T, R).
\end{equation*}
The desired bound now follows directly from Lemma~\ref{lemma:monotonicity_of_u_interface_region}.
\end{proof}

\begin{lemma}\label{lemma:phase_gamma:gamma-ct_bounded}
   Consider the setting of Proposition~\ref{prop:large_time_behaviour:interface_prop}
and recall the phase $ \gamma: [T, \infty) \to \ell^\infty(\Z) $
defined in \eqref{eqn:main:def_of_gamma}. Then there exists $T_*\geq T$  such that
the difference $n_*(l,t) - c_* t$ is uniformly bounded
for $t\geq T_*$ and $l\in \Z$ . In particular,  we can find a constant $M>0$ so that
\begin{align*}
    \norm{\gamma(t) - c_* t}_{\ell^\infty} \leq M ,
    \qquad \qquad t \ge T_*.
\end{align*}
\end{lemma}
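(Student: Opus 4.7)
The plan is to sandwich $n_*(l,t) - c_*t$ between two constants by combining the trapping estimates from Lemma~\ref{lemma:omega_limit_points:upper and lower bounds} with the defining inequalities $u_{n_*,l}(t) \le \tfrac{1}{2} < u_{n_*+\sigma_*^2,l}(t)$ from Proposition~\ref{prop:main:interface}, and then to pay the bounded correction $\vartheta_*(l,t) \in [0,\sigma_*^2]$ in order to pass from $n_*$ to $\gamma_l$.

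First I would fix $q_1, q_2 \in (0,\tfrac{1}{4})$ small enough that Lemma~\ref{lemma:omega_limit_points:upper and lower bounds} applies. Since the factors $1 - e^{-\mu(t-T)}$ and $e^{-\mu(t-T)}$ are bounded uniformly for $t \ge T$, the time-dependent shifts inside $\Phi_*$ can be absorbed into two finite constants $A_+, A_- \in \R$ (using the monotonicity of $\Phi_*$), leaving the clean uniform bounds
\begin{equation*}
    \Phi_*\bigl(n - c_*t - A_-\bigr) - q_2 \;\le\; u_{n,l}(t) \;\le\; \Phi_*\bigl(n - c_*t + A_+\bigr) + q_1,
\end{equation*}
valid for all $t \ge T$ and $(n,l) \in \Zs$.

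Next I would evaluate the lower bound at $n = n_*(l,t)$ and use $u_{n_*,l}(t) \le \tfrac{1}{2}$ to obtain $\Phi_*(n_*(l,t) - c_*t - A_-) \le \tfrac{1}{2} + q_2 < \tfrac{3}{4}$, so that strict monotonicity of $\Phi_*$ yields
\begin{equation*}
    n_*(l,t) - c_*t \;\le\; A_- + \Phi_*^{-1}(\tfrac{3}{4}).
\end{equation*}
Symmetrically, evaluating the upper bound at $n = n_*(l,t) + \sigma_*^2$ and using $u_{n_*+\sigma_*^2,l}(t) > \tfrac{1}{2}$ gives $\Phi_*(n_*(l,t) + \sigma_*^2 - c_*t + A_+) > \tfrac{1}{2} - q_1 > \tfrac{1}{4}$, hence $n_*(l,t) - c_*t \ge \Phi_*^{-1}(\tfrac{1}{4}) - A_+ - \sigma_*^2$. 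Combining these two one-sided bounds produces a constant $M_0 > 0$, depending only on the fixed data, with $|n_*(l,t) - c_*t| \le M_0$ uniformly in $l \in \Z$ and $t \ge T_*$ for some $T_* \ge T$.

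To transfer the bound from $n_*$ to $\gamma$ I would note that $u_{n_*,l}(t) \in (0, \tfrac{1}{2}]$ together with $\Phi_*(0) = \tfrac{1}{2}$ and the monotonicity of $\Phi_*^{-1}$ force $\theta_l^-(t) \le 0$, while $u_{n_*+\sigma_*^2,l}(t) > \tfrac{1}{2}$ forces $\theta_l^+(t) > 0$. The definition \eqref{eqn:main:zeta} then immediately places $\vartheta_*(l,t) = -\sigma_*^2 \theta_l^-(t)/(\theta_l^+(t) - \theta_l^-(t))$ in $[0, \sigma_*^2]$, so \eqref{eqn:main:def_of_gamma} yields $\norm{\gamma(t) - c_*t}_{\ell^\infty} \le M_0 + \sigma_*^2 =: M$. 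I do not anticipate any substantive obstacle in this argument: it is essentially careful bookkeeping on top of the sandwich estimate already in hand, with the only mild subtlety being the algebra needed to confirm that $\vartheta_*$ is indeed trapped in $[0,\sigma_*^2]$.
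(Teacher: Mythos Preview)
Your argument is correct and matches the standard Fife--McLeod sandwich approach that the paper invokes (its own proof is just a one-line reference to \cite[Lemma~5.4]{jukic2019dynamics}). One cosmetic point: Lemma~\ref{lemma:omega_limit_points:upper and lower bounds} \emph{produces} the constants $q_1,q_2$ rather than letting you pick them small, but this is harmless --- simply retain the factors $q_ie^{-\mu(t-T)}$ and enlarge $T_*$ until they drop below $\tfrac14$, after which your bookkeeping runs verbatim.
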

\begin{proof}
The proof is analogous to that of Lemma 5.4 in~\cite{jukic2019dynamics}. 
\end{proof}

\begin{proof}[Proof of Proposition~\ref{thm:main_results:gamma_approx_u}] 
    Arguing by contradiction once more, let us assume
    that there exists $\delta>0  $ together with sequences
    $(n_k, l_k) \in \Zs$ and $ T\leq t_1 < t_2 < \dots \to \infty $
    for which
	\begin{equation}\label{eqn:phase_gamma:assume_contrary_in_proof_that_gamma_approx_u}
	 | \Delta_k | := |u_{n_k,l_k}(t_k) - \Phi_*\big(n_k - \gamma_{l_k}(t_k)\big)|\geq \delta.
	\end{equation}
	Analogously as in the proof of \cite[Thm. 2.2]{jukic2019dynamics}, one can show that $n_k - c_* t_k$ is a bounded sequence. In addition, from Lemma~\ref{lemma:phase_gamma:gamma-ct_bounded} we also know that $n_*(l_k, t_k) - c_* t_k$ is bounded. Therefore, the sequence $n_*(l_k, t_k) - n_k$ is also bounded, allowing us to identify it with a constant $m\in \Z$. Applying Proposition~\ref{prop:omega_limit_points:main_prop} we find $\theta_0\in\R$ such that  the limit 
	\begin{equation}\label{eqn:phase:gamma-ct_bounded:limit}
	    u_{n+n_k, l+l_k}(t+t_k) \to \Phi_*(n-c_*t +\theta_0)
	\end{equation}
	holds for all $(n,l,t)\in \Zs\times\R$, after passing to a further subsequence. 	Recalling the definition~\eqref{eqn:main:def_of_gamma}, this leads to
	 \begin{equation*}
	 \begin{aligned}
	\Phi_*\big(n_k - \gamma_{l_k}(t_k)\big)  &= \Phi_*\big(n_k - n_*(l_k, t_k) - \vartheta_*(l_k, t_k)
	\big) \\
	& = \Phi_*\big(-m -  \vartheta_*(l_k, t_k) \big).
	\end{aligned}
	\end{equation*}
Due to~\eqref{eqn:phase:gamma-ct_bounded:limit} and definition~\eqref{eqn:main:zeta} of $\vartheta_*(l, t)$
we obtain the convergence
	\begin{equation}
	   \vartheta_*(l_k, t_k) \to -\dfrac{\sigma_*^2 \Phi_*^{-1}\big( \Phi_*(m+\theta_0)\big)}{\Phi_*^{-1}\big(\Phi_*(m+\sigma_*^2 + \theta_0)\big) - \Phi_*^{-1}\big(\Phi_*(m+\theta_0)\big)}  = -m-\theta_0
	\end{equation}
as $k \to \infty$, which in turn implies that 
\begin{equation*}
    \Phi_*\big(n_k - \gamma_{l_k}(t_k)\big)  \to \Phi_*(\theta_0). 
\end{equation*}
We hence find that
\begin{equation*}
    \Delta_k \to \Phi_*(\theta_0) - \Phi_*(\theta_0) = 0 
\end{equation*}
as $k\to\infty$, which clearly contradicts~\eqref{eqn:phase_gamma:assume_contrary_in_proof_that_gamma_approx_u}.
\end{proof}

\subsection{Phase asymptotics}
In this subsection we establish the asymptotic flatness result for the phase $\gamma(t)$ that was stated in Proposition~\ref{prop:large_time_behaviour:l_differences_gamma}. A key ingredient
is that 
the first differences of the function $l \mapsto n_*(l,t)$ 
can be uniformly bounded for large $t$.  

\begin{lemma}\label{lemma:phase_asymp:diff_of_n}
 Consider the setting of Proposition~\ref{prop:large_time_behaviour:interface_prop}. Then there exists a constant $\Tilde{T}>T$ so that for every $t\geq \tilde{T}$ and $l\in \mathbb{Z}$ we have
\begin{equation*}
    |n_*(l+1,t) - n_*(l,t)|\leq \sigma_*^2.
\end{equation*}
\end{lemma}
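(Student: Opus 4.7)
The plan is to argue by contradiction, exploiting the $\omega$-limit machinery from Proposition~\ref{prop:omega_limit_points:main_prop} together with the uniform boundedness of $n_*(l,t) - c_* t$ established in Lemma~\ref{lemma:phase_gamma:gamma-ct_bounded}. Suppose that no such $\tilde{T}$ exists. Then I can find sequences $t_k \to \infty$ and $l_k \in \Z$ together with integers
\begin{equation*}
    d_k := n_*(l_k+1,t_k) - n_*(l_k,t_k), \qquad |d_k| \geq \sigma_*^2 + 1.
\end{equation*}
Writing $n_k = n_*(l_k,t_k)$, Lemma~\ref{lemma:phase_gamma:gamma-ct_bounded} ensures that both $n_k - c_* t_k$ and $n_k + d_k - c_* t_k$ remain bounded, so that in particular $(n_k,l_k,t_k) \in \mathcal{S}$ and the sequence $(d_k)$ is bounded. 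After extracting a subsequence I may assume $d_k \equiv d$ is constant, with $|d| \geq \sigma_*^2 + 1$.

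Next, I would apply Proposition~\ref{prop:omega_limit_points:main_prop} to the sequence $(n_k,l_k,t_k)$ to obtain a shift $\theta_0 \in \R$ and a further subsequence (still denoted $k$) along which
\begin{equation*}
    u_{n+n_k,\,l+l_k}(t+t_k) \longrightarrow \Phi_*(n-c_* t+\theta_0) \qquad \hbox{in } C_{\mathrm{loc}}(\Z^2_\times \times \R).
\end{equation*}
Evaluating this convergence at the four lattice points dictated by the defining inequalities \eqref{eq:mr:props:n:star} for $n_*(l_k,t_k)$ and $n_*(l_k+1,t_k)$, and using monotonicity of $\Phi_*$ together with $\Phi_*(0)=\tfrac{1}{2}$, I obtain
\begin{align*}
    \Phi_*(\theta_0) &\leq \tfrac{1}{2}, & \Phi_*(\sigma_*^2 + \theta_0) &\geq \tfrac{1}{2},\\
    \Phi_*(d + \theta_0) &\leq \tfrac{1}{2}, & \Phi_*(d + \sigma_*^2 + \theta_0) &\geq \tfrac{1}{2}.
\end{align*}
The first pair forces $\theta_0 \in [-\sigma_*^2,0]$, while the second pair then forces $d \in [-\sigma_*^2-\theta_0,\,-\theta_0] \subset [-\sigma_*^2,\sigma_*^2]$, contradicting $|d|\geq \sigma_*^2+1$.

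The only subtle points will be verifying that the shifted sequence indeed lies in $\mathcal{S}$ (routine, via Lemma~\ref{lemma:phase_gamma:gamma-ct_bounded}) and keeping track of weak vs.\ strict inequalities when passing to the limit. Neither is expected to cause a real obstacle: the inequalities involving $>$ in \eqref{eq:mr:props:n:star} degrade to $\geq$ in the limit, but this loss is harmless because the interval $[-\sigma_*^2,\sigma_*^2]$ is still disjoint from the set $\{|d|\geq \sigma_*^2+1\}$ of integers we are trying to exclude.
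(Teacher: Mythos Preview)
Your proof is correct and follows essentially the same argument as the paper: a contradiction via Proposition~\ref{prop:omega_limit_points:main_prop}, using the boundedness of $n_*(l,t)-c_*t$ to extract a subsequence with constant integer gap $d$, and then reading off from the four limiting inequalities that $\theta_0\in[-\sigma_*^2,0]$ and $d+\theta_0\in[-\sigma_*^2,0]$, hence $|d|\le\sigma_*^2$. Your remarks about the degradation of strict to weak inequalities in the limit and the integrality of $d$ make the contradiction explicit, but the structure is identical to the paper's proof.
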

\begin{proof}
Assume to the contrary that there exist sequences $(n_k, \tilde{n}_k,l_k)_{k \ge 0} \subset \Z^3$, $(t_k)_{k \ge 0} \subset (0, \infty)$ with $T<t_1 < t_2<...\to\infty$ for which
\begin{equation} \label{eqn:phase_asymp:diff_geq_sigma1}
    |n_k - \tilde{n}_k|> \sigma_*^2
\end{equation} and
\begin{equation}\label{eqn:phase_asymp:inequalities1}
	\begin{cases}
	u_{n_k, l_k}(t_k) \leq 1/2,\\
	u_{n_k+\sigma_*^2, l_k}(t_k) > 1/2, 
	\end{cases}	\qquad \qquad \qquad \qquad
	\begin{cases}
	u_{\tilde{n}_k, l_k+1}(t_k)  \leq 1/2,\\
	u_{\tilde{n}_k+\sigma_*^2, l_k+1}(t_k) >   1/2 .
	\end{cases}    
\end{equation}
Since both sequences $n_*(l_k+1, t_k)-c_*t_k$ and $n_*(l_k, t_k)-c_*t_k$ are bounded, we can assume that their difference is constant and equal to $m \in \mathbb{Z}$, i.e.
\begin{equation*}
    m = \tilde{n}_{k} - n_k. 
\end{equation*}
With this notation we can apply Proposition~\ref{prop:omega_limit_points:main_prop} to find a constant $\theta_0\in \R $ for which
\begin{equation*}
	\begin{cases}
	u_{n_k, l_k}(t_k) \to \Phi_*(\theta_0), \\
	u_{n_k+\sigma_*^2, l_k}(t_k) \to \Phi_*(\theta_0 + \sigma_*^2), 
	\end{cases} \qquad  
	\begin{cases}
    u_{m+{n}_k, l_k+1}(t_k) \to \Phi_*(m+ \theta_0),\\	
    u_{m+\sigma_*^2+{n}_k, l_k+1}(t_k) \to  \Phi_*(m+ \theta_0 + \sigma_*^2).	\end{cases}    
\end{equation*}
Combining these limits with the inequalities~\eqref{eqn:phase_asymp:inequalities1} we find that $\theta_0$  necessarily satisfies
\begin{equation*}
    - \sigma_*^2 \leq \theta_0 \leq 0,  \qquad - \sigma_*^2 \leq  m+ \theta_0\leq 0.
\end{equation*}
This in turn implies that $|m|\leq \sigma_*^2 $, contradicting the strict inequality in~\eqref{eqn:phase_asymp:diff_geq_sigma1}.

\end{proof}

\begin{proof}[Proof of Proposition~\ref{prop:large_time_behaviour:l_differences_gamma}]
	Assume to the contrary that there exists $ \delta > 0 $ 
	together with subsequences $(l_k)_{k \ge 0} \subset \Z$ and 
	$ T\leq t_1<t_2<\dots \to \infty $
	for which
	\begin{equation}\label{eqn:phase_gamma:assumption_gamma_not_flat}
    	\delta\leq |\gamma_{l_k+1}(t_k) - \gamma_{l_k}(t_k)|.	    
	\end{equation}
	Lemma~\ref{lemma:phase_asymp:diff_of_n} assures us that it is possible to pass to a subsequence that has $$ n_*(l_k+1,t_k) = n_*(l_k, t_k) + m,  $$
	for some integer $m\in [0, \sigma_*^2]$. Recalling the definition~\eqref{eqn:main:zeta} for $\vartheta_*(l,t)$, we find
	\begin{equation}\label{eqn:diff_gamma_assumption1}
	\begin{aligned}
	\gamma_{l_k+1}(t) - \gamma_{l_k}(t) &= m + \vartheta_*( l_k+1,t_k) - \vartheta_*( l_k,t_k)  . 
	\end{aligned}
	\end{equation}
We now employ Proposition~\ref{prop:omega_limit_points:main_prop} to find $\theta_0\in \R$ such that for all $(n,l,t)\in \Zs\times\R$ we have
\begin{equation*}
    u_{n+n_k, l+l_k}(t+t_k) \to  \Phi_*(n-c_*t +\theta_0), \quad \text{as}\quad k\to\infty,  
\end{equation*}
which further implies that
\begin{equation*}
     \vartheta_*( l_k,t_k) \to -\theta_0, \qquad \vartheta_*( l_k+1,t_k) \to - m - \theta_0.
\end{equation*}
Taking the limit in~\eqref{eqn:diff_gamma_assumption1} we obtain
	\begin{equation*}
	    \delta \leq |m-\theta_0 -m +\theta_0|= 0,
	\end{equation*}
	which is a clear contradiction.
\end{proof}

\section{Linearized phase evolution}\label{sec:lde}

In this section we consider the lattice differential equation
\begin{equation}\label{eqn:lde:linear:main_eq}
    \dot{h}_l(t) = \sum_{k = -N}^N a_k \big[ h_{l+\mu_k}(t) - h_l(t)\big], \quad t>0
\end{equation}
with the initial condition
\begin{equation}\label{eqn:lde:linear:init}
    h(0) =h^0\in \ell^\infty(\Z).
\end{equation} 
In order to highlight the general applicability of our results, we step back here from the specific framework associated to~\eqref{eqn:main:main_eqn_for_theta}. Instead, we impose the following general assumption on the coefficients $ a = (a_k)_{k=-N}^N \subset \R$ 
and the shifts $\mu = (\mu_k)_{k=-N}^N\subset \Z$. 

\begin{itemize}
\item[(h$\alpha$)]\label{item:lde:halpha}
The function $f:[-\pi, \pi]\to \R$ defined by
\begin{equation}
\label{eq:lde:bnds:def:f}
    f(\omega):=\sum_{k=-N}^Na_k (\cos(\mu_k \omega) -1 ) 
\end{equation}    
   is strictly negative on $[-\pi, \pi]\backslash \{0\}$. Futhermore, the constant  
 $\Lambda\in \R$ defined by 
\begin{equation}\label{eqn:lde:def_of_lambda}
\Lambda:= \sum_{k=-N}^N a_k \mu_k^2 = -f''(0)
\end{equation} 
satisfies  $\Lambda> 0$.
\end{itemize}
Let us first observe that the assumption (h$\alpha$) implies that  we can find $m>0$ and $\kappa>0$ such that
\begin{align}
    f(\omega) &\leq -\dfrac{\Lambda}{2} \omega^2 \ & & \text{for} \ \omega\in [-\kappa, \kappa], & & & &\label{eqn:lde:f_quadratic}\\
    f(\omega) &< -m  & & \text{for}\ \omega \in [-\pi, -\kappa] \cup [\kappa, \pi]. & & & &\label{eqn:lde:f_bounded}.
\end{align}

For any $n\in \N_0$ we inductively define the $(n)$-th discrete derivative  $\partial^{(n)}:\ell^\infty(\Z)\to \ell^\infty(\Z) $ by writing
\begin{align*}
    [\partial^{(0)}\Gamma]_j :=  \Gamma_j, \qquad  
    [\partial^{(1)}\Gamma]_j := \Gamma_{j+1} - \Gamma_j 
\end{align*}
together with
\begin{equation}\label{eqn:lde:nth_derivative}
    [\partial^{(n)}\Gamma]_j = \left[\partial^{(1)} \left(\partial^{(n-1)}\Gamma)\right)\right]_j
\end{equation}
for $n>1$.
The first goal of this section is to establish decay estimates of the form \begin{equation}\label{eqn:lde:decay_intro}
    ||\partial^{(n)}h(t)||_{\ell^\infty}\sim~O(t^{-\frac{n}{2}})
\end{equation}
for the solution $h(t)$ of the system~\eqref{eqn:lde:linear:main_eq}-\eqref{eqn:lde:linear:init}.  
These rates are consistent with the estimates for solutions of the continuous heat equation $h_t =  h_{xx}$, which can be readily obtained by taking $x$-derivatives of the explicit representation 
\begin{equation}\label{eqn:lde:cont_heat_eqn}
    h(x,t) = \dfrac{1}{\sqrt{4\pi t}}\int_{\R} e^{-\frac{(x-y)^2}{4t}} h(y, 0) dy.
\end{equation}

Our goal is to find a 
solution formula for~\eqref{eqn:lde:linear:main_eq} equivalent to~\eqref{eqn:lde:cont_heat_eqn}, in the sense that it takes the form of the convolution between the fundamental solution with the initial condition. By finding such a representation, we can 
transfer discrete derivatives onto the fundamental solution to establish~\eqref{eqn:lde:decay_intro}.

\begin{thm}[{see \S\ref{subsec:lde:strategy}}]
\label{thm:lde:n-th-diff-theta}
Assume that condition (h$\alpha$) holds and pick $n\in \N_0$. Then there exists a constant $C=C(n)$ so that for any $h^0\in \ell^\infty(\Z)$, the $n$-th discrete derivative of the solution $h\in 
C^1\left([0, \infty);\ell^\infty(\Z)\right)$ to the initial value problem~\eqref{eqn:lde:linear:main_eq}-\eqref{eqn:lde:linear:init} satisfies the  bound
\begin{equation*}
    \norm{\partial^{(n)} h(t)}_{\ell^\infty } \leq C \min\left\{\norm{\partial^{(n)}  h^0}_{\ell^\infty}, \norm{ h^0}_{\ell^\infty} t^{-\frac{n}{2}} \right\}.
\end{equation*}
\end{thm}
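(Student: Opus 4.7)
My plan is to build a Green's function for \eqref{eqn:lde:linear:main_eq} via the discrete Fourier transform and then to extract the required decay rates through a saddle-point analysis of this kernel. First I would take $\hat h(\omega, t) = \sum_{l \in \Z} h_l(t) e^{-i\omega l}$, which diagonalises \eqref{eqn:lde:linear:main_eq} into the scalar ODE $\partial_t \hat h = M(\omega) \hat h$ with multiplier $M(\omega) = \sum_{k=-N}^N a_k (e^{i\mu_k \omega} - 1)$, whose real part is precisely $f(\omega)$ from \eqref{eq:lde:bnds:def:f}. Inverting the transform gives the convolution representation $h(t) = G(t) \ast h^0$ with kernel
\begin{equation*}
G_l(t) = \frac{1}{2\pi} \int_{-\pi}^{\pi} e^{t M(\omega) + i \omega l}\, d\omega.
\end{equation*}
Since $\partial^{(1)}$ is a translation minus the identity it commutes with convolution; Young's inequality then delivers
\begin{equation*}
\norm{\partial^{(n)} h(t)}_{\ell^\infty} \le \min\!\bigl\{\norm{G(t)}_{\ell^1} \norm{\partial^{(n)} h^0}_{\ell^\infty},\; \norm{\partial^{(n)} G(t)}_{\ell^1} \norm{h^0}_{\ell^\infty}\bigr\},
\end{equation*}
so it is enough to prove $\norm{\partial^{(n)} G(t)}_{\ell^1} \le C\, t^{-n/2}$ for $t \ge 1$, together with a routine short-time estimate on $[0,1]$ obtained via Duhamel and the finite range of $(\mu_k)$.

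Next I would establish a pointwise saddle-point bound on $\partial^{(n)} G_l(t)$. Splitting $M(\omega) = i\mathcal{M}\omega + f(\omega) + i g(\omega)$ with drift $\mathcal{M} = \sum_k a_k \mu_k$ and cubic remainder $g(\omega) = O(\omega^3)$, and passing to the comoving index $L = l + \mathcal{M} t$, the fact that $\partial^{(n)}$ corresponds to the symbol $(e^{i\omega}-1)^n$ yields
\begin{equation*}
\partial^{(n)} G_l(t) = \frac{1}{2\pi} \int_{-\pi}^{\pi} (e^{i\omega}-1)^n e^{t(f(\omega) + i g(\omega)) + i\omega L}\, d\omega.
\end{equation*}
By $2\pi$-periodicity and analyticity the contour $[-\pi,\pi]$ may be shifted to $[-\pi,\pi] + i\eta$ for any $\eta \in \R$; I would take $\eta = L/(\Lambda t)$ when $|L| \le \kappa\Lambda t$ (the saddle prescription for the quadratic leading part of $M$) and $\eta = \kappa\, \mathrm{sign}(L)$ otherwise. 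In the near-diagonal regime, combining $|e^{i\omega L}| = e^{-\eta L}$ with \eqref{eqn:lde:f_quadratic} and a cubic bound on $g$ gives
\begin{equation*}
\operatorname{Re}\!\bigl(t(f+ig)(u + i\eta) + i(u + i\eta)L\bigr) \le -\tfrac{\Lambda}{4}\, t\, u^2 - \tfrac{L^2}{4\Lambda t},
\end{equation*}
while \eqref{eqn:lde:f_bounded} supplies an $\exp(-\gamma t)$ contribution in the far regime. Controlling $|(e^{i\omega}-1)^n|$ on the shifted contour by a multiple of $|u|^n + (|L|/t)^n$ and integrating the Gaussian in $u$ then produces
\begin{equation*}
|\partial^{(n)} G_l(t)| \le C\, t^{-(n+1)/2} \bigl(1 + |L|/\sqrt{t}\bigr)^n \exp\!\bigl(-L^2/(4\Lambda t)\bigr) + C e^{-\gamma t}.
\end{equation*}
Summing over $l$ and recognising the result as a Riemann approximation to the Gaussian moment $\int_\R (1+|x|)^n e^{-x^2/(4\Lambda)}\, dx$ under the rescaling $x = L/\sqrt{t}$ absorbs one factor of $\sqrt{t}$, yielding $\norm{\partial^{(n)} G(t)}_{\ell^1} \le C\, t^{-n/2}$ as required.

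The hard part will be the uniform execution of this saddle-point step across the full period $[-\pi, \pi]$. Patching the two regimes $|L| \le \kappa\Lambda t$ and $|L| > \kappa\Lambda t$ while keeping the quadratic minorant of $-f$ valid near the origin, and simultaneously the uniform minorant \eqref{eqn:lde:f_bounded} valid near the endpoints $\omega = \pm\pi$, requires a careful cut-off. The multiplier $(e^{i\omega}-1)^n$ demands equal care: near $\omega = 0$ it is small and produces the $t^{-n/2}$ gain, whereas elsewhere it is merely bounded by $2^n$ and must be absorbed by the exponential gain coming from \eqref{eqn:lde:f_bounded}. It is here that assumption (h$\alpha$) --- and in particular the strict negativity of $f$ on $[-\pi,\pi]\setminus\{0\}$ --- is indispensable.
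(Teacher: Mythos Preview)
Your approach coincides with the paper's: both obtain the convolution $h(t)=M(t)\ast h^0$ via Fourier inversion, reduce by Young's inequality to $\ell^1$-bounds on $\partial^{(n)}M(t)$, and establish these by shifting the integration contour to imaginary height $\eta\sim L/(\Lambda t)$ in the core $|L|\lesssim t$ and to a fixed height in the tails, using the quadratic minorant \eqref{eqn:lde:f_quadratic} near $\omega=0$ and the uniform one \eqref{eqn:lde:f_bounded} away from it; the paper's splitting into $P_l$ and $R_l$ (Lemma~\ref{lemma:lde:M_bounded_P_R}) is exactly your bound $|(e^{i\omega}-1)^n|\lesssim |u|^n+|\eta|^n$ on the shifted contour.

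One point needs tightening. Your displayed pointwise estimate carries an additive $Ce^{-\gamma t}$ term that is uniform in $l$, and this is not summable over $\Z$; as written, ``summing over $l$'' would diverge. The paper avoids this by treating the tail region $|l/t+a\cdot\mu|\ge K$ separately (Lemma~\ref{lemma:lde:l_1_noncompact_interval}): there the fixed shift $\eta=\pm\bar\epsilon$ produces a factor $e^{-\bar\epsilon|l|/2}$ which, combined with $|l|\gtrsim t$ in that regime, yields $\sum_{\mathrm{tail}}|\partial^{(n)}M_l(t)|\le Ce^{-t}$. You announced this regime (``$\eta=\kappa\,\mathrm{sign}(L)$ otherwise'') but did not record the resulting bound or carry it into the final sum; once you do, the argument is complete and matches the paper's.
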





The second main result of this section concerns lower and upper bounds for the  solution $h(t)$ that are sharper than the
$\ell^\infty$-bounds in Theorem~\ref{thm:lde:n-th-diff-theta}. 
In particular, we show that if the initial condition $h^0$ is bounded away from $0$, then the solution
$h(t)$ is positive for large time $t\gg 1$. 
Moreover, under the additional assumption that the first differences of $h^0$ are flat enough we obtain the same conclusion for all time $t\geq 0$. 
The key issue is that some of the coefficients $(a_k)$ are allowed to be negative, which causes the usual comparison principle to fail. Indeed, it can (and does)  happen that  a solution $h(t)$ admits negative values for a short time even if the initial condition is strictly positive.

\begin{proposition} [{see \S\ref{subsec:lde:strategy}}]
\label{prop:lde:comparison_principle}
Consider the setting of Theorem~\ref{thm:lde:n-th-diff-theta} and
pick $\varepsilon>0$. 
Then there exists a time $T=T(\varepsilon)>0$ and $C=C(T, \varepsilon)$ such that for all $t\geq T$ the following properties hold.
\begin{enumerate}[(i)]
    \item For any $h^0\in \ell^\infty(\Z)$ that has $h_k^0 \geq 0$ for all $k \in \Z$, we have the bounds
\begin{align}
   h_k(t) &\geq \inf_{j\in \Z} h^0_j - C \norm{\partial h^0}_{\ell^\infty},  \quad & & k\in \Z, \qquad t\in [0, T],  & &  \label{eqn:lde:cp:flat_diff}
   \\
   h_k(t) &\geq  \inf_{j\in \Z} h^0_j - \varepsilon\norm{h^0}_{\ell^\infty},   & &  k\in \Z,  \qquad t\geq T. & &  \label{eqn:lde:cp:large_time}
 \end{align}
 \item For any $h^0\in \ell^\infty(\Z)$ that has $h_k^0 \leq 0$ for all $k \in \Z$, we have the bounds
\begin{align}
   h_k(t) &\leq \sup_{j\in \Z} h^0_j + C \norm{\partial h^0}_{\ell^\infty},  \ \quad & & k\in \Z, \qquad t\in [0, T], & & \label{eqn:lde:cp:flat_diff:sup}
   \\
   h_k(t)&\leq  \sup_{j\in \Z} h^0_j + \varepsilon\norm{h_0}_{\ell^\infty},  & &   k\in \Z,  \qquad t\geq T. & &
\end{align}
\end{enumerate} 
\end{proposition}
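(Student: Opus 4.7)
The LDE \eqref{eqn:lde:linear:main_eq} preserves constant sequences, since $\sum_k a_k(c-c)=0$ for any $c\in\R$. By linearity we may write $h_l(t) = (\inf_j h^0_j) + g_l(t)$, where $g$ solves the same LDE with initial datum $g^0 = h^0 - \inf_j h^0_j$ satisfying $g^0\ge 0$, $\inf_j g^0_j = 0$, $\norm{\partial g^0}_{\ell^\infty}=\norm{\partial h^0}_{\ell^\infty}$, and (whenever $h^0\ge 0$) $\norm{g^0}_{\ell^\infty}\le\norm{h^0}_{\ell^\infty}$. It thus suffices to prove $g_l(t)\ge -C\norm{\partial h^0}_{\ell^\infty}$ for $t\in[0,T]$ and $g_l(t)\ge -\varepsilon\norm{h^0}_{\ell^\infty}$ for $t\ge T$. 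Part (ii) follows by applying part (i) to $-h$, since the LDE and all norms are invariant under $h\mapsto -h$.

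\textbf{Short-time bound.} Rewriting each shift as a telescoping sum of at most $|\mu_k|$ first differences yields $|h_{l+\mu_k}(s)-h_l(s)|\le|\mu_k|\,\norm{\partial h(s)}_{\ell^\infty}$. Invoking Theorem~\ref{thm:lde:n-th-diff-theta} with $n=1$ gives $\norm{\partial h(s)}_{\ell^\infty}\le C\norm{\partial h^0}_{\ell^\infty}$ uniformly in $s\ge 0$. Integrating the LDE and using $g^0_l\ge 0$, one obtains
\begin{equation*}
g_l(t) \ge g^0_l - t\sum_k|a_k|\,|\mu_k|\,C\norm{\partial h^0}_{\ell^\infty} \ge - C'T\norm{\partial h^0}_{\ell^\infty}
\end{equation*}
for every $t\in[0,T]$, which is the desired estimate.

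\textbf{Long-time bound.} The discrete Fourier transform diagonalises the LDE into $\partial_t\widehat h(\omega,t)=f(\omega)\widehat h(\omega,t)$, whose inversion expresses the solution as the convolution $g_l(t)=\sum_j G_{l-j}(t)\,g^0_j$ with the Green's function
\begin{equation*}
G_n(t)=\frac{1}{2\pi}\int_{-\pi}^{\pi}e^{-in\omega}e^{f(\omega)t}\,d\omega,\qquad \sum_n G_n(t)=e^{f(0)t}=1.
\end{equation*}
Setting $G^-_n=\max(-G_n,0)$ and using $g^0\ge 0$ yields $g_l(t)\ge -\norm{G^-(t)}_{\ell^1}\norm{h^0}_{\ell^\infty}$, so it suffices to show $\norm{G^-(t)}_{\ell^1}\to 0$. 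Using \eqref{eqn:lde:f_quadratic}--\eqref{eqn:lde:f_bounded}, split the integral at $|\omega|=\kappa$. On $|\omega|\le\kappa$ the bound $f(\omega)\le -\Lambda\omega^2/2$ allows a saddle-point expansion that extracts a leading Gaussian piece $G^{\mathrm{gauss}}_n(t)=(2\pi\Lambda t)^{-1/2}e^{-n^2/(2\Lambda t)}$, which is strictly positive and has $\ell^1$-mass tending to $1$. On $|\omega|\ge\kappa$ the bound $f\le -m$ gives integrand of size $e^{-mt}$; repeated integration by parts in $\omega$ (exploiting the smoothness of $f$ away from the origin) transfers factors of $(in)^{-1}$ onto the integrand and produces bounds $|G_n^{(2)}(t)|\le C_M(t/|n|)^M e^{-mt}$, hence $\ell^1$-decay. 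Collecting terms gives $\norm{G^-(t)}_{\ell^1}=O(t^{-1/2})$, so picking $T$ so that this quantity is below $\varepsilon$ closes the argument.

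\textbf{Main difficulty.} The crux is the $\ell^1$-control of the negative part of the Green's function: Parseval-type arguments yield only $\ell^2$-decay, so one must upgrade the saddle-point expansion of $G_n(t)$ to a pointwise Gaussian asymptotic with an $\ell^1$-summable error. This is the same delicate oscillatory-integral analysis deployed throughout \S\ref{sec:lde}, and is exactly the reason assumption (h$\alpha$) requires $f<0$ on the whole of $[-\pi,\pi]\setminus\{0\}$ rather than merely a local quadratic sign at the origin.
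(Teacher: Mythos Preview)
Your short-time argument is correct and genuinely simpler than the paper's: instead of estimating $\sum_l |M_l(t)|\,|l|$ via Lemma~\ref{lemma:lde:decay_sum_Ml_times_ln}, you integrate the LDE directly and invoke the uniform derivative bound from Theorem~\ref{thm:lde:n-th-diff-theta}. This is a nice shortcut.

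The long-time argument, however, has a concrete error and a structural gap. First, the Fourier symbol of the LDE is $\sum_k a_k(e^{i\mu_k\omega}-1)=f(\omega)+ip(\omega)$ with $p(\omega)=\sum_k a_k\sin(\mu_k\omega)$, not $f(\omega)$ alone; you have dropped the imaginary part. Since $p'(0)=a\cdot\mu$ is generically nonzero, the Green's function is centred near $n=-(a\cdot\mu)t$ rather than $n=0$, so your Gaussian $G^{\mathrm{gauss}}_n(t)=(2\pi\Lambda t)^{-1/2}e^{-n^2/(2\Lambda t)}$ is off by a drifting shift and the difference $G-G^{\mathrm{gauss}}$ does \emph{not} go to zero in $\ell^1$. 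Second, your integration-by-parts estimate on the outer piece $|\omega|\ge\kappa$ is not valid as stated: the sharp cut at $\omega=\pm\kappa$ produces boundary terms of size $O(e^{-ct}/|n|)$ after a single integration by parts, and these persist through further integrations; they are not $\ell^1$-summable in $n$, so the claimed bound $|G^{(2)}_n(t)|\le C_M(t/|n|)^M e^{-mt}$ fails. The paper avoids this by splitting in \emph{index} space (according to the size of $|l/t+a\cdot\mu|$) and deforming the Fourier contour with an $l$-dependent imaginary shift $\epsilon^*(l,t)$; see \S\ref{subsec:contour}--\S\ref{subsec:lde:core}. Your reduction to $\norm{G^-(t)}_{\ell^1}\to 0$ is equivalent to what the paper proves, but the hard analytic work is precisely Lemmas~\ref{lemma:lde:sum1}--\ref{lemma:lde:sum_2}, and your sketch does not supply an alternative route to them.
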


\subsection{Strategy}\label{subsec:lde:strategy}
In order to find an explicit formula for the solution $h$ of the initial problem~\eqref{eqn:lde:linear:main_eq}-\eqref{eqn:lde:linear:init}, we note that a spatial  Fourier transform leads  to the decoupled sets of ODEs 
\begin{align*}
    \dfrac{d}{dt}\hat{h}(\omega, t) =\sum_{k=-N}^N a_k (e^{i\mu_k \omega} - 1)\hat{h}(\omega, t)
\end{align*}
for $\omega\in [-\pi, \pi]$. Introducing the function
\begin{equation}
\label{eq:lde:bnds:def:p}
    p(\omega) = \sum_{k=-N}^N a_k \sin{(\mu_k\omega)},
\end{equation} 
we hence obtain 
the convolution formula 
\begin{equation}\label{eqn:lde:convolution}
       h_l(t)  = \sum_{k\in \Z} h^0_k M_{l-k}(t),
\end{equation}
where the fundamental solution $M(t)$ is defined by \begin{equation}\label{eqn:lde:fundamental solution}
    M_l(t) = \dfrac{1}{2\pi}\int_{-\pi}^{\pi} e^{i l\omega }e^{tf(\omega)  + it
    p(\omega)} d\omega. 
\end{equation}
Notice that  assumption (h$\alpha$) ensures that $\norm{M(t)}_{\ell^\infty}\leq 1$ for every $t\geq 0$. 

The proof of Theorem~\ref{thm:lde:n-th-diff-theta} relies on the following two lemmas, in which we focus on the decay estimates for the $\ell^1$-norm of the n-th differences $\partial^{(n)}M(t)$. We obtain the necessary estimates by dividing the sum into two parts, based on the size of the term $|l/t + a\cdot\mu|$. We note that the constant  $-a\cdot \mu = -p'(0)$ is often referred to as the group velocity.  
It tracks the speed of the `center' of $M$ and - in context of \S\ref{sec:main} - is closely related to $[\partial_{\omega} \lambda_{\omega}]_{\omega=0}$ and $[\partial_\varphi c_\varphi]_{\varphi = 0}$.

\begin{lemma}[{see \S\ref{subsec:lde:global}}]
\label{lemma:lde:l_1_noncompact_interval}
Consider the setting of Theorem~\ref{thm:lde:n-th-diff-theta}. Then there exist positive constants $K = K( n)$ and $C=C( n)$ such that 
\begin{equation}\label{eqn:lde:non_compact_claim1}
    \sum_{|l/t+a\cdot\mu|\geq K} |[\partial^n M(t)]_l | \big[ 1 + |l| \big] \leq Ce^{-t}.
\end{equation} 
\end{lemma}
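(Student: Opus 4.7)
The plan is to estimate the Fourier representation
\[
  [\partial^{(n)}M(t)]_l
  = \frac{1}{2\pi}\int_{-\pi}^{\pi}(e^{i\omega}-1)^n\,e^{i l\omega + t[f(\omega)+ip(\omega)]}\,d\omega
\]
by a contour-deformation argument in the spirit of the saddle-point method. The integrand is entire in $\omega$ and $2\pi$-periodic (since $l$ and all $\mu_k$ are integers), so the integration contour may be shifted to the horizontal line $[-\pi+i\eta,\pi+i\eta]$ for any real $\eta$; the two vertical segments at $\mathrm{Re}(\omega)=\pm\pi$ cancel by periodicity. In the regime $l/t + a\cdot\mu \ge K$ I plan to use a fixed small $\eta > 0$, and symmetrically $\eta < 0$ when $l/t + a\cdot\mu \le -K$.

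The heart of the computation is a short expansion of the exponent. Writing $\omega = \xi + i\eta$ and using the identities for $\cos(\mu_k(\xi+i\eta))$ and $\sin(\mu_k(\xi+i\eta))$, one finds
\[
  \mathrm{Re}\bigl(f(\xi+i\eta)+ip(\xi+i\eta)\bigr) = \sum_{k=-N}^{N} a_k\bigl[\cos(\mu_k\xi)\,e^{-\mu_k\eta}-1\bigr]
  = f(\xi) - \eta p'(\xi) + O(\eta^2),
\]
uniformly in $\xi\in[-\pi,\pi]$. Combined with the boundedness $|p'(\xi)| \le C_1$ and the factor $-l\eta$ from $e^{il\omega}$, the real part of the full exponent is at most
\[
  -l\eta + tf(\xi) + \eta t C_1 + O(t\eta^2).
\]
Assumption (h$\alpha$) yields $f(\xi) \le -\Lambda \xi^2/2$ near $0$ and $f(\xi) \le -m$ away from zero, so the routine Laplace-type bound gives $\int_{-\pi}^{\pi}e^{tf(\xi)}\,d\xi \le C/\sqrt t$. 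Since also $|(e^{i\omega}-1)^n|\le 2^n$ on the shifted contour, this yields the pointwise estimate
\[
  |[\partial^{(n)}M(t)]_l| \le \frac{C_n}{\sqrt t}\,e^{-l\eta + \eta tC_1 + O(t\eta^2)}
  \qquad\text{valid for } l \ge (K-v)t,\quad v := a\cdot\mu.
\]

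With the pointwise bound in hand, I would control the $(1+|l|)$-weighted sum over $l \ge (K-v)t$ by the elementary estimate $\sum_{l\ge L_0}(1+l)e^{-\eta l} \le C_\eta(1+L_0)e^{-\eta L_0}$ with $L_0 = (K-v)t$, giving
\[
  \sum_{l/t+v\,\ge\, K}(1+|l|)\,|[\partial^{(n)}M(t)]_l|
  \le \frac{C(1+t)}{\sqrt t}\,e^{-\eta t[(K-v)-C_1] + O(t\eta^2)}.
\]
Fixing $\eta>0$ small so that the $O(\eta^2)$ remainder is controlled, and then taking $K$ sufficiently large so that $\eta[K-v-C_1] - O(\eta) \ge 1$, the exponent drops to at most $-t$, easily absorbing the polynomial factor $(1+t)/\sqrt t$ and delivering the claimed $Ce^{-t}$ bound. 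The mirror-image sum for $l/t + v \le -K$ is handled symmetrically via $\eta<0$.

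The main obstacle is the coupled calibration of $\eta$ and $K$: $\eta$ must be small enough to contain the quadratic Taylor error $O(t\eta^2)$, but once $\eta$ is fixed, $K$ must be chosen large enough both to overcome the linear drift $+\eta tC_1$ coming from $p'(\xi)\neq 0$ and to produce a pointwise decay rate strictly larger than $1$ at the boundary $l\approx(K-v)t$. Once these are arranged, the remaining ingredients — the periodicity-based contour shift, the expansion of $\mathrm{Re}(f+ip)$ in $\eta$, and the Laplace-type bound on $\int e^{tf(\xi)}d\xi$ — slot together to give the stated exponential decay.
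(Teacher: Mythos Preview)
Your approach is essentially the paper's: fix a small imaginary contour shift $\eta$ (the paper calls it $\overline{\epsilon}$), then take $K$ large enough that the factor $e^{-\eta l}$ at the boundary $l\approx(K-v)t$ dominates everything else. The paper packages the shifted exponent as $g(l,\epsilon,t)+tq(\epsilon,\omega)$ and uses the a-priori sign $q(\overline{\epsilon},\cdot)\le 0$ from Lemma~\ref{lemma:lde:q_bounds}; your Taylor expansion in $\eta$ of $\mathrm{Re}(f+ip)$ is an equivalent repackaging.

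One small slip: invoking the Laplace-type bound $\int_{-\pi}^{\pi}e^{tf(\xi)}\,d\xi\le C/\sqrt t$ is unnecessary here and leaves a spurious $1/\sqrt t$ in your final prefactor $(1+t)/\sqrt t$, which is \emph{not} absorbed by the exponential as $t\to 0^+$. Since $f\le 0$ by (h$\alpha$), the trivial bound $\int e^{tf}\le 2\pi$ already suffices (this is precisely what the paper does via $q\le 0$); with that replacement your prefactor becomes $C(1+t)$ and the absorption into $Ce^{-t}$ works uniformly for all $t>0$.
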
 

\begin{lemma}[{see \S\ref{subsec:lde:core}}]
\label{lemma:lde:l_1_compact_interval}
Consider the setting of Theorem~\ref{thm:lde:n-th-diff-theta} and pick $K>0$. Then there exists $C = C(K,  n)>0$ such that 
\begin{equation*}
     \sum_{|l/t+a\cdot\mu|\leq K}|[\partial^n M(t)]_l| \leq C\min\left\{1, t^{-\frac{n}{2}}\right\}. 
 \end{equation*}
\end{lemma}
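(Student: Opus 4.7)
The Fourier representation \eqref{eqn:lde:fundamental solution} together with the fact that $\partial^{(1)}$ corresponds to the Fourier multiplier $e^{i\omega}-1$ gives
\begin{equation*}
[\partial^{(n)}M(t)]_l=\frac{1}{2\pi}\int_{-\pi}^{\pi}(e^{i\omega}-1)^n e^{il\omega+tf(\omega)+itp(\omega)}\,d\omega,
\end{equation*}
and I would decompose this integral over $[-\kappa,\kappa]\cup([-\pi,\pi]\setminus[-\kappa,\kappa])$ with $\kappa$ from \eqref{eqn:lde:f_quadratic}. By \eqref{eqn:lde:f_bounded} the outer piece is bounded pointwise by $Ce^{-mt}$, which after summing over the $O(Kt)$ admissible indices $l$ yields a total $Cte^{-mt}$, harmless for both the $O(1)$ and the $O(t^{-n/2})$ regimes.

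The main step is a saddle-point analysis of the core piece. I would introduce the rescaled displacement $v:=(l+t\,a\cdot\mu)/t\in[-K,K]$ and set $r(\omega):=p(\omega)-(a\cdot\mu)\omega=O(\omega^3)$, so the exponent becomes $t\phi(\omega;v)$ with $\phi(\omega;v):=f(\omega)+i[v\omega+r(\omega)]$. Since $f$ and $p$ are trigonometric polynomials, $\phi$ is entire, and the saddle equation $\partial_\omega\phi(\omega;v)=0$ admits (by the implicit function theorem, starting from $\omega=0$ at $v=0$) a smooth branch $\omega_\star(v)$ with $\omega_\star(v)=iv/\Lambda+O(v^2)$ and $\mathrm{Re}\,\phi(\omega_\star(v);v)\leq -cv^2$ uniformly on $[-K,K]$ for some $c>0$. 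Deforming $[-\kappa,\kappa]$ to the steepest-descent contour through $\omega_\star(v)$ and applying standard Laplace asymptotics then yields the pointwise Gaussian bound
\begin{equation*}
|[\partial^{(n)}M(t)]_l|\leq C t^{-1/2}\bigl(|v|^n+t^{-n/2}\bigr)e^{-ctv^2}, \qquad t\geq 1.
\end{equation*}
Summing and using that consecutive admissible $l$ correspond to $v$-spacing $1/t$,
\begin{equation*}
\sum_{|v|\leq K}|[\partial^{(n)}M(t)]_l|\leq Ct\int_{-K}^{K}t^{-1/2}\bigl(|v|^n+t^{-n/2}\bigr)e^{-ctv^2}\,dv\leq C t^{-n/2},
\end{equation*}
by the elementary Gaussian identity $\int|v|^n e^{-ctv^2}dv\leq Ct^{-(n+1)/2}$. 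For $t\leq 1$, the crude bound $|[\partial^{(n)}M(t)]_l|\leq 2^n\|M(t)\|_{\ell^\infty}\leq 2^n$ combined with the $O(1)$ number of admissible indices gives a uniform $O(1)$ bound, completing the $C\min\{1,t^{-n/2}\}$ conclusion.

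The main obstacle is the uniform saddle-point estimate on all of $[-K,K]$. Near $v=0$ the saddle $\omega_\star(v)$ collapses to the origin where $f$ vanishes only to second order, while for $|v|$ of order $K$ the saddle migrates to a bounded distance and one must ensure the deformed contour stays inside a strip where the real part of $\phi$ retains Gaussian-strength negativity. The crucial propagation step is the uniform lower bound $-\mathrm{Re}\,\phi(\omega_\star(v);v)\geq cv^2$ on $[-K,K]$, which blends the local quadratic information of \eqref{eqn:lde:f_quadratic} near $\omega=0$ with the global strict negativity of $f$ supplied by \eqref{eqn:lde:f_bounded}. A Morse-lemma change of variables near each $\omega_\star(v)$ converting $\phi$ into its normal form then reduces the remaining work to standard Gaussian integrals with controllable higher-order corrections.
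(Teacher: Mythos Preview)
Your approach and the paper's share the same backbone---a contour shift to extract Gaussian decay in $v=l/t+a\cdot\mu$---but the paper executes it more elementarily and thereby sidesteps the very obstacle you identify. Instead of locating a genuine saddle $\omega_\star(v)$ via the implicit function theorem and deforming to a steepest-descent contour, the paper shifts the entire contour $[-\pi,\pi]$ rigidly to $[-\pi+i\epsilon^*,\pi+i\epsilon^*]$ (the $2\pi$-periodicity cancels the vertical segments), with the height $\epsilon^*=\epsilon^*(l,t)=v/(\Lambda+2\delta)$ chosen \emph{linearly} in $v$ for an auxiliary parameter $\delta>0$ (Lemma~\ref{lemma:lde:est_for_g}). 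The exponent then splits as $g(l,\epsilon^*,t)+tq(\epsilon^*,\omega)$, and a direct Taylor expansion of $g$ in $\epsilon$ yields $g(l,\epsilon^*,t)\le-\nu_\delta tv^2$ with $\nu_\delta=1/(2(\Lambda+2\delta))$---exactly the Gaussian factor you want, but obtained without ever solving a saddle equation. The factor $|e^{-\epsilon^*+i\omega}-1|^n$ is bounded by $C(|\epsilon^*|^n+|\omega|^n)$, and the $|\epsilon^*|^n\sim|v|^n$ piece produces the same $t^{-n/2}$ after summation as in your computation.

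The key to uniformity on $[-K,K]$ is that $\delta$ may be taken as large as needed (at the price of a smaller but still positive $\nu_\delta$), which forces $|\epsilon^*|\le\bar\epsilon$ for \emph{all} $|v|\le K$; here $\bar\epsilon$ is a fixed small threshold on which the residual kernel bound $\int_{-\pi}^\pi|\omega|^ne^{tq(\epsilon,\omega)}d\omega\le C\min\{1,t^{-(n+1)/2}\}$ is available (Corollary~\ref{lemma:lde:est_for_int_h}). Thus the paper never needs a true saddle, only an approximate one confined to a fixed narrow strip. Your implicit-function branch, by contrast, exists a priori only for small $v$, and continuing it out to $|v|=K$ while retaining both the steepest-descent geometry and the bound $\mathrm{Re}\,\phi(\omega_\star(v);v)\le-cv^2$ is precisely the piece of work your plan leaves undone. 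The paper's linear-in-$v$ shift with adjustable $\delta$ is a clean replacement for that continuation argument, and buys uniformity for arbitrary $K$ with no Morse-lemma or higher-order bookkeeping.
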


\begin{proof}[Proof of Theorem~\ref{thm:lde:n-th-diff-theta}]
In view of the convolution formula~\eqref{eqn:lde:convolution}, we have 
\begin{equation*}
    \norm{\partial^{(n)} h(t)}_{\ell^\infty} \leq \norm{h^0}_{\ell^\infty}  \norm{\partial^{(n)} M(t)}_{\ell^1}. 
\end{equation*}
Employing Lemmas~\ref{lemma:lde:l_1_noncompact_interval} and~\ref{lemma:lde:l_1_compact_interval} in combination with the fast decay of the exponential we obtain a constant $C = C(n)$ for which 
\begin{equation*}
    \norm{\partial^{(n)} h(t)}_{\ell^\infty} \leq C\norm{h^0}_{\ell^\infty}  t^{-\frac{n}{2}} .
\end{equation*}
On the other hand, by transferring the n-th difference operators to the sequence $h^0$, we can write
\begin{equation*}
    \norm{\partial^{(n)} h(t)}_{\ell^\infty} \leq \norm{\partial^{(n)}h^0}_{\ell^\infty}  \norm{ M(t)}_{\ell^1}.
\end{equation*}
Applying Lemmas~\ref{lemma:lde:l_1_noncompact_interval} and \ref{lemma:lde:l_1_compact_interval} with $n=0$ now leads to the desired bound. 
\end{proof}

To prove the  lower bounds for solution $h(t)$ that are formulated in Proposition~\ref{prop:lde:comparison_principle}, 
we first note that 
\begin{equation}\label{eqn:lde:cp:sum_Ml=1}
    \sum_{l\in \Z} M_l(t) = 1, \qquad t\geq 0.
\end{equation}
Indeed, if $h^0 \equiv 1$, then by uniqueness we must have $h(t) = 1$ for all $t>0$. 
Our next task is to extract more detailed information on the spatial distribution of the `mass' of $M$. In particular, we show that the bulk of this mass is contained in a region that is $O(\sqrt{t})$ wide. By combining our estimates with \eqref{eqn:lde:cp:sum_Ml=1}, the negative components of $M$ can be controlled asymptotically. 

\begin{figure}[t]
    \centering
    \includegraphics[width = \linewidth]{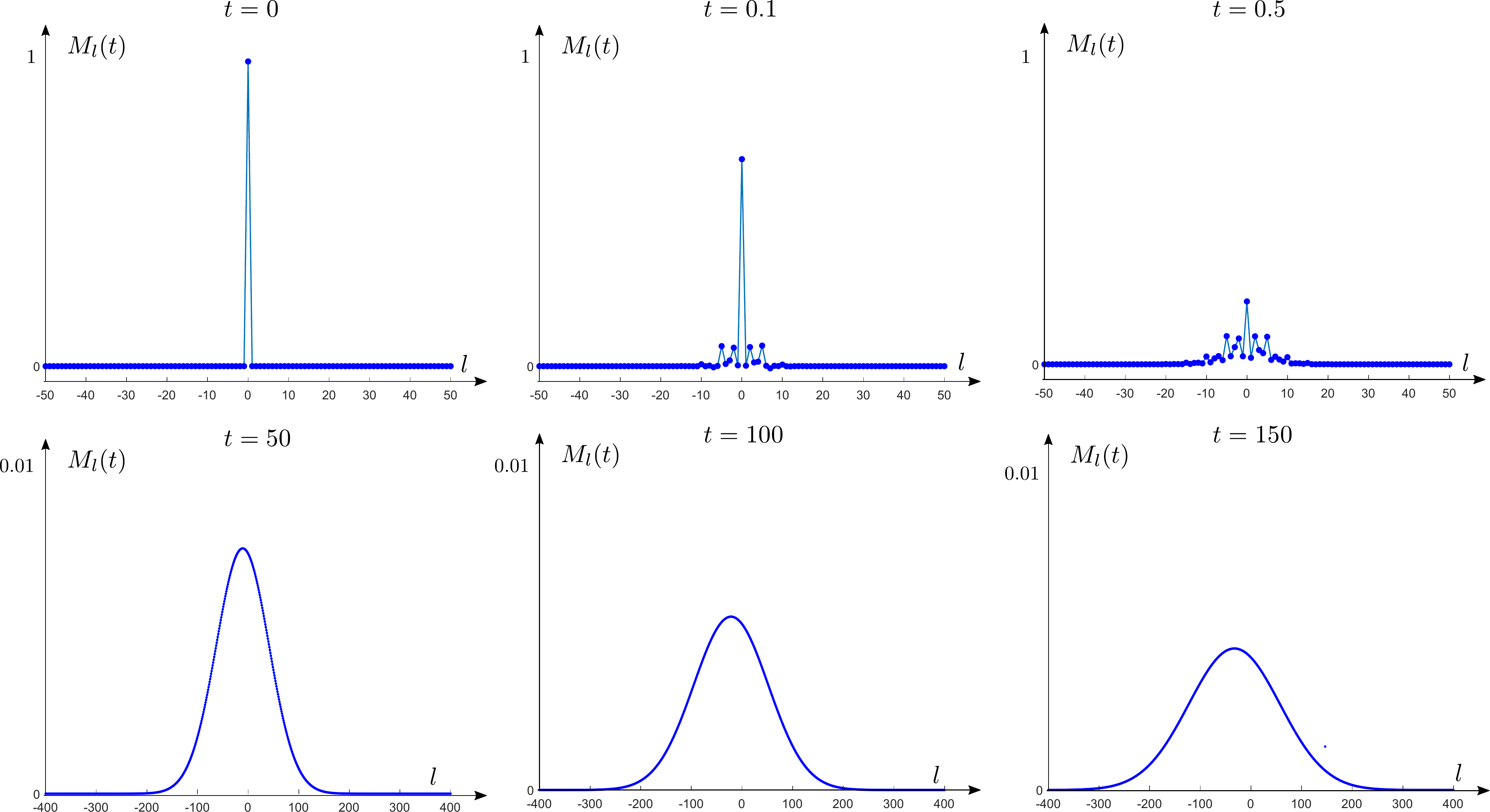}
    \caption{These six graphs represent
    the time evolution of the Green's function $M_l(t)$, 
    which we computed numerically
    by applying~\eqref{eqn:lde:fundamental solution}
    to the coefficients $(a_k)_{k=-10}^{10}$ 
    appearing in Table~\ref{tab:numerics:a_k}.  
    Observe the negative values for $M_l(t)$ 
    that are clearly visible for $t = 0.1$,
    together with the leftward movement of the 
    `center of mass', which travels at the speed
    $-a\cdot\mu =-0.22$.
    }
    \label{fig:bessel}
\end{figure}
\begin{lemma}[{see \S\ref{subsec:lde:core}}]
\label{lemma:lde:sum1}
Consider the setting of Theorem~\ref{thm:lde:n-th-diff-theta}
and pick positive constants $\kappa$ and $K_*$. Then there exist a time $T=T(\kappa, K_*)$ such that for all $t\geq T$ we have
\begin{equation*}
    \sum_{|l/t + a\cdot\mu| \leq \frac{K_*}{\sqrt{t}}} |M_l(t)| \leq 1+\kappa .
\end{equation*}
\end{lemma}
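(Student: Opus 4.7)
The plan is to perform a saddle-point analysis of the oscillatory integral \eqref{eqn:lde:fundamental solution}, showing that for $l$ in the window
$$A_t := \{l\in\Z : |l/t + a\cdot\mu| \leq K_*/\sqrt{t}\}$$
of cardinality at most $2K_*\sqrt{t}+1$, the value $M_l(t)$ is well approximated by the nonnegative Gaussian
$$G_l(t) := \frac{1}{\sqrt{2\pi t\Lambda}}\exp\!\Big(-\frac{(l+t\,a\cdot\mu)^2}{2t\Lambda}\Big).$$
The triangle inequality then gives
$$\sum_{l\in A_t}|M_l(t)|\;\leq\;\sum_{l\in A_t}|M_l(t)-G_l(t)|\;+\;\sum_{l\in A_t}G_l(t),$$
and it suffices to show that the first sum is $o(1)$ and the second is at most $1+o(1)$ as $t\to\infty$.

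For the pointwise approximation, I would substitute $\omega=\eta/\sqrt{t}$ in \eqref{eqn:lde:fundamental solution} to obtain
$$M_l(t) = \frac{1}{2\pi\sqrt{t}}\int_{-\pi\sqrt{t}}^{\pi\sqrt{t}} e^{i s_l\eta}\,\exp\!\bigl(tf(\eta/\sqrt{t})+it\bigl[p(\eta/\sqrt{t})-(a\cdot\mu)\eta/\sqrt{t}\bigr]\bigr)\,d\eta, \qquad s_l := \tfrac{l+t\,a\cdot\mu}{\sqrt{t}}.$$
Since $f$ is even (being a sum of cosines) with $f(0)=f'(0)=0$ and $f''(0)=-\Lambda$, we have $tf(\eta/\sqrt{t})=-\Lambda\eta^2/2+O(\eta^4/t)$; similarly $p(\omega)-(a\cdot\mu)\omega=O(\omega^3)$ since $p$ is odd with $p'(0)=a\cdot\mu$, yielding a phase correction of size $O(\eta^3/\sqrt{t})$. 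Using \eqref{eqn:lde:f_bounded} to show that the contribution of $|\eta|\geq\kappa_*\sqrt{t}$ decays exponentially in $t$ for a suitable fixed $\kappa_*>0$, together with \eqref{eqn:lde:f_quadratic} and the expansion $e^R\approx 1+R$ on $|\eta|\leq \kappa_*\sqrt{t}$, one extracts $M_l(t) = G_l(t) + O(1/t)$ uniformly in $s_l\in[-K_*,K_*]$. Summing over the $|A_t|=O(\sqrt{t})$ terms yields $\sum_{l\in A_t}|M_l-G_l|=O(t^{-1/2})\to 0$.

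For the sum of Gaussians, the values $s_l$ for $l\in A_t$ form a uniform grid of mesh $\Delta=1/\sqrt{t}$ in the interval $[-K_*,K_*]$, so
$$\sum_{l\in A_t}G_l(t)\;=\;\sum_{l\in A_t}\frac{\Delta}{\sqrt{2\pi\Lambda}}\,e^{-s_l^2/(2\Lambda)}$$
is a Riemann sum for the smooth integral $\int_{-K_*}^{K_*}\frac{1}{\sqrt{2\pi\Lambda}}\,e^{-s^2/(2\Lambda)}\,ds \le 1$. Standard midpoint/trapezoidal quadrature estimates yield a Riemann-sum error of $O(1/\sqrt{t})$, so both the approximation and the quadrature errors can be made arbitrarily small by choosing $T=T(\kappa,K_*)$ large enough, giving $\sum_{A_t}|M_l(t)|\leq 1+\kappa$ for $t\ge T$.

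The main technical obstacle is ensuring that the $O(1/t)$ error in the saddle-point expansion is genuinely uniform in $s_l\in[-K_*,K_*]$. The delicate point is the cubic phase correction $i\eta^3/\sqrt{t}$: when Taylor-expanded against the Gaussian weight $e^{-\Lambda\eta^2/2}$ it a priori produces a contribution of size $O(1/\sqrt{t})$, which is borderline too large to sum to $o(1)$ over the $O(\sqrt{t})$ indices in $A_t$. Squeezing out the extra $1/\sqrt{t}$ requires either noting that the odd Gaussian moment vanishes or iterating the expansion to second order, where the quadratic-in-$R$ term is genuinely of size $O(1/t)$.
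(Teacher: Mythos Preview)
Your approach is correct but takes a genuinely different route from the paper. Rather than approximating $M_l$ by the Gaussian $G_l$ on the real axis, the paper shifts the contour in \eqref{eqn:lde:fundamental solution} by an $l$-dependent imaginary amount $\epsilon^*(l,t)=2\nu_\delta(l/t+a\cdot\mu)$ (Lemma~\ref{lemma:lde:est_for_g}) to obtain the direct pointwise upper bound
\[
|M_l(t)|\;\le\;\tfrac{1}{2\pi}\,e^{-\nu_\delta t(l/t+a\cdot\mu)^2}\int_{-\pi}^{\pi}e^{tq(\epsilon^*,\omega)}\,d\omega,
\qquad \nu_\delta=\tfrac{1}{2(\Lambda+2\delta)};
\]
the $\omega$-integral is $\le\sqrt{2\pi/(t(\Lambda-2\delta))}+2\pi e^{-\bar m t}$ by Lemma~\ref{lemma:lde:q_bounds}, and summing these Gaussian bounds via Lemma~\ref{lemma:lde:function_s} yields the leading term $\sqrt{(\Lambda+2\delta)/(\Lambda-2\delta)}\le 1+\kappa/2$ for $\delta$ small. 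This works entirely with absolute-value bounds, never needs an asymptotic expansion of the integrand, and reuses the contour-deformation machinery already built for the other lemmas of {\S}\ref{sec:lde}. Your route is more classical and gives sharper pointwise information on $M_l$, at the price of controlling the saddle-point remainder uniformly.

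One correction to your obstacle paragraph: the ``odd Gaussian moment'' argument does not apply as stated, since the integral carries the factor $e^{is_l\eta}$ with $s_l\neq 0$ in general, and the expansion $e^R\approx 1+R$ is not valid for $|\eta|\sim\sqrt{t}$. The clean fix is to use that $\mathrm{Re}\,Q\le 0$ on $|\omega|\le\kappa$ (from $f(\omega)\le-\tfrac{\Lambda}{2}\omega^2$), whence $|e^Q-1|\le|\mathrm{Re}\,Q|+\min(2,|\mathrm{Im}\,Q|)\le C\,t(|\omega|^3+\omega^4)$; integrating this against $e^{-t\Lambda\omega^2/2}$ and substituting $\omega=\eta/\sqrt t$ gives $|M_l-G_l|=O(1/t)$ uniformly in $l$ with no further tricks, so your summed error is indeed $O(t^{-1/2})$.
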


\begin{lemma}[{see \S\ref{subsec:lde:core}}] \label{lemma:lde:sum_2}
Consider the setting of Theorem~\ref{thm:lde:n-th-diff-theta}
and pick $\kappa>0$.  Then there exists a constant $K_*>0$  so that for all  $K\geq K_*$ and $t\geq 1$ we have the bound
\begin{equation*}
    \sum_{\frac{K}{\sqrt{t}} \leq |l/t + a\cdot\mu| \leq K } |M_l(t)| \leq \kappa.
\end{equation*}
\end{lemma}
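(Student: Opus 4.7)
The plan is to deduce Lemma~\ref{lemma:lde:sum_2} from a pointwise Gaussian bound on the fundamental solution,
\begin{equation*}
  |M_l(t)| \;\le\; \frac{C}{\sqrt{t}}\,e^{-\delta\, t\, s_l^2},\qquad s_l := l/t+a\cdot\mu,
\end{equation*}
valid uniformly for $|s_l|\le K_0$ and $t\ge 1$, with fixed positive constants $C,\delta,K_0$ depending only on $(a_k)$ and $(\mu_k)$. This estimate is the natural output of the saddle-point analysis of the integral representation~\eqref{eqn:lde:fundamental solution} used to prove Lemmas~\ref{lemma:lde:l_1_noncompact_interval}–\ref{lemma:lde:sum1}. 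Writing $\tilde p(\omega)=p(\omega)-a\cdot\mu\,\omega$, so that $\tilde p(0)=\tilde p'(0)=0$, the phase becomes $tf(\omega)+it s_l\omega+it\tilde p(\omega)$; the saddle solves $f'(\omega^*)+i(s_l+\tilde p'(\omega^*))=0$ and, for small $|s_l|$, sits at $\omega^*\approx is_l/\Lambda$. Deforming the real contour to the steepest-descent path through $\omega^*$ and using the local expansion $f(\omega)\approx -\tfrac{\Lambda}{2}\omega^2$ yields the claimed Gaussian factor $e^{-ts_l^2/(2\Lambda)+O(ts_l^3)}$, which after shrinking $\delta$ extends to all $|s_l|\le K_0$ by the strict negativity of $f$ off zero guaranteed in (h$\alpha$).

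With this bound in hand the remainder is a Riemann-sum computation. First, for $K\ge K_0$ the outer band $K_0\le |s_l|\le K$ is absorbed by Lemma~\ref{lemma:lde:l_1_noncompact_interval} (up to a slight redefinition of its threshold constant), which gives an $O(e^{-t})$ contribution; so we may assume $K\le K_0$. Since the $s_l$ are evenly spaced with step $1/t$, the sum is controlled by a Riemann approximation:
\begin{equation*}
  \sum_{K/\sqrt{t}\le|s_l|\le K}\!|M_l(t)|
  \;\le\; C'\sqrt{t}\int_{K/\sqrt{t}}^{K} e^{-\delta t s^2}\,ds
  \;+\; \text{(two boundary terms of order } t^{-1/2}\text{)}.
\end{equation*}
The substitution $u=s\sqrt{\delta t}$ collapses this to $\tfrac{C'}{\sqrt{\delta}}\int_{K\sqrt{\delta}}^{\infty}e^{-u^2}\,du$, which tends to zero as $K\to\infty$. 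Choosing $K_*$ large enough that this integral and the boundary terms are together at most $\kappa$ yields the bound for all $K\ge K_*$ and all $t\ge 1$.

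The main obstacle is the uniform Gaussian estimate on $M_l(t)$ for $|s_l|$ up to a fixed $K_0$. For $|s_l|$ small the local quadratic picture is transparent, but for moderate $|s_l|$ the saddle $\omega^*(s_l)$ moves off the imaginary axis and one must verify that the real part of the exponent at $\omega^*$ remains bounded above by a multiple of $-ts_l^2$. This requires tracking how the contour can be shifted into the region where $\operatorname{Re}[f(\omega)+i(s_l\omega+\tilde p(\omega))]$ is controlled — which is exactly the content of the saddle-point machinery already deployed in Lemmas~\ref{lemma:lde:l_1_noncompact_interval}–\ref{lemma:lde:sum1}, and which we would reuse rather than redevelop. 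Once the pointwise Gaussian bound is available, the rest of the argument reduces to the standard tail estimate above.
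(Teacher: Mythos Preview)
Your approach is essentially the paper's: obtain a pointwise Gaussian bound $|M_l(t)|\le C t^{-1/2}e^{-\nu t s_l^2}$ from the contour-shift machinery (the paper's Lemma~\ref{lemma:lde:est_for_g} and the unnamed lemma preceding the proof of Lemma~\ref{lemma:lde:sum_2}), then sum the Gaussian tail from $|s_l|\ge K/\sqrt t$. The one structural difference is that the paper lets the Gaussian rate depend on $K$, taking $\nu=\overline\epsilon/(8K)$ on the \emph{whole} band $|s_l|\le K$; the tail sum then yields a factor $e^{-\nu K^2}=e^{-\overline\epsilon K/8}$, small for large $K$ uniformly in $t\ge 1$, and no outer band ever needs to be split off.

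Your variant, with $\delta$ and $K_0$ fixed and the band $K_0\le |s_l|\le K$ handed to Lemma~\ref{lemma:lde:l_1_noncompact_interval}, has a small gap as written: that lemma only gives $O(e^{-t})$, which at $t=1$ is $O(1)$ and need not be $\le\kappa$; increasing $K$ does nothing to this term. The fix is either to observe that the proof of Lemma~\ref{lemma:lde:l_1_noncompact_interval} actually gives $Ce^{-cK_0 t}$ for $|s_l|\ge K_0$ (so choose $K_0$ large enough that $Ce^{-cK_0}\le\kappa/2$), or simply to adopt the paper's $K$-dependent rate and avoid the split altogether. Also, your ``boundary terms of order $t^{-1/2}$'' should be recorded as $O(t^{-1/2}e^{-\delta K^2})$ so that it is clear they vanish as $K\to\infty$.
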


In our next result we show that the kernel $M(t)$ behaves similarly to the Gaussian kernel. In particular, for the continuous kernel we have $$\dfrac{1}{\sqrt{t}}\int_\R e^{-x^2/t} \frac{|x|} {\sqrt{t}} \sim O(1).$$
To establish the equivalent estimate for the discrete kernel $M(t)$, we have to take into account that the kernel is not symmetric anymore, but that the center of mass `travels' in time with speed $-(a\cdot  \mu)t $ (see Figure~\ref{fig:bessel}). 

\begin{lemma}[{see~\S\ref{subsec:lde:core}}]
\label{lemma:lde:decay_sum_Ml_times_ln}
Consider the setting of Theorem~\ref{thm:lde:n-th-diff-theta}. There exists a constant $C$ such that for every $t>0$ we have
\begin{equation*}
    \sum_{ l\in \Z} |M_l(t) | \frac{|l + (a\cdot\mu)t|}{\sqrt{t}} \leq C .
\end{equation*}
\end{lemma}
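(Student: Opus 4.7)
Writing $y_l := l + (a\cdot\mu)t$, the quantity to estimate is $S(t) := \sum_{l\in\Z} |M_l(t)|\,|y_l|/\sqrt{t}$. My plan is to split $\Z$ according to the size of $|y_l|$ relative to $\sqrt{t}$ and treat each piece separately, using the lemmas already established in this section for the extremes and a direct saddle-point analysis for the middle. For the outer piece $|l/t + a\cdot\mu|\geq K_0$, Lemma~\ref{lemma:lde:l_1_noncompact_interval} with $n=0$ gives $\sum|M_l(t)|(1+|l|)\leq Ce^{-t}$; combined with $|y_l|\leq|l|+|a\cdot\mu|t$ this yields an exponentially small contribution for $t\geq 1$. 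For the central piece $|y_l|\leq K_*\sqrt{t}$ the weight $|y_l|/\sqrt{t}$ is at most $K_*$, so Lemma~\ref{lemma:lde:sum1} bounds the contribution by $K_*(1+\kappa)$ once $t$ is large; a direct bound using $M_l(0)=\delta_{l,0}$ covers $t\in(0,1]$.

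The main obstacle is the intermediate piece $K_*\sqrt{t}\leq|y_l|\leq K_0 t$, since there the weight $|y_l|/\sqrt{t}$ can be as large as $K_0\sqrt{t}$, while Lemmas~\ref{lemma:lde:sum1}--\ref{lemma:lde:sum_2} alone only yield uniform smallness of the $\ell^1$-mass. To overcome this I would establish a pointwise Gaussian bound
\[
|M_l(t)|\leq C t^{-1/2}\exp\!\bigl(-c\,y_l^2/t\bigr),\qquad t\geq 1,\ l\in\Z,
\]
via the saddle-point method applied to~\eqref{eqn:lde:fundamental solution}. Rewriting the exponent as $iy_l\omega+tf(\omega)+itr(\omega)$ with $r(\omega):=p(\omega)-p'(0)\omega=O(\omega^3)$ and deforming the contour from $[-\pi,\pi]$ to $[-\pi+is,\pi+is]$ with $s$ chosen proportional to $y_l/t$ (legal by $2\pi$-periodicity and entireness of the integrand), the shift produces a decaying prefactor $e^{-y_l s}$. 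The expansion $\mathrm{Re}\,f(\omega+is)=f(\omega)+\tfrac{s^2}{2}\sum_k a_k\mu_k^2\cos(\mu_k\omega)+O(s^4)$ shows that on the shifted contour the real part of the exponent is bounded above by $-y_l s+\tfrac{\Lambda s^2}{2}\,t+tf(\omega)+O(s^4 t)$, and optimizing in $s$ together with a standard Laplace estimate exploiting $f(\omega)\leq-\tfrac{\Lambda}{2}\omega^2$ from~\eqref{eqn:lde:f_quadratic} absorbs the remaining $\omega$-integral at the $t^{-1/2}$ scale.

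Equipped with this pointwise bound, I decompose the intermediate region dyadically into rings $A_j:=\{l:K_*2^j\sqrt{t}\leq|y_l|\leq K_*2^{j+1}\sqrt{t}\}$, each containing $O(K_*2^j\sqrt{t})$ lattice points. The pointwise bound then forces $\sum_{l\in A_j}|M_l(t)|\leq CK_*2^j\,e^{-c\,4^j K_*^2}$, so multiplying by the available weight $|y_l|/\sqrt{t}\leq K_*2^{j+1}$ gives a contribution $\leq CK_*^2\,2^{2j+1}\,e^{-c\,4^j K_*^2}$; summing over $j\geq 0$ produces a finite geometric-times-Gaussian total. The hardest technical step throughout is justifying the contour deformation globally: one must verify $\mathrm{Re}\,f(\omega+is)<0$ for $|\omega|$ away from zero uniformly in $|s|\lesssim 1$, which leans on the strict inequality in (h$\alpha$) together with the analytic continuation of the trigonometric polynomial $f+ip$ into a horizontal strip around the real axis. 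This analysis is essentially already carried out in the proofs of Lemmas~\ref{lemma:lde:l_1_compact_interval}--\ref{lemma:lde:sum_2}, so I expect to reuse the same contour machinery while reorganising it to extract the pointwise Gaussian bound needed here.
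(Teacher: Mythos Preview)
Your approach is correct and shares the same core idea as the paper's proof---a pointwise Gaussian bound on $|M_l(t)|$ obtained by shifting the contour proportionally to $l/t+a\cdot\mu$, followed by a weighted summation---but the paper executes it more directly. The Gaussian estimate you propose to extract is already recorded as~\eqref{eq:est:core:m:l}; with it in hand the paper splits only into two regions (outer via Lemma~\ref{lemma:lde:l_1_noncompact_interval}, core $|x_l^t|\le K$ via~\eqref{eq:est:core:m:l}) and handles the entire core in one line by applying Lemma~\ref{lemma:lde:function_s}\textit{(\ref{item:lde:lemma_s:sum_s})} with $n=1$ to the sum $\sum |x_l^t|\,e^{-\nu t(x_l^t)^2}$. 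Your three-way split, the separate appeal to Lemma~\ref{lemma:lde:sum1} for the central window, and the dyadic decomposition are therefore all unnecessary: the first-moment Gaussian sum that Lemma~\ref{lemma:lde:function_s} already computes absorbs both your central and intermediate regions at once.
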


\begin{proof}[Proof of Proposition~\ref{prop:lde:comparison_principle}]
We provide the proof only for \textit{(i)}, noting that item \textit{(ii)} can  be derived analogously.  
Upon introducing the shorthand $$x_l^t = \frac{l}{t} + a\cdot\mu,$$ we use 
 Lemmas~\ref{lemma:lde:l_1_noncompact_interval}, \ref{lemma:lde:sum1} and~\ref{lemma:lde:sum_2} to find constants $T$ and $K_*$ so that for all $t\geq T$ we have 
\begin{equation}\label{eqn:lde:cp:init_est}
    \sum_{|x_l^t| \leq \frac{K_*}{\sqrt{t}}} |M_l(t)| \leq 1+\kappa , \qquad  \qquad
    \sum_{|x_l^t| \geq \frac{K_*}{\sqrt{t}}}|M_l(t)| \leq \kappa.
\end{equation}
Combining these inequalities with~\eqref{eqn:lde:cp:sum_Ml=1} we arrive at the bound
\begin{align*}
     \sum_{|x_l^t| \leq \frac{K_*}{\sqrt{t}}} M_l(t) &= 1 -   \sum_{|x_l^t| > \frac{K_*}{\sqrt{t}}} M_l(t) \geq  1 -  \sum_{|x_l^t| > \frac{K_*}{\sqrt{t}}} |M_l(t)|  \geq 1 - \kappa.
 \end{align*}
Employing Lemma~\ref{lemma:lde:sum1} again, we conclude that
\begin{equation}
   -2\kappa \leq  \sum_{|x_l^t| \leq \frac{K_*}{\sqrt{t}}}   M_l(t) - |M_l(t)|   \leq 0,
\end{equation}
from which we obtain the lower bounds
\begin{align*}
    h_k(t) &= \sum_{l\in \Z} M_l(t) h^0_{l-k} 
      = \sum_{|x_l^t| \leq \frac{K_*}{\sqrt{t}}} |M_l(t)|   h^0_{l-k} + \sum_{|x_l^t| \leq \frac{K_*}{\sqrt{t}}} \left( M_l(t) - |M_l(t)| \right)  h^0_{l-k} + \sum_{|x_l^t| > \frac{K}{\sqrt{t}}} M_l(t) h^0_{l-k}
    \\[0.3cm]
     & \geq (1-{\kappa})    \inf_{j\in \Z} h_j^0 - {3\kappa} \norm{h_0}_\infty \\[0.3cm]
    &  \geq \inf_{j\in \Z} h_j^0 - {4\kappa} \norm{h_0}_\infty. 
\end{align*}
The estimate~\eqref{eqn:lde:cp:large_time} can now readily be derived by adjusting the constant $\kappa$ chosen in~\eqref{eqn:lde:cp:init_est}. 

In order to establish~\eqref{eqn:lde:cp:flat_diff} we first compute
\begin{align*}
    h_k(t) = \sum_{l\in \Z} M_l(t) h_{k-l}^0 = \sum_{l\in \Z} M_l(t) h_k^0 + \sum_{l\in \Z} M_l(t) (h^0_{k-l} - h_k^0) \geq h_k^0 - \norm{\partial h^0}_{\ell^\infty}\sum_{ l\in \Z} |M_l(t) | |l |.
\end{align*}
Using Lemma~\ref{lemma:lde:decay_sum_Ml_times_ln}, we can estimate
\begin{equation*}
    \sum_{ l\in \Z} |M_l(t) | |l | \leq C\sqrt{t} + t |a\cdot \mu|  \norm{M(t)}_{\ell^1}.
\end{equation*}
Lemmas~\ref{lemma:lde:l_1_noncompact_interval} and~\ref{lemma:lde:l_1_compact_interval} ensure that $\norm{M(t)}_{\ell^1(\Z)}$ is
uniformly bounded. We can therefore find $C = C(T)$ such that $\max_{t\in [0,T]}\sum_{ l\in \Z} |M_l(t) | |l | \leq C $, which leads to the desired bound~\eqref{eqn:lde:cp:flat_diff}.

\end{proof}

\subsection{Contour deformation}\label{subsec:contour}

The main difficulty towards proving  Lemmas~\ref{lemma:lde:l_1_noncompact_interval} and~\ref{lemma:lde:l_1_compact_interval} lies in the fact that  $[\partial^n M(t)]_l$ depends on the variable $l$ only through the expression $e^{i\omega l}$. By simply taking the absolute value of the integrand in expressions such as~\eqref{eqn:lde:fundamental solution}, we therefore lose all information on the decay coming from the $l$-variable. In order to overcome this issue, we pick $\epsilon\in \R$ and denote by $\Gamma_\epsilon$ rectangle consisting of paths $$\gamma_1 = [-\pi, \pi], \ \gamma_2 = [\pi, \pi+i\epsilon], \ \gamma_3 = [\pi+i\epsilon, -\pi+i\epsilon], \ \gamma_4 = [-\pi+i\epsilon, -\pi].$$ Due to the fact that $f$ and $g$ are $2\pi$-periodic in the real variable, we have 
$$\int_{\gamma_2} e^{il\omega} e^{tf(\omega) + it p(\omega)} \ d\omega  = - \int_{\gamma_4} e^{il\omega} e^{tf(\omega) + it p(\omega)} \ d\omega. $$
Therefore, we obtain
\begin{align*}
 0 =  \oint_{\Gamma_\epsilon} e^{il\omega} e^{tf(\omega) + it p(\omega)} \ d\omega = \int_{\gamma_1} e^{il\omega} e^{tf(\omega) + it p(\omega)} \ d\omega +  \int_{\gamma_3} e^{il\omega} e^{tf(\omega) + it p(\omega)} \ d\omega.
\end{align*}
Recalling~\eqref{eqn:lde:fundamental solution}, this allows us to compute
\begin{equation}\label{eqn:shift_contour_Ml}
\begin{aligned}
    M_l(t) &= \dfrac{1}{2\pi} \int_{-\pi+i\epsilon}^{\pi+i\epsilon} e^{il\omega} e^{tf(\omega) + it p(\omega)} \ d\omega
    =  \dfrac{1}{2\pi} \int_{-\pi}^{\pi} e^{i l(\omega +i\epsilon) }e^{tf(\omega+i\epsilon) + it p(\omega+i\epsilon)} \ d\omega.
\end{aligned}
\end{equation}
Writing $z = x+iy$ with $x, y\in \R$, we recall the formulas
\begin{align*}
    \cos{z} &= \cos{x}\cosh{y} - i\sin{x}\sinh{y}, & &  \sin{z} = \sin{x}\cosh{y} + i\cos{x}\sinh{y}
\end{align*}
to obtain
\begin{align} \label{eqn:lde:def_of_Ml2}
    M_l(t) = \dfrac{1}{2\pi} e^{-\epsilon l}\int_{-\pi}^{\pi} e^{i l \omega  }e^{t f(\omega, \epsilon)  + it
    p(\omega, \epsilon)}\ d\omega,
\end{align}
where the functions $f, p:[-\pi, \pi]\times\R \to \R$ are
now defined by
\begin{align*}
    f(\omega, \epsilon) = \sum_{k=-N}^Na_k (\cos{(\mu_k\omega)e^{-\mu_k\epsilon}} -1 ),  \qquad p(\omega, \epsilon) &=  \sum_{k=-N}^Na_k \sin(\mu_k\omega)e^{-\mu_k\epsilon},
\end{align*}
extending the definitions \eqref{eq:lde:bnds:def:f}
and \eqref{eq:lde:bnds:def:p}.

The main strategy is to choose  suitable values for $\epsilon$ in order to isolate the relevant decay
rates in various $(l,t)$ regimes. 
Indeed, the representation
\eqref{eqn:lde:def_of_Ml2} does retain
sufficient spatial information for our purposes
when applying crude estimates to the integrands.
To appreciate this,
we recall that the Fourier symbol of the difference operator $\partial^{(1)}$ is $e^{i\omega}-1$
and introduce the real-valued expressions
\begin{align}
    P_l(\epsilon, t, n) &= |1-e^{-\epsilon}|^{n} e^{-\epsilon l} \int_{-\pi}^{\pi } e^{t f(\epsilon, \omega) }\ d\omega,  \label{eqn:lde:def_of_P}
    \\ 
    R_l(\epsilon, t, n) &=e^{-\frac{\epsilon n}{2}}e^{-\epsilon l}\int_{-\pi}^{\pi } |\omega|^n e^{t f(\epsilon, \omega) }\ d\omega \label{eqn:lde:def_of_R}.
\end{align}
The result below shows %
that their sum can be used to extract the desired bounds on $\partial^{n} M(t)$.
In particular, the problem of estimating the $\ell^1$-norm of the sequence $\partial^n M(t)$ is reduced to finding the corresponding bounds
for the $\ell^1$-norm of the sequences $P(\epsilon, t, n)$ and $R(\epsilon, t, n)$. 

\begin{lemma}\label{lemma:lde:M_bounded_P_R}
Consider the setting of Theorem~\ref{thm:lde:n-th-diff-theta}. Then for every $l\in \Z$, $\epsilon\in\R$ and $t\geq 0$ we have
\begin{equation}\label{eqn:lde:partial_M_P_R}
  \left|[\partial^{n} M(t)]_l\right| \leq   \dfrac{2^{n/2}}{4\pi}\big(P_l(\epsilon, t, n) + R_l(\epsilon, t, n)\big) .
\end{equation}
\end{lemma}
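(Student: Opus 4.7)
The plan is to express $[\partial^{(n)}M(t)]_l$ as a single shifted-contour Fourier integral and then bound the integrand pointwise in $\omega$ by a sum of two terms that integrate to $P_l$ and $R_l$.

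Starting from the representation~\eqref{eqn:lde:def_of_Ml2}, shifting the spatial index $l\mapsto l+1$ multiplies the integrand by $e^{-\epsilon}e^{i\omega}$, so
\begin{equation*}
M_{l+1}(t)-M_l(t)=\frac{1}{2\pi}e^{-\epsilon l}\int_{-\pi}^{\pi}(e^{-\epsilon}e^{i\omega}-1)\,e^{il\omega}e^{tf(\omega,\epsilon)+itp(\omega,\epsilon)}\,d\omega.
\end{equation*}
Iterating $n$ times gives
\begin{equation*}
[\partial^{(n)}M(t)]_l=\frac{1}{2\pi}e^{-\epsilon l}\int_{-\pi}^{\pi}(e^{-\epsilon}e^{i\omega}-1)^n e^{il\omega}e^{tf(\omega,\epsilon)+itp(\omega,\epsilon)}\,d\omega.
\end{equation*}
Since $f(\cdot,\epsilon)$ and $p(\cdot,\epsilon)$ are real-valued we have $|e^{tf+itp}|=e^{tf}$ and $|e^{il\omega}|=1$, so taking absolute values inside the integral yields
\begin{equation*}
\bigl|[\partial^{(n)}M(t)]_l\bigr|\leq \frac{1}{2\pi}e^{-\epsilon l}\int_{-\pi}^{\pi}\bigl|e^{-\epsilon}e^{i\omega}-1\bigr|^n e^{tf(\omega,\epsilon)}\,d\omega.
\end{equation*}

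The heart of the proof is a pointwise estimate of $|e^{-\epsilon}e^{i\omega}-1|^n$. Expanding the modulus produces the exact identity
\begin{equation*}
|e^{-\epsilon}e^{i\omega}-1|^2=(1-e^{-\epsilon})^2+4e^{-\epsilon}\sin^2(\omega/2),
\end{equation*}
which I rewrite as $a^2+b^2$ with $a=|1-e^{-\epsilon}|$ and $b=2e^{-\epsilon/2}|\sin(\omega/2)|$. The convexity inequality $(a^2+b^2)^{n/2}\leq 2^{n/2-1}(a^n+b^n)$ combined with $|\sin(\omega/2)|\leq|\omega|/2$ then delivers
\begin{equation*}
|e^{-\epsilon}e^{i\omega}-1|^n\leq 2^{n/2-1}\bigl(|1-e^{-\epsilon}|^n+e^{-n\epsilon/2}|\omega|^n\bigr).
\end{equation*}
Substituting this back into the integral and recognizing the two resulting pieces as $P_l(\epsilon,t,n)$ and $R_l(\epsilon,t,n)$ from \eqref{eqn:lde:def_of_P}--\eqref{eqn:lde:def_of_R} completes the argument, with the constant $\tfrac{2^{n/2-1}}{2\pi}=\tfrac{2^{n/2}}{4\pi}$ appearing automatically.

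The only point requiring any thought is how to split $|e^{-\epsilon}e^{i\omega}-1|^n$ into an $\epsilon$-part and an $\omega$-part. A naive triangle-inequality decomposition of the form $|e^{-\epsilon}e^{i\omega}-1|\leq |1-e^{-\epsilon}|+|e^{i\omega}-1|$ would produce an $R_l$-style integral \emph{without} the factor $e^{-n\epsilon/2}$. That factor is precisely what will let the subsequent analysis balance the spatial decay from $e^{-\epsilon l}$ against the temporal decay $t^{-n/2}$ by optimizing $\epsilon$ against $|l|/t$, so anchoring the argument in the exact squared-modulus identity rather than in a blunt triangle inequality is essential.
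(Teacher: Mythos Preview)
Your proof is correct and follows essentially the same approach as the paper: both derive the shifted-contour formula for $[\partial^{(n)}M(t)]_l$, bound the integrand, rewrite $|e^{-\epsilon}e^{i\omega}-1|^2$ as $(1-e^{-\epsilon})^2+2e^{-\epsilon}(1-\cos\omega)$ (equivalently $4e^{-\epsilon}\sin^2(\omega/2)$), and then apply the convexity inequality $(a+b)^{p}\le 2^{p-1}(a^p+b^p)$ with $p=n/2$ together with $1-\cos\omega\le \omega^2/2$. Your closing remark about why the triangle-inequality split would lose the crucial $e^{-n\epsilon/2}$ factor is a nice addition not present in the paper.
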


\begin{proof}
Taking $n$-th differences in~\eqref{eqn:lde:def_of_Ml2} we obtain the expression
\begin{align*}
    [\partial^{n} M(t)]_l = \dfrac{1}{2\pi} e^{-\epsilon l} \int_{-\pi}^{\pi } e^{i\omega l}\left( e^{-\epsilon} e^{i\omega} - 1 \right)^n e^{tf(\omega, \epsilon) + it p(\omega, \epsilon)} d\omega,
\end{align*}
which leads to the bound
\begin{equation}\label{eqn:lde:estimate_partial_n}
    \left|[\partial^{n} M(t)]_l\right| \leq \dfrac{1}{2\pi} e^{-\epsilon l} \int_{-\pi}^{\pi } \left| e^{-\epsilon} e^{i\omega} - 1 \right|^n e^{tf(\omega, \epsilon) } \ d\omega.
\end{equation}
Next, we compute
\begin{align*}
    |e^{-\epsilon} e^{i\omega} - 1 | & = \sqrt{e^{-2\epsilon} - 2e^{-\epsilon}\cos{\omega }+1} \\
    & = \sqrt{\left(1- e^{-\epsilon}\right)^2 + 2 e^{-\epsilon}(1-\cos{\omega})}.
\end{align*}
Employing the standard inequality $(a+b)^p\leq 2^{p-1} (a^p + b^p)$ for non-negative real numbers $a$, $b$ and $p$ together with ${1-\cos{\omega}}\leq \frac{|\omega|^2}{2}$, we obtain the bound 
\begin{align*}
     |e^{-\epsilon} e^{i\omega} - 1 |^n \leq  2^{\frac{n}{2} -1 } \left( |1-e^{-\epsilon}|^n  + e^{-\frac{n\epsilon}{2}} |\omega|^n\right),
\end{align*}
from which~\eqref{eqn:lde:partial_M_P_R} readily follows. 
\end{proof}

At times, it is convenient to split the exponents in~\eqref{eqn:lde:def_of_P}-\eqref{eqn:lde:def_of_R} in a slightly different fashion. To this end, we introduce
 two auxiliary functions $g:\Z\times\R\times(0, \infty) \to \R$ and $q:\R\times (0, \infty)$ defined by 
\begin{align}
    g(l,\epsilon, t) &:= -\epsilon l  + t \sum_{k=-N}^Na_k (e^{-\mu_k \epsilon} - 1), 
\label{eqn:lde:def_of_g}
\\
    q(\epsilon, \omega ) &:= {\sum_{k=-N}^N a_k (\cos{(\mu_k \omega)} -1 )e^{-\mu_k\epsilon}}. \label{eqn:lde:def_of_h}
\end{align}
This allows us to rewrite~\eqref{eqn:lde:def_of_P} and \eqref{eqn:lde:def_of_R} in the form
\begin{align}
    P_l(\epsilon, t, n) &= |1-e^{-\epsilon}|^{n} e^{g(l,\epsilon,t)} \int_{-\pi}^\pi e^{t q(\epsilon, \omega)}\,  d\omega  \label{eqn:lde:def_P2} ,
    \\ 
    R_l(\epsilon, t, n) &= e^{-\frac{\epsilon n}{2}} e^{g(l,\epsilon, t)} \int_{-\pi}^\pi |\omega|^n e^{t q(\epsilon, \omega)}\,  d\omega . \label{eqn:lde:def_R2}
\end{align}
 Note that $g$ vanishes for $\epsilon = 0$, while $q$ reduces to $f$. In the reminder of this subsection we provide several preliminary bounds for $q$ and the integral expressions above. 

\begin{lemma}\label{lemma:lde:function_s}
Pick $n\in \N_0$, introduce  two positive constants \begin{equation}\label{eqn:lde:C1_C2}
    C_1  = C_1(n) =  \int_{-\infty}^\infty u^n e^{-u^2}\, du , \qquad \qquad
    C_2  = C_2(n) =   n^{n/2} e^{-n/2} 2^{2 -n/2}
\end{equation}
and consider the functions  
\begin{equation*}
    s_{n,\nu}(x, t) = |x|^n e^{-\nu t x^2 },
\end{equation*} 
together with the sequences
\begin{equation*}
    x_k^t = \frac{k}{t} + b
\end{equation*} 
for any $b \in \R$.
Then the following claims hold.
\begin{enumerate}[(i)]
    \item\label{item:lde:lemma_s:int_s} For any $t>0$ and $\nu>0$ we have
    $$\int_{-\infty}^\infty s_{n,\nu}(x, t)dx \leq  C_1  t^{-\frac{n+1}{2}} \nu^{-\frac{n+1}{2}}.$$
    \item\label{item:lde:lemma_s:sum_s} 
    For any $t>0$ and $\nu>0$ the series $\sum_{k\in \Z } s_n(x_k^t, t)$ converges and we have the upper bound
    \begin{equation}\label{eqn:lde:sum_s}
        \sum_{k\in \Z} s_{n, \nu} (x_k^t, t) \leq  C_1  t^{-\frac{n-1}{2}} \nu^{-\frac{n+1}{2}} + C_2 t^{-\frac{n}{2}} \nu^{-\frac{n}{2}}.
    \end{equation}
  
   \item\label{item:lde:lemma_s:tail_bound}  For any $t > 0$, $\nu>0$ and $K_0 > 0$ we have the tail bound
   \begin{equation}\label{eqn:tail:bound}
         \sum_{|x_k^t| \ge K_0} s_{0, \nu} (x_k^t, t) \leq 
         \big( 2 + \frac{2\sqrt{t}}{\sqrt{\nu}} \big)         e^{-\nu t K_0^2}.
    \end{equation}
\end{enumerate}
\end{lemma}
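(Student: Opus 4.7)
The three claims are all routine Gaussian-type estimates, so the plan is to give a direct computation for each, with the main care being in \emph{(ii)} where the Riemann-sum comparison must accommodate the fact that $s_{n,\nu}(\cdot,t)$ is not monotone.

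For \emph{(i)}, I would simply perform the substitution $u = x\sqrt{\nu t}$, under which $dx = du/\sqrt{\nu t}$ and $|x|^n = |u|^n (\nu t)^{-n/2}$. The integral then reduces to $(\nu t)^{-(n+1)/2} \int_{\R} |u|^n e^{-u^2}\,du$, which by the definition of $C_1$ in \eqref{eqn:lde:C1_C2} gives exactly the bound claimed.

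For \emph{(ii)}, the key observation is that the spacing $x_{k+1}^t - x_k^t = 1/t$ is small, and that $s_{n,\nu}(\cdot,t)$ is piecewise monotone, with local maxima at $x^* = \pm \sqrt{n/(2\nu t)}$ and (for $n\geq 1$) a local minimum at $x=0$. An elementary differentiation shows that the maximum value is
\begin{equation*}
  \max_{x\in\R} s_{n,\nu}(x,t) = \bigl(n/(2\nu t)\bigr)^{n/2} e^{-n/2}.
\end{equation*}
The plan is to split $\Z$ according to the sign of $x_k^t$ and its relation to $\pm x^*$, so that on each piece the function $k \mapsto s_{n,\nu}(x_k^t,t)$ is monotone. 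On a monotone piece one has the standard bound $f(x_k)\,\Delta \leq \int_{x_k}^{x_k+\Delta} f$ (or $\int_{x_k-\Delta}^{x_k} f$) with $\Delta = 1/t$, and the only indices not covered by such a bound are the at most three indices closest to $\{-x^*,0,x^*\}$. Contributing these at most by $\max s_{n,\nu}$ and everything else by $t\int s_{n,\nu}$, part \emph{(i)} then yields the two terms in \eqref{eqn:lde:sum_s}, with the constant $C_2$ comfortably absorbing the factor of three.

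For \emph{(iii)}, I would split the sum into the right tail $x_k^t \geq K_0$ and the left tail $x_k^t \leq -K_0$. On each, the exponential $e^{-\nu t x^2}$ is monotone (decreasing on the right tail, increasing on the left), so one can use the same Riemann-sum comparison as in \emph{(ii)}, picking up an additional endpoint contribution bounded by $e^{-\nu t K_0^2}$ from the boundary index. This gives
\begin{equation*}
 \sum_{|x_k^t|\geq K_0} s_{0,\nu}(x_k^t,t) \;\leq\; 2\,e^{-\nu t K_0^2} \;+\; t\!\!\int_{|y|\geq K_0}\!\! e^{-\nu t y^2}\,dy.
\end{equation*}
The remaining Gaussian tail integral is then estimated by the substitution $z = y - K_0$, which gives $\int_{K_0}^{\infty} e^{-\nu t y^2}\,dy \leq e^{-\nu t K_0^2}\int_0^\infty e^{-\nu t z^2}\,dz = \tfrac{\sqrt{\pi}}{2\sqrt{\nu t}}\,e^{-\nu t K_0^2}$. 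Multiplying by the factor $t$ and doubling to account for both tails, one arrives at $(\sqrt{\pi}\sqrt{t}/\sqrt{\nu})\,e^{-\nu t K_0^2}$, which together with the $2\,e^{-\nu t K_0^2}$ boundary term yields \eqref{eqn:tail:bound} since $\sqrt{\pi} < 2$.

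The only mildly subtle point is part \emph{(ii)}: one has to be disciplined about which direction of monotonicity is used to bound each $f(x_k)\Delta$ by an adjacent integral, in order not to double-count intervals. Everything else is bookkeeping with standard Gaussian estimates.
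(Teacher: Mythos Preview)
Your proposal is correct and follows essentially the same route as the paper: substitution for \emph{(i)}, a monotone-piece Riemann-sum comparison plus a few exceptional ``peak'' indices for \emph{(ii)}, and the same comparison with boundary terms for \emph{(iii)}. Two small remarks: in \emph{(ii)} the correct count of exceptional indices is four (two bracketing each maximum; none is needed at the minimum $x=0$), not three, which is exactly why $C_2$ carries the factor $2^2$---your conclusion is unaffected since you bound by $C_2$ anyway. In \emph{(iii)} your substitution $z=y-K_0$ together with $e^{-2\nu t K_0 z}\le 1$ is a clean self-contained alternative to the paper's appeal to the Chernoff bound $\mathrm{erfc}(x)\le e^{-x^2}$, and yields the same estimate.
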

\begin{proof}
Item \textit{(\ref{item:lde:lemma_s:int_s})} follows directly after substituting $u = \sqrt{\nu t} x $ and observing that for every $n\geq0$ we have $C_1(n) < \infty$. 
To prove item \textit{(\ref{item:lde:lemma_s:sum_s})}, we first note that 
the function $x\mapsto s_{n,\nu}(x, t)$ is symmetric around $0$,  increasing on the interval $[0, \frac{\sqrt{n}}{\sqrt{2\nu t}}]$ and decreasing on  $[ \frac{\sqrt{n}}{\sqrt{2\nu t}}, \infty)$. 
Choosing integers $N_1$,  $N_2$ and $M$ in such a way that 
$$  -\frac{\sqrt{n}}{\sqrt{2\nu t}}\in [x_{N_1}^t, x_{N_1+1}^t], \qquad \qquad    \frac{\sqrt{n}}{\sqrt{2\nu t}}\in [x_{N_2}^t, x_{N_2+1}^t], \qquad \qquad  0\in [x_M^t, x_{M+1}^t],$$ 
we can hence write
\begin{align*}
    \sum_{k\in \Z} s_{n,\nu}(x_k, t ) 
    & = \sum_{k=-\infty}^{N_1-1} \frac{1}{x_{k+1}^t - x_k^t}\int_{x_k^t}^{x_{k+1}^t}s_{n,\nu}(x_k^t, t )\, dx
     + \sum_{k=N_1+2}^{M} \frac{1}{x_{k}^t - x_{k-1}^t}\int_{x_{k-1}^t}^{x_{k}^t}s_{n,\nu}(x_k^t, t )\, dx \\[0.3cm]
    &\indent  + \sum_{k=M+1}^{N_2-1} \frac{1}{x_{k+1}^t - x_k^t}\int_{x_k^t}^{x_{k+1}^t}s_{n,\nu}(x_k^t, t )\, dx
    + \sum_{k=N_2+2}^\infty \frac{1}{x_{k}^t - x_{k-1}^t}\int_{x_{k-1}^t}^{x_{k}^t}s_{n,\nu}(x_k^t, t )\, dx 
    \\[0.3cm]
    &\indent  + s_{n,\nu}(x_{N_1}^t, t) +s_{n, \nu}(x_{N_1+1}^t, t)  + s_{n,\nu}(x_{N_2}^t, t) +s_{n, \nu}(x_{N_2+1}^t, t).
\end{align*}
Noting that $x_{k+1}^t - x_k^t = 1/t$ and recalling  \textit{(\ref{item:lde:lemma_s:int_s})}, we find
\begin{align*}
    \sum_{k\in \Z} s_{n,\nu}(x_k, t ) 
    & \leq  t\int_{-\infty}^{x_{N_1}^t} s_{n,\nu}(x, t )\, dx
     + t \int_{k=x_{N_2}^t}^{\infty} s_{n,\nu}(x, t )\, dx + 4 s_{n,\nu} (\sqrt{n}/\sqrt{2\nu t}) \\[0.3cm]
     & \leq   C_1  t^{-\frac{n-1}{2}} \nu^{-\frac{n+1}{2}} + C_2 t^{-\frac{n}{2}} \nu^{-\frac{n}{2}} .
\end{align*}
This proves~\eqref{eqn:lde:sum_s}, as desired.

 For \textit{(\ref{item:lde:lemma_s:tail_bound})}, we first
choose integers $N_1$
and $N_2$  in such a way that 
$$  -K_0 \in [x_{N_1}^t, x_{N_1 +1}^t], \qquad \qquad    K_0 \in [x_{N_2-1}^t, x_{N_2}^t].$$
Using the fact
that $x\mapsto s_{0,\nu}(x,t)$ is even and decreasing on $[0, \infty)$, we compute
\begin{equation*}
\begin{aligned}
    \sum_{|x_k^t| \ge K_0}^N s_{0,\nu}(x_k^t, t) 
    &\leq s_{0,\nu}(x^t_{N_1})
    + s_{0,\nu}(x^t_{N_2})
    +
    2t \int_{K_0}^{\infty} e^{-\nu t x^2}\, dx
\\
   &\le  2 s_{0,\nu}(K_0) + 2 \frac{\sqrt{t}}{\sqrt{\nu}} \int_{\sqrt{\nu t}K_0}^\infty e^{-u^2} \, d u.
\end{aligned}
\end{equation*}
The desired estimate
now follows from
the Chernoff bound,
which states that $\mathrm{erfc}(x) \leq e^{-x^2}$ holds for all $x>0$ (see \cite{chang2011chernoff, chiani2002improved}).
\end{proof}

\begin{lemma}\label{lemma:lde:q_bounds}
Consider the setting of Theorem~\ref{thm:lde:n-th-diff-theta} and pick $0<\delta<\frac{\Lambda}{2}$.  Then there exist positive constants $\overline{\epsilon}$, $\overline{m}$ and $\overline{\kappa}$ such that the following statements hold. 
\begin{enumerate}[(i)]
    \item \label{item:lde:q_bounds:general}For all $\epsilon\in [-\overline{\epsilon}, \overline{\epsilon}]$ we have
    \begin{align}
        q(\epsilon, \omega )& \leq -\left(\frac{\Lambda}{2} - \delta\right)\omega^2, & & \omega \in [-\overline{\kappa}, \overline{\kappa}], & &  \label{eqn:lde:bounds_q_quadratic} \\
        q(\epsilon, \omega) & \leq -\overline{m},  & &   \omega\in [-\pi, -\overline{\kappa}]\cup[\overline{\kappa}, \pi]. & & & & \label{eqn:lde:bounds_q_uniform}
    \end{align}

\item\label{item:lde:q_bounds:integral}  
Pick $n\in \N_0$ and recall the constant $C_1 = C_1(n)$ from~\eqref{eqn:lde:C1_C2}. Then the estimate 
\begin{equation*}
    \int_{-\pi}^\pi |\omega|^n e^{t q(\epsilon, \omega)} d\omega  \leq C_1 t^{-\frac{n+1}{2}} \left(\frac{\Lambda -2\delta}{2}\right)^{-\frac{n+1}{2}}  + \frac{2\pi^{n+1}}{n+1}e^{-\overline{m}t}
\end{equation*}
holds for all $t>0$ and $\epsilon\in [-\overline{\epsilon}, \overline{\epsilon}]$.
In particular, for $n=0$ we have
\begin{equation}\label{eqn:lde:bound_on_q_n=0}
    \int_{-\pi}^\pi e^{t q(\epsilon, \omega)} d\omega  \leq   \frac{{\sqrt{2 \pi}}}{\sqrt{t(\Lambda - 2\delta)}}   + 2\pi e^{-\overline{m}t}. 
\end{equation}
\end{enumerate}
\end{lemma}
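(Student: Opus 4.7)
The plan is to treat item (i) as a perturbation of the bounds \eqref{eqn:lde:f_quadratic}--\eqref{eqn:lde:f_bounded} on $f$ that are afforded by (h$\alpha$), and then to feed these bounds into Lemma~\ref{lemma:lde:function_s}\textit{(\ref{item:lde:lemma_s:int_s})} to obtain item (ii).

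For the quadratic bound in item (i), I would Taylor expand $q(\epsilon,\omega)$ in $\omega$ around $0$. Since $\sin(\mu_k\cdot 0)=0$ for every $k$, a direct computation gives
\begin{equation*}
    q(\epsilon,0) = 0, \qquad \partial_\omega q(\epsilon,0) = 0, \qquad \partial_\omega^2 q(\epsilon,0) = -\sum_{k=-N}^N a_k \mu_k^2 e^{-\mu_k\epsilon},
\end{equation*}
which at $\epsilon=0$ reduces to $-\Lambda$. By continuity of $\epsilon\mapsto\partial_\omega^2 q(\epsilon,0)$, we may choose $\overline{\epsilon}_1>0$ so small that $\partial_\omega^2 q(\epsilon,0)\leq -\Lambda + \delta$ for every $|\epsilon|\leq \overline{\epsilon}_1$. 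Since the fourth derivative $\partial_\omega^4 q(\epsilon,\omega)$ is uniformly bounded on $[-\overline{\epsilon}_1,\overline{\epsilon}_1]\times[-\pi,\pi]$ by some $C_*>0$, Taylor's theorem yields
\begin{equation*}
    q(\epsilon,\omega) \le \tfrac{1}{2}\bigl(-\Lambda+\delta\bigr)\omega^2 + \tfrac{C_*}{24}\omega^4
    = -\Bigl(\tfrac{\Lambda}{2} - \tfrac{\delta}{2} - \tfrac{C_*}{24}\omega^2\Bigr)\omega^2.
\end{equation*}
Choosing $\overline{\kappa}>0$ with $C_*\overline{\kappa}^2/24 \le \delta/2$ then gives \eqref{eqn:lde:bounds_q_quadratic}.

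For the uniform bound \eqref{eqn:lde:bounds_q_uniform}, I would use that by (h$\alpha$) the function $f$ is continuous and strictly negative on the compact set $K = [-\pi,-\overline{\kappa}]\cup[\overline{\kappa},\pi]$, so $f$ attains a maximum $-m_0<0$ there. The function $q$ is jointly continuous on the compact set $[-\overline{\epsilon}_1,\overline{\epsilon}_1]\times K$ and reduces to $f$ at $\epsilon=0$, hence by uniform continuity I can shrink $\overline{\epsilon}_1$ to some $\overline{\epsilon}>0$ for which $q(\epsilon,\omega)\le -m_0/2=:-\overline{m}$ holds throughout $[-\overline{\epsilon},\overline{\epsilon}]\times K$.

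For item (ii), I split the integral at $\pm\overline{\kappa}$. On $[-\overline{\kappa},\overline{\kappa}]$ the bound \eqref{eqn:lde:bounds_q_quadratic} together with Lemma~\ref{lemma:lde:function_s}\textit{(\ref{item:lde:lemma_s:int_s})} applied with $\nu = \tfrac{\Lambda-2\delta}{2}$ yields
\begin{equation*}
    \int_{-\overline{\kappa}}^{\overline{\kappa}} |\omega|^n e^{tq(\epsilon,\omega)}\,d\omega
    \le \int_{-\infty}^{\infty} |\omega|^n e^{-t \frac{\Lambda-2\delta}{2} \omega^2}\,d\omega
    \le C_1\, t^{-\frac{n+1}{2}}\Bigl(\tfrac{\Lambda - 2\delta}{2}\Bigr)^{-\frac{n+1}{2}}.
\end{equation*}
On the complement, the uniform bound \eqref{eqn:lde:bounds_q_uniform} gives
\begin{equation*}
    \int_{\overline{\kappa}\le|\omega|\le \pi} |\omega|^n e^{tq(\epsilon,\omega)}\,d\omega
    \le 2 e^{-\overline{m}t}\int_{\overline{\kappa}}^{\pi}\omega^n\, d\omega
    \le \tfrac{2\pi^{n+1}}{n+1}e^{-\overline{m}t}.
\end{equation*}
Adding these gives the stated bound; specializing to $n=0$ uses $C_1(0)=\sqrt{\pi}$ to produce \eqref{eqn:lde:bound_on_q_n=0}. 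The main (modest) obstacle is book-keeping the constants so that the small $\delta>0$ correction in the quadratic coefficient can absorb both the continuity error in $\partial_\omega^2 q(\epsilon,0)$ and the quartic remainder — but since $\delta$ can be split freely, this is straightforward.
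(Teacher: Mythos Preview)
Your proof is correct and follows essentially the same approach as the paper: a perturbation argument for item~(i) (the paper packages it via the auxiliary function $\overline{q}(\epsilon,\omega)=q(\epsilon,\omega)+(\tfrac{\Lambda}{2}-\delta)\omega^2$ and argues from $\overline{q}_{\omega\omega}(\epsilon,0)\le -\delta$, whereas you make the fourth-order Taylor expansion explicit, exploiting that $\partial_\omega^3 q(\epsilon,0)=0$), followed by the same continuity argument for \eqref{eqn:lde:bounds_q_uniform} and the same splitting at $\pm\overline{\kappa}$ for item~(ii).
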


\begin{proof}
To prove item~\textit{(\ref{item:lde:q_bounds:general})}, we start by defining an auxiliary function $\overline{q} (\epsilon, \omega) = q(\epsilon, \omega) + \left(\frac{\Lambda}{2} - \delta\right)\omega^2$,
which satisfies
\begin{equation}\label{eqn:lde:h_estimates_eqn1}
 \overline{q}(\epsilon, 0) = \overline{q}_\omega(\epsilon, 0) = 0,
 \qquad  \qquad
 \overline{q}_{\omega\omega}(\epsilon, 0) = -\sum_{k=-N}^N a_k \mu_k^2 e^{-\mu_k \epsilon}+ \Lambda - 2\delta.
\end{equation}
Recalling the definition \eqref{eqn:lde:def_of_lambda}
and exploiting continuity,
there exists $\overline{\epsilon}>0$ such that 
\begin{equation*}
    -\sum_{k=-N}^N a_k \mu_k^2 e^{-\mu_k \epsilon} < - \Lambda + \delta, \qquad \text{for all} \ \epsilon\in [-\overline{\epsilon}, \overline{\epsilon}],
\end{equation*} and consequently 
$$\overline{q}_{\omega\omega}( \epsilon, 0) \leq - \delta < 0 , \qquad \text{for all}  \ \epsilon \in [-\overline{\epsilon}, \overline{\epsilon}]. $$
Combining this bound with~\eqref{eqn:lde:h_estimates_eqn1} allows us to find $\overline{\kappa}>0$ such that $\overline{q}(\epsilon, \omega) \leq 0 $ for all $\omega \in [-\overline{\kappa}, \overline{\kappa}] $ and $\epsilon\in [-\overline{\epsilon}, \overline{\epsilon}]$ which proves~\eqref{eqn:lde:bounds_q_quadratic}. 
To establish~\eqref{eqn:lde:bounds_q_uniform}, we note that assumption \textit{(h$\alpha$)} implies that there exists a constant $\overline{m}>0$ such that
\begin{equation*}
    q(0, \omega) \leq -2\overline{m} , \ \text{for}\ \omega\in [-\pi, -\overline{\kappa}]\cup [\overline{\kappa}, \pi].
\end{equation*}
Therefore, by possibly reducing $\overline{\epsilon}$ we can conclude that for $ \epsilon\in [-\overline{\epsilon},\overline{\epsilon} ] $ and $\omega\in  [-\pi, -\overline{\kappa}]\cup [\overline{\kappa}, \pi]$ we have $
     q(\epsilon, \omega) \leq -\overline{m} $,
as desired.

To prove item~\textit{(\ref{item:lde:q_bounds:integral})} we use the bounds from (i)
to compute
\begin{align*}
    \int_{-\pi}^\pi |\omega|^n e^{t q(\epsilon, \omega)} d\omega \leq \int_{-\overline{\kappa}}^{\overline{\kappa}} |\omega|^n e^{-\frac{t}{2}(\Lambda - 2\delta)\omega^2 }\, d\omega + 2\int_{\overline{\kappa}}^\pi \omega^n e^{-\overline{m} t} \, d\omega. 
\end{align*}
We may now employ item~\textit{(\ref{item:lde:lemma_s:int_s})} from Lemma~\ref{lemma:lde:function_s} with $\nu = \frac{\Lambda - 2\delta}{2}$ and explicitly evaluate the second integral to obtain the desired bound.
\end{proof}

\begin{cor}\label{lemma:lde:est_for_int_h}
 Consider the setting of Theorem~\ref{thm:lde:n-th-diff-theta} and pick $n\in N_0$.  Then there exist constants $\overline{\epsilon}>0$ and  $C = C(n)>0$  such that for all $t>0$ and $\epsilon \in (-\overline{\epsilon}, \overline{\epsilon})$ we have the estimate
\begin{equation}\label{eqn:lde:est_for_int_e^tq}
    \int_{-\pi}^\pi |\omega|^n e^{t q(\epsilon, \omega)}\, d\omega \leq C\min\big\{1 ,  t^{-\frac{n+1}{2}} \big\}.
\end{equation}
\end{cor}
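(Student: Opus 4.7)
The plan is to derive this corollary directly from item (ii) of Lemma~\ref{lemma:lde:q_bounds}, which already furnishes a bound of the form
\[
\int_{-\pi}^\pi |\omega|^n e^{t q(\epsilon, \omega)}\, d\omega \le C_1 t^{-(n+1)/2} \Bigl(\tfrac{\Lambda - 2\delta}{2}\Bigr)^{-(n+1)/2} + \tfrac{2\pi^{n+1}}{n+1} e^{-\overline{m} t}
\]
valid for any fixed $\delta \in (0,\Lambda/2)$ and all $\epsilon \in (-\overline\epsilon,\overline\epsilon)$. The goal is to convert the right-hand side into $C\min\{1, t^{-(n+1)/2}\}$, which amounts to handling the small-$t$ and large-$t$ regimes separately.

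For the large-$t$ regime, I would observe that on $[1,\infty)$ the exponential term $e^{-\overline{m}t}$ decays faster than any polynomial, so there is a constant $C'=C'(n)$ with $e^{-\overline{m}t} \le C' t^{-(n+1)/2}$ for $t\ge 1$. Fixing $\delta = \Lambda/4$ (say) in the estimate above then yields
\[
\int_{-\pi}^\pi |\omega|^n e^{tq(\epsilon,\omega)}\, d\omega \le C\, t^{-(n+1)/2}, \qquad t\ge 1,
\]
for a new constant $C=C(n)$ independent of $\epsilon \in (-\overline\epsilon,\overline\epsilon)$.

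For the small-$t$ regime $t\in(0,1]$, the bounds in item (i) of Lemma~\ref{lemma:lde:q_bounds} give $q(\epsilon,\omega) \le 0$ for every $\omega \in [-\pi,\pi]$ and $\epsilon \in (-\overline\epsilon,\overline\epsilon)$ (using $q\le -(\tfrac{\Lambda}{2}-\delta)\omega^2$ on $[-\overline\kappa,\overline\kappa]$ and $q\le -\overline m$ elsewhere). Hence $e^{tq(\epsilon,\omega)}\le 1$ pointwise, and a trivial estimate gives
\[
\int_{-\pi}^\pi |\omega|^n e^{tq(\epsilon,\omega)}\, d\omega \le \int_{-\pi}^\pi |\omega|^n\, d\omega = \tfrac{2\pi^{n+1}}{n+1}.
\]
On $(0,1]$ we have $t^{-(n+1)/2}\ge 1$, so $\min\{1,t^{-(n+1)/2}\}=1$ and this uniform bound is exactly what is required. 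Combining the two regimes (and enlarging the constant if necessary to absorb the transition at $t=1$) yields the desired inequality for all $t>0$. There is no real obstacle here: the work has already been done in Lemma~\ref{lemma:lde:q_bounds}, and the corollary is essentially a cosmetic repackaging designed to produce a single clean bound expressed as $\min\{1,t^{-(n+1)/2}\}$ that will be convenient to invoke later.
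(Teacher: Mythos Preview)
Your proof is correct and follows essentially the same approach as the paper: split into the regimes $t\in(0,1]$ and $t\ge 1$, use $q(\epsilon,\omega)\le 0$ from item~(i) of Lemma~\ref{lemma:lde:q_bounds} for the uniform bound on small $t$, and apply item~(ii) with $\delta=\Lambda/4$ together with the fast decay of $e^{-\overline m t}$ for large $t$. The paper's own proof is nearly identical, differing only in minor phrasing.
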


\begin{proof}
For $0 < t< 1$ the uniform bound follows from item \textit{(\ref{item:lde:q_bounds:general})} of Lemma~\ref{lemma:lde:q_bounds}, which implies that $q(\epsilon, \omega) \le 0$.
On the other hand, we may apply item \textit{(\ref{item:lde:q_bounds:integral})} from the same result with $\delta = \frac{\Lambda}{4}$ to find
\begin{align*}
    \int_{-\pi}^\pi |\omega|^n e^{t q(\epsilon, \omega)} d\omega \leq C_1 {2^{{n+1}}} t^{-\frac{n+1}{2}}\Lambda^{-\frac{n+1}{2}}  +  \dfrac{2\pi^{n+1}}{n+1} e^{-\overline{m}t},
\end{align*}
which can be absorbed into~\eqref{eqn:lde:est_for_int_e^tq} on account of the fast decay of the exponential. 
\end{proof}

\subsection{Global and outer bounds}\label{subsec:lde:global}
In order to prove the $\ell^1$-decay estimates of the sequence ${\partial^n M(t)}$ we first establish $\ell^\infty$-bounds,
which decay at a rate
that is faster by a factor $1/\sqrt{t}$. In particular, we have the following result. 

\begin{lemma}\label{lemma:lde:l_infty_bounds}
Assume that (h$\alpha$) is satisfied and pick $n\in\N_0$. Then there exists $C = C(n)>0$ so that the $n$-th difference of the sequence $M(t)$ satisfies the bound
\begin{equation}\label{eqn:lde:l_infty_est}
      ||\partial^n M(t)||_{\ell^\infty} \leq C\min \left\{1, t^{-\frac{n+1}{2}} \right\}.
\end{equation}
\end{lemma}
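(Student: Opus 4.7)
The plan is to apply Lemma~\ref{lemma:lde:M_bounded_P_R} with the specific choice $\epsilon = 0$ and then invoke the integral estimate from Corollary~\ref{lemma:lde:est_for_int_h}. The guiding observation is that the $\ell^\infty$ norm is by definition uniform in $l$, so there is no need to exploit the $l$-dependent decay of the exponential factor $e^{-\epsilon l}$ in \eqref{eqn:lde:def_of_P}--\eqref{eqn:lde:def_of_R}; setting $\epsilon=0$ kills precisely this factor and immediately yields an $l$-independent right-hand side.

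Concretely, I would start by fixing $n \in \N_0$ and $t > 0$ and applying Lemma~\ref{lemma:lde:M_bounded_P_R} with $\epsilon = 0$, giving
\begin{equation*}
\bigl|[\partial^{n} M(t)]_l\bigr| \;\le\; \frac{2^{n/2}}{4\pi}\bigl(P_l(0,t,n) + R_l(0,t,n)\bigr) \qquad \text{for every } l \in \Z.
\end{equation*}
Recalling the definitions \eqref{eqn:lde:def_of_P}--\eqref{eqn:lde:def_of_R} and noting that $q(0,\omega)=f(\omega)$ via \eqref{eqn:lde:def_of_h}, I would observe that $|1-e^{-\epsilon}|^n$ vanishes at $\epsilon=0$ whenever $n\ge 1$ (using the convention $0^0=1$ for $n=0$), while $e^{-\epsilon n/2}e^{-\epsilon l}=1$. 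Hence
\begin{equation*}
P_l(0,t,n) \;=\; \mathbf{1}_{\{n=0\}} \int_{-\pi}^{\pi} e^{tf(\omega)}\,d\omega,
\qquad
R_l(0,t,n) \;=\; \int_{-\pi}^{\pi} |\omega|^{n}\, e^{tf(\omega)}\,d\omega,
\end{equation*}
both of which are manifestly independent of $l$.

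The remaining step is to bound these integrals. I would apply Corollary~\ref{lemma:lde:est_for_int_h} at $\epsilon=0$, which gives a constant $C=C(n)>0$ with
\begin{equation*}
\int_{-\pi}^{\pi} |\omega|^{n}\, e^{tf(\omega)}\,d\omega \;\le\; C\min\!\bigl\{1,\, t^{-(n+1)/2}\bigr\},
\end{equation*}
and, by the same corollary applied with $n=0$, an analogous bound (with $n=0$) for the $P$-contribution in the exceptional case $n=0$. Combining these estimates with the pointwise bound above and taking the supremum over $l$ yields
\begin{equation*}
\|\partial^{n} M(t)\|_{\ell^\infty} \;\le\; C\min\!\bigl\{1,\, t^{-(n+1)/2}\bigr\},
\end{equation*}
as desired.

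I do not expect any serious obstacle here: the combinatorial work of extracting spatial decay via contour deformation has already been absorbed into Lemma~\ref{lemma:lde:M_bounded_P_R}, and the uniform-in-$\epsilon$ analytic estimates on $q(\epsilon,\omega)$ have been packaged into Corollary~\ref{lemma:lde:est_for_int_h}. The only bookkeeping point worth flagging is the separate treatment of $n=0$ versus $n \ge 1$ in the $P$-term, which is handled by the $\mathbf{1}_{\{n=0\}}$ indicator above.
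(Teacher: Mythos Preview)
Your proposal is correct and follows essentially the same approach as the paper: set $\epsilon=0$ in Lemma~\ref{lemma:lde:M_bounded_P_R} to kill the $l$-dependence, then apply Corollary~\ref{lemma:lde:est_for_int_h}. You are in fact slightly more careful than the paper in flagging the $n=0$ case of the $P$-term separately; the paper simply asserts that $P$ and $g$ vanish at $\epsilon=0$ and absorbs the $n=0$ discrepancy into the constant.
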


\begin{proof}
Picking $\epsilon =0 $, we see that $P$ and $g$ vanish,  
which in view of \eqref{eqn:lde:partial_M_P_R}
and Corollary~\ref{lemma:lde:est_for_int_h} implies the desired bound
\begin{equation}
    |[\partial^n M(t)]_l|
    \le \frac{2^{n/2}}{2 \pi}
    \int_{-\pi}^\pi | \omega|^n  e^{t q(0,\omega)} \, d \omega
    \le \frac{2^{n/2} C}{4 \pi} \min \big\{1 ,  t^{-\frac{n+1}{2}} \big\}.
\end{equation}
\end{proof}

\begin{proof}[Proof of Lemma~\ref{lemma:lde:l_1_noncompact_interval}]
Consider the constant
$\overline{\epsilon} > 0$
introduced in Lemma~\ref{lemma:lde:q_bounds}, which guarantees that $q(\overline{\epsilon}, \cdot) \le 0$
and $q(-\overline{\epsilon}, \cdot) \le 0$. 
In view of the bound~\eqref{eqn:lde:partial_M_P_R} 
and 
the representation
\eqref{eqn:lde:def_P2}-\eqref{eqn:lde:def_R2},
it suffices 
to show that there exist $K>|a \cdot \mu| $ and $C>0$ so that
\begin{equation}
\label{eq:lin:bnds:sum:exp:g}
     \sum_{l \ge ( K - a\cdot \mu) t} (1 + l ) e^{g(l, \overline{\epsilon}, t)}
     < C e^{-t},
     \qquad \qquad 
     \sum_{l \le - ( K - a\cdot \mu) t} (1 + |l|) e^{g(l, -\overline{\epsilon}, t)}
     \le C e^{-t}.
\end{equation}
Focusing on the former,
we pick 
\begin{equation}
    K >  1 + |a \cdot \mu |
    +2 \overline{\epsilon}^{-1} \big[ 2 + \sum_{k=-N}^N |a_k| \big] ,
\end{equation} 
which allows us to compute
\begin{equation}
\begin{array}{lcl}
       g(l,\overline{\epsilon}, t)
       &=& -\frac{1}{2}\overline{\epsilon} l 
       -\frac{1}{2}\overline{\epsilon} l + t \sum_{k=-N}^Na_k (e^{-\mu_k \overline{\epsilon}} - 1),
\\[0.2cm]
& \le & 
-\frac{1}{2}\overline{\epsilon} l  - t\Big[ \frac{1}{2} \overline{\epsilon} (K - a \cdot \mu)
- \sum_{k=-N}^Na_k (e^{-\mu_k \overline{\epsilon}} - 1) \Big]
\\[0.2cm]
& \le &
-\frac{1}{2}\overline{\epsilon} l
\end{array}
\end{equation}
for all $l \ge (K - a \cdot \mu)t$ and $t > 0$.
Using $t e^{-t} \le 1$, this in turn yields
\begin{equation}
\begin{array}{lcl}
     \sum_{l \ge ( K - a\cdot \mu) t}(1 + l) e^{g(l, \overline{\epsilon}, t)}
     &\le& 2 \sum_{l \ge ( K - a\cdot \mu) t} l e^{-\frac{1}{2}\overline{\epsilon} l}
     \\[0.2cm]
& \le &  2 (1 - e^{-\frac{1}{2}  \overline{\epsilon}})^{-2} 
 \big[ ( K - a\cdot \mu) t  + 1 \big]
 e^{- \frac{1}{2} ( K - a\cdot \mu)   \overline{\epsilon}  t } 
\\[0.2cm]
& \le &  2 (1 - e^{-\frac{1}{2}  \overline{\epsilon}})^{-2} 
  ( K - a\cdot \mu) ( t  + 1) 
 e^{- 2  t } 
\\[0.2cm]
& \le &
4 (1 - e^{-\frac{1}{2}  \overline{\epsilon}})^{-2} 
 ( K - a\cdot \mu) 
 e^{- t } . 
 \end{array}
\end{equation}
Here we used the bound 
\begin{align*}
    \sum_{l=l_*}^\infty l r^l = r \dfrac{d}{dr} \left(\sum_{l = l_*}^\infty r^l \right) =  r \dfrac{d}{dr} \left(\frac{r^{l_*}}{1-r} \right) =\frac{l_* r^{l_*}}{1-r} + \frac{r^{l_*+1}}{(1-r)^2} \leq \frac{(l_*+1)r^{l_*}}{(1-r)^2}
\end{align*}
with $r = e^{-\frac{1}{2}\overline{\epsilon}}$ and $l_* = \lfloor(K-a\cdot \mu)t  \rfloor $. The second
inequality in \eqref{eq:lin:bnds:sum:exp:g}
can be obtained in a similar fashion. 
\end{proof}

\subsection{Core bounds}\label{subsec:lde:core}

In this subsection we prove Lemmas~\ref{lemma:lde:l_1_compact_interval},~\ref{lemma:lde:sum1} and~\ref{lemma:lde:sum_2}, which all deal with $\ell^1$-bounds on compact intervals. Recalling the characterization~\eqref{eqn:lde:def_P2}-\eqref{eqn:lde:def_R2}, we start by providing useful bounds for the exponent $g$ when $|l/t + a\cdot\mu|$ is bounded.
To obtain these estimates,  we show that for
compact sets of $\epsilon$ 
the function $g$ can be controlled by an upwards parabola in $\epsilon$. 
\begin{lemma}\label{lemma:lde:est_for_g}
 Consider the setting of Theorem~\ref{thm:lde:n-th-diff-theta} and pick constants $\overline{\epsilon}>0$ and $K>0$. 
 Let $\delta >0$ be any number that satisfies \begin{equation}\label{eqn:lde:def_of_delta_g}
    \delta \geq \max\left\{\dfrac{K}{2\overline{\epsilon}} - \dfrac{\Lambda}{2},  \dfrac{\overline{\epsilon}}{3}\sum_{k=-N}^N |a_k \mu_k^3| e^{|\mu_k|\overline{\epsilon}} \right\}
 \end{equation} 
 and write 
 \begin{equation}\label{eqn:lde:def_of_nu}
    \nu_\delta = \dfrac{1}{2(\Lambda + 2\delta)}.
\end{equation}
 Then for every  pair $(l,t)\in \Z\times(0,\infty)$ with $|l/t + \alpha\cdot \mu |\leq K$, the choice 
 \begin{equation}\label{eqn:lde:epsilon*}
     \epsilon^* = \epsilon^*(l,t)= 2\nu_\delta \left(\frac{l}{t} + a\cdot\mu \right)\in [-\overline{\epsilon}, \overline{\epsilon}], 
\end{equation}
satisfies the inequality
\begin{equation}\label{eqn:lde:minimum_of_g}
    g(\epsilon^*, l, t) \leq -\nu_\delta t\left(\dfrac{l}{t} + a \cdot \mu \right)^2.
\end{equation}
\end{lemma}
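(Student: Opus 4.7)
}

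The plan is to Taylor-expand $e^{-\mu_k\epsilon}$ to second order around $\epsilon=0$, which converts $g(l,\epsilon,t)$ into a quadratic in $\epsilon$ plus a controlled cubic remainder. The choice $\epsilon^* = 2\nu_\delta\bigl(\tfrac{l}{t}+a\cdot\mu\bigr)$ is then precisely the minimizer of this quadratic, which after plugging in gives the stated bound.

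In more detail, write $y = l/t + a\cdot\mu$, so that $|y|\le K$ by hypothesis. Applying Taylor's theorem with the Lagrange form of the remainder to the exponential, for each $k$ there exists $\xi_k$ between $0$ and $\epsilon$ with
\begin{equation*}
    e^{-\mu_k\epsilon} - 1 = -\mu_k\epsilon + \tfrac{1}{2}\mu_k^2\epsilon^2 + R_k(\epsilon), \qquad R_k(\epsilon) = -\tfrac{1}{6}\mu_k^3 e^{-\mu_k\xi_k}\epsilon^3.
\end{equation*}
In particular, for $|\epsilon|\le \bar\epsilon$ we obtain $|R_k(\epsilon)| \le \tfrac{1}{6}|\mu_k|^3|\epsilon|^3 e^{|\mu_k|\bar\epsilon}$. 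Substituting this expansion into~\eqref{eqn:lde:def_of_g} and using $\sum_k a_k\mu_k^2 = \Lambda$ along with the definition of $y$, we find
\begin{equation*}
    g(l,\epsilon,t) = -\epsilon t y + \tfrac{1}{2}\Lambda t\epsilon^2 + t\sum_{k=-N}^N a_k R_k(\epsilon).
\end{equation*}

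The next step is to absorb the cubic remainder into the quadratic term. Using $|\epsilon|^3 \le \bar\epsilon\,\epsilon^2$ for $|\epsilon|\le\bar\epsilon$ together with the second inequality in~\eqref{eqn:lde:def_of_delta_g}, we estimate
\begin{equation*}
    \Bigl|t\sum_k a_k R_k(\epsilon)\Bigr| \le \frac{\bar\epsilon\, t\epsilon^2}{6}\sum_{k=-N}^N |a_k\mu_k^3| e^{|\mu_k|\bar\epsilon} \le \frac{\delta\, t\epsilon^2}{2},
\end{equation*}
which yields the clean quadratic upper bound
\begin{equation*}
    g(l,\epsilon,t) \;\le\; -\epsilon t y + \tfrac{1}{2}(\Lambda+\delta)\,t\epsilon^2 \qquad \text{for every } \epsilon\in[-\bar\epsilon,\bar\epsilon].
\end{equation*}

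It remains to verify that the candidate $\epsilon^* = 2\nu_\delta y = y/(\Lambda+2\delta)$ indeed lies in the admissible range $[-\bar\epsilon,\bar\epsilon]$ and to substitute it. The first inequality in~\eqref{eqn:lde:def_of_delta_g} can be rearranged as $\Lambda+2\delta \ge K/\bar\epsilon$, so $|\epsilon^*|\le K/(\Lambda+2\delta)\le \bar\epsilon$, justifying \eqref{eqn:lde:epsilon*}. Plugging $\epsilon = \epsilon^*$ into the displayed quadratic bound gives
\begin{equation*}
    g(l,\epsilon^*,t) \;\le\; -\frac{t y^2}{\Lambda+2\delta} + \frac{(\Lambda+\delta)\,t y^2}{2(\Lambda+2\delta)^2} \;=\; \frac{t y^2}{2(\Lambda+2\delta)^2}\bigl[(\Lambda+\delta) - 2(\Lambda+2\delta)\bigr] \;=\; -\,\frac{(\Lambda+3\delta)\,t y^2}{2(\Lambda+2\delta)^2}.
\end{equation*}
Since $\Lambda+3\delta \ge \Lambda+2\delta$, the right-hand side is bounded above by $-t y^2/\bigl(2(\Lambda+2\delta)\bigr) = -\nu_\delta t y^2$, which is exactly~\eqref{eqn:lde:minimum_of_g}.

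The argument is essentially a routine Taylor estimate followed by a completion-of-the-square calculation; there is no genuine obstacle. The only subtlety, and the reason for the two-fold buffer baked into the threshold $\bar\epsilon/3$ (rather than the minimal $\bar\epsilon/6$) in~\eqref{eqn:lde:def_of_delta_g}, is that one must be careful to use the same parameter $\delta$ both to dominate the cubic remainder and to control the range of $\epsilon^*$, which forces a slight oversizing of the quadratic coefficient so that the minimizer of the easier parabola $-\epsilon ty + \tfrac{1}{2}(\Lambda+2\delta)t\epsilon^2$ still produces a valid upper bound after accounting for the error term.
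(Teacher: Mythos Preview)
Your proof is correct and follows essentially the same approach as the paper: Taylor-expand $g$ to second order, absorb the cubic remainder into the quadratic coefficient using the second bound in~\eqref{eqn:lde:def_of_delta_g}, verify $|\epsilon^*|\le\bar\epsilon$ via the first bound, and evaluate at $\epsilon^*$. The only cosmetic difference is that the paper uses the slightly coarser coefficient $\tfrac{\Lambda}{2}+\delta$ (rather than your $\tfrac{\Lambda+\delta}{2}$), which makes $\epsilon^*$ the exact minimizer and allows a clean completion of the square $g\le \tfrac{t}{4\nu_\delta}\bigl((\epsilon-\epsilon^*)^2-(\epsilon^*)^2\bigr)$; you compensate with the extra inequality $\Lambda+3\delta\ge\Lambda+2\delta$ at the end, which is fine.
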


\begin{proof}
 By expanding  the function $g$ around $\epsilon = 0$, we obtain
\begin{align*}
    g(\epsilon, l, t) = -\epsilon l + \Big(-\epsilon a\cdot\mu + \frac{\Lambda}{2}\epsilon^2  - \dfrac{\epsilon^3}{6}\sum_{k=-N}^N a_k \mu_k^3 e^{-\mu_k \tilde{\epsilon}}  \Big) t
\end{align*}
for some $\tilde{\epsilon}$ with $|\tilde{\epsilon}|\leq |\epsilon|$, which we rewrite as 
\begin{align}\label{eqn:lde:lemma_g:expansion_g}
    g(\epsilon, l, t) = -\epsilon l  + \Big(-\epsilon a\cdot\mu  + (\frac{\Lambda}{2} +\delta)\epsilon^2 \Big) t +\epsilon^2\Big(-\delta - \dfrac{\epsilon}{6}\sum_{k=-N}^N a_k \mu_k^3 e^{-\mu_k \tilde{\epsilon}} \Big)t.
\end{align}
For any $\epsilon\in [-\overline{\epsilon}, \overline{\epsilon}]$, $l\in \Z$, $t>0$  our condition on $\delta$ ensures that
\begin{align*}
    g(\epsilon, l, t) \leq -\epsilon l + \Big(-\epsilon a\cdot\mu  + \big(\frac{\Lambda}{2} + \delta\big)\epsilon^2 \Big) t = \dfrac{t}{4\nu_\delta}\Big((\epsilon - \epsilon^*)^2 - (\epsilon^*)^2\Big),
\end{align*}
since the last term in~\eqref{eqn:lde:lemma_g:expansion_g} is negative. It hence suffices to show that $|\epsilon^*|\leq \overline{\epsilon} $, but that follows directly from our assumption on $\delta$. 
\end{proof}

\begin{proof}[Proof of Lemma~\ref{lemma:lde:l_1_compact_interval}]
In view of Lemma~\ref{lemma:lde:M_bounded_P_R} it suffices to show that 
\begin{equation*}
     \sum_{|l/t + a\cdot\mu|\leq K} P_l(\epsilon^*, t, n) \leq  C t^{-\frac{n}{2}}, \quad  \sum_{|l/t + a\cdot\mu|\leq K} R_l(\epsilon^*, t, n) \leq  C t^{-\frac{n}{2}}, \quad t \geq 1
\end{equation*}
for some constant $C>0$.
Here we make the choice
$\epsilon^* = \epsilon^*(l,t)$
as defined by \eqref{eqn:lde:epsilon*}
in Lemma~\ref{lemma:lde:est_for_g}, using the value
$\overline{\epsilon} > 0$ 
that was introduced in Lemma \ref{lemma:lde:q_bounds},
together with an arbitrary $\delta > 0$ that satisfies~\eqref{eqn:lde:delta_bounds_h}. Without loss, we make the further restriction $\overline{\epsilon} < 1$,
which allows us to write
$|1-e^{-\epsilon^*}|\leq 2 |\epsilon^*|$.
We will provide the proof only for $P$, noting that the estimate for $R$ can  be derived analogously.

The bound \eqref{eqn:lde:minimum_of_g}
allows us to compute
\begin{equation*}
  P_l(\epsilon^*, t, n) \leq 
  2^n |\epsilon^*|^n
  e^{-\nu_{\delta} t \left(l/t + a\cdot \mu\right)^2} \int_{-\pi}^\pi e^{t q(\epsilon^*, \omega)}\, d\omega ,
\end{equation*}
which
in combination with Corollary
\ref{lemma:lde:est_for_int_h}
yields
\begin{align*}
     P_l(\epsilon^*, t, n) &\leq  
     C(n) 2^n  t^{-\frac{1}{2}}|\epsilon^*|^n e^{-\nu_\delta t \left(l/t + a\cdot \mu\right)^2}
\\
& \leq
C(n) 4^n  t^{-\frac{1}{2}}
\nu_{\delta}^n \left(l/t + a\cdot \mu\right)^n e^{-\nu_\delta t \left(l/t + a\cdot \mu\right)^2} .
\end{align*}
Applying
item~\textit{(\ref{item:lde:lemma_s:sum_s})} from Lemma~\ref{lemma:lde:function_s} with $\nu = \nu_{\delta}$
now yields the desired estimate, upon redefining $C$.
\end{proof}

\begin{proof}[Proof of Lemma~\ref{lemma:lde:sum1}]
Our goal is to exploit the representations \eqref{lemma:lde:M_bounded_P_R} for $n=0$ and \eqref{eqn:lde:def_P2}
to obtain the estimate
\begin{equation}
    |M_l(t)| \leq \dfrac{1}{2\pi} e^{g(l, \epsilon^*, t)} \int_{-\pi}^\pi e^{t q(\epsilon^*, \omega)} d\omega,
\end{equation}
where we again use the values
$\epsilon^* = \epsilon^*(l,t)$
defined by \eqref{eqn:lde:epsilon*}
in Lemma~\ref{lemma:lde:est_for_g}, but now
picking $0 < \delta < \Lambda / 2$ to be small enough to ensure that
\begin{equation}\label{eqn:lde:delta_bounds_h}
    \sqrt{\dfrac{\Lambda + 2{\delta}}{\Lambda - 2{\delta}}} \leq 1+ \dfrac{\kappa}{2}
\end{equation}
holds. In order to validate
the condition \eqref{eqn:lde:def_of_delta_g}
with $K:=K_*/\sqrt{T}$
, we pick a sufficiently large $T > 0$ and decrease
the value of $\overline{\epsilon} > 0$ from
Lemma~\ref{lemma:lde:q_bounds} to ensure that 
\begin{equation}\label{eqn:lde:cond1_delta}
\dfrac{K_*}{2 \sqrt{T} \overline{\epsilon}} - \dfrac{\Lambda}{2} < 0 < \delta,
\qquad \qquad 
     \dfrac{\overline{\epsilon}}{3}\sum_{k=-N}^N |a_k \mu_k^3| e^{|\mu_k|\overline{\epsilon}} \leq \delta 
\end{equation} 
both hold.
Combining \eqref{eqn:lde:bound_on_q_n=0}
and \eqref{eqn:lde:minimum_of_g}
and writing 
$x_l^t = l/t+a\cdot\mu$,
we hence obtain
\begin{equation*}
    |M_l(t)| \leq \dfrac{1}{2\pi} e^{ - \nu_{\delta} t (x_l^t)^2}
    \Big[\dfrac{{\sqrt{2\pi}}}{\sqrt{t}\sqrt{\Lambda - 2{\delta}}}  + 2\pi e^{-\overline{m}t}
    \Big]
\end{equation*}
whenever $| x_l^t| \le K_*/\sqrt{t}$ and $t \ge T$.
Applying 
item \textit{(\ref{item:lde:lemma_s:sum_s})} of Lemma~\ref{lemma:lde:function_s} with $n=0$ and $\nu  = \nu_{\delta}$, we now compute
\begin{align}\label{eqn:lde:bounds:h:bounds2}
    \sum_{|x_l^t|\leq \frac{K_*}{\sqrt{t}}} |M_l(t)|&\leq  \dfrac{1}{2\pi}   \left(4+ \sqrt{2 t \pi } \sqrt{\Lambda + 2\delta} \right) 
    \Big[\dfrac{{\sqrt{2\pi}}}{\sqrt{t}\sqrt{\Lambda - 2{\delta}}}  + 2\pi e^{-\overline{m}t} \Big]
    \\
    & 
    \leq \sqrt{\dfrac{\Lambda + 2\delta}{\Lambda - 2\delta}} + {\dfrac{2\sqrt{2}}{\sqrt{\pi t(\Lambda - 2\delta)}}} + e^{-\overline{m}t}\left(4+ \sqrt{2 t } \sqrt{\pi}\sqrt{\Lambda + 2\delta} \right) 
\end{align}
for all $t\geq T$. The first term is smaller than $1+\frac{\kappa}{2}$ while the rest can be made smaller than $\frac{\kappa}{2}$ by further increasing $T$ if needed. 
\end{proof}

\begin{lemma}
 Consider the setting of Theorem~\ref{thm:lde:n-th-diff-theta}. 
 Then there exist constants $\overline{\epsilon}>0$ and  $C >0$  such that for 
 any sufficiently large $K>0$  we have the estimate
\begin{equation}
\label{eq:est:core:m:l}
\begin{array}{lclcl}
    |M_l(t)|  
& \le &
C \min\{1 , t^{-1/2} \} e^{- \overline{\epsilon} t  |l/t + a \cdot \mu|^2 / (8 K)}
\end{array}
\end{equation}
whenever $|l/t + a \cdot \mu| \le K$.
\end{lemma}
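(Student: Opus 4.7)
The plan is to combine Lemmas~\ref{lemma:lde:M_bounded_P_R} and~\ref{lemma:lde:est_for_g} with Corollary~\ref{lemma:lde:est_for_int_h}, using a $K$-dependent choice of the contour-shift parameters. Specializing Lemma~\ref{lemma:lde:M_bounded_P_R} to $n=0$ (where $P_l(\epsilon,t,0) = R_l(\epsilon,t,0)$) and rewriting through~\eqref{eqn:lde:def_P2}, we start from the pointwise bound
\begin{equation*}
    |M_l(t)| \le \frac{1}{2\pi}\, e^{g(l,\epsilon,t)} \int_{-\pi}^{\pi} e^{t q(\epsilon,\omega)}\, d\omega,
\end{equation*}
valid for every $\epsilon \in \R$. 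The freedom to choose $\epsilon = \epsilon^*(l,t)$ depending on $(l,t)$ as in Lemma~\ref{lemma:lde:est_for_g} is what will allow us to extract a Gaussian factor in $l/t + a\cdot\mu$.

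Concretely, let $\overline{\epsilon}$ be the constant furnished by Lemma~\ref{lemma:lde:q_bounds} (which is also the one implicit in Corollary~\ref{lemma:lde:est_for_int_h}); this is fixed once and for all. For any sufficiently large $K > 0$ we take $\delta := K/\overline{\epsilon}$. This value dominates both lower bounds in~\eqref{eqn:lde:def_of_delta_g}: the first reduces to $K/(2\overline{\epsilon}) \ge -\Lambda/2$, while the second is a fixed constant depending only on $\overline{\epsilon}$ and the coefficients $(a_k), (\mu_k)$. A direct computation then shows that
\begin{equation*}
    \frac{\overline{\epsilon}}{8K} \;\le\; \nu_\delta \;=\; \frac{1}{2(\Lambda + 2K/\overline{\epsilon})} \;\le\; \frac{\overline{\epsilon}}{4K},
\end{equation*}
provided $K \ge \Lambda\overline{\epsilon}/2$. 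The upper bound ensures that the choice $\epsilon^* = 2\nu_\delta(l/t + a\cdot\mu)$ satisfies $|\epsilon^*| \le 2\nu_\delta K \le \overline{\epsilon}/2$, so $\epsilon^* \in [-\overline{\epsilon},\overline{\epsilon}]$ throughout the admissible range $|l/t + a\cdot\mu| \le K$; the lower bound combines with the conclusion $g(\epsilon^*, l, t) \le -\nu_\delta t (l/t + a\cdot\mu)^2$ of Lemma~\ref{lemma:lde:est_for_g} to produce the desired Gaussian factor $e^{-\overline{\epsilon} t (l/t + a\cdot\mu)^2 / (8K)}$. Corollary~\ref{lemma:lde:est_for_int_h} applied with $n=0$ (using that $\epsilon^* \in [-\overline{\epsilon},\overline{\epsilon}]$) controls the remaining integral by $C\min\{1, t^{-1/2}\}$, and assembling these two estimates yields~\eqref{eq:est:core:m:l}.

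The only delicate point is the tension between the two competing demands on $\delta$: it must be taken of order $K/\overline{\epsilon}$ so that the parabolic majorant from Lemma~\ref{lemma:lde:est_for_g} remains valid across the full admissible range of $l/t + a\cdot\mu$, yet the resulting $\nu_\delta$ must still be of order $\overline{\epsilon}/K$ in order to preserve the announced exponent $\overline{\epsilon}/(8K)$. The linear scaling $\delta = K/\overline{\epsilon}$ is essentially forced by this balance; beyond identifying it, the proof is a routine concatenation of the preceding estimates.
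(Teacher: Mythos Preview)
Your proof is correct and follows essentially the same approach as the paper: both combine Lemma~\ref{lemma:lde:M_bounded_P_R} (at $n=0$) with Lemma~\ref{lemma:lde:est_for_g} and Corollary~\ref{lemma:lde:est_for_int_h}, choosing $\delta$ of order $K/\overline{\epsilon}$ so that $\nu_\delta$ is of order $\overline{\epsilon}/K$ and $\epsilon^*$ stays in $[-\overline{\epsilon},\overline{\epsilon}]$. The only cosmetic difference is that the paper takes $\delta = K/(2\overline{\epsilon}) - \Lambda/2$ (saturating the first branch of~\eqref{eqn:lde:def_of_delta_g}) whereas you take $\delta = K/\overline{\epsilon}$ and then sandwich $\nu_\delta$ between $\overline{\epsilon}/(8K)$ and $\overline{\epsilon}/(4K)$; both choices lead to the same bound.
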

\begin{proof}
We first fix $\overline{\epsilon} > 0$
as provided in
Corollary \ref{lemma:lde:est_for_int_h}
and consider an arbitrary $K > 0$. Writing
\begin{equation}
    \delta = \frac{ K}{2\overline{\epsilon}} - \frac{\Lambda}{2},
    \qquad \qquad
    \nu_{\delta} = \frac{\overline{\epsilon}}{8K}
\end{equation}
we see that condition~\eqref{eqn:lde:def_of_delta_g} is satisfied provided that $K$ is sufficiently large. 
Recalling the notation $x_l^t = l/t + a \cdot \mu$,
we may again exploit
\eqref{lemma:lde:M_bounded_P_R}, \eqref{eqn:lde:def_P2},
\eqref{eqn:lde:est_for_int_e^tq}
and \eqref{eqn:lde:minimum_of_g}
to obtain the desired estimate
\begin{equation}
\begin{array}{lclcl}
    |M_l(t)|  &\leq & \dfrac{1}{2\pi} e^{g(l, \epsilon^*, t)} \int_{-\pi}^\pi e^{t q(\epsilon^*, \omega)} d\omega
\le &
\frac{C}{2 \pi} \min\{1 , t^{-1/2} \} e^{- \nu_{\delta} t  (x_l^t)^2}
\end{array}
\end{equation}
whenever $|x_l^t| \le K$.
\end{proof}

\begin{proof}[Proof of Lemma~\ref{lemma:lde:sum_2}]
Recalling the notation $x_l^t = l/t + a \cdot \mu$,
we
apply \eqref{eqn:tail:bound}
with $\nu= \frac{\overline{\epsilon}}{8K}$ and $K_0 = K/\sqrt{t}$
to the estimate
\eqref{eq:est:core:m:l},
which allows us to compute
\begin{equation}
    \begin{array}{lcl}
 \sum_{\frac{K}{\sqrt{t}}
 \le |x_l^t| \leq K}
 |M_l(t)|
 & \le & 
 C
 \min\{1 , t^{-1/2} \}
 \sum_{|x_l^t| \ge \frac{K}{\sqrt{t}}}
 e^{- \nu t  (x_l^t)^2}
\\[0.2cm]
 & \le &
 C
 \min\{1 , t^{-1/2} \}
 \Big[ 
 2 +4\dfrac{{\sqrt{2 K t}}}{\sqrt{\overline{\epsilon}}}
 \Big] e^{-\overline{\epsilon} K /  8}
\\[0.2cm]
 & \le &
C
 \Big[ 
 2 +4\dfrac{{\sqrt{2 K }}}{\sqrt{\overline{\epsilon}}}
 \Big] e^{-\overline{\epsilon} K /  8} .
  \end{array}
\end{equation}
This can be made arbitrarily small by choosing $K$ to be sufficiently large.
\end{proof}


\begin{proof}[Proof of Lemma~\ref{lemma:lde:decay_sum_Ml_times_ln}]
Writing
$x_l^t = l/t + a \cdot \mu$
and taking $K$ as in 
Lemma~\ref{lemma:lde:l_1_noncompact_interval},
this result shows that it suffices to find a constant $C > 0$ so that
\begin{equation*}
\label{eq:lde:est:on:core:times:x}
    \sum_{|x_l^t| \le K } |M_l(t)||x_l^t|\leq Ct^{-\frac{1}{2}}
\end{equation*}
holds for all $t > 0$.
Writing
$\nu= \frac{\overline{\epsilon}}{8K}$ and applying the bound
\eqref{eq:est:core:m:l}
we find
\begin{equation}
\begin{array}{lcl}
    \sum_{|x_l^t| \le K } |M_l(t)||x_l^t| &\leq &
    C
 \min\{1 , t^{-1/2} \}
 \sum_{|x_l^t| \le K}
 |x_l^t| e^{- \nu t  (x_l^t)^2}.
\\[0.2cm]
\end{array}
\end{equation}
The desired bound \eqref{eq:lde:est:on:core:times:x} now follows from an application of \eqref{eqn:lde:sum_s} with $n=1$.
\end{proof}

\section{Phase approximation strategies}\label{sec:theta}

Throughout this paper, various scalar LDEs of the form $\dot{\theta} = \Theta(\theta)$ are considered, which can all be seen as approximations to the (asymptotic) evolution of the 
phase $\gamma(t)$ defined in 
\eqref{eqn:main:def_of_gamma}. Our main purpose
here is to explore the relationship between the various points of view and to obtain
several key decay rates.

We proceed by introducing the standard shift operator $S:\ell^\infty(\Z) \mapsto \ell^\infty(\Z)$ that acts as
\begin{equation*}
    [S\theta]_l = \theta_{l+1}. 
\end{equation*}
This allows us to represent 
the (k)-th discrete derivative~\eqref{eqn:lde:nth_derivative} 
in the convenient form
\begin{equation}\label{eqn:analysis_theta:nth_derivative_S}
    \begin{aligned}
        \partial^{(k)}\theta  &= (S - I)^k\theta = (S^{k-1} + \cdots +S + I)(S-I)\theta \\
        & = (S^{k-1} + \cdots +S + I)\partial \theta  .
    \end{aligned}
\end{equation}
Recalling the shifts $\sigma_{\nu}$ introduced in~\eqref{eq:mr:def:shifts:sigma},
we also define the first-difference operators
\begin{equation}
\label{eq:th:def:pi:nu}
\begin{array}{lcll}
\pi^\diamond_{\nu} \theta  &=& 
(S^{\sigma_\nu} - I)\theta,
&\nu \in \left\{1, 2, 3, 4\right\},
\\[0.3cm]
\pi^\diamond_{\nu\oplus\nu'} \theta  &= &(S^{\sigma_\nu + \sigma_{\nu'}} - I)\theta, 
&\nu, \nu' \in \left\{1, 2, 3, 4\right\},
\end{array}
\end{equation}
together with their second-difference counterparts
\begin{equation}
\label{eq:th:def:pi:nu:nup}
    \pi^{\diamond \diamond} _{\nu\nu'} \theta  = \pi^\diamond_{\nu'}  \pi^\diamond_{\nu} \theta   
    =  (S^{\sigma_{\nu'}} - I)(S^{\sigma_\nu} - I)\theta,
    \qquad \qquad \nu,\nu' \in \left\{1, 2, 3, 4\right\}.
\end{equation}
These can be expanded as first differences by means of the useful identity
\begin{equation}\label{eqn:super_sub_sol:2nd_diff_via_1st_diff}
     \pi^{\diamond \diamond} _{\nu\nu'} \theta = \pi^\diamond_{\nu\oplus\nu'} \theta - \pi^\diamond_{\nu} \theta - \pi^\diamond_{\nu'} \theta .
\end{equation}
For convenience, we also introduce
the shorthands
\begin{equation}
    \pi^\diamond_{l;\nu}  \theta
    = [ \pi^\diamond_{\nu} \theta]_l,
    \qquad
    \pi^\diamond_{l;\nu\oplus\nu'}  \theta
    = [ \pi^\diamond_{\nu\oplus\nu'} \theta]_l,
    \qquad
    \pi^{\diamond\diamond}_{l;\nu\nu'}  \theta
    = [ \pi^{\diamond\diamond}_{\nu\nu'} \theta]_l
\end{equation}
for $\nu, \nu' \in \{1, 2, 3, 4\}$.

All the nonlinearities that we consider share a common linearization, which 
using \eqref{eqn:super_sub_sol:2nd_diff_via_1st_diff}
and the definitions \eqref{eqn:main:a_k}
can be represented in the equivalent forms
\begin{equation}
\label{eq:th:def:h:lin}
\begin{array}{lcl}
    \mathcal{H}_{\mathrm{lin}}[h]
    &= & \sum_{\nu=1}^4 \alpha^\diamond_{p;\nu} \pi^\diamond_{\nu}h +
    \sum_{\nu,\nu'=1}^4 \alpha_{p;\nu\nu'}^{\diamond \diamond} \pi^{\diamond \diamond}_{\nu\nu'} h
\\[0.2cm]
& = & 
\sum_{\nu=1}^4 \alpha^\diamond_{p;\nu}
\alpha^\diamond_{p;\nu} \pi^\diamond_{\nu}h 
+ \sum_{\nu,\nu'=1}^4
\alpha_{p;\nu\nu'}^{\diamond\diamond}
\big( \pi_{\nu \oplus \nu'} - \pi^\diamond_{\nu} - \pi^\diamond_{\nu'}  \big) h
\\[0.2cm]
& = &
\sum_{k=-N}^N a_k (S^k - I) h.
\end{array}
\end{equation}
It is important to observe
that the assumptions (HS$)_1$ and (HS$)_2$ 
guarantee that condition (h$\alpha$) in
{\S}\ref{sec:lde} is satisfied. In particular,
we will be able to exploit all the linear results obtained in that section.

\paragraph{Summation convention}
To make our notation more concise, we will use the Einstein summation convention whenever this is not likely to lead to ambiguities. This means that
any Greek index that appears only on the right hand side of an equation
is automatically summed.
For example, the first line of
\eqref{eq:th:def:h:lin} can be simplified as
\begin{equation}
\begin{array}{lcl}
    \mathcal{H}_{\mathrm{lin}}[h]
    &= &  \alpha^\diamond_{p;\nu} \pi^\diamond_{\nu}h 
    +
     \alpha_{p;\nu\nu'}^{\diamond\diamond} \pi^{\diamond \diamond}_{\nu\nu'} h .
\end{array}
\end{equation}

\paragraph{`Cole-Hopf' nonlinearity
$\Theta_{\mathrm{ch}}$}
We start by discussing the
nonlinearity $\Theta_{\mathrm{ch}}$ defined by~\eqref{eqn:main:eqn_for_alpha_diamonds},
which for $d \neq 0$ is given by
\begin{equation}
\label{eq:th:def:theta:ch}
[\Theta_{\mathrm{ch}}(\theta)]_l
= \dfrac{1}{d} \sum_{k=-N}^N a_k \left(e^{d\big(\theta_{l+k}(t) - \theta_l(t)\big)} - 1 \right)   + c_*.
\end{equation}
The key feature is that any solution
to 
\begin{equation}
\label{eq:th:dot:theta:ch}
    \dot{\theta} = \Theta_{\mathrm{ch}}(\theta)
\end{equation} 
can be used to construct a solution to the linear problem
\begin{equation}
\label{eq:th:dot:h:ch}
    \dot{h}(t) = \mathcal{H}_{\mathrm{lin}} [h(t)]
\end{equation}
by applying the Cole-Hopf transform
\begin{equation}
\label{eq:th:def:cole:hopf:h}
    h(t) = e^{d\big(\theta(t) - c_* t\big)}.
\end{equation}
This can be verified by a straightforward computation. Vice versa, any nonnegative solution to the linear LDE \eqref{eq:th:def:cole:hopf:h}
yields a solution to
\eqref{eq:th:dot:theta:ch}
by writing
\begin{equation}
\label{eq:th:rec:theta:from:h}
    \theta(t) = \dfrac{\log h(t) }{d} + c_* t.
\end{equation}

Our first main result uses this correspondence to establish bounds on the discrete derivatives of solutions to \eqref{eq:th:dot:theta:ch}.
In order to exploit the 
fact that this LDE
is invariant under spatially homogeneous perturbations, we introduce the deviation seminorm
\begin{equation}\label{eqn:analysis_of_theta:semiorm}
    [\theta]_{\mathrm{dev}} = \norm{\theta  - \theta_0}_{\ell^\infty}
\end{equation}
for sequences $\theta\in \ell^\infty(\Z)$.
In view of \eqref{eq:th:rec:theta:from:h}, it is essential to ensure
that $h$ remains positive. This is where 
Proposition \ref{prop:lde:comparison_principle} comes into play, which requires us to impose a
flatness condition on the initial condition $\theta(0)$. This was not needed for the corresponding result \cite[Cor. 6.2]{jukic2019dynamics},
where the comparison principle could be exploited.

\begin{proposition}\label{prop:lde:analysis_theta:differences} 
Assume that (H$g$), (H$\Phi$),   (HS$)_1$ and (HS$)_2$ 
all hold and fix a positive constant $R>0$. Then there exist constants $M$ and $\delta$ such that for any $\theta \in C^1\big([0,\infty);\ell^\infty(\Z)\big)$
that satisfies the LDE~\eqref{eqn:main:main_eqn_for_theta}
with 
$[\theta(0)]_{\mathrm{dev}} < R$ and $\norm{\partial \theta(0)}_{\ell^\infty} \leq \delta$, 
we have the estimates
\begin{align}
 	\norm{\partial^{(k)} \theta(t)}_{\ell^\infty} &\leq M \min\left\{\norm{\partial \theta^0}_{\ell^\infty}, t^{-\frac{k}{2}}\right\}, \qquad k= 1, 2, 3. \label{eqn:analysis_theta:differences}
 \end{align}
Moreover, for any pair $(m,n)\in \Z^2$ there exists a constant $C= C(m,n, R)$ such that 
\begin{align}
    \norm{
    n(S^m - I)\theta(t) - m (S^n - I)\theta(t)
    }_{\ell^\infty} &\leq   C \min\left\{\norm{\partial \theta^0}_{\ell^\infty}, t^{-1}\right\} , \label{eqn:analysis_theta:differences_mn} 
 	\\
    \norm{
     n \partial(S^m - I)\theta(t) - m \partial (S^n - I)\theta(t) 
    }_{\ell^\infty} &\leq  C \min\left\{\norm{\partial \theta^0}_{\ell^\infty}, t^{-\frac{3}{2}}\right\} . \label{eqn:analysis_theta:differences_mn2} 
 \end{align}
\end{proposition}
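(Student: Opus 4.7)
The strategy is to invoke the Cole--Hopf reduction $h(t) = e^{d(\theta(t) - c_* t)}$ from \eqref{eq:th:def:cole:hopf:h}, which linearises the LDE \eqref{eq:th:dot:theta:ch} and allows us to import the decay theory of \S\ref{sec:lde}. Since $\Theta_{\mathrm{ch}}$ depends only on the differences $\theta_{l+k} - \theta_l$, we may normalise $\theta_0(0) = 0$, obtaining $\|\theta(0)\|_{\ell^\infty} < R$ and the initial bounds
\begin{equation*}
  e^{-|d|R} \le h_l(0) \le e^{|d|R},
  \qquad
  \|\partial h(0)\|_{\ell^\infty} \le |d|\, e^{|d|R}\,\|\partial \theta(0)\|_{\ell^\infty} \le |d|\, e^{|d|R}\,\delta .
\end{equation*}
In the degenerate case $d = 0$ the nonlinearity reduces to $\Theta_{\mathrm{ch}}(\theta) = \mathcal{H}_{\mathrm{lin}}[\theta] + c_*$, so the whole proposition follows directly from Theorem~\ref{thm:lde:n-th-diff-theta} applied to $\theta - c_* t$; the remainder of the plan therefore concerns $d \neq 0$.

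The crucial preparatory step, and the main obstacle of the proof, is to confirm that $h(t)$ stays uniformly bounded away from zero for all $t \ge 0$. Because $\mathcal{H}_{\mathrm{lin}}$ lacks a genuine comparison principle when some $a_k$ are negative, positivity is not automatic, and I would instead invoke Proposition~\ref{prop:lde:comparison_principle} with $\varepsilon = e^{-2|d|R}/4$. Taking $\delta$ small enough that $C\|\partial h(0)\|_{\ell^\infty} \le e^{-|d|R}/2$, the short-time bound on $[0,T]$ and the large-time bound on $[T,\infty)$ combine to give $h_l(t) \ge e^{-|d|R}/2$ for every $(l,t)$. A symmetric argument on a sign-flipped shift of $h$ produces a uniform upper bound $h_l(t) \le C_0$. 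Theorem~\ref{thm:lde:n-th-diff-theta} then yields
\begin{equation*}
  \|\partial^{(k)} h(t)\|_{\ell^\infty} \le C\min\bigl\{\|\partial^{(k)} h(0)\|_{\ell^\infty},\; C_0 t^{-k/2}\bigr\}
  \le C'(k)\min\bigl\{\|\partial\theta(0)\|_{\ell^\infty},\; t^{-k/2}\bigr\}
\end{equation*}
for $k = 1, 2, 3$, where we used $\|\partial^{(k)} h(0)\|_{\ell^\infty} \lesssim \|\partial h(0)\|_{\ell^\infty}$.

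To transfer these estimates back to $\theta$ I would invert the Cole--Hopf formula via $\theta(t) = d^{-1}\log h(t) + c_* t$. A direct algebraic manipulation gives
\begin{equation*}
  d\,\partial^{(2)}\theta_l = \log\!\Bigl(1 + \tfrac{h_l\,\partial^{(2)} h_l \,-\, (\partial h_l)^2}{h_{l+1}^2}\Bigr),
\end{equation*}
and an analogous (longer) identity for $\partial^{(3)}\theta_l$ containing $\partial^{(3)} h$, $\partial^{(2)} h\cdot \partial h$ and $(\partial h)^3$. After further tightening $\delta$ so that the arguments of these logarithms lie in $[-\tfrac12,\tfrac12]$, the elementary estimate $|\log(1+x)| \le 2|x|$ and the uniform lower bound on $h$ produce the pointwise inequalities
\begin{equation*}
  |\partial^{(2)}\theta_l| \lesssim |\partial^{(2)} h_l| + |\partial h_l|^2,
  \qquad
  |\partial^{(3)}\theta_l| \lesssim |\partial^{(3)} h_l| + |\partial^{(2)} h_l|\,|\partial h_l| + |\partial h_l|^3 .
\end{equation*}
Every term on the right is controlled by the appropriate power of $\min\{\|\partial\theta(0)\|_{\ell^\infty},\; t^{-1/2}\}$ by the bound of the previous paragraph, proving \eqref{eqn:analysis_theta:differences}.

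For the mixed-difference bounds \eqref{eqn:analysis_theta:differences_mn}--\eqref{eqn:analysis_theta:differences_mn2}, the telescoping identity $(S^m - I)\theta_l = \sum_{j=0}^{m-1}\partial\theta_{l+j}$ (with the obvious sign/range conventions for negative $m,n$) produces
\begin{equation*}
  n(S^m - I)\theta_l - m(S^n - I)\theta_l = \sum_{j=0}^{m-1}\sum_{k=0}^{n-1}\bigl(\partial\theta_{l+j} - \partial\theta_{l+k}\bigr),
\end{equation*}
and each inner summand is itself a sum of at most $|j-k| \le |m| + |n|$ second differences of $\theta$. Applying $\partial$ to this identity simply replaces those second differences by third differences, so the two claims reduce respectively to the $k=2$ and $k=3$ cases of \eqref{eqn:analysis_theta:differences} already established.
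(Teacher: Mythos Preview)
Your proposal is correct and follows the paper's strategy: normalise $\theta_0(0)=0$, apply the Cole--Hopf transform, secure a uniform positive lower bound on $h$ via Proposition~\ref{prop:lde:comparison_principle}, import the decay of $\partial^{(k)} h$ from Theorem~\ref{thm:lde:n-th-diff-theta}, and then pass back through the logarithm. The one noteworthy difference lies in the $k=3$ step. The paper Taylor-expands $\partial^{(3)}\log h$ directly and is confronted with the quadratic combination $[(S^3-I)h]^2 - 3[(S^2-I)h]^2 + 3[(S-I)h]^2$, whose individual pieces are only $O(t^{-1})$; it must then invoke the structural bound \eqref{eq:th:bnd:n:m:theta} to expose the cancellation $9 - 3\cdot 4 + 3 = 0$. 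Your approach of applying $\partial$ to the already-simplified second-difference log identity bypasses this, because a single discrete product rule on the numerator $h\,\partial^{(2)} h - (\partial h)^2$ immediately delivers the $\partial^{(3)} h + \partial^{(2)} h\cdot\partial h + (\partial h)^3$ structure you claim. (Your numerator should in fact read $h_{l+1}\partial^{(2)} h_l - \partial h_{l+1}\,\partial h_l$, but the estimate is unaffected.) For \eqref{eqn:analysis_theta:differences_mn}--\eqref{eqn:analysis_theta:differences_mn2} your double-telescoping argument and the paper's appeal to \eqref{eq:th:bnd:n:m:theta} are equivalent reformulations of the same reduction to $\|\partial^{(2)}\theta\|_{\ell^\infty}$ and $\|\partial^{(3)}\theta\|_{\ell^\infty}$.
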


\paragraph{`Comparison' nonlinearity $\Theta_{\mathrm{cmp}}$}
Upon introducing the quadratic expression
\begin{equation}
\label{eq:th:def:q:cmp}
    [\mathcal{Q}_{\mathrm{cmp}}(\theta)]_l
    = 
    \alpha_{q;\nu\nu'}^{\diamond\diamond} [\pi^\diamond_{l;\nu}\theta] [\pi^\diamond_{l;\nu'} \theta],
\end{equation}
we are ready to define
a new nonlinear function $  \Theta_{\mathrm{cmp}}:\ell^\infty(\Z)\to \ell^\infty(\Z)$
that acts as
\begin{align}
  \Theta_{\mathrm{cmp}}(\theta)
    &=
    \mathcal{H}_{\mathrm{lin}}[ \theta]
    + \mathcal{Q}_{\mathrm{cmp}}(\theta)
    + c_*.
    \label{eqn:main:main_eqn_for_theta_approx}
\end{align}
This function plays an important role in {\S}\ref{sec:sub:sup} where we construct sub- and super solutions for~\eqref{eqn:main:AC_equation} in order to exploit the comparison principle. Indeed,
our choice \eqref{eqn:intro:super_sol_ansatz} will generate terms in the super-solution residual
that contain the factor 
\begin{equation}
\label{eq:th:def:r:theta}
\begin{aligned}
        \mathcal{R}_{\theta}(t)
   :& =  \dot{\theta}(t)- \Theta_{\mathrm{cmp}}\big(\theta(t)\big) 
   \\[0.3cm]
   &= \alpha^\diamond_{p;\nu} \pi^\diamond_{\nu} \theta(t)
    +
     \alpha_{p;\nu\nu'}^{\diamond \diamond} \pi^{\diamond \diamond}_{\nu\nu'} \theta(t) +  \alpha_{q;\nu\nu'}^{\diamond\diamond} [\pi^\diamond_{l;\nu}\theta(t)] [\pi^\diamond_{l;\nu'} \theta(t)] + c_*.
\end{aligned}
\end{equation}
Since this difference does not have a sign that we can exploit, we need to 
absorb it into our remainder terms. This requires a decay rate of
$\mathcal{R}_{\theta}(t) \sim t^{-3/2}$ or faster.

Obviously, we can achieve $\mathcal{R}_{\theta} = 0$ by 
choosing $\theta$ appropriately. However, 
the resulting LDE $\dot{\theta} = \Theta_{\mathrm{cmp}}(\theta)$
is surprisingly hard to analyze due to the presence of the problematic quadratic terms,
which precludes us from obtaining the desired $\partial \theta \sim t^{-1/2}$ decay rates in $\ell^\infty$ (rather than $\ell^2$, which is much easier). 

This problem is circumvented by our choice to use \eqref{eq:th:dot:theta:ch}
as the evolution for $\theta$. Our second main result provides the necessary bounds on $\mathcal{R}_{\theta}(t)$ and two other related expressions.
The main challenge here
is to compare the quadratic
terms in $\Theta_{\mathrm{cmp}}$
and $\Theta_{\mathrm{ch}}$.
In fact, our choice
\eqref{eqn:main:d}
for the parameter $d$ is
based on the necessity
to  neutralize the dangerous components that lead to $O(t^{-1})$ behaviour.

\begin{proposition}\label{prop:analysis_theta:theta_approx_quadratic}
Consider the setting of Proposition~\ref{prop:lde:analysis_theta:differences}. There exist constants $M$ and $\delta$ such that for any $\theta \in C^1\big([0,\infty);\ell^\infty(\Z)\big)$
that satisfies the LDE~\eqref{eq:th:dot:theta:ch}
with 
$[\theta(0)]_{\mathrm{dev}} < R$ and $\norm{\partial \theta(0)}_{\ell^\infty} \leq \delta$, we have the bound
\begin{equation}        
\label{eqn:super_and_sub_sols_Rtheta_est}
 \norm{\mathcal{R}_\theta(t)}_{\ell^\infty}
   \le M \min\left\{\norm{\partial \theta(0)}_{\ell^\infty}, t^{-\frac{3}{2}}\right\}
\end{equation}
for all $t > 0$. In addition,
for any $\nu \in \{1, 2,3,4\}$
and $t > 0$
we have 
\begin{equation}        
\label{eqn:super_and_sub_sols_Rtheta_est:deriv:i}
  \norm{
\pi^\diamond_{\nu}\dot{\theta}(t) - \sum_{\nu'=1}^4 \alpha^\diamond_{p;\nu'} \pi^{\diamond \diamond}_{\nu'\nu}\theta(t)
}_{\ell^\infty}
  \le M \min\left\{\norm{\partial \theta(0)}_{\ell^\infty}, t^{-\frac{3}{2}}\right\},
\end{equation}
while for any pair 
$\nu,\nu' \in \{1, 2,3,4\}$
and $t > 0$ we have
\begin{equation}        
\label{eqn:super_and_sub_sols_Rtheta_est:deriv:ii}
  \norm{
          \pi^{\diamond \diamond}_{\nu \nu'} \dot{\theta}(t)
        }_{\ell^\infty}
  \le M \min\left\{\norm{\partial \theta(0)}_{\ell^\infty}, t^{-\frac{3}{2}}\right\} .
\end{equation}
\end{proposition}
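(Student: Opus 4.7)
The plan is to write $\mathcal{R}_\theta(t) = \Theta_{\mathrm{ch}}(\theta(t)) - \Theta_{\mathrm{cmp}}(\theta(t))$ and Taylor-expand the exponentials in $\Theta_{\mathrm{ch}}$ around zero. Since Proposition~\ref{prop:lde:analysis_theta:differences} gives $\|(S^k-I)\theta(t)\|_{\ell^\infty} \leq C\|\partial \theta(t)\|_{\ell^\infty} \leq Ct^{-1/2}$, the cubic and higher Taylor remainders are pointwise bounded by $Ct^{-3/2}$. The linear parts of $\Theta_{\mathrm{ch}}$ and $\Theta_{\mathrm{cmp}}$ coincide with $\mathcal{H}_{\mathrm{lin}}\theta + c_*$, so the problem reduces to comparing the two quadratic forms $\tfrac{d}{2}\sum_k a_k[(S^k-I)\theta]^2$ and $\alpha_{q;\nu\nu'}^{\diamond\diamond}[\pi^\diamond_\nu\theta][\pi^\diamond_{\nu'}\theta]$.

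The central step uses the mixed-difference bound \eqref{eqn:analysis_theta:differences_mn} with $(m,n) = (k,1)$, which yields $\|(S^k-I)\theta - k\,\partial \theta\|_{\ell^\infty} \leq Ct^{-1}$. Substituting $(S^k-I)\theta = k\,\partial\theta + E_k$ with $\|E_k\|_{\ell^\infty} \leq Ct^{-1}$ into both quadratic forms, the cross terms $E_k\,\partial\theta$ and $E_kE_{k'}$ are $O(t^{-3/2})$, leaving
\begin{equation*}
    \mathcal{R}_\theta = \left[\tfrac{d}{2}\sum_k a_k k^2 - \sum_{\nu,\nu'}\sigma_\nu\sigma_{\nu'}\alpha_{q;\nu\nu'}^{\diamond\diamond}\right] (\partial \theta)^2 + O(t^{-3/2}).
\end{equation*}
The definition \eqref{eqn:main:d} for $d$, combined with Lemma~\ref{lemma:main:expressions_c_lambda}\textit{(vi)} (which identifies the denominator of \eqref{eqn:main:d} with $\Lambda = \sum_k a_k k^2$), forces the bracketed coefficient to vanish. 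This exact cancellation of the leading $t^{-1}$ contribution is the only nontrivial step of the proof and is the design rationale for \eqref{eqn:main:d}; without it we would only obtain $\|\mathcal{R}_\theta\|_{\ell^\infty} = O(t^{-1})$, too weak for the super-solution construction of \S\ref{sec:sub:sup}. The companion $\|\partial\theta(0)\|_{\ell^\infty}$-alternative inside the $\min$ follows from the first clause of \eqref{eqn:analysis_theta:differences} together with the $\delta$-smallness assumption, which lets one power of $\|\partial\theta\|_{\ell^\infty}$ be absorbed into a constant.

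For the first-differenced bound \eqref{eqn:super_and_sub_sols_Rtheta_est:deriv:i} the plan is to apply $\pi^\diamond_\nu$ directly to $\dot\theta = \Theta_{\mathrm{ch}}(\theta)$. Since $\pi^\diamond_\nu$ commutes with the shifts, the decomposition \eqref{eq:th:def:h:lin} of $\mathcal{H}_{\mathrm{lin}}$ produces the target $\sum_{\nu'}\alpha^\diamond_{p;\nu'}\pi^{\diamond\diamond}_{\nu'\nu}\theta$ plus a sum of third differences of $\theta$; the latter are $O(t^{-3/2})$ by \eqref{eqn:analysis_theta:differences} with $k=3$. The quadratic contribution is treated through the factoring identity $\pi^\diamond_\nu\beta^2 = [\pi^\diamond_\nu\beta]\cdot[(S^{\sigma_\nu}+I)\beta]$ with $\beta = (S^k-I)\theta$: the first factor is a second difference ($O(t^{-1})$) while the second is $O(t^{-1/2})$, so the product is $O(t^{-3/2})$. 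The cubic remainder inherits its $O(t^{-3/2})$ bound directly.

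The proof of \eqref{eqn:super_and_sub_sols_Rtheta_est:deriv:ii} is analogous and in fact gives room to spare: applying $\pi^{\diamond\diamond}_{\nu\nu'}$ to $\Theta_{\mathrm{ch}}(\theta)$ turns every linear term into a third- or fourth-order difference of $\theta$ (the latter bounded by twice the former via $\partial^{(4)} = (S-I)\partial^{(3)}$), and iterating the factoring identity twice controls the quadratic piece at the rate $t^{-1}\cdot t^{-1/2} = t^{-3/2}$. As before, the $\|\partial\theta(0)\|_{\ell^\infty}$-alternative is obtained by using the first clause of \eqref{eqn:analysis_theta:differences} throughout and absorbing surplus factors into $\delta$. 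The principal obstacle of the proposition remains the quadratic cancellation in \eqref{eqn:super_and_sub_sols_Rtheta_est}; once this is established, the other two bounds follow from the same expansion combined with elementary product-rule accounting.
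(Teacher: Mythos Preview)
Your proposal is correct and follows essentially the same approach as the paper: both arguments Taylor-expand $\Theta_{\mathrm{ch}}$, reduce the first bound to the quadratic cancellation $\tfrac{d}{2}\sum_k a_k k^2 = \sigma_\nu\sigma_{\nu'}\alpha^{\diamond\diamond}_{q;\nu\nu'}$ dictated by \eqref{eqn:main:d}, and handle the differenced bounds via product-rule identities together with Proposition~\ref{prop:lde:analysis_theta:differences}. The only cosmetic difference is that for \eqref{eqn:super_and_sub_sols_Rtheta_est:deriv:i}--\eqref{eqn:super_and_sub_sols_Rtheta_est:deriv:ii} the paper routes through $\Theta_{\mathrm{cmp}}$ and reuses the already-established bound on $\mathcal{R}_\theta$ (writing $\pi^\diamond_\nu\dot\theta = \pi^\diamond_\nu\mathcal{R}_\theta + \pi^\diamond_\nu\Theta_{\mathrm{cmp}}(\theta)$ and using $\|\pi^\diamond_\nu\mathcal{R}_\theta\|\le 2\|\mathcal{R}_\theta\|$), whereas you re-expand $\Theta_{\mathrm{ch}}$ directly; both lead to the same collection of third-difference and product terms.
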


\paragraph{`Discrete mean curvature' nonlinearity
$\Theta_{\mathrm{dmc}}$}
Recalling the sequences
$(A_k)$ and $(B_k)$ together with the
functions $\beta_\theta$ and $\Delta_\theta$ defined
in \eqref{eq:int:lap:beta:gamma}, we generalize the definition of $\overline{c}_{\theta}$
from \eqref{eq:int:c:gamma}
slightly by
writing
\begin{equation}
\label{eq:analysis_theta:dc:lap:gamma}
    [\widetilde{c}_\theta]_l
    =\sum_{0<|k|\leq N}  C_k c_{\varphi_{l;k}(\theta)}
\end{equation}
for a sequence $(C_k)$ that must satisfy
\begin{equation}
    \label{eq:th:nrm:C}
\sum_{0 < |k| \le N} C_k = 1,
\qquad \qquad
\sum_{0 < |k| \le N} k C_k = 0.
\end{equation}
The corresponding
generalization
of the definition
\eqref{eqn:main:Theta_dmc}
for $\Theta_{\mathrm{dmc}}$
is now given by
\begin{equation}\label{eqn:th:dmc:wt}
\widetilde{\Theta}_{\mathrm{dmc}}(\theta) =  \kappa_H \frac{{\Delta}_\theta}{\beta_\theta^2} + \beta_\theta \widetilde{c_\theta} ,
\end{equation}
which reduces to
$\Theta_{\mathrm{dmc}}$
in the special case $C_k = 1/(2N)$. 

Our task here is to 
establish a slight
generalization of Proposition~\ref{prop:main:mcf}
by analyzing the difference of $\widetilde{\Theta}_{\mathrm{dmc}}$ with $\Theta_{\mathrm{ch}}$. We achieve this by expanding the
 direction-dependent 
 wavespeeds $c_{\varphi}$ introduced in Lemma \ref{lem:mr:dir:dep:waves}
 in terms of the angle $\varphi$. In particular,
 we provide proofs for the explicit
 expressions 
 stated in Lemma
 \ref{lemma:main:expressions_c_lambda}. This allows us to make
 the link with the
 identities
 \eqref{eq:int:cond:kappa:d}
  for the parameters $\kappa_H$ and $d$.

\subsection{Coefficient identities  }

Our results in this section strongly depend
on the equivalence
of the representations
\eqref{eqn:main:d} for the parameter $d$. Our goal is
to establish this equivalence by providing the proofs of Lemma~\ref{lemma:main_results:coeff_diamond} and Lemma~\ref{lemma:main:expressions_c_lambda}.
To set the stage,
we recall
the set of shifts
\begin{equation*}
    (\tau_1, \tau_2, \tau_3, \tau_4) = (\sigma_h, \sigma_v, -\sigma_h, -\sigma_v)
\end{equation*}
and their corresponding translation operators 
\begin{equation*}
    [T_\nu h](\xi) = h(\xi + \tau_\nu), \qquad \nu \in \left\{1, 2, 3, 4\right\}
\end{equation*}
that were defined in \eqref{eq:mr:def:T:nu}.
This allows us
to recast the
direction-dependent
travelling-wave MFDE \eqref{eqn:main:perturbed:mfde}  in the convenient form
\begin{equation}
\label{eq:th:dir:dep:mfde}
    \begin{aligned}
        -c_\varphi \Phi_\varphi'(\xi) &= \Phi(\xi + \tau_\nu \cos{\varphi} + \sigma_\nu \sin{\varphi})
        -4 \Phi_\varphi(\xi) + g\big(\Phi_\varphi(\xi) \big),
    \end{aligned}
\end{equation}
which after linearization around $\Phi_{\varphi}$ gives rise to the linear operators
\begin{equation}
    [\mathcal{L}^{\varphi} v](\xi) = c_\varphi v'(\xi) + v(\xi + \tau_\nu \cos{\varphi} + \sigma_\nu \sin{\varphi}) - 4 v(\xi) + g'\big(\Phi_\varphi(\xi)\big) v(\xi).
\end{equation}
These should not be confused with their counterparts $\mathcal{L}_{\omega}$ defined
in \eqref{eq:mr:def:l:omega}, agreeing only when $\varphi = \omega = 0$.
\begin{proof}[Proof of Lemma~\ref{lemma:main_results:coeff_diamond}]
In view of the definition~\eqref{eqn:main:eqn_for_alpha_diamonds} and the identity $\langle \Phi_*', \psi_* \rangle = 1$, we have
\begin{equation*}
    \left \langle T_\nu \Phi_*', \psi_* \right\rangle - \alpha_{p;\nu}^\diamond  \left \langle \Phi_*', \psi_* \right\rangle =  \alpha_{p;\nu}^\diamond  -  \alpha_{p;\nu}^\diamond = 0,
\end{equation*}
for each fixed $\nu \in \{1, 2, 3, 4\}$,
which implies that
$T_\nu \Phi_*' - \alpha_{p;\nu}^\diamond  \Phi_*' \in \mathcal{R}(\mathcal{L}_0)$
by Lemma~\ref{lemma:main_results:lambda_omega}.
In particular, we can find
a bounded $C^1$-smooth function $\overline{p}_\nu^\diamond$
for which 
$$\mathcal{L}_0
\big[ \overline{p}_\nu^\diamond + b\Phi_*' \big] = T_\nu \Phi_*' - \alpha_{p;\nu}^\diamond  \Phi_*' $$ 
holds for any $b\in \R$.
Setting $b = - \langle \overline{p}_\nu^\diamond , \psi_* \rangle $ 
we can construct our desired
function ${p}_\nu^\diamond$ by writing
${p}_\nu^\diamond=\overline{p}_\nu^\diamond + b\Phi_*'$.
The remaining  functions ${p}_{\nu \nu'}^{\diamond \diamond}$ and ${q}_{\nu \nu'}^{\diamond \diamond}$
can be constructed analogously.
\end{proof}

\begin{proof}[Proof of Lemma~\ref{lemma:main:expressions_c_lambda}]
To establish item~(\textit{\ref{item:analysis_theta:c}}),
we introduce the
function $\chi(\xi):= \xi \Phi_*'(\xi)$ and use
the MFDE \eqref{eq:th:dir:dep:mfde}
at $\varphi = 0$ to compute
\begin{align*}
   [ \mathcal{L}_0 \chi](\xi) &= c_*\Phi_*'(\xi) + c_*\xi \Phi_*''(\xi) + (\xi+ \tau_\nu) T_\nu \Phi_*'(\xi)- 4\xi \Phi_*'(\xi) + \xi g'\big(\Phi_*(\xi)\big)\Phi_*'(\xi) 
   \\
   & = c_*\Phi_*'(\xi) + T_\nu \Phi_*'(\xi) + \xi \frac{d}{d\xi}\Big( c_* \Phi_*'(\xi) + \tau_{\nu} T_\nu \Phi_*(\xi) - 4 \Phi_*(\xi) + g\big(\Phi_*(\xi)\big)\Big)
   \\
   & = c_*\Phi_*'(\xi) + \tau_\nu T_\nu \Phi_*'(\xi).
\end{align*}
We integrate this expression against the kernel element $\psi_*$ and recall the definition of $\alpha_{p;\nu}^\diamond$ from Lemma~\ref{lemma:main_results:coeff_diamond} to obtain $c_* = -\tau_\nu \alpha_{p;\nu}^\diamond$, as claimed.

Turning to the other items, 
  we differentiate the equation~\eqref{eq:th:dir:dep:mfde}
with respect to $\varphi$. This yields
\begin{equation}\label{eqn:analysis_theta:partial_c}
    \begin{aligned}
    -[\partial_\varphi c_\varphi] \Phi_\varphi'(\xi)  &= [\mathcal{L}^\varphi \partial_\varphi \Phi_\varphi](\xi)
  +\Phi_\varphi'(\xi + \tau_\nu \cos{\varphi} + \sigma_\nu \sin{\varphi})(-\tau_\nu\sin{\varphi} + \sigma_\nu \cos{\varphi}),
\end{aligned}
\end{equation}
where we emphasize that
differentiations with respect to the angle $\varphi$
will always be denoted by $\partial_\varphi$.
Evaluating~\eqref{eqn:analysis_theta:partial_c}  in $\varphi = 0$, we obtain
\begin{equation*}
    -[\partial_\varphi c_{\varphi}]_{\varphi= 0}\Phi_*'(\xi) =  [\mathcal{L}_0 [\partial_\varphi \Phi_\varphi]_{\varphi = 0}] (\xi)+ \sigma_\nu T_\nu \Phi_*'(\xi).
\end{equation*}
Integrating
against  the adjoint kernel element $\psi_*$,
we
may use the characterization~\eqref{eqn:main:range_L0}
in 
combination with Lemma~\ref{lemma:main_results:coeff_diamond}
to arrive at the explicit expression
\begin{equation}
   - [\partial_\varphi c_\varphi]_{\varphi = 0}  = 
   \sigma_\nu \langle T_\nu \Phi'_*, \psi_* \rangle
   = \sigma_\nu \alpha^\diamond_{p;\nu}
\end{equation}
stated in  \textit{(ii)}.
Applying Lemma~\ref{lemma:main_results:coeff_diamond} once more,
we subsequently obtain
\begin{equation*}
  \mathcal{L}_0 [\partial_\varphi \Phi_\varphi]_{\varphi = 0}  = \sigma_\nu \alpha_{p;\nu}^\diamond \Phi_*' - \sigma_\nu [T_{\nu} \Phi_*']
  = -\mathcal{L}_0 (\sigma_\nu p_{\nu}^\diamond).
\end{equation*}
The Fredholm properties
formulated in 
Lemma \ref{lem:mr:fred:props:l:0} hence imply
\begin{equation*}
    [\partial_\varphi \Phi_\varphi]_{\varphi = 0} = -\sigma_\nu p_{\nu}^\diamond + b\Phi_*', 
\end{equation*}
where the coefficient $b$ is given by
\begin{equation*}
    b=\langle [\partial_\varphi \Phi_\varphi]_{\varphi = 0}, \psi_* \rangle + \sigma_\nu \langle p_{\nu}^\diamond, \psi_* \rangle .
\end{equation*}
This vanishes on account
of the normalization choices
in Lemmas 
\ref{lemma:main_results:lambda_omega} and~\ref{lemma:main_results:coeff_diamond},
establishing~(\textit{\ref{item:analysis_theta:partial_phi}}).

A further differentiation
of \eqref{eqn:analysis_theta:partial_c} with respect to $\varphi$ yields
\begin{equation}\label{eqn:analysis_theta:partial2_c}
    \begin{aligned}
        -[\partial_\varphi^2 c_\varphi] \Phi_\varphi'(\xi)   &= 
        2[\partial_\varphi c_\varphi] \partial_{\varphi}\Phi_\varphi'(\xi)  + [\mathcal{L}^{\varphi} \partial^2_\varphi \Phi_\varphi] (\xi) 
    \\
    &\qquad +  2 \partial_\varphi \Phi_\varphi'(\xi + \tau_\nu \cos{\varphi} + \sigma_\nu \sin{\varphi})(-\tau_\nu\sin{\varphi} + \sigma_\nu \cos{\varphi})   \\
        &\qquad + \Phi_\varphi'(\xi + \tau_\nu \cos{\varphi} + \sigma_\nu \sin{\varphi})(-\tau_\nu \cos{\varphi} - \sigma_\nu \sin{\varphi})
        \\
        &\qquad
        + \Phi_\varphi''(\xi + \tau_\nu \cos{\varphi} + \sigma_\nu \sin{\varphi})(-\tau_\nu\sin{\varphi} + \sigma_\nu \cos{\varphi})^2
       \\
       &\qquad 
       + 
        g''\big(\Phi_\varphi(\xi)\big)[\partial_\varphi\Phi_\varphi(\xi)]^2 .
    \\
    \end{aligned}
\end{equation}
Evaluating this 
in $\varphi = 0$ and integrating
against the adjoint kernel element $\psi_*$, we 
obtain
\begin{equation*}
    \begin{aligned}
        -[\partial_\varphi^2 c_\varphi]_{\varphi = 0}  &= 
         2[\partial_\varphi c_\varphi]_{\varphi=0} \left\langle [\partial_{\varphi}\Phi_\varphi']_{\varphi = 0 }, \psi_*\right\rangle  
         +  2 \sigma_\nu \langle T_\nu[\partial_\varphi \Phi_\varphi']_{\varphi = 0}, \psi_*\rangle    \\
        & \qquad -\tau_\nu \langle T_\nu \Phi', \psi_* \rangle
        + \langle T_\nu\Phi_*'', \psi_* \rangle \sigma_\nu^2 
        +
       \langle g''(\Phi_*(\xi))[\partial_\varphi\Phi_\varphi]_{\varphi = 0}^2 , \psi_*\rangle.
    \end{aligned}
\end{equation*}
Substituting the expressions
from items~(\textit{\ref{item:analysis_theta:c}}),  (\textit{\ref{item:analysis_theta:partial_c}}) and (\textit{\ref{item:analysis_theta:partial_phi}}), we arrive at
\begin{equation*}
    \begin{aligned}
        -[\partial_\varphi^2 c_\varphi]_{\varphi = 0}  &= 
        -2\sigma_\nu\alpha_{p;\nu}^\diamond \langle   [\partial_\varphi \Phi_\varphi']_{\varphi = 0}, \psi_*\rangle
               + 2 \sigma_\nu  \langle T_\nu [\partial_\varphi \Phi_\varphi']_{\varphi = 0}, \psi_*\rangle    
        + \langle T_\nu \Phi_\varphi'', \psi_* \rangle \sigma_\nu^2 
        \\
       & \qquad
        -\tau_\nu \alpha_{p;\nu}^\diamond 
              + \langle  g''(\Phi_*)[\partial_\varphi\Phi_\varphi]_{\varphi = 0}^2 , \psi_*\rangle 
       \\
       &= 
       2\sigma_\nu\alpha_{p;\nu}^\diamond \langle \sigma_{\nu'} \frac{d}{d\xi} p_{\nu'}^\diamond , \psi_*\rangle 
       - 2 \sigma_\nu \sigma_{\nu'}\langle T_\nu \frac{d}{d\xi}p_{\nu'}^\diamond, \psi_*\rangle    
        + \langle T_\nu \Phi_*'', \psi_* \rangle \sigma_\nu^2 
        \\
        &\qquad
        +  c_* +  \sigma_\nu \sigma_{\nu'} \langle g''(\Phi_*) p_{\nu}^\diamond p_{\nu'}^\diamond , \psi_*\rangle 
       \\
       & =  c_* - 2\sigma_\nu \sigma_{\nu'} \alpha_{q;\nu\nu'}^{\diamond \diamond},
\end{aligned}
\end{equation*}
which establishes~\textit{(\ref{item:analysis_theta:c_2nd_derivative})}. 
Finally,
items~\textit{(\ref{item:analyis_theta:partial_lambda})}
and~\textit{(\ref{item:analyis_theta:partial_lambdax2})}
follow directly from Lemma~5.6 in \cite{hoffman2017entire} and the definition~\eqref{eqn:main:a_k} for the coefficients $(a_k)$.
\end{proof}

\subsection{Quadratic comparisons}

In order to establish the main results of this section
we need to carefully examine the structure 
of the quadratic terms in our nonlinearities. As a preparation, we first confirm
that the difference operators
\eqref{eq:th:def:pi:nu}
and \eqref{eq:th:def:pi:nu:nup} can be appropriately bounded by the corresponding discrete derivatives.

\begin{lemma}\label{lem:th:bnd:pi:fncs}
   There exist a constant $M > 0$ so that for any 
   $\theta\in\ell^\infty(\Z)$
   and any $\nu, \nu', \nu''\in \{1, 2,3, 4\}$
we have the estimates
\begin{align}
    \norm{ \pi_{ \nu}^{\diamond} \theta ]}_{\ell^\infty}&\leq M
    \norm{ \partial \theta}_{\ell^\infty},
    \label{eqn:analysis_theta:first_diff_nu}
    \\
    \norm{ \pi_{ \nu\nu'}^{\diamond \diamond} \theta ]}_{\ell^\infty}&\leq M\norm{ \partial^{(2)} \theta}_{\ell^\infty}, \label{eqn:analysis_theta:second_diff_nu}
    \\
     \norm{\pi^\diamond_{\nu} [\pi_{ \nu'\nu''}^{\diamond \diamond} \theta(t) ]}_{\ell^\infty}&\leq M\norm{\partial^{(3)} \theta}_{\ell^\infty} \label{eqn:analysis_theta:first_diff_second_diff_nu},
    \\
     \norm{\pi^\diamond_{\nu} \big[
     [\pi_{ \nu'}^{\diamond } \theta][ \pi_{ \nu''}^{\diamond } \theta] \big] }_{\ell^\infty}&\leq M\norm{\partial^{(2)} \theta}_{\ell^\infty}
     \norm{\partial \theta}_{\ell^\infty}. \label{eqn:analysis_theta:first_diff_squares_nu}
\end{align}
\end{lemma}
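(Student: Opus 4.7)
The plan is to exhibit every operator on the left-hand side as a bounded linear (or bilinear) combination of the standard difference operators $\partial$, $\partial^{(2)}$ and $\partial^{(3)}$, using two basic ingredients: a telescoping identity for powers of the shift, and a discrete Leibniz rule for $\pi^\diamond_\nu$ on products.

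First I would record the telescoping formula
\begin{equation*}
    S^m - I \;=\; \sum_{j=0}^{m-1} S^j (S-I) \;=\; \sum_{j=0}^{m-1} S^j \partial, \qquad m\ge 1,
\end{equation*}
together with its negative-shift analogue $S^{-m}-I = -\sum_{j=1}^{m} S^{-j}\partial$. Because each $\sigma_\nu$ and $\sigma_\nu+\sigma_{\nu'}$ is bounded in modulus by $2\sigma_\infty$, applying this identity to $\pi^\diamond_\nu=S^{\sigma_\nu}-I$ immediately yields \eqref{eqn:analysis_theta:first_diff_nu} with $M=2\sigma_\infty$, since shifts are isometries on $\ell^\infty(\Z)$. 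For \eqref{eqn:analysis_theta:second_diff_nu} I factor
\begin{equation*}
   \pi^{\diamond\diamond}_{\nu\nu'}\theta \;=\; (S^{\sigma_{\nu'}}-I)(S^{\sigma_\nu}-I)\theta,
\end{equation*}
apply the telescoping formula to each factor, and thereby express $\pi^{\diamond\diamond}_{\nu\nu'}\theta$ as a finite sum (at most $(2\sigma_\infty)^2$ terms) of shifts of $(S-I)^2\theta = \partial^{(2)}\theta$. Estimate \eqref{eqn:analysis_theta:first_diff_second_diff_nu} is then immediate by applying the same trick a third time to the outer $\pi^\diamond_\nu$.

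For the bilinear estimate \eqref{eqn:analysis_theta:first_diff_squares_nu} I would use the discrete product rule
\begin{equation*}
   \pi^\diamond_\nu(f g) \;=\; (S^{\sigma_\nu}f)\,\pi^\diamond_\nu g \;+\; (\pi^\diamond_\nu f)\, g,
\end{equation*}
which follows from adding and subtracting $(S^{\sigma_\nu}f)g$. Applied with $f=\pi^\diamond_{\nu'}\theta$ and $g=\pi^\diamond_{\nu''}\theta$, and noting $\pi^\diamond_\nu \pi^\diamond_{\nu'} = \pi^{\diamond\diamond}_{\nu'\nu}$ and $\pi^\diamond_\nu \pi^\diamond_{\nu''} = \pi^{\diamond\diamond}_{\nu''\nu}$, this gives
\begin{equation*}
   \bigl\| \pi^\diamond_\nu\bigl[(\pi^\diamond_{\nu'}\theta)(\pi^\diamond_{\nu''}\theta)\bigr]\bigr\|_{\ell^\infty}
   \;\le\; \|\pi^\diamond_{\nu'}\theta\|_{\ell^\infty}\,\|\pi^{\diamond\diamond}_{\nu''\nu}\theta\|_{\ell^\infty}
   + \|\pi^{\diamond\diamond}_{\nu'\nu}\theta\|_{\ell^\infty}\,\|\pi^\diamond_{\nu''}\theta\|_{\ell^\infty},
\end{equation*}
and the right-hand side is bounded by a constant multiple of $\|\partial\theta\|_{\ell^\infty}\|\partial^{(2)}\theta\|_{\ell^\infty}$ by the first two parts of the lemma. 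Taking $M$ to be the maximum of all the constants produced above finishes the proof. No genuine obstacle is expected; the only bookkeeping point is to ensure one handles both signs of $\sigma_\nu$ uniformly in the telescoping step.
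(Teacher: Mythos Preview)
Your proof is correct and follows essentially the same route as the paper: the first three bounds come from writing $S^{\sigma_\nu}-I$ as a polynomial in $S$ times $(S-I)$ (your telescoping identity is exactly this), and the bilinear bound comes from the discrete product rule $(S^n-I)(fg)=(S^n f)(S^n-I)g+[(S^n-I)f]g$ applied with $f=\pi^\diamond_{\nu'}\theta$, $g=\pi^\diamond_{\nu''}\theta$. The only cosmetic difference is that the paper packages the telescoping step as a single polynomial factorisation $\pi^\diamond_\nu=S^{-\sigma_\infty}P_\nu(S)(S-I)$, whereas you write out the sum explicitly.
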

\begin{proof}
The first three bounds
follow directly 
from the fact that the difference operators $\pi_{; \nu}^{\diamond} $ can all be represented in the form
\begin{equation}
    \pi_{;{\nu}} = S^{- \max\{ |\sigma_h|, |\sigma_v|} P_{\nu}(S)(S-I)
\end{equation}
for appropriate polynomials $P_{\nu}$. The final
bound follows from the product
rule
\begin{equation}
    (S^{n} -I ) [\theta_1 \theta_2]
    = [S^n \theta_1](S^n  -I) \theta_2
    + [(S^n - I) \theta_1 ]  \theta_2
\end{equation}
which holds for all $\theta_1, \theta_2 \in \ell^\infty(\Z)$.
\end{proof}

We proceed
with our analysis by 
providing an explicit formula for the operators $S^m-I$,
which isolates the terms
for which only a single discrete derivative $\partial$ can be factored out.
This leads naturally to the crucial bounds
\eqref{eq:th:bnd:n:m:theta},
which will allow us to extract
additional decay from suitably
combined first-difference operators.

\begin{lemma}\label{lemma:analysis_theta:Sm-I}
For any integer $m \ge 1$ we have the identities
\begin{equation}\label{eqn:lde:nonlinear:induction}
\begin{array}{lcl}
    (S^m - I) &= &(S-I)^2\sum_{k=0}^{m-2} (m-k-1) S^k  + m (S-I),  \\[0.3cm]
     (S^{-m} - I) &= &-(S-I)^2\sum_{k=0}^{m-2} (m-k-1) S^{k-m} - m S^{-m}(S-I).
\end{array}
\end{equation}
\end{lemma}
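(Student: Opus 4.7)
The natural starting point is the geometric factorisation
\[
    S^m - I = (S-I)\sum_{k=0}^{m-1} S^k,
\]
which holds for every integer $m \ge 1$. The plan is to extract a second factor of $(S-I)$ from the finite sum on the right, isolating the purely linear-in-$m$ remainder that appears in \eqref{eqn:lde:nonlinear:induction}.

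To do this I would rewrite each summand as
$S^k = I + (S^k - I)$ and apply the same geometric factorisation once more to $S^k - I = (S-I)\sum_{j=0}^{k-1} S^j$. Swapping the order of summation then gives
\[
    \sum_{k=0}^{m-1} S^k
    \;=\; m \,+\, (S-I)\sum_{k=0}^{m-1}\sum_{j=0}^{k-1} S^j
    \;=\; m \,+\, (S-I)\sum_{j=0}^{m-2}(m-1-j)\,S^j,
\]
where the double sum is evaluated by counting, for each fixed $j$, the $m-1-j$ indices $k \in \{j+1,\dots,m-1\}$. Multiplying both sides by $(S-I)$ yields the first identity in \eqref{eqn:lde:nonlinear:induction}. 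As a sanity check one can verify the cases $m=1$ (empty sum plus $S-I$) and $m=2$ (giving $(S-I)^2 + 2(S-I) = S^2 - I$), both of which are immediate. An alternative route would be a straightforward induction on $m$, using $S^{m+1} - I = S(S^m - I) + (S - I)$ and $S(S-I) = (S-I)^2 + (S-I)$ to repackage the inductive hypothesis into the desired form; I would regard this as a fallback in case the re-indexing of the double sum obscures the argument.

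The second identity is then a one-line consequence of the first: since $S^{-m}-I = -S^{-m}(S^m-I)$, applying $-S^{-m}$ to the already-established formula and absorbing the $S^{-m}$ factors into the shifted powers $S^{k-m}$ in the quadratic piece and into the $S^{-m}(S-I)$ factor in the linear piece gives exactly the second line of \eqref{eqn:lde:nonlinear:induction}. There is no real obstacle here beyond careful bookkeeping of indices in the swap of summation; every manipulation is an identity in the commutative polynomial algebra generated by $S$ and $S^{-1}$ acting on $\ell^\infty(\Z)$.
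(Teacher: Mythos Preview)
Your argument is correct. The iterated geometric factorisation together with the index swap does exactly what is needed, and the reduction of the second identity to the first via $S^{-m}-I=-S^{-m}(S^m-I)$ is the same as in the paper.

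The paper, however, takes the inductive route that you only mention as a fallback: it verifies $m=1,2$ by hand and passes from $m$ to $m+1$ using $S^{m+1}-I = S(S^m-I)+(S-I)$, then re-indexes and adds and subtracts $m(S-I)^2$ to recover the stated form. Your double-sum computation is arguably cleaner, since it produces the coefficient $m-1-j$ in one step rather than through an inductive repackaging, and it makes transparent why the remainder term is exactly $m(S-I)$ (it is simply the contribution of the $mI$ coming from $\sum_{k=0}^{m-1} I$). The induction, on the other hand, avoids the double sum entirely and may feel more mechanical to verify. Either approach is perfectly adequate for this elementary identity; the two differ only in bookkeeping, and the second line of \eqref{eqn:lde:nonlinear:induction} is obtained identically in both.
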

\begin{proof}
We only consider the first identity, noting that the second one follows readily from the computation
\begin{equation*}
    S^{-m} - I = -S^{-m}(S^m - I). 
\end{equation*}
For $m =1$ the claim follows trivially, while for $m=2$ we  have $ (S^2 - I) = (S^2 - 2S + I) + 2(S-I) $. Assuming that \eqref{eqn:lde:nonlinear:induction} holds for all $k$ up to some $m\geq  2$, we compute
\begin{align*}
   (S^{m+1} - I) &= 
   S(S^{m} - I) + (S-I) 
   \\
   &=(S-I)^2\sum_{k=0}^{m-2} (m-k-1) S^{k+1}  + m S(S-I) + (S-I) 
 \\
 & = (S-I)^2\sum_{k=1}^{m-1} (m-k) S^{k}  + m S(S-I) + (S-I) .
\end{align*}
Adding and subtracting $m(S-I)^2$ results in the desired identity
\begin{align*}
 (S^{m+1} - I)  
 & = (S-I)^2\sum_{k=0}^{m-1} (m-k) S^{k}  + (m + 1)(S-I).
\end{align*}
\end{proof}

\begin{cor}
Pick a pair $(m,n) \in \Z^2$.
Then there exists a constant $C = C(m,n) > 0$ so that for any $\theta \in \ell^\infty(\Z)$ we have the bounds
\begin{equation}
\label{eq:th:bnd:n:m:theta}
\begin{array}{lcl}
    \norm{n(S^m - I)\theta - m (S^n - I) \theta }_{\ell^\infty} 
    & \le & C \norm{\partial^{(2)} \theta }_{\ell^\infty},
\\[0.2cm]
    \norm{n^2[(S^m-I)\theta]^2  - m^2  [(S^n-I)\theta]^2 }_{\ell^\infty} & \le & C \norm{\partial^{(2)} \theta }_{\ell^\infty} \norm{\partial \theta }_{\ell^\infty},
\end{array}
\end{equation}
where the squares are evaluated in a pointwise fashion.
\end{cor}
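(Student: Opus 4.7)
The plan is to deduce both estimates directly from the decomposition provided by Lemma~\ref{lemma:analysis_theta:Sm-I}. The key observation is that in the combination $n(S^m - I)\theta - m(S^n-I)\theta$, the linear terms $nm(S-I)\theta$ cancel exactly, leaving only contributions involving $(S-I)^2 = \partial^{(2)}$ composed with bounded polynomial shift operators.

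For the first bound, I would apply Lemma~\ref{lemma:analysis_theta:Sm-I} to rewrite, schematically for $m \ge 1$,
\begin{equation*}
    (S^m - I)\theta = m(S-I)\theta + (S-I)^2 P_m(S)\theta, \qquad P_m(S) = \sum_{k=0}^{m-2}(m-k-1)S^k,
\end{equation*}
with an analogous identity for $n$. The combination
\begin{equation*}
    n(S^m - I)\theta - m(S^n - I)\theta = n\, (S-I)^2 P_m(S)\theta - m\, (S-I)^2 P_n(S)\theta
\end{equation*}
then clearly satisfies the $\ell^\infty$-bound $C(m,n)\|\partial^{(2)}\theta\|_{\ell^\infty}$ since shift operators are isometries on $\ell^\infty(\Z)$. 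Negative values of $m$ or $n$ are handled identically using the second identity in~\eqref{eqn:lde:nonlinear:induction}, and the mixed-sign case poses no additional complication.

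For the second bound, I would exploit the difference-of-squares factorization
\begin{equation*}
    n^2 [(S^m-I)\theta]^2 - m^2 [(S^n-I)\theta]^2 = \bigl(n(S^m-I)\theta - m(S^n-I)\theta\bigr)\bigl(n(S^m-I)\theta + m(S^n-I)\theta\bigr),
\end{equation*}
understood pointwise. The first factor is bounded by $C(m,n)\|\partial^{(2)}\theta\|_{\ell^\infty}$ by the estimate just established. For the second factor, I would use the elementary telescoping bound $\|(S^k - I)\theta\|_{\ell^\infty} \le |k|\,\|\partial\theta\|_{\ell^\infty}$, which holds for any $k\in\Z$ and implies $\|n(S^m-I)\theta + m(S^n-I)\theta\|_{\ell^\infty} \le 2|mn|\,\|\partial\theta\|_{\ell^\infty}$. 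Multiplying the two pointwise bounds and absorbing all $m,n$-dependent prefactors into a single constant $C(m,n)$ yields the claim.

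No substantial obstacle is expected: once the algebraic decomposition from Lemma~\ref{lemma:analysis_theta:Sm-I} is in place, the proof reduces to observing the cancellation of single-difference terms together with the trivial telescoping estimate.
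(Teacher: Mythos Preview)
Your approach is essentially identical to the paper's: both use Lemma~\ref{lemma:analysis_theta:Sm-I} to isolate the $(S-I)^2$-terms and observe the cancellation of the single-difference contributions, then invoke $a^2-b^2=(a+b)(a-b)$ for the second bound.

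One point deserves more care than ``poses no additional complication'': in the mixed-sign case, say $m>0$ and $n<0$, the second identity in~\eqref{eqn:lde:nonlinear:induction} produces the linear contribution $nS^{n}(S-I)$ rather than $n(S-I)$, so the single-difference terms do \emph{not} cancel outright. One is left with
\[
nm(S-I)-mnS^{n}(S-I)=nm(I-S^{n})(S-I),
\]
which is still a (shifted) second difference and is therefore absorbed by $C\|\partial^{(2)}\theta\|_{\ell^\infty}$. The paper makes this step explicit; your sketch should too.
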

\begin{proof}
To establish the first bound,
we assume without loss that $m >0$ and set out to exploit the identities \eqref{eqn:lde:nonlinear:induction}. The key observation is that all terms featuring an $(S-I)^2$ factor can be absorbed by the stated bound.
If also $n > 0$, then the remaining terms involving $(S-I)$ factors cancel.
If $n  < 0$, then we compute
\begin{equation}
    nm(S-I) - mn S^{n} (S-I)
    = nm (I - S^n)(S-I),
\end{equation}
which can be written as a sum of (shifted) second-differences. 
The second bound now follows directly from the standard factorization
$a^2 - b^2 = (a+b)(a-b)$.
\end{proof}

The bounds above can be used
to reduce the mixed products
appearing in
the definition \eqref{eq:th:def:q:cmp} for $\mathcal{Q}_{\mathrm{cmp}}$ as a sum of pure squares.
Inspired by \eqref{eqn:super_sub_sol:2nd_diff_via_1st_diff} and the identity
$2ab = (a+b)^2 - a^2 -b^2$,
we
introduce the expression
\begin{equation}
\label{eq:th:def:q:i}
    \mathcal{Q}_{\mathrm{cmp};I}(\theta)
    = \frac{1}{2}\alpha_{q;\nu\nu'}^{\diamond\diamond} 
    \Big(
      [\pi_{\nu \oplus \nu'} \theta]^2
      - [\pi_{\nu}\theta ]^2
       - [\pi_{\nu'} \theta]^2
      \Big).
\end{equation}

\begin{lemma}\label{lemma:analysis_theta:Q-QI}
Consider the setting of Proposition~\ref{prop:lde:analysis_theta:differences}.
Then there exists $C > 0$
so that for any $\theta \in \ell^\infty(\Z)$ we have the bound
\begin{equation}
\label{eq:bnd:on:q:vs:q:i}
    \norm{\mathcal{Q}_{\mathrm{cmp}}(\theta) - \mathcal{Q}_{\mathrm{cmp};I}(\theta)}_{\ell^\infty} \le 
    C \norm{\partial \theta}_{\ell^\infty}
    \norm{\partial^{(2)} \theta}_{\ell^\infty} .
\end{equation}
\end{lemma}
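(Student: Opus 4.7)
The plan is to reduce the difference $\mathcal{Q}_{\mathrm{cmp}}-\mathcal{Q}_{\mathrm{cmp};I}$ to a simple algebraic identity and then apply the uniform bounds collected in Lemma~\ref{lem:th:bnd:pi:fncs}.

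First, for fixed $\nu,\nu'\in\{1,2,3,4\}$ I would set
\[
a_l = \pi^\diamond_{l;\nu}\theta, \qquad b_l = \pi^\diamond_{l;\nu'}\theta, \qquad c_l = \pi^{\diamond\diamond}_{l;\nu\nu'}\theta,
\]
and use \eqref{eqn:super_sub_sol:2nd_diff_via_1st_diff} in the form $\pi^\diamond_{l;\nu\oplus\nu'}\theta = a_l+b_l+c_l$. Expanding $(a+b+c)^2$ and subtracting $a^2+b^2$ yields the purely algebraic identity
\[
\tfrac{1}{2}\big([\pi^\diamond_{\nu\oplus\nu'}\theta]^2 - [\pi^\diamond_\nu\theta]^2 - [\pi^\diamond_{\nu'}\theta]^2\big) \;=\; [\pi^\diamond_\nu\theta][\pi^\diamond_{\nu'}\theta] \;+\; \tfrac{1}{2}\,\pi^{\diamond\diamond}_{\nu\nu'}\theta\,\bigl(\pi^{\diamond\diamond}_{\nu\nu'}\theta + 2\pi^\diamond_\nu\theta + 2\pi^\diamond_{\nu'}\theta\bigr).
\]
Multiplying by $\alpha^{\diamond\diamond}_{q;\nu\nu'}$ and summing over $\nu,\nu'$ (using the Einstein convention) gives the pointwise representation
\[
\mathcal{Q}_{\mathrm{cmp};I}(\theta) - \mathcal{Q}_{\mathrm{cmp}}(\theta) \;=\; \tfrac{1}{2}\,\alpha^{\diamond\diamond}_{q;\nu\nu'}\,\pi^{\diamond\diamond}_{\nu\nu'}\theta\,\bigl(\pi^{\diamond\diamond}_{\nu\nu'}\theta + 2\pi^\diamond_\nu\theta + 2\pi^\diamond_{\nu'}\theta\bigr),
\]
so the task is reduced to controlling this single product pointwise.

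Next I would apply Lemma~\ref{lem:th:bnd:pi:fncs}. The bounds \eqref{eqn:analysis_theta:first_diff_nu} and \eqref{eqn:analysis_theta:second_diff_nu} give
\[
\|\pi^\diamond_\nu\theta\|_{\ell^\infty} \le M\|\partial\theta\|_{\ell^\infty}, \qquad \|\pi^{\diamond\diamond}_{\nu\nu'}\theta\|_{\ell^\infty} \le M\|\partial^{(2)}\theta\|_{\ell^\infty}.
\]
For the mixed terms $\pi^{\diamond\diamond}_{\nu\nu'}\theta\cdot\pi^\diamond_\nu\theta$ and $\pi^{\diamond\diamond}_{\nu\nu'}\theta\cdot\pi^\diamond_{\nu'}\theta$ the product of these two estimates directly yields the desired bound $M^2\|\partial\theta\|_{\ell^\infty}\|\partial^{(2)}\theta\|_{\ell^\infty}$. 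For the diagonal term $[\pi^{\diamond\diamond}_{\nu\nu'}\theta]^2$ I would bound one factor by $M\|\partial^{(2)}\theta\|_{\ell^\infty}$ while estimating the other as $\|\pi^{\diamond\diamond}_{\nu\nu'}\theta\|_{\ell^\infty} = \|\pi^\diamond_{\nu'}\pi^\diamond_\nu\theta\|_{\ell^\infty} \le 2\|\pi^\diamond_\nu\theta\|_{\ell^\infty} \le 2M\|\partial\theta\|_{\ell^\infty}$, which again produces the product structure $\|\partial\theta\|_{\ell^\infty}\|\partial^{(2)}\theta\|_{\ell^\infty}$.

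Combining the above with the finite sum over $\nu,\nu'$ and absorbing the constants $|\alpha^{\diamond\diamond}_{q;\nu\nu'}|$, $M$ into a single constant $C$ gives \eqref{eq:bnd:on:q:vs:q:i}. There is no real obstacle here; the only mildly subtle point is recognizing that the quadratic-in-second-difference term $[\pi^{\diamond\diamond}_{\nu\nu'}\theta]^2$ must be split asymmetrically between the $\partial$ and $\partial^{(2)}$ norms in order to land in the advertised product form rather than in $\|\partial^{(2)}\theta\|_{\ell^\infty}^2$.
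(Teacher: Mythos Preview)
Your proof is correct and follows essentially the same route as the paper. Both arguments reduce the difference to a product of a second-difference factor $\pi^{\diamond\diamond}_{\nu\nu'}\theta$ with a first-difference-type factor and then invoke Lemma~\ref{lem:th:bnd:pi:fncs}; the paper writes the companion factor as $\pi^{\diamond\diamond}_{\nu\nu'}\theta - 2\pi^\diamond_{\nu\oplus\nu'}\theta$, which is algebraically the negative of your $\pi^{\diamond\diamond}_{\nu\nu'}\theta + 2\pi^\diamond_\nu\theta + 2\pi^\diamond_{\nu'}\theta$ via \eqref{eqn:super_sub_sol:2nd_diff_via_1st_diff}, and your asymmetric splitting of $[\pi^{\diamond\diamond}_{\nu\nu'}\theta]^2$ is exactly what is needed (and implicitly used) in either formulation.
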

\begin{proof}

We fix the pair $(\nu, \nu')$ and  use the relation~\eqref{eqn:super_sub_sol:2nd_diff_via_1st_diff} to write
\begin{equation*}
    [\pi_{\nu} \theta ][\pi_{\nu'} \theta]
    = \frac{1}{2}\big[
       (\pi_{\nu} \theta + \pi_{\nu'} \theta)^2
       - [\pi_{\nu}\theta ]^2
       - [\pi_{\nu'} \theta]^2
    \big]
    = \frac{1}{2}
    \big[
      (\pi_{\nu \oplus \nu'} \theta - \pi_{\nu \nu'} \theta)^2
      - [\pi_{\nu}\theta ]^2
       - [\pi_{\nu'} \theta]^2
    \big] .
\end{equation*}
In particular, we see that
\begin{align*}
    \mathcal{Q}_{\mathrm{cmp}}(\theta)
    -\mathcal{Q}_{\mathrm{cmp};I}(\theta)
    &=
      \frac{1}{2}\alpha^{\diamond \diamond}_{q;\nu \nu'}
       [\pi_{\nu \nu'} \theta] \big(\pi_{\nu \nu'} \theta - 2 \pi_{\nu \oplus \nu'} \theta 
      \big),
\end{align*}
from which the bound is immediate by Lemma~\ref{lem:th:bnd:pi:fncs}.
\end{proof}

Turning to $\Theta_{\mathrm{ch}}$, we introduce the quadratic expression
\begin{equation*}
[\mathcal{Q}_{\mathrm{ch}}(\theta)]_l
= \frac{1}{2} \sum_{k=-N}^N a_k (\theta_{l+k} - \theta_l)^2
\end{equation*}
and note that $d \mathcal{Q}_{\mathrm{ch}}(\theta)$
is the second-order term in the Taylor expansion of 
\eqref{eq:th:def:theta:ch}.
Inserting the definitions~\eqref{eqn:main:a_k} for the coefficients $(a_k)$, we see that
\begin{equation}
\label{eq:th:def:q:ii}
    \mathcal{Q}_{\mathrm{ch}}(\theta)
    =
    \frac{1}{2}
    \Big(
     \alpha^{\diamond \diamond}_{p; \nu \nu'}
     \big( [\pi_{\nu \oplus \nu'} \theta]^2
     - [\pi_{\nu} \theta]^2
     - [\pi_{\nu'} \theta]^2
     \big)
     + \alpha^\diamond_{p;\nu} [\pi_{\nu} \theta]^2
    \Big),
\end{equation}
which closely resembles the structure of
\eqref{eq:th:def:q:i}. Indeed,
in both cases the slowly-decaying terms can be isolated in a transparant fashion, using
the coefficients
\begin{equation}
    \beta_{\mathrm{cmp}} = 
     \sigma_{\nu} \sigma_{\nu'} \alpha^{\diamond \diamond}_{q ; \nu \nu'},
    \qquad \qquad
    \beta_{\mathrm{ch}}
    =  \sigma_{\nu} \sigma_{\nu'} \alpha^{\diamond \diamond}_{p ; \nu \nu'}
     +\frac{1}{2} \sigma_{\nu}^2 \alpha^{\diamond}_{p;\nu}.
\end{equation}

\begin{lemma}
Consider the setting of Proposition~\ref{prop:lde:analysis_theta:differences}.
Then there exists $C > 0$
so that for any $\theta \in \ell^\infty(\Z)$ we have the bound
\begin{align}
    \norm{ \mathcal{Q}_{\mathrm{cmp};I}(\theta)
     - \beta_{\mathrm{cmp}}
     (\partial \theta)^2
     }_{\ell^\infty} 
     + 
     \norm{ \mathcal{Q}_{\mathrm{ch}}(\theta)
     - \beta_{\mathrm{ch}}
     (\partial \theta)^2
     }_{\ell^\infty} 
      &\le  C \norm{ \partial \theta}_{\ell^\infty}
     \norm{ \partial^{(2)} \theta}_{\ell^\infty} . \label{eqn:analysis_theta:Q_I-beta_I}
\end{align}
\end{lemma}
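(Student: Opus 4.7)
The plan is to show that for any integer shift $m \in \Z$, one has the pointwise estimate
\begin{equation*}
  \bigl\|[(S^m-I)\theta]^2 - m^2(\partial\theta)^2\bigr\|_{\ell^\infty} \le C_m \, \|\partial\theta\|_{\ell^\infty}\,\|\partial^{(2)}\theta\|_{\ell^\infty},
\end{equation*}
which is exactly the second bound in \eqref{eq:th:bnd:n:m:theta} specialized to $n=1$ (so that $S^n-I = \partial$). Once this is in hand, the proposition reduces to a purely algebraic verification that the coefficients match $\beta_{\mathrm{cmp}}$ and $\beta_{\mathrm{ch}}$.

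Applying this estimate with $m = \sigma_{\nu}$ and $m = \sigma_{\nu} + \sigma_{\nu'}$ for each $\nu,\nu' \in \{1,2,3,4\}$ yields
\begin{equation*}
  [\pi^{\diamond}_{\nu}\theta]^2 = \sigma_{\nu}^2 (\partial\theta)^2 + \mathcal{E}_{\nu},
  \qquad
  [\pi^{\diamond}_{\nu\oplus\nu'}\theta]^2 = (\sigma_{\nu}+\sigma_{\nu'})^2 (\partial\theta)^2 + \mathcal{E}_{\nu\nu'},
\end{equation*}
where all the error terms $\mathcal{E}_\cdot$ satisfy $\|\mathcal{E}_\cdot\|_{\ell^\infty} \le C\|\partial\theta\|_{\ell^\infty}\|\partial^{(2)}\theta\|_{\ell^\infty}$. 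Substituting into the definition \eqref{eq:th:def:q:i} and using the elementary identity $(\sigma_\nu+\sigma_{\nu'})^2 - \sigma_\nu^2 - \sigma_{\nu'}^2 = 2\sigma_\nu\sigma_{\nu'}$, the Einstein-summed expression
\begin{equation*}
  \mathcal{Q}_{\mathrm{cmp};I}(\theta) = \tfrac{1}{2}\alpha^{\diamond\diamond}_{q;\nu\nu'}\bigl([\pi^{\diamond}_{\nu\oplus\nu'}\theta]^2 - [\pi^{\diamond}_{\nu}\theta]^2 - [\pi^{\diamond}_{\nu'}\theta]^2\bigr)
\end{equation*}
collapses (modulo an $O(\|\partial\theta\|\|\partial^{(2)}\theta\|)$ remainder) to $\sigma_{\nu}\sigma_{\nu'}\alpha^{\diamond\diamond}_{q;\nu\nu'}(\partial\theta)^2 = \beta_{\mathrm{cmp}}(\partial\theta)^2$, which gives the first half of \eqref{eqn:analysis_theta:Q_I-beta_I}.

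The second half follows identically: substituting the same expansions into \eqref{eq:th:def:q:ii} produces the leading contribution
\begin{equation*}
  \tfrac{1}{2}\alpha^{\diamond\diamond}_{p;\nu\nu'}\cdot 2\sigma_{\nu}\sigma_{\nu'}(\partial\theta)^2 + \tfrac{1}{2}\alpha^{\diamond}_{p;\nu}\sigma_{\nu}^2 (\partial\theta)^2,
\end{equation*}
which is precisely $\beta_{\mathrm{ch}}(\partial\theta)^2$, with an error again controlled by $\|\partial\theta\|_{\ell^\infty}\|\partial^{(2)}\theta\|_{\ell^\infty}$. There is no real obstacle here since the heavy lifting has already been done when proving \eqref{eq:th:bnd:n:m:theta} via Lemma~\ref{lemma:analysis_theta:Sm-I}; the only point requiring mild care is checking that the constants assembled from finitely many $(\nu,\nu')$ pairs combine into a single universal $C$, which is immediate since the index set is finite.
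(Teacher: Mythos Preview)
Your proposal is correct and follows essentially the same approach as the paper's own proof: both invoke the second bound in \eqref{eq:th:bnd:n:m:theta} with $n=1$ to replace each $[\pi^{\diamond}_{\nu}\theta]^2$ and $[\pi^{\diamond}_{\nu\oplus\nu'}\theta]^2$ by the corresponding multiple of $(\partial\theta)^2$ up to an $O(\|\partial\theta\|_{\ell^\infty}\|\partial^{(2)}\theta\|_{\ell^\infty})$ error, and then substitute into the definitions \eqref{eq:th:def:q:i} and \eqref{eq:th:def:q:ii}. Your write-up is simply more explicit about the algebraic bookkeeping that produces $\beta_{\mathrm{cmp}}$ and $\beta_{\mathrm{ch}}$.
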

\begin{proof}
In view of \eqref{eq:th:bnd:n:m:theta}
we have the bound
\begin{equation}
\norm{[\pi_{\nu \oplus \nu'} \theta]^2     - (\sigma_{\nu} + \sigma_{\nu'})^2 [\partial \theta]^2}_{\ell^\infty}
+ 
\norm{[\pi_{\nu} \theta]^2     - \sigma_{\nu}^2 [\partial \theta]^2}_{\ell^\infty}
\le C \norm{ \partial \theta}_{\ell^\infty}
     \norm{ \partial^{(2)} \theta}_{\ell^\infty},
\end{equation}
which can be directly
applied to the definitions
\eqref{eq:th:def:q:i}
and \eqref{eq:th:def:q:ii} to obtain the desired estimate.
\end{proof}

 Lemma~\ref{lemma:main:expressions_c_lambda} shows that the ratio 
$$\frac{\beta_{\mathrm{cmp}}}{\beta_{\mathrm{ch}}} = \dfrac{2 \sum_{\nu,\nu'=1}^4\sigma_\nu \sigma_{\nu'} \alpha_{q;\nu \nu'}^{\diamond \diamond}}{\sum_{\nu=1}^4\sigma_\nu^2 \alpha^\diamond_{p;\nu} + 2 \sum_{\nu,\nu'=1}^4 \sigma_\nu\sigma_{\nu'}} $$
is exactly the value of the coefficient $d$ defined in~\eqref{eqn:main:d}. In particular,
combining 
\eqref{eq:bnd:on:q:vs:q:i}
and \eqref{eqn:analysis_theta:Q_I-beta_I}
we see that
\begin{equation}
\label{eq:th:diff:q:cmp:ch}
    \norm{ \mathcal{Q}_{\mathrm{cmp}}(\theta) - d\mathcal{Q}_{\mathrm{ch}}(\theta) }_{\ell^\infty}
    \le 3 C \norm{ \partial \theta}_{\ell^\infty}
     \norm{ \partial^{(2)} \theta}_{\ell^\infty} ,
\end{equation}
which allows us to establish the following crucial bound.

\begin{cor}
Consider the setting of Proposition~\ref{prop:lde:analysis_theta:differences}.
Then there exists $C > 0$
so that for any $\theta \in \ell^\infty(\Z)$ we have the bound
\begin{equation}
\label{eq:th:bnd:ch:vs:cmp}
\norm{    \Theta_{\mathrm{ch}}(\theta)
    - \Theta_{\mathrm{cmp}}(\theta) }_{\ell^\infty} 
\le C e^{2 N |d|  \norm{\partial \theta}_{\ell^\infty} }
\norm{\partial \theta}_{\ell^\infty}^3
+  C \norm{ \partial \theta}_{\ell^\infty}
     \norm{ \partial^{(2)} \theta}_{\ell^\infty}.
\end{equation}
\end{cor}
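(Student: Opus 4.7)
The plan is to Taylor expand the exponential appearing in $\Theta_{\mathrm{ch}}$ to second order and then invoke the already-established difference bound \eqref{eq:th:diff:q:cmp:ch} on the quadratic parts. Writing $y_{l,k} = d(\theta_{l+k} - \theta_l)$ and using the third-order Taylor remainder gives
\begin{equation*}
  \frac{1}{d} \big(e^{y_{l,k}} - 1\big) = (\theta_{l+k} - \theta_l) + \frac{d}{2}(\theta_{l+k} - \theta_l)^2 + E_{l,k},
\end{equation*}
where $|E_{l,k}| \le \tfrac{1}{6} |d|^2 |\theta_{l+k} - \theta_l|^3 e^{|y_{l,k}|}$. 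Since $k$ ranges in $\{-N,\dots,N\}$, the elementary estimate $|\theta_{l+k} - \theta_l| \le |k| \norm{\partial \theta}_{\ell^\infty} \le N \norm{\partial \theta}_{\ell^\infty}$ both bounds the exponent by $N|d|\norm{\partial \theta}_{\ell^\infty}$ and the cubic factor by $N^3 \norm{\partial\theta}_{\ell^\infty}^3$.

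Multiplying by $a_k$ and summing over $k$, the leading terms reassemble into $\mathcal{H}_{\mathrm{lin}}[\theta] + d\, \mathcal{Q}_{\mathrm{ch}}(\theta)$ by the definitions \eqref{eq:th:def:h:lin} and \eqref{eq:th:def:q:ii}, leaving
\begin{equation*}
  \Theta_{\mathrm{ch}}(\theta) = \mathcal{H}_{\mathrm{lin}}[\theta] + d\, \mathcal{Q}_{\mathrm{ch}}(\theta) + E(\theta) + c_*,
\end{equation*}
with a remainder that satisfies $\norm{E(\theta)}_{\ell^\infty} \le C e^{2N|d|\norm{\partial\theta}_{\ell^\infty}} \norm{\partial\theta}_{\ell^\infty}^3$ for a constant $C$ depending only on $N$, $d$ and $(a_k)$; the slightly generous factor $2N$ in the exponent comfortably absorbs the summation over $k$.

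Subtracting the definition \eqref{eqn:main:main_eqn_for_theta_approx} of $\Theta_{\mathrm{cmp}}$ cancels both $\mathcal{H}_{\mathrm{lin}}[\theta]$ and $c_*$, leaving
\begin{equation*}
  \Theta_{\mathrm{ch}}(\theta) - \Theta_{\mathrm{cmp}}(\theta) = \big( d\, \mathcal{Q}_{\mathrm{ch}}(\theta) - \mathcal{Q}_{\mathrm{cmp}}(\theta) \big) + E(\theta).
\end{equation*}
The quadratic discrepancy is controlled by \eqref{eq:th:diff:q:cmp:ch}, which bounds it by $3C\norm{\partial\theta}_{\ell^\infty}\norm{\partial^{(2)}\theta}_{\ell^\infty}$, and combining with the cubic estimate on $E(\theta)$ and adjusting the constant yields the desired inequality. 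The $d=0$ branch of \eqref{eq:th:def:theta:ch} is handled by observing that in that case $d\,\mathcal{Q}_{\mathrm{ch}} = 0$, the Taylor remainder is absent entirely, and the bound reduces directly to \eqref{eq:th:diff:q:cmp:ch} (note that $d=0$ forces $\beta_{\mathrm{cmp}} = 0$, so the estimate is compatible with $\mathcal{Q}_{\mathrm{cmp}}(\theta)$ being $O(\norm{\partial\theta}\norm{\partial^{(2)}\theta})$).

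There is no substantive obstacle in this argument: the delicate matching between the quadratic nonlinearities, which was the true source of difficulty and motivated the specific choice \eqref{eqn:main:d} of $d$, has already been accomplished in \eqref{eq:th:diff:q:cmp:ch}. All that remains is the Taylor-expansion bookkeeping outlined above.
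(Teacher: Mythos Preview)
Your proof is correct and follows essentially the same route as the paper: Taylor-expand $\Theta_{\mathrm{ch}}$ to second order to isolate $\mathcal{H}_{\mathrm{lin}}[\theta]+d\,\mathcal{Q}_{\mathrm{ch}}(\theta)+c_*$ with a cubic remainder, then subtract $\Theta_{\mathrm{cmp}}$ and invoke \eqref{eq:th:diff:q:cmp:ch} for the quadratic discrepancy. Your treatment of the $d=0$ case is in fact slightly more careful than the paper's, which asserts the difference vanishes identically; as you correctly note, one instead gets $-\mathcal{Q}_{\mathrm{cmp}}(\theta)$, which is still controlled by \eqref{eq:th:diff:q:cmp:ch} since $d=0$ forces $\beta_{\mathrm{cmp}}=0$.
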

\begin{proof}
For $d = 0$ we simply have
\begin{equation*}
    \Theta_{\mathrm{ch}}(\theta) - \Theta_{\mathrm{cmp}}(\theta) = 0.
\end{equation*}
For $d\neq 0$, we note that a Taylor expansion up to third order implies
\begin{equation}\label{eqn:analysis_theta:Theta_ch_Taylor}
    \norm{\Theta_{\mathrm{ch}}(\theta) - \mathcal{H}_{\mathrm{lin}}[\theta]
    - d\mathcal{Q}_{\mathrm{ch}}(\theta) - c_*}_{\ell^\infty}
    \le 
    C e^{ 2 N|d|  \norm{\partial \theta}_{\ell^\infty} }
\norm{\partial \theta}_{\ell^\infty}^3.
\end{equation}
In view of~\eqref{eq:th:diff:q:cmp:ch}, the desired bound
now follows directly 
from the identity
\begin{equation}
\begin{array}{lcl}
    \Theta_{\mathrm{cmp}}(\theta) - \mathcal{H}_{\mathrm{lin}}[\theta]
    - d\mathcal{Q}_{\mathrm{ch}}(\theta)-  c_* &= &  \mathcal{Q}_{\mathrm{cmp}}(\theta) -   d\mathcal{Q}_{\mathrm{ch}}(\theta) .
\end{array}
\end{equation}
\end{proof}

We now turn to our final nonlinearity 
$\widetilde{\Theta}_{\mathrm{dmc}}$
and show that it can be expanded as
\begin{equation}
\label{eq:th:exp:theta:dmc}
\begin{aligned}
\widetilde{\Theta}_{\mathrm{dmc};I}(\theta) &= 
 \sum_{0< |k|\leq N} \left(\frac{2\kappa_H B_k}{k^2} - C_k\dfrac{[\partial_\varphi c_\varphi]_{\varphi = 0}}{k}\right)(\theta_{l+k} - \theta_l) \\
           &\indent 
            +\sum_{0< |k|\leq N}  \dfrac{\left(A_k c_* + C_k [\partial^2_\varphi c_\varphi]_{\varphi = 0} \right)}{2k^2}(\theta_{l+k} - \theta_l)^2 + c_*   ,
\end{aligned}
\end{equation}
up to third order in $\theta$.
This 
is more than sufficient
to establish
Proposition~\ref{prop:main:mcf},
but also allows the relation between
the coefficients to be fully explored
by the interested reader.
For example, in the setting
where $[\partial_\varphi c_{\varphi}]_{\varphi =0} \neq 0$,
it is also possible to prescribe $(B_k)$
and read-off the accompanying
values for $(A_k, C_k)$.
In any case, the conclusions
of Proposition~\ref{prop:main:mcf}
are valid for any sequence
$(C_k)$ that satisfies
\eqref{eq:th:nrm:C}.

\begin{lemma}\label{lemma:analysis_theta:expansion_theta_tilde}
For any sequence $(A_k, B_k,C_k)_{0 < |k| \le N}$,
there exists a constant $K > 0$ so that we have the bound
\begin{equation}\label{eqn:main:Theta_dmc_expanded}
    \begin{aligned}
           \norm{ \widetilde{\Theta}_{\mathrm{dmc}}(\theta)
           - \widetilde{\Theta}_{\mathrm{dmc};I}(\theta)}_{\ell^\infty}
           & \le K \norm{\partial \theta}^3_{\ell^\infty}.
    \end{aligned}
\end{equation}
\end{lemma}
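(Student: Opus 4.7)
The plan is to perform a direct Taylor expansion of both terms defining $\widetilde{\Theta}_{\mathrm{dmc}}(\theta)$ up to quadratic order in the first differences $\delta_{l;k} := \theta_{l+k} - \theta_l$, carefully tracking the remainder. Observe first that $|\delta_{l;k}| \le |k| \, \|\partial \theta\|_{\ell^\infty} \le N \|\partial\theta\|_{\ell^\infty}$, so any monomial of total degree $n$ in the $\delta_{l;k}$ is $O(\|\partial\theta\|_{\ell^\infty}^n)$. We may assume $\|\partial\theta\|_{\ell^\infty}$ is small (otherwise \eqref{eqn:main:Theta_dmc_expanded} is trivial by boundedness of the ingredients on compact sets).

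First I would expand the mean-curvature term. Since $[\beta_\theta^2]_l = 1 + \sum_{0<|k|\le N} \frac{A_k}{k^2}\delta_{l;k}^2$ is of the form $1 + O(\|\partial\theta\|^2)$, the geometric series gives $[\beta_\theta^{-2}]_l = 1 + O(\|\partial\theta\|_{\ell^\infty}^2)$. Multiplying by $[\Delta_\theta]_l = \sum_k \frac{2B_k}{k^2}\delta_{l;k} = O(\|\partial\theta\|_{\ell^\infty})$ yields
\begin{equation*}
  \kappa_H \frac{[\Delta_\theta]_l}{[\beta_\theta^2]_l}
  = \sum_{0<|k|\le N} \frac{2\kappa_H B_k}{k^2}\delta_{l;k}
     + O(\|\partial\theta\|_{\ell^\infty}^3),
\end{equation*}
which exactly contributes the $\frac{2\kappa_H B_k}{k^2}$ piece of the linear coefficients.

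Next I would expand the drift term $\beta_\theta \widetilde{c}_\theta$. Writing $\beta_\theta = \sqrt{1+x}$ with $x = O(\|\partial\theta\|^2)$ gives $[\beta_\theta]_l = 1 + \tfrac{1}{2}\sum_k \frac{A_k}{k^2}\delta_{l;k}^2 + O(\|\partial\theta\|_{\ell^\infty}^4)$. For the angle, the expansion $\arctan(-\delta_{l;k}/k) = -\delta_{l;k}/k + O(\delta_{l;k}^3)$ combined with the $C^2$-smoothness of $\varphi \mapsto c_\varphi$ from Lemma~\ref{lem:mr:dir:dep:waves} (and Taylor's theorem with remainder) yields
\begin{equation*}
  c_{\varphi_{l;k}(\theta)}
  = c_* - \frac{[\partial_\varphi c_\varphi]_{\varphi=0}}{k}\delta_{l;k}
       + \frac{[\partial_\varphi^2 c_\varphi]_{\varphi=0}}{2k^2}\delta_{l;k}^2
       + O(\|\partial\theta\|_{\ell^\infty}^3).
\end{equation*}
Multiplying by $C_k$, summing over $0<|k|\le N$, and using the normalization $\sum_k C_k = 1$ produces
\begin{equation*}
  [\widetilde{c}_\theta]_l = c_* - \sum_k C_k \frac{[\partial_\varphi c_\varphi]_{\varphi=0}}{k}\delta_{l;k}
      + \sum_k C_k \frac{[\partial_\varphi^2 c_\varphi]_{\varphi=0}}{2k^2}\delta_{l;k}^2
      + O(\|\partial\theta\|_{\ell^\infty}^3).
\end{equation*}
Multiplying $\beta_\theta$ with $\widetilde{c}_\theta$, the only quadratic contribution from the $\beta_\theta$-correction that survives is $c_* \cdot \tfrac{1}{2}\sum_k \frac{A_k}{k^2}\delta_{l;k}^2$, since the quadratic $\beta_\theta$-term multiplied by the linear $\widetilde{c}_\theta$-term is already $O(\|\partial\theta\|_{\ell^\infty}^3)$.

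Finally I would add the two expansions and collect coefficients of $\delta_{l;k}$ and $\delta_{l;k}^2$. The linear coefficients combine to $\frac{2\kappa_H B_k}{k^2} - C_k\frac{[\partial_\varphi c_\varphi]_{\varphi=0}}{k}$, while the quadratic coefficients combine to $\frac{A_k c_* + C_k[\partial_\varphi^2 c_\varphi]_{\varphi=0}}{2k^2}$, which matches $\widetilde{\Theta}_{\mathrm{dmc};I}(\theta)$ exactly. The accumulated error is $O(\|\partial\theta\|_{\ell^\infty}^3)$ with a constant $K$ depending only on $N$, $\kappa_H$, $(A_k,B_k,C_k)$, $c_*$, and bounds on the first three derivatives of $\varphi \mapsto c_\varphi$ near $\varphi=0$. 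There is no real obstacle here: the argument is purely a careful bookkeeping of Taylor remainders, with the only mild subtlety being the use of $C^2$-smoothness of $c_\varphi$ (guaranteed by Lemma~\ref{lem:mr:dir:dep:waves}) to justify the quadratic expansion uniformly in $l$.
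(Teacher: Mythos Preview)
Your proposal is correct and follows essentially the same route as the paper: expand $\beta_\theta$, $\Delta_\theta/\beta_\theta^2$, and $\widetilde{c}_\theta$ to second order in the differences $\theta_{l+k}-\theta_l$ (using $\sqrt{1+x}$, the geometric series, $\arctan$, and Taylor for $c_\varphi$), multiply, and collect. The paper's proof is slightly terser but identical in spirit; your explicit tracking of which cross-terms are already cubic is a welcome clarification. One small remark: both you and the paper write the Taylor remainder for $c_\varphi$ as $O(\varphi^3)$, which strictly requires more than the $C^2$-smoothness stated in Lemma~\ref{lem:mr:dir:dep:waves}; this is harmless in practice (the smoothness of $c_\varphi$ is inherited from that of $g$, which is $C^3$ by (Hg)), but your closing sentence about ``the first three derivatives of $\varphi\mapsto c_\varphi$'' is the honest dependency and slightly at odds with your earlier appeal to $C^2$ alone.
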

\begin{proof}
Recalling the definitions
\eqref{eq:int:lap:beta:gamma},
we first expand the terms $\beta_\theta $ and $\Delta_{\theta}/\beta_\theta^2 $
as
\begin{equation}\label{eqn:analysis_theta:tilde_theta_exp}
\begin{aligned}
\left[\beta_\theta\right]_l &= 1+ \sum_{0<|k|\leq N} \dfrac{A_k}{2k^2}(\theta_{l+k} - \theta_l)^2 + O(\norm{\partial \theta}^3_{\ell^\infty}), \\  \frac{[{\Delta}_\theta]_l}{[\beta_\theta^2]_l}  &=  \sum_{0 < |k| \le N} \frac{2 B_k}{ k^2}  (\theta_{l+k} - \theta_l) + O(\norm{\partial \theta}^3_{\ell^\infty}). 
    \end{aligned}
\end{equation}
To find a corresponding representation for  $\widetilde{c}_\theta$ we 
first expand each individual term $c_{\varphi_{l;k} (\theta)}$ as
\begin{equation}\label{eqn:main:expansion_for_ck}
   c_{\varphi_{l;k} (\theta)} = c_* + [\partial_\varphi c_\varphi]_{\varphi = 0} \varphi_{l;k} (\theta) + \dfrac{1}{2}[\partial^2_\varphi c_\varphi]_{\varphi = 0} \big(\varphi_{l;k} (\theta)\big)^2 + O(\varphi_{l;k}(\theta)^3).
\end{equation}
Referring to Figure~\ref{fig:intro:zoom},
we use the explicit formula 
\begin{equation*}
    \tan{\varphi_{l;k}(\theta)} = -\dfrac{\theta_{l+k} - \theta_l }{k}
\end{equation*}
together with  the expansion $\tan{\varphi_{l;k}(\theta)} = \varphi_{l;k}(\theta) + O(\varphi_{l;k}(\theta)^3) $ to obtain
\begin{equation*}
   \left[ \widetilde{c}_\theta\right]_l =  c_* - [\partial_\varphi c_\varphi]_{\varphi = 0} \sum_{0<|k|\leq N} \dfrac{\theta_{l+k} - \theta_l }{k}+ \dfrac{1}{2}[\partial^2_\varphi c_\varphi]_{\varphi = 0} \dfrac{(\theta_{l+k} - \theta_l)^2 }{k^2} +O(\norm{\partial \theta}^3_{\ell^\infty}),
\end{equation*}
which yields the desired statement.
%
\end{proof}

\begin{proof}[Proof of Proposition~\ref{prop:main:mcf}]
Recalling that our
assumptions imply that $c_*\neq 0$ and $\kappa_H  \neq 0$,
we may write
\begin{equation}\label{eqn:main:mcf:equations_coefficients_1}
    \begin{array}{clc}
          A_k := & \dfrac{d a_k k^2}{c_*} -  \dfrac{C_k [\partial^2_\varphi c_\varphi]_{\varphi = 0}}{ c_*}, 
       \\[0.4cm]
        B_k:=& \dfrac{a_k k^2}{2\kappa_H} + \dfrac{k C_k [\partial_\varphi c_\varphi]_{\varphi = 0} }{2 \kappa_H}
    \end{array}
\end{equation}
for  $0<|k|\leq N$
and use \eqref{eq:th:exp:theta:dmc}
to conclude
\begin{align*}
    \widetilde{\Theta}_\mathrm{dmc;I}(\theta) &= \sum_{k=-N}^N a_k(\theta_{l+k} - \theta_l) + \sum_{k=-N}^N \dfrac{d}{2} a_k(\theta_{l+k} - \theta_l)^2  + c_* 
    = 
    \mathcal{H}_{\mathrm{lin}}[\theta]
    + d \mathcal{Q}_{\mathrm{ch}}(\theta) + c_*.
    \end{align*}
In particular, the desired bound
follows 
from~\eqref{eqn:analysis_theta:Theta_ch_Taylor} and~\eqref{eqn:main:Theta_dmc_expanded}.

It hence remains to check 
that our coefficients
\eqref{eqn:main:mcf:equations_coefficients_1} satisfy the restrictions
~\eqref{eq:int:nrm:A:C},
which we will achieve
under the general conditions
\eqref{eq:th:nrm:C}.
Employing item \textit{(\ref{item:analyis_theta:partial_lambdax2})} of Lemma~\ref{lemma:main:expressions_c_lambda} we compute
\begin{align*}
     \sum_{0<|k|\leq N} A_k & =  \dfrac{-d[\partial_{\omega}^2 \lambda_\omega]_{\omega = 0} - [\partial^2_\varphi c_\varphi]_{\varphi = 0}}{c_*}.
\end{align*}
This sum is equal to one if and only if the parameter $d$ is chosen as
in~\eqref{eqn:main:d}, which is by straightforward computation equivalent to the definition~\eqref{eq:int:cond:kappa:d}. In a similar fashion, we may
use items~\textit{(\ref{item:analysis_theta:partial_c})}
and ~\textit{(\ref{item:analyis_theta:partial_lambdax2})}
of Lemma~\ref{lemma:main:expressions_c_lambda} to compute
\begin{align*}
     \sum_{0<|k|\leq N} B_k & =  
     -\dfrac{[\partial_{\omega}^2 \lambda_\omega]_{\omega = 0}}{2 \kappa_H}, 
     \qquad 
     \qquad \sum_{0<|k|\leq N} B_k/k = \left(\sum_{0<|k|\leq N} \dfrac{a_k k}{2 \kappa_H} \right)+  \dfrac{ [\partial_\varphi c_\varphi]_{\varphi = 0} }{ 2 \kappa_H} = 0.
\end{align*}
Setting the first sum equal to one leads to the choice~\eqref{eq:int:cond:kappa:d}
for $\kappa_H$.
\end{proof}

\subsection{Cole-Hopf transformation}

We have now collected all the ingredients we need to 
exploit the Cole-Hopf transformation and establish Proposition~\ref{prop:lde:analysis_theta:differences}. The main challenge is to pass difference operators through the relation \eqref{eq:th:rec:theta:from:h}. Proposition~\ref{prop:analysis_theta:theta_approx_quadratic} subsequently follows in a relatively straightforward fashion from the bound
\eqref{eq:th:bnd:ch:vs:cmp}.

\begin{proof}[Proof of Proposition~\ref{prop:lde:analysis_theta:differences}]
Since the function $\tilde{\theta}_l(t) = \theta_l(t) - \theta_0(0)$ also satisfies the first line of~\eqref{eqn:main:main_eqn_for_theta} and this spatially homogeneous shift
is not seen by the difference operators in \eqref{eqn:analysis_theta:differences}-\eqref{eqn:analysis_theta:differences_mn2},
we may assume without loss that $\theta_0(0) = 0$ and consequently $[\theta(0)]_{\mathrm{dev}} = \norm{\theta(0)}_{\ell^\infty} \le R$.
For $d = 0$, we can immediately apply Theorem~\ref{thm:lde:n-th-diff-theta} to function $h(t): = \theta(t) - c_* t $.

For $d\neq 0$, the initial condition
$h(0) = e^{d\theta(0)}$ for
the transformed
system \eqref{eq:th:dot:h:ch}
satisfies the bounds
\begin{equation*}
e^{-|d| R} \leq     \inf_{l\in \Z} h_l(0) \leq \sup_{l\in \Z} h_l(0) \leq e^{|d|R} ,
\qquad \qquad 
\norm{\partial h(0)}_{\infty}
\le e^{|d|R} \delta .
\end{equation*}

We can no longer use the comparison principle to extend these bounds to all $t > 0$ as in \cite{jukic2019dynamics}. Instead, we 
employ Proposition~\ref{prop:lde:comparison_principle}
with the choice
$\varepsilon =   {e^{-2|d|R}}/2$
to find $T = T(R)$ so that 
\begin{align*}
\inf_{l\in \Z} h_l(t) \geq  e^{-|d|R} - \varepsilon e^{|d|R} =   \dfrac{1}{2}e^{-|d|R}
\end{align*}
holds for all $t\geq T$.
On the other hand, using
the constant $C(T, \varepsilon)$ from 
Proposition~\ref{prop:lde:comparison_principle} to write
\begin{equation}
    \delta = \frac{e^{-2 |d|R}}{2C(T,\varepsilon)}
    = \epsilon/C(T,\epsilon),
\end{equation}
we see that \eqref{eqn:lde:cp:flat_diff}
implies that also
\begin{equation}
\inf_{l\in \Z} h_l(t) \geq
e^{-|d|R }
 -  C(T,\varepsilon) e^{|d|R} \delta
 = 
\dfrac{1}{2} e^{-|d|R} 
\end{equation}
for all $0 \le t \leq T$. This provides a uniform strictly positive lower bound for $h$ that is essential for our estimates below.

Turning to \eqref{eqn:analysis_theta:differences}, we pick $l \in \Z$ and use the 
intermediate value theorem
to write
\begin{equation}\label{eqn:lde:bounds:cole_hopf}
    [\partial \theta(t)]_l = \dfrac{1}{d}\dfrac{ [\partial h(t)]_l}{h^a_{l}(t)}, \qquad [\partial^{(2)}\theta(t)]_l = \dfrac{1}{d}\left(\dfrac{[\partial^{(2)} h(t)]_l}{h_{l}(t)}   -\dfrac{[(S-I) h(t)]_{l}^2}{h^b_{l}(t)^2}-\dfrac{[(S^2-I)h(t)]_{l}^2}{2 h^c_{l}(t)^2}\right),
\end{equation}
together with
\begin{align*}
    [\partial^{(3)} \theta(t)]_l &= \dfrac{1}{d} \left(\dfrac{[\partial^{(3)} h(t)]_l }{h_l(t)} + \dfrac{
    [(S^3 - I)h(t)]_l^2 - 3 [(S^2 - I)h(t)]_l^2 + 3[(S - I)h(t)]_l^2 }{2h_l(t)^2}\right) \\
     &+ \quad \dfrac{1}{d} \left(\dfrac{[(S^3 - I)h(t)]_l^3 }{6 h_l^d(t)^3} + \dfrac{[(S^2 - I)h(t)]_l^3 }{6 h_l^e(t)^3} + \dfrac{[(S - I)h(t)]_l^3 }{6 h_l^f(t)^3}\right)
\end{align*}
where we have the inclusions
\begin{equation}
\frac{1}{2} e^{-|d|R}
\le 
\min_{n=0,1,2,3}\{ h_{l+n}(t) \}
\le h_l^a(t),
h_l^b(t),
h_l^c(t),
h_l^d(t),
h_l^e(t),
h_l^f(t)
\le 
\max_{n=0,1,2,3}\{ h_{l+n}(t) \}.
\end{equation}
Applying Theorem~\ref{thm:lde:n-th-diff-theta},
the desired bounds for $k=1,2$ follow directly,
while for $k=3$ it suffices to show that the term
\begin{equation}\label{eqn:function_theta:3rd_diff_aux}
    \tilde{h}(t) =  [(S^3 - I)h(t)]^2 - 3 [(S^2 - I)h(t)]^2 + 3[(S - I)h(t)]^2
\end{equation}
satisfies $||\tilde{h}(t)||_{\ell^\infty} \le M t^{-3/2}$. In view of the decomposition
\begin{align*}
     \tilde{h}(t) &=  [(S^3 - I)h(t)]^2 - 9 [(S-I)h(t)]^2 - 3\big([(S^2 - I)h(t)]^2 - 4[(S-I)h(t)]^2\big) 
\end{align*}
this follows from \eqref{eq:th:bnd:n:m:theta}
and Theorem~\ref{thm:lde:n-th-diff-theta}.
The remaining 
estimates~\eqref{eqn:analysis_theta:differences_mn} and \eqref{eqn:analysis_theta:differences_mn2} now follow directly from 
\eqref{eq:th:bnd:n:m:theta}.
\end{proof}

\begin{proof}[Proof of Proposition~\ref{prop:analysis_theta:theta_approx_quadratic}]
The first bound~\eqref{eqn:super_and_sub_sols_Rtheta_est}
follows directly by combining
\eqref{eq:th:bnd:ch:vs:cmp}
with
Proposition~\ref{prop:lde:analysis_theta:differences}.
To establish~\eqref{eqn:super_and_sub_sols_Rtheta_est:deriv:i} we fix $\nu\in \{1, 2, 3, 4\}$ and exploit the definition~\eqref{eq:th:def:r:theta} to compute
\begin{equation*}
\begin{array}{lcl}
   \pi^\diamond_{\nu}\dot{\theta}(t) - \sum_{\nu'=1}^4 \alpha^\diamond_{p;\nu'} \pi^{\diamond \diamond}_{\nu'\nu}\theta(t)
    &= &
      \pi^\diamond_{\nu} \mathcal{R}_\theta(t)
      + 
      \sum_{\nu',\nu''=1}^4
    \alpha_{p;\nu'\nu''}^{\diamond \diamond} \pi^\diamond_{\nu} [\pi_{ \nu'\nu''}^{\diamond \diamond} \theta(t) ] 
    \\[0.2cm]
    & & \qquad
    + \sum_{\nu',\nu'' = 1}^4 \alpha_{q;\nu'\nu''}^{\diamond \diamond} \pi^\diamond_{\nu} [\pi_{ \nu'}^{\diamond } \theta(t) \pi_{ \nu''}^{\diamond } \theta(t) ] .  
\end{array}
\end{equation*}
The desired bound now follows from
Lemma~\ref{lem:th:bnd:pi:fncs}
in combination with
Proposition~\ref{prop:lde:analysis_theta:differences}.
In a similar fashion,
the final bound~\eqref{eqn:super_and_sub_sols_Rtheta_est:deriv:i}
follows from the observation
\begin{equation*}
\begin{array}{lcl}
    \pi^{\diamond \diamond}_{\nu \nu'} \dot{\theta}(t)
    &= & 
    \pi^{\diamond \diamond}_{\nu \nu'}  \mathcal{R}_\theta(t)
    + \sum_{\nu''=1}^4
    \alpha_{p;\nu''}^{\diamond } \pi^\diamond_{\nu \nu'} [\pi_{ \nu''}^{\diamond } \theta(t) ] 
    \\[0.2cm]
    & & \qquad
    + \sum_{\nu'',\nu'''=1}^4 \alpha_{p;\nu''\nu'''}^{\diamond \diamond} \pi^{\diamond \diamond}_{\nu \nu'} [\pi^{\diamond \diamond}_{\nu'' \nu'''}\theta(t)]  
    \\[0.2cm]
    & &
    \qquad
    + \sum_{\nu'',\nu'''=1}^4 \alpha_{q;\nu''\nu'''}^{\diamond \diamond} \pi^{\diamond \diamond}_{\nu \nu'} [\pi_{ \nu''}^{\diamond } \theta(t) \pi_{l; \nu'''}^{\diamond } \theta(t) ] .
\end{array}
\end{equation*}
\end{proof}

\section{Construction of super- and sub-solutions}
\label{sec:sub:sup}
The main aim of this section is to construct explicit super- and sub-solutions 
for the discrete Allen-Cahn equation~\eqref{eqn:main:discrete_AC_new},
using the function $\theta$ introduced in Theorem~\ref{thm:main_result:gamma_mcf}. 
To be more precise, 
for any $u\in C^1([0, \infty), \ell^\infty(\Zs)\big)$
we define the residual
\begin{equation*}
  \mathcal{J}[u](t) = \dot{u}(t) - [\Delta^\times u(t)] - g\big(u(t)\big)
\end{equation*}
and say that $u$ is a super- respectively sub-solution for~\eqref{eqn:main:discrete_AC_new} if the inequality $\mathcal{J}[u]_{n,l}(t)\geq 0$, respectively $\mathcal{J}[u]_{n,l}(t)\leq 0$ holds for all $(n,l)\in \Zs$ and $t\geq 0$. Our construction utilizes the functions 
introduced in Lemma \ref{lemma:main_results:coeff_diamond} together with the difference operators defined in
\eqref{eq:th:def:pi:nu} and \eqref{eq:th:def:pi:nu:nup}. The main difference compared to our earlier work \cite{jukic2019dynamics} and the PDE results in \cite{NARAMATANO2011} is that a significant number of additional terms are needed to control the anisotropic effects caused by the misalignment of our wave with the underlying lattice.

\begin{proposition}
\label{prop:super_and_sub_sol:supersolution_proposition} 
Fix $R > 0$  and suppose that
the assumptions (H$g$), (H$\Phi_*$),  (HS$)_1$
and (HS$)_2$
all hold. Then for any $\epsilon > 0$, there exist  constants $\delta>0$, $\nu > 0$ and  $C^1$-smooth functions
\begin{equation}
    z: [0, \infty) \to \R,
    \qquad
    Z: [0, \infty) \to \R
\end{equation}
so that for any $\theta^0 \in \ell^\infty(\Z)$ with
\begin{equation}
[\theta^0]_{\mathrm{dev}} < R,
\qquad \qquad
\norm{\partial \theta^0 }_{\ell^\infty}  < \delta
\end{equation}
the following holds true.

\begin{enumerate}[(i)]
\item\label{item:super_sub_sols:explicit_formulas}
Writing  $\theta: [0, \infty) \to \ell^\infty(\Z)$ for the solution to
\eqref{eqn:main:main_eqn_for_theta}
with the initial condition $\theta(0) = \theta^0$,
the function $u^+$ defined by
	\begin{equation}\label{eqn:super_and_sub_sol:def_u_plus}
	\begin{aligned}
    u^+_{n,l}(t) &= \Phi_*\big(n-\theta_l(t) + Z(t)\big) + \pi^\diamond_{l;\nu}\theta(t) p^\diamond_\nu\big(n-\theta_l(t) + Z(t)\big) + \pi^{\diamond \diamond}_{l;\nu\nu'}\theta(t) p^{\diamond \diamond}_{\nu \nu'}\big(n-\theta_l(t) + Z(t)\big) \\
    &\qquad \qquad  +  \pi^\diamond_{l;\nu}\theta(t) \pi^\diamond_{l;\nu'}\theta(t)q^{\diamond \diamond}_{\nu \nu'}\big(n-\theta_l(t) + Z(t)\big)  + z(t)
	\end{aligned}
\end{equation}
is a super-solution of 
\eqref{eqn:main:AC_equation},
while
the function $u^-$ defined by
\begin{equation}\label{eqn:super_and_sub_sol:def_u_minus}
	\begin{aligned}
    u^-_{n,l}(t) &= \Phi_*\big(n-\theta_l(t) - Z(t)\big) + \pi^\diamond_{l;\nu}\theta(t) p^\diamond_\nu\big(n-\theta_l(t) - Z(t)\big) + \pi^{\diamond \diamond}_{l;\nu\nu'}\theta(t) p^{\diamond \diamond}_{\nu \nu'}\big(n-\theta_l(t) - Z(t)\big) \\
    &\qquad \qquad +  \pi^\diamond_{l;\nu}\theta(t) \pi^\diamond_{l;\nu'}\theta(t)q^{\diamond \diamond}_{\nu \nu'}\big(n-\theta_l(t) - Z(t)\big)  - z(t)
	\end{aligned}
\end{equation}
is a sub-solution of \eqref{eqn:main:AC_equation}. %
	\item\label{item:super_sub_sols:Z0} 
	We have $Z(0) = 0$ together with the bound $0\leq Z(t) \leq \epsilon$ for all $t \ge 0$.
	\item\label{item:super_sub_sols:z0_nu} We have the bound $0\leq z(t) \leq \epsilon $ for all $t \ge 0$,
	together with the initial inequalities
		\begin{equation}
	  \label{eq:int:sub:sup:bnd:init:p:zero}
   z(0) - \delta \norm{p_\nu^\diamond}_{L^\infty} - 2\delta \norm{p_{\nu \nu'}^{\diamond\diamond}}_{L^\infty} - \delta^2 \norm{q_{\nu \nu'}^{\diamond\diamond}}_{L^\infty} > \nu > 0 .
	\end{equation}
	\item\label{item:super_sub_sols:asymp} 
	The asymptotic behaviour
	$z (t) = O ( t^{-\frac{3}{2}} ) $
	holds for $t \to \infty$.
\end{enumerate}
In addition, the constants $\nu = \nu(\epsilon)$
satisfy $\lim_{\epsilon\downarrow 0} \nu(\epsilon) = 0$.
\end{proposition}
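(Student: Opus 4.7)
[Proof sketch of Proposition \ref{prop:super_and_sub_sol:supersolution_proposition}]
The plan is to substitute the ansatz \eqref{eqn:super_and_sub_sol:def_u_plus} directly into the residual $\mathcal{J}[u^+](t)$ and expand each shifted term around the base argument $\xi = n - \theta_l(t) + Z(t)$. Introducing the shorthand $\Delta_{\nu}(n,l,t) = (n+\tau_\nu) - \theta_{l+\sigma_\nu}(t) + Z(t) = \xi + \tau_\nu - \pi_{l;\nu}^\diamond \theta(t)$, I will perform a second-order Taylor expansion in the small quantity $\pi_{l;\nu}^\diamond \theta$ of the factors $\Phi_*$, $p_\nu^\diamond$, $p_{\nu\nu'}^{\diamond\diamond}$, $q_{\nu\nu'}^{\diamond\diamond}$ appearing at the shifted spatial sites. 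Using Proposition \ref{prop:lde:analysis_theta:differences} to bound $\pi^\diamond_{l;\nu}\theta$, $\pi^{\diamond\diamond}_{l;\nu\nu'}\theta$, and higher differences by $O(t^{-1/2})$, $O(t^{-1})$, $O(t^{-3/2})$ respectively, all Taylor remainders beyond third order in the differences will already be $O(t^{-3/2})$.

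Next I will collect the remaining terms by matching orders in $\pi^\diamond$ and $\pi^{\diamond\diamond}$. The zeroth-order pieces (those involving $\Phi_*$ alone) reduce, after using the travelling-wave MFDE~\eqref{eqn:main:mdfe}, to a single contribution $\big(c_* - \dot\theta_l + \dot Z\big)\Phi_*'(\xi)$ plus Taylor remainders, since $\dot{u}^+$ contributes $-\dot\theta_l \Phi_*'$ and the Laplacian produces the MFDE combination $-c_*\Phi_*'$. The first-order terms $\pi^\diamond_{l;\nu}\theta \cdot [\ldots]$ cancel precisely by the defining equation for $p_\nu^\diamond$ in Lemma \ref{lemma:main_results:coeff_diamond}, up to a $\Phi_*'$ contribution of size $\alpha_{p;\nu}^\diamond \pi^\diamond_{l;\nu}\theta$. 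The second-order pieces involving $\pi^{\diamond\diamond}_{l;\nu\nu'}\theta$ and $\pi^\diamond_{l;\nu}\theta\,\pi^\diamond_{l;\nu'}\theta$ cancel via the equations for $p_{\nu\nu'}^{\diamond\diamond}$ and $q_{\nu\nu'}^{\diamond\diamond}$, again leaving only multiples of $\Phi_*'$. Assembling all the $\Phi_*'$-coefficients and using the identity $c_* = -\tau_\nu \alpha^\diamond_{p;\nu}$ from Lemma \ref{lemma:main:expressions_c_lambda}, the total coefficient is exactly $\big(\dot Z - \mathcal{R}_\theta(t)\big)\Phi_*'(\xi)$, with $\mathcal{R}_\theta$ defined in \eqref{eq:th:def:r:theta}.

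The residual therefore takes the schematic Fife--McLeod form
\begin{equation*}
    \mathcal{J}[u^+]_{n,l}(t) = \dot Z(t)\Phi_*'(\xi) + \dot z(t) - g'\big(\Phi_*(\xi) + \zeta(t)\big) z(t) + \mathcal{E}_{n,l}(t),
\end{equation*}
where $\zeta(t)$ is an intermediate value and the error $\mathcal{E}$ gathers $\mathcal{R}_\theta \Phi_*'$, the Taylor remainders, and the $z$--$Z$ cross terms; by Proposition \ref{prop:analysis_theta:theta_approx_quadratic} combined with Proposition \ref{prop:lde:analysis_theta:differences} we have $\|\mathcal{E}(t)\|_{\ell^\infty} \le K t^{-3/2}$ for an absolute constant $K$. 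I will then choose a small $\mu > 0$ satisfying $g'(0), g'(1) < -2\mu < 0$ and a cutoff $M > 0$ such that $g'(\Phi_*(\xi)) \le -\mu$ for $|\xi| \ge M$ and $\Phi_*'(\xi) \ge \rho > 0$ for $|\xi| \le M$. The pair $(Z,z)$ is then defined as the solution of the linear system
\begin{align*}
    \dot z(t) + \mu z(t) &= -K(t+1)^{-3/2}, & z(0) &= \epsilon/2, \\
    \dot Z(t) &= \rho^{-1}\big[K(t+1)^{-3/2} + \Lambda z(t)\big], & Z(0) &= 0,
\end{align*}
where $\Lambda \ge \sup |g'|$. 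Standard ODE estimates yield the integrable behaviour $z(t) = O(t^{-3/2})$ in Item~\textit{(\ref{item:super_sub_sols:asymp})} and the uniform bound $0 \le Z(t), z(t) \le \epsilon$ after shrinking $\delta$ to make the $\mathcal{R}_\theta$-source small enough; requiring $\delta$ to also satisfy \eqref{eq:int:sub:sup:bnd:init:p:zero} and $Z(t) \le \epsilon$ fixes the relationship $\nu(\epsilon) \to 0$ as $\epsilon \to 0$. The sub-solution $u^-$ is handled symmetrically by flipping the sign of $Z$ and $z$.

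The main technical obstacle is bookkeeping: the Taylor remainders after the $p^\diamond, p^{\diamond\diamond}, q^{\diamond\diamond}$ cancellations contain products such as $\pi^\diamond_{l;\nu}\theta \cdot \pi^{\diamond\diamond}_{l;\nu'\nu''}\theta$ and $\pi^\diamond_{l;\nu}\theta \cdot \pi^\diamond_{l;\nu'}\theta \cdot \pi^\diamond_{l;\nu''}\theta$, together with mismatches between $\dot{\pi^\diamond_\nu \theta}$ and the shifted values of $p^\diamond_\nu$ evaluated along the flow. Controlling the latter requires precisely the bounds \eqref{eqn:super_and_sub_sols_Rtheta_est:deriv:i}--\eqref{eqn:super_and_sub_sols_Rtheta_est:deriv:ii}, and it is for this reason that the $d$ in \eqref{eqn:main:d} must be chosen so that $\Theta_{\mathrm{ch}}$ and $\Theta_{\mathrm{cmp}}$ agree to cubic order in $\partial\theta$ — otherwise an irreducible $O(t^{-1})$ term would survive and destroy the supersolution property.
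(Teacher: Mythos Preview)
Your overall strategy---Taylor-expand the shifted arguments, use the defining equations \eqref{eqn:main:eqns_for_p} for $p^\diamond_\nu$, $p^{\diamond\diamond}_{\nu\nu'}$, $q^{\diamond\diamond}_{\nu\nu'}$ to cancel terms order by order, arrive at a Fife--McLeod residual, and close by choosing $z,Z$---is precisely the paper's approach, and your remark about why the value of $d$ in \eqref{eqn:main:d} is forced is correct. The bookkeeping you describe matches \S\ref{subsec:super_sub_sols:J_phi}--\S\ref{subsec:super_sub_sols:J_qnux2} and Lemma~\ref{lemma:super_and_sub_sols:R_final}.

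Where your argument breaks down is the concrete choice of $(z,Z)$. With the ODE $\dot z+\mu z=-K(t+1)^{-3/2}$ and $z(0)=\epsilon/2$, the homogeneous part decays exponentially while the particular solution behaves like $-\tfrac{K}{\mu}(t+1)^{-3/2}$; hence $z(t)$ is eventually \emph{negative}, contradicting item~\textit{(\ref{item:super_sub_sols:z0_nu})}. Even after flipping the sign of the forcing, the initial value $z(0)=\epsilon/2$ is far too large to secure item~\textit{(\ref{item:super_sub_sols:Z0})}: your $\dot Z$ contains the term $\rho^{-1}\Lambda z$, so $Z(\infty)\ge \rho^{-1}\Lambda\int_0^\infty z$, and with $z(0)\sim\epsilon$ decaying on the time-scale $\mu^{-1}$ this integral is of order $\epsilon/\mu$. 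Since there is no reason for $\Lambda/(\mu\rho)\le 1$, you cannot conclude $Z\le\epsilon$, and shrinking $\delta$ does not help because $z(0)$ is independent of $\delta$.

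The paper avoids both pitfalls by \emph{prescribing} $z$ directly rather than solving an ODE: it takes $z$ to be any smooth function with $K_\epsilon(t)\le m\,z(t)\le 2K_\epsilon(t)$ where $K_\epsilon(t)=M\min\{\delta_\epsilon,t^{-3/2}\}$ and $\delta_\epsilon\sim\epsilon^3$, then sets $Z(t)=C_\epsilon\int_0^t z$. The crucial scaling is $z(0)\sim\delta_\epsilon\sim\epsilon^3$, which makes $\int_0^\infty z\sim\delta_\epsilon^{1/3}\sim\epsilon/C_\epsilon$ and hence $Z\le\epsilon$; the inequality \eqref{eq:int:sub:sup:bnd:init:p:zero} then forces $\nu\sim\delta_\epsilon\to 0$ as required. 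You can salvage your ODE approach by taking $z(0)$ of this smaller order and reversing the sign of the forcing, but as written the system does not deliver items \textit{(\ref{item:super_sub_sols:Z0})}--\textit{(\ref{item:super_sub_sols:asymp})}.
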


\subsection{Preliminaries}
Our proof of Proposition~\ref{prop:super_and_sub_sol:supersolution_proposition} focuses on the analysis of
the super-solution residual
$\mathcal{J}[u^{+}]$,
since the sub-solution 
$\mathcal{J}[u^{-}]$
can be analyzed completely analogously.  We start by splitting the residual
into the five components
\begin{align}\label{eqn:super_and_sub_sols:initial_resdual}
 \mathcal{J}[u^+]  = \mathcal{J}_{\Phi} + \mathcal{J}_{p_\nu^\diamond} + \mathcal{J}_{p_{\nu\nu'}^{\diamond \diamond}} + \mathcal{J}_{q_{\nu\nu'}^{\diamond \diamond}} + \mathcal{J}_{\text{glb}}.
 \end{align}
The first four are closely related
to the functions
$\Phi_*$, $p_\nu^\diamond$, $p_{\nu \nu'}^{\diamond \diamond}$ and $q_{\nu \nu'}^{\diamond \diamond}$, respectively, depending
on $Z$ only through the variable
\begin{equation*}
    \xi_{n,l}(t) = n - \theta_l(t) + Z(t).
\end{equation*}
Indeed, we use the definitions

\begin{equation}\label{eqn:super_sub_sols:all_Js}
    \begin{aligned}
    \left[\mathcal{J}_{\Phi}(t)\right]_{n,l} 
    &= -\Phi_*'\big(\xi_{n,l}(t)\big) \dot{\theta_l}(t) -  \left[\Delta^\times \Phi_*\big(\xi(t)\big)\right]_{n,l} ,   \\[0.3cm]
    [\mathcal{J}_{p_\nu^\diamond}(t)]_{n,l} 
    &
    =  [\pi^\diamond_{l;\nu}\dot{\theta}(t)] p^\diamond_\nu(\xi_{n,l}(t)) - [\pi^\diamond_{l;\nu}\theta(t)] \dfrac{d}{d\xi}p^\diamond_\nu\big(\xi_{n,l}(t)\big)\dot{\theta_l}(t)  - \left[  \Delta^\times[\pi^\diamond_{\nu}\theta(t)] p^\diamond_\nu\big(\xi(t)\big)\right]_{n,l}, 
    \\[0.3cm]
    [\mathcal{J}_{p_{\nu\nu'}^{\diamond \diamond}}(t)]_{n,l} 
    &=
    [\pi^{\diamond\diamond}_{l;\nu\nu'}\dot{\theta}(t)] p^{\diamond\diamond}_{\nu\nu'}\big(\xi_{n,l}(t)\big) -  \dot{\theta_l}(t) [\pi^{\diamond\diamond}_{l;\nu \nu'}\theta(t)], \dfrac{d}{d\xi}p^{\diamond\diamond}_{\nu \nu'}\big(\xi_{n,l}(t)\big) \\
    &\indent  - \left[\Delta^\times [\pi^{\diamond \diamond}_{\nu\nu'}\theta(t)] p^{\diamond \diamond}_{\nu\nu'}\big(\xi(t)\big)\right]_{n,l}, 
    \\[0.3cm]
    [\mathcal{J}_{q_{\nu\nu'}^{\diamond \diamond}}(t)]_{n,l} 
    &
    = [\pi^{\diamond}_{l;\nu}\dot{\theta}(t)][\pi^{\diamond}_{l;\nu'}\theta(t)] q^{\diamond\diamond}_{\nu\nu'}\big(\xi_{n,l}(t)\big) +  [\pi^{\diamond}_{l;\nu}\theta(t)][\pi^{\diamond}_{l;\nu'}\dot{\theta}(t)] q^{\diamond\diamond}_{\nu\nu'}\big(\xi_{n,l}(t)\big) \\
    &\indent  -
    [\pi^{\diamond}_{l;\nu}\theta(t)][\pi^{\diamond}_{l;\nu'}\theta(t)] \dfrac{d}{d\xi}q^{\diamond\diamond}_{\nu \nu'}\big(\xi_{n,l}(t)\big)\dot{\theta_l}(t)     
     - \left[\Delta^\times [\pi^{\diamond }_{\nu}\theta(t)][ \pi^{\diamond}_{\nu'}\theta(t)] q^{\diamond\diamond}_{\nu\nu'}\big(\xi(t)\big)\right]_{n,l}. 
    \\
\end{aligned}
\end{equation}
On the other hand, 
we group the terms related to
the nonlinearity $g$
and the dynamics of the functions $Z$ and $z$ 
into the global term
\begin{equation}\label{eqn:super_sub_sols:J_glb}
    [\mathcal{J}_{\text{glb}}(t)]_{n,l}  = \dot{Z}(t)\Big( \Phi_*'\big(\xi_{n,l}(t)\big) + B_{n,l}(t) \Big)
     + \dot{z}(t) - g\big(u^+_{n,l}(t)\big),
\end{equation}
where  $B_{n,l}(t)$ denotes the bounded sequence
\begin{equation}
    B_{n,l}(t) :=  [\pi^\diamond_{l;\nu}\theta(t)] \dfrac{d}{d\xi}p^\diamond_\nu\big(\xi_{n,l}(t)\big) + [\pi^{\diamond\diamond}_{l;\nu \nu'}\theta(t)] \dfrac{d}{d\xi}p^{\diamond\diamond}_{\nu \nu'}\big(\xi_{n,l}(t)\big) + 
    [\pi^{\diamond}_{l;\nu}\theta(t)][\pi^{\diamond}_{l;\nu'}\theta(t)] \dfrac{d}{d\xi}q^{\diamond\diamond}_{\nu \nu'}\big(\xi_{n,l}(t)\big).
\end{equation}
In the first phase of our analysis we split each of these terms into
a useful approximation
together with a residual that contains terms that behave asymptotically as $O(t^{-\frac{3}{2}})$.
In the second phase we combine these approximations, allowing us to isolate an additional set of higher order terms and extract our final approximation.

\subsection{Analysis of \texorpdfstring{$\mathcal{J}_{\Phi}$}{Lg}} \label{subsec:super_sub_sols:J_phi}
Setting out to analyze the term $\mathcal{J}_{\Phi}$,
we introduce the 
approximation
\begin{equation}\label{eqn:super_sub_sol:J_phi_apx}
    \begin{aligned}
    \left[\mathcal{J}_{\Phi;\text{apx}}(t)\right]_{n,l} &:= \Phi_*'\big(\xi_{n,l}(t)\big)(-\dot{\theta}_l(t) + c_*) + g\Big(\Phi_*\big(\xi_{n,l}(t)\big)\Big)  + [\pi^\diamond_{l;\nu}\theta(t)] [T_\nu \Phi_*']\big(\xi_{n,l}(t)\big)  \\
    &\indent  - \dfrac{1}{2} [\pi^\diamond_{l;\nu}\theta(t)]^2 [T_\nu \Phi_*'']\big(\xi_{n,l}(t)\big) \\[0.3cm]
\end{aligned}
\end{equation}
and implicitly define the residual $\mathcal{R}_{\Phi}$ via the splitting
\begin{equation}\label{eqn:super_sub_sol:J_phi=J_phi_apx+R}
    \mathcal{J}_{\Phi} = \mathcal{J}_{\Phi;\text{apx}} + \mathcal{R}_{\Phi}.
\end{equation}
The result below confirms that the expression $\mathcal{R}_{\Phi}$ contains only higher order terms,
allowing us to focus on
$\mathcal{J}_{\Phi;\text{apx}}$
for our further computations.


\begin{lemma}\label{lemma:super_and_sub_sols:RPhi}
Assume the setting of Proposition~\ref{prop:super_and_sub_sol:supersolution_proposition}. 
There exists constants $M > 0$, $\delta>0$
so that for any $\theta \in C^1([0,\infty);\ell^\infty)$
that satisfies the LDE~\eqref{eqn:main:main_eqn_for_theta}
with 
$[\theta(0)]_{\mathrm{dev}} < R$ and $\norm{\partial \theta(0)}_{\ell^\infty}\leq \delta $
and any pair of  functions 
$z, Z \in C([0,\infty);\R)$,
we have the estimate
\begin{equation}\label{eqn:super_and_sub_sols_RPhi_est}
    \norm{\mathcal{R}_{\Phi}(t)}_{\ell^\infty} \le M \min\left\{\norm{\partial \theta(0)}_{\ell^\infty}, t^{-\frac{3}{2}}\right\},
    \qquad 
    \qquad
    t > 0.
\end{equation}
\end{lemma}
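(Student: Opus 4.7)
The plan is to show that $\mathcal{R}_\Phi$ is exactly the third-order Taylor remainder produced when expanding each shift inside $\Delta^\times \Phi_*(\xi(t))$. To this end I would first write out
\[
[\Delta^\times \Phi_*(\xi(t))]_{n,l} = \sum_{\nu=1}^4 \Phi_*\bigl(\xi_{n,l}(t) + \tau_\nu - \pi^\diamond_{l;\nu}\theta(t)\bigr) - 4\Phi_*(\xi_{n,l}(t)),
\]
using that a shift of $(\sigma_h,\sigma_v)$ in $(n,l)$ produces a shift of $\tau_1 = \sigma_h$ in the $\xi$-variable together with a transverse index shift of $\sigma_1 = \sigma_v$, and similarly for $\nu = 2,3,4$, which is exactly how the pair $(\tau_\nu,\sigma_\nu)$ was set up in~\eqref{eq:mr:def:shifts:tau} and~\eqref{eq:mr:def:shifts:sigma}.

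Next I would Taylor-expand each summand around $\xi_{n,l}(t)+\tau_\nu$ to third order, writing
\[
\Phi_*(\xi_{n,l}(t) + \tau_\nu - h_\nu) = [T_\nu \Phi_*](\xi_{n,l}(t)) - h_\nu [T_\nu \Phi_*'](\xi_{n,l}(t)) + \tfrac{1}{2} h_\nu^2 [T_\nu\Phi_*''](\xi_{n,l}(t)) - \tfrac{1}{6} h_\nu^3 \Phi_*'''(\eta_\nu),
\]
with $h_\nu = \pi^\diamond_{l;\nu}\theta(t)$ and $\eta_\nu$ an intermediate point. Substituting the travelling wave MFDE~\eqref{eqn:main:mdfe} in the form
\[
\sum_{\nu=1}^4 [T_\nu\Phi_*](\xi) - 4\Phi_*(\xi) = -c_* \Phi_*'(\xi) - g(\Phi_*(\xi))
\]
collapses the zeroth-order sum, and plugging into the definition of $\mathcal{J}_\Phi$ shows that the constant, linear-in-$\pi^\diamond_\nu\theta$ and quadratic-in-$\pi^\diamond_\nu\theta$ terms match $\mathcal{J}_{\Phi;\text{apx}}$ from~\eqref{eqn:super_sub_sol:J_phi_apx} on the nose. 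Consequently the residual reduces to the pure Taylor tail
\[
[\mathcal{R}_\Phi(t)]_{n,l} = \tfrac{1}{6}\sum_{\nu=1}^4 [\pi^\diamond_{l;\nu}\theta(t)]^3 \Phi_*'''(\eta_\nu).
\]

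Finally I would bound this cubic remainder. The profile $\Phi_*'''$ is globally bounded because $\Phi_*$ and $g$ are smooth and the MFDE can be differentiated to express $\Phi_*''',\Phi_*''$ in terms of shifts of $\Phi_*'$ and derivatives of $g\circ \Phi_*$. Combining Lemma~\ref{lem:th:bnd:pi:fncs}, which gives $\|\pi^\diamond_\nu\theta(t)\|_{\ell^\infty} \le M\|\partial\theta(t)\|_{\ell^\infty}$, with the decay estimate~\eqref{eqn:analysis_theta:differences} from Proposition~\ref{prop:lde:analysis_theta:differences} applied to the solution $\theta$ of the LDE~\eqref{eqn:main:main_eqn_for_theta} yields
\[
\|\pi^\diamond_\nu \theta(t)\|_{\ell^\infty}^3 \le M^3 \min\bigl\{\|\partial\theta^0\|_{\ell^\infty}^3, t^{-3/2}\bigr\}.
\]
Since $\|\partial\theta^0\|_{\ell^\infty} \le \delta$, choosing $\delta$ small enough that $M^3\delta^2 \le 1$ gives $\|\partial\theta^0\|^3 \le \delta^2 \|\partial\theta^0\| \le \|\partial\theta^0\|$, which produces the claimed bound~\eqref{eqn:super_and_sub_sols_RPhi_est}.

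There is no real obstacle here: the whole argument is an exact algebraic cancellation via the MFDE followed by routine Taylor tail estimates. The only mildly delicate point is keeping track of signs when collapsing $\mathcal{J}_\Phi = -\Phi_*'\dot\theta - \Delta^\times\Phi_*(\xi)$ against the quadratic term $-\tfrac{1}{2}[\pi^\diamond_\nu\theta]^2[T_\nu\Phi_*'']$ in $\mathcal{J}_{\Phi;\text{apx}}$ (note the minus sign), which is precisely what makes $\mathcal{R}_\Phi$ come out proportional to $+\tfrac{1}{6}\Phi_*'''$ rather than some mismatched expression.
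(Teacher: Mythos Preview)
Your proposal is correct and follows essentially the same approach as the paper: expand each shifted value $[T_\nu\Phi_*](\xi_{n,l+\sigma_\nu}(t))$ about $\xi_{n,l}(t)$, use the travelling-wave MFDE to cancel the zeroth-order contribution, match the first- and second-order terms against $\mathcal{J}_{\Phi;\mathrm{apx}}$, and bound the cubic Taylor remainder via Proposition~\ref{prop:lde:analysis_theta:differences} and Lemma~\ref{lem:th:bnd:pi:fncs}. The only cosmetic difference is that the paper writes the remainder in integral form rather than Lagrange form, and does not spell out the $\delta$-absorption step $\|\partial\theta^0\|^3\le\delta^2\|\partial\theta^0\|$ that you (correctly) make explicit.
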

\begin{proof}
Proceeding from the definition 
\begin{equation} \label{eqn:super_sub_sols:laplacian_phi}
    [\Delta^\times \Phi_*\left(\xi\left(t\right)\right)]_{n,l}  = 
    [T_\nu \Phi_*] \big(\xi_{n,l+\sigma_\nu}(t)\big) - 4\Phi_*\big(\xi_{n,l}(t)\big),
\end{equation}    
 we expand $[T_\nu \Phi_*] \big(\xi_{n,l+\sigma_\nu}(t)\big)$ around $[T_\nu \Phi] \big(\xi_{n,l}(t)\big)$
 to find
 \begin{align*}
      [\Delta^\times \Phi_*(\xi(t))]_{n,l}  &= \Big([\tau_\nu \Phi_*] \big(\xi_{n,l}(t)\big)  - 4 \Phi_* \big(\xi_{n,l}(t)\big) \Big) - [\pi^\diamond_{l;\nu}\theta(t)]  [T_\nu \Phi_*']\big(\xi_{n,l}(t)\big)  
     \\
     & \qquad + \dfrac{1}{2}[\pi^\diamond_{l;\nu}\theta(t)]^2  [T_\nu \Phi_*'']\big(\xi_{n,l}(t)\big)
    + \dfrac{1}{2}\int_{0}^{-\pi_{l;\nu}^\diamond \theta(t)} [T_\nu \Phi_*'''](\xi_{n, l+\sigma_\nu}(t) - s) s^2\, ds.
 \end{align*}
 Inserting this expression into the definition~\eqref{eqn:super_sub_sols:all_Js} for $\mathcal{J}_{\Phi}$, we arrive at
 \begin{align*}
     [\mathcal{J}_\Phi]_{n,l}(t) - [\mathcal{J}_{\Phi; \text{apx}}]_{n,l}(t) &=
     -\Big[  c_* \Phi_*'\big(\xi_{n,l}(t)\big) + [T_\nu \Phi_*] \big(\xi_{n,l}(t)\big)  - 4 \Phi_* \big(\xi_{n,l}(t)\big) + g\Big(\Phi_*\big(\xi_{n,l}(t)\big)\Big) \Big]\\
    &\indent   - \dfrac{1}{2}\int_{0}^{-\pi_{l;\nu}^\diamond \theta(t)} [T_\nu \Phi_*''']\big(\xi_{n, l+\sigma_\nu}(t) - s\big) s^2\, ds .
 \end{align*}
The first row vanishes due to the MFDE~\eqref{eqn:main:mdfe}, while the second row satisfies the desired bound
~\eqref{eqn:super_and_sub_sols_RPhi_est} by combination of 
the estimate~\eqref{eqn:analysis_theta:first_diff_nu} with Proposition
\ref{prop:lde:analysis_theta:differences}.  
\end{proof}

\subsection{Analysis of \texorpdfstring{$\mathcal{J}_{p_\nu^\diamond}$}{Lg}}
In this subsection we fix $\nu \in \{1, 2, 3, 4\}$ and analyze the function $\mathcal{J}_{p_\nu^\diamond}$. 
In particular, we introduce the expression

\begin{align}
    [\mathcal{J}_{p_{\nu}^{\diamond};\text{apx}}]_{n,l}(t)
    &:= 
    [\pi^\diamond_{l;\nu}\theta(t)] \Big[- [\mathcal{L}_0 p_\nu^\diamond]\big(\xi_{n,l}(t)\big) +  g'\Big(\Phi_*\big(\xi_{n,l}(t)\big)\Big)
    p_{\nu}^\diamond\big(\xi_{n,l}(t)\big)\Big]
    \\
    &\indent  + [ \pi^{\diamond \diamond}_{l;\nu \nu' }\theta(t)] \Big( \alpha_{p;\nu}^\diamond p_{\nu'}^\diamond \big(\xi_{n,l}(t)\big) - [T_{\nu'} p_{\nu}^\diamond]\big(\xi_{n,l}(t)\big)\Big) \\
   &\indent 
    + [ \pi^\diamond_{l;\nu}\theta(t)] [ \pi^\diamond_{l;\nu'}\theta(t)] \left(      -    \alpha_{p;\nu'}^{\diamond} \dfrac{d}{d\xi}p_{\nu}^{\diamond}\big(\xi_{n,l}(t)\big)  +
 \dfrac{d}{d\xi} [T_{\nu'} p_{\nu}^\diamond] \big(\xi_{n,l}(t)\big) 
    \right)
\label{eq:sub:sup:def:j:p:nu:apx}
\end{align}
and set out to obtain bounds
for the residual
\begin{equation}\label{eqn:super_sub_sol:R_p_nu}
\mathcal{R}_{p_\nu^\diamond}:= \mathcal{J}_{p_{\nu}^{\diamond}} - \mathcal{J}_{p_{\nu}^{\diamond};\text{apx}}.
\end{equation}
\begin{lemma}\label{lemma:super_and_sub_sols:Rp_nu_diamond}
Consider the setting of Proposition~\ref{prop:super_and_sub_sol:supersolution_proposition}.
Then there exist  constants $\delta>0$ and $M > 0$
so that for any $\theta \in C^1([0,\infty);\ell^\infty)$
that satisfies the LDE~\eqref{eqn:main:main_eqn_for_theta}
with 
$[\theta(0)]_{\mathrm{dev}} < R$ and $\norm{\partial \theta(0)}_{\ell^\infty}\leq \delta $
and any pair of  functions 
$z, Z \in C([0,\infty);\R)$,
we have the estimate
\begin{equation}\label{eqn:super_and_sub_sols_Rp_nu_est}
    \norm{\mathcal{R}_{p_\nu^\diamond}(t)}_{\ell^\infty} \le M \min\left\{\norm{\partial \theta(0)}_{\ell^\infty}, t^{-\frac{3}{2}}\right\}, \qquad
    t > 0.
\end{equation}
\end{lemma}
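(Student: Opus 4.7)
My approach is a bookkeeping argument driven by Taylor expansion of $p^\diamond_\nu$ along the $\xi$-direction combined with the substitution of the LDE $\dot\theta = \Theta_{\mathrm{ch}}(\theta)$ into the explicit time derivatives that appear in $\mathcal{J}_{p^\diamond_\nu}$. The function $\mathcal{J}_{p^\diamond_\nu;\text{apx}}$ is chosen precisely to collect everything in $\mathcal{J}_{p^\diamond_\nu}$ that is at most quadratic in the first differences of $\theta$; every remaining contribution will either be cubic (and hence controlled by $\|\partial\theta\|_{\ell^\infty}^2\|\partial^{(2)}\theta\|_{\ell^\infty}$) or come from an $O(t^{-3/2})$ residual guaranteed by Proposition~\ref{prop:analysis_theta:theta_approx_quadratic}. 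The decay estimates in Proposition~\ref{prop:lde:analysis_theta:differences} and the bounds from Lemma~\ref{lem:th:bnd:pi:fncs} then convert all such terms into the required $\min\{\|\partial\theta(0)\|_{\ell^\infty}, t^{-3/2}\}$ bound.

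The first task is to expand the discrete Laplacian $[\Delta^\times [\pi^\diamond_\nu\theta(t)] p^\diamond_\nu(\xi(t))]_{n,l}$. Writing $X_{n,l}(t) = [\pi^\diamond_{l;\nu}\theta(t)] p^\diamond_\nu(\xi_{n,l}(t))$, each of the four shifts satisfies
\begin{equation*}
X_{n+\tau_\mu,l+\sigma_\mu}(t) = \big([\pi^\diamond_{l;\nu}\theta(t)] + [\pi^{\diamond\diamond}_{l;\nu\mu}\theta(t)]\big)\,p^\diamond_\nu\big(\xi_{n,l}(t) + \tau_\mu - [\pi^\diamond_{l;\mu}\theta(t)]\big),
\end{equation*}
because $\xi_{n+\tau_\mu,l+\sigma_\mu}(t)-\xi_{n,l}(t) = \tau_\mu - [\pi^\diamond_{l;\mu}\theta(t)]$ and $[\pi^\diamond_\nu\theta(t)]_{l+\sigma_\mu} = [\pi^\diamond_{l;\nu}\theta(t)]+[\pi^{\diamond\diamond}_{l;\nu\mu}\theta(t)]$. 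Taylor-expanding $p^\diamond_\nu$ around $\xi_{n,l}(t)+\tau_\mu$ up to first order (with a quadratic remainder controlled by $\|p^\diamond_\nu\|_{C^2}[\pi^\diamond_{l;\mu}\theta(t)]^2$), then summing over $\mu$ and subtracting $4X_{n,l}$, produces exactly the combinations $[\pi^\diamond_{l;\nu}\theta(t)]\sum_\mu[T_\mu p^\diamond_\nu] - 4[\pi^\diamond_{l;\nu}\theta(t)]p^\diamond_\nu$ and $[\pi^{\diamond\diamond}_{l;\nu\mu}\theta(t)][T_\mu p^\diamond_\nu]$, together with the cross-term $-[\pi^\diamond_{l;\nu}\theta(t)][\pi^\diamond_{l;\mu}\theta(t)]\frac{d}{d\xi}[T_\mu p^\diamond_\nu]$ that will match the last line of \eqref{eq:sub:sup:def:j:p:nu:apx}.

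The second task is to substitute the LDE into the two remaining pieces of $\mathcal{J}_{p^\diamond_\nu}$. Using \eqref{eqn:super_and_sub_sols_Rtheta_est} and \eqref{eqn:super_and_sub_sols_Rtheta_est:deriv:i}, I replace $\dot\theta_l(t)$ by $c_* + \alpha^\diamond_{p;\mu}[\pi^\diamond_{l;\mu}\theta(t)]$ (plus strictly quadratic $\alpha^{\diamond\diamond}$-terms from $\Theta_{\mathrm{cmp}}$ and an $O(t^{-3/2})$ remainder) and $[\pi^\diamond_{l;\nu}\dot\theta(t)]$ by $\alpha^\diamond_{p;\mu}[\pi^{\diamond\diamond}_{l;\mu\nu}\theta(t)]$ (plus $O(t^{-3/2})$). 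Assembling this with the Laplacian expansion, the identity
\begin{equation*}
[\mathcal{L}_0 p^\diamond_\nu](\xi) = c_*\tfrac{d}{d\xi}p^\diamond_\nu(\xi) + \sum_{\mu=1}^4[T_\mu p^\diamond_\nu](\xi) - 4p^\diamond_\nu(\xi) + g'(\Phi_*(\xi))p^\diamond_\nu(\xi)
\end{equation*}
shows that the $c_*\frac{d}{d\xi}p^\diamond_\nu$ drift and the four shift contributions combine with the $-[\mathcal{L}_0 p^\diamond_\nu](\xi_{n,l})$ factor in $\mathcal{J}_{p^\diamond_\nu;\text{apx}}$; all terms quadratic in the first differences of $\theta$ likewise line up (possibly after relabelling symmetric indices of $\pi^{\diamond\diamond}$) with the $\alpha^\diamond_{p;\nu}p^\diamond_{\nu'}$ and $-\alpha^\diamond_{p;\nu'}\frac{d}{d\xi}p^\diamond_\nu$ entries of the apx.

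What remains is the residual. It is built from: (i) the quadratic Taylor remainders in $p^\diamond_\nu$, bounded by $\|p^\diamond_\nu\|_{C^2}[\pi^\diamond_{l;\mu}\theta(t)]^2\|\pi^\diamond_{l;\nu}\theta(t)\|_{\ell^\infty}$; (ii) cross-products $[\pi^{\diamond\diamond}_{l;\nu\mu}\theta(t)][\pi^\diamond_{l;\mu}\theta(t)]$ controlled by Lemma~\ref{lem:th:bnd:pi:fncs} and the decay $\|\partial^{(2)}\theta(t)\|_{\ell^\infty}\|\partial\theta(t)\|_{\ell^\infty}\leq M\min\{\|\partial\theta(0)\|_{\ell^\infty},t^{-3/2}\}$ from Proposition~\ref{prop:lde:analysis_theta:differences}; and (iii) the $O(t^{-3/2})$ pieces supplied by Proposition~\ref{prop:analysis_theta:theta_approx_quadratic}. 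Each contribution is immediately seen to obey \eqref{eqn:super_and_sub_sols_Rp_nu_est}. The main obstacle in practice is the combinatorial bookkeeping of which quadratic terms must be extracted from the Laplacian expansion versus from $\dot\theta_l$ in order to produce the precise apx used in \eqref{eq:sub:sup:def:j:p:nu:apx}; once this matching is understood, the estimates themselves are routine consequences of the tools already developed in Sections~\ref{sec:lde} and~\ref{sec:theta}.
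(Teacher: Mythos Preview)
Your proposal is correct and follows essentially the same route as the paper: expand the discrete Laplacian of $[\pi^\diamond_\nu\theta]\,p^\diamond_\nu(\xi)$ via a first-order Taylor expansion in the $\xi$-direction, use the identity $[\pi^\diamond_\nu\theta]_{l+\sigma_\mu}=[\pi^\diamond_{l;\nu}\theta]+[\pi^{\diamond\diamond}_{l;\nu\mu}\theta]$, replace the time derivatives via Proposition~\ref{prop:analysis_theta:theta_approx_quadratic}, and let the definition of $\mathcal{L}_0$ absorb the shift--drift combination; the residual then consists precisely of your items (i)--(iii), each handled by Proposition~\ref{prop:lde:analysis_theta:differences} and Lemma~\ref{lem:th:bnd:pi:fncs}. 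Your treatment of the $c_*\frac{d}{d\xi}p^\diamond_\nu$ drift is in fact slightly cleaner than the paper's, which buries this term in the $\mathcal{L}_0$ cancellation.
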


\begin{proof}
For fixed $\nu\in \{1, 2, 3, 4\}$, we rewrite the discrete Laplacian as
\begin{align}\label{eqn:super_sub_sol:laplacian_p_nu}
\Delta^\times\left[ [\pi^\diamond_{\nu}\theta(t)] p^\diamond_\nu\big(\xi(t)\big)\right]_{n,l} &
=  [\pi_{l;\nu+ {\nu'}} \theta(t)] [T_{\nu'} p_{\nu}^\diamond] \big((\xi_{n,l+\sigma_{\nu'}}(t)\big) -  4[\pi^\diamond_{l;\nu} \theta(t)] p^\diamond_\nu\big(\xi_{n,l}(t)\big),
\end{align}
where the first term is summed
over the indices $\nu'\in \{1, 2, 3, 4\}$. Using~\eqref{eqn:super_sub_sol:2nd_diff_via_1st_diff} we rephrase this term as
\begin{equation}\label{eqn:super_sub_sols:J_p_nu:laplacian}
\begin{aligned}
     \left[\pi_{l;\nu+\nu'}^{\diamond} \theta(t)\right] [T_{\nu'} p_{\nu}^\diamond] \big((\xi_{n,l+\sigma_{\nu'}}(t)\big) 
     & 
     =  [\pi_{l;\nu}^{\diamond} \theta(t)] [T_{\nu'} p_{\nu}^\diamond] \left(\xi_{n,l+\sigma_{\nu'}}(t)\right) \\
     &\indent  + [\pi_{l;\nu \nu'}^{\diamond \diamond} \theta(t)] [T_{\nu'} p_{\nu}^\diamond] \big(\xi_{n,l+\sigma_{\nu'}}(t)\big) .
\end{aligned}
\end{equation}
Furthermore, for a fixed pair $(\nu, \nu')$ we expand $[T_{\nu'} p_{\nu}^\diamond] \left(\xi_{n,l+\sigma_{\nu'}}(t)\right) $ around $[T_{\nu'} p_{\nu}^\diamond] \big(\xi_{n,l}(t)\big)$ to find
\begin{align*}
    [T_{\nu'} p_{\nu}^\diamond] \big(\xi_{n,l+\sigma_{\nu'}}(t)\big) & = [T_{\nu'} p_{\nu}^\diamond] \big(\xi_{n,l}(t)\big) - [\pi_{l;\nu'}^{\diamond}\theta(t)]  \dfrac{d}{d\xi} [T_{\nu'} p_{\nu}^\diamond] \big(\xi_{n,l}(t)\big) \\
    &
    \quad + \int_{0}^{-\pi_{l;\nu'}^\diamond\theta(t)} \dfrac{d^2}{d \xi ^2}[T_{\nu'} p_{\nu}^\diamond] \left(\xi_{n,l+\sigma_{\nu'}}(t) - s \right) s\, ds.
\end{align*}
Inserting this expression back into~\eqref{eqn:super_sub_sols:J_p_nu:laplacian}, we obtain
\begin{align*}
      \left[\pi_{l;\nu+\nu'}^{\diamond} \theta(t)\right] [T_{\nu'} p_{\nu}^\diamond] \big((\xi_{n,l+\sigma_{\nu'}}(t)\big) 
     &
     =  [\pi^\diamond_{l;\nu} \theta(t)] [T_{\nu'} p_{\nu}^\diamond] \big((\xi_{n,l}(t)\big) 
     \\
     & \quad - [\pi^\diamond_{l;\nu} \theta(t)] [\pi^\diamond_{l;\nu'} \theta(t)] \dfrac{d}{d\xi} [T_{\nu'} p_{\nu}^\diamond] \big(\xi_{n,l}(t)\big) 
     \\
     & \quad + [\pi_{l;\nu \nu'}^{\diamond \diamond} \theta(t)] [T_{\nu'} p_{\nu}^\diamond] \big(\xi_{n,l}(t)\big) +  [\tilde{\mathcal{R}}_{p_{\nu}^\diamond}(t)]_{n,l}
\end{align*}
where $
{\tilde{\mathcal{R}}}_{p_{\nu}^\diamond}$ is defined as
\begin{align*}
    [\tilde{\mathcal{R}}_{p_{\nu}^\diamond}(t)]_{n,l} &= \left([\pi_{l;\nu}^\diamond \theta(t)] + [\pi_{l;\nu \nu'}^{\diamond\diamond} \theta(t)]\right) \int_{0}^{-\pi_{l;\nu'}^\diamond\theta(t)} \dfrac{d^2}{d\xi^2}[T_{\nu'} p_{\nu}^\diamond] \left(\xi_{n,l+\sigma_{\nu'}}(t) - u \right) u \ du \\
     & \qquad 
     + [\pi_{l;\nu \nu'}^{\diamond \diamond} \theta(t)] [\pi_{l;\nu'}^{\diamond}\theta(t)]  \dfrac{d}{d\xi} [T_{\nu'} p_{\nu}^\diamond]  \big(\xi_{n,l}(t)\big).
\end{align*}
Note that this term satisfies the bound~\eqref{eqn:super_and_sub_sols_Rp_nu_est}
in view of 
Proposition 
\ref{prop:lde:analysis_theta:differences} and Lemma~\ref{lem:th:bnd:pi:fncs}.

Comparing
\eqref{eqn:super_sub_sols:all_Js}
and \eqref{eq:sub:sup:def:j:p:nu:apx},
the definition \eqref{eqn:super_sub_sol:R_p_nu}
hence yields

\begin{align*}
     [\mathcal{R}_{p_{\nu}^\diamond}(t)]_{n,l} 
     &= \Big([\pi^\diamond_{l;\nu}\dot{\theta}(t)] - [ \pi^{\diamond \diamond}_{l;\nu' \nu }\theta(t)]  \alpha_{p;\nu'}^\diamond \Big)p^\diamond_\nu\big(\xi_{n,l}(t)\big) 
     \\
     &\indent  - [\pi^\diamond_{l;\nu}\theta(t)]  \Big(\dot{\theta_l}(t)   -   \alpha_{p;\nu'}^{\diamond}[ \pi^\diamond_{l;\nu'}\theta(t)] \Big) \dfrac{d}{d\xi}p^\diamond_\nu\big(\xi_{n,l}(t)\big)
     \\
      & \indent - [\pi^\diamond_{l;\nu} \theta(t)] \Big[ [T_{\nu'} p_{\nu}^\diamond]  \big(\xi_{n,l}(t) \big) - 4p_{\nu}^\diamond  \big(\xi_{n,l}(t) \big) + g'\Big(\Phi_*\big(\xi_{n,l}(t)\big)\Big)
      p_{\nu}^\diamond \big(\xi_{n,l}(t)\big) -  [\mathcal{L}_0 p_\nu^\diamond]\big(\xi_{n,l}(t)\big) \Big] 
     \\
      &\indent   - [\tilde{\mathcal{R}}_{p_{\nu}^\diamond}(t)]_{n,l}.
\end{align*}
The first row satisfies our estimate~\eqref{eqn:super_and_sub_sols_Rp_nu_est} due to the bound~\eqref{eqn:super_and_sub_sols_Rtheta_est:deriv:i} of Proposition~\ref{prop:analysis_theta:theta_approx_quadratic}.
Similarly, for the second row we can apply
\eqref{eqn:super_and_sub_sols_Rtheta_est}
in combination with
Proposition 
\ref{prop:lde:analysis_theta:differences} and Lemma~\ref{lem:th:bnd:pi:fncs}.
The third row vanishes in view of
the definition \eqref{eq:mr:def:l:omega} for
the operator $\mathcal{L}_0$,
which completes the proof. 
\end{proof}

 \subsection{Analysis of \texorpdfstring{$\mathcal{J}_{p_{\nu \nu'}^{\diamond \diamond}}$}{Lg}
 and \texorpdfstring{$\mathcal{J}_{q_{\nu\nu'}^{\diamond \diamond}}$ }{Lg}
 }\label{subsec:super_sub_sols:J_qnux2}
 
Throughout this subsection,
we fix a  pair $(\nu, \nu')\in \{1, 2, 3, 4\}^2$ and study the approximants
\begin{equation}\label{eqn:super_sub_sols:J_p_nu_nu'}
     \ [\mathcal{J}_{p_{\nu \nu'}^{\diamond \diamond};\text{apx}}(t) ]_{n,l}:= [\pi_{\nu \nu'}^{\diamond \diamond}\theta(t)] \Big( -[\mathcal{L}_0 p^{\diamond \diamond}_{\nu\nu'}]\big(\xi_{n,l}(t)\big)  + g'\big(\Phi_*\big(\xi_{n,l} (t)\big)\big) p^{\diamond \diamond}_{\nu\nu'}\big(\xi_{n,l}(t)\big) \Big)
\end{equation}
together with
\begin{align*}
    [\mathcal{J}_{q_{\nu \nu'}^{\diamond \diamond};\text{apx}}]_{n,l}(t):=
  [\pi^{\diamond}_{l;\nu}\theta(t)][\pi^{\diamond}_{l;\nu'}\theta(t)] \Big(-[\mathcal{L}_0 q^{\diamond \diamond}_{\nu\nu'}]\big(\xi_{n,l}(t)\big)  + g'\big(\Phi_*\big(\xi_{n,l} (t)\big)\big) q^{\diamond \diamond}_{\nu\nu'}\big(\xi_{n,l}(t)\big)\Big) .
\end{align*}
In particular, we obtain
bounds on the residuals
\begin{equation}\label{eqn:super_sub_sols:R_q_nu_nu'}
     \mathcal{R}_{p_{\nu \nu'}^{\diamond \diamond}}:=\mathcal{J}_{p_{\nu \nu'}^{\diamond \diamond}}- \mathcal{J}_{p_{\nu \nu'}^{\diamond \diamond};\text{apx}},
     \qquad \qquad
        \mathcal{R}_{q_{\nu \nu'}^{\diamond \diamond}}:=\mathcal{J}_{q_{\nu \nu'}^{\diamond \diamond}}- \mathcal{J}_{q_{\nu \nu'}^{\diamond \diamond};\text{apx}}.
\end{equation}

\begin{lemma}\label{lemma:super_and_sub_sols:Rp_nu_diamondx2}
Consider the setting of Proposition~\ref{prop:super_and_sub_sol:supersolution_proposition}.
Then there exist constants $\delta>0$ and $M > 0$
so that for any $\theta \in C^1([0,\infty);\ell^\infty(\Z))$
that satisfies the LDE~\eqref{eqn:main:main_eqn_for_theta}
with 
$[\theta(0)]_{\mathrm{dev}} < R$ and $\norm{\partial \theta(0)}_{\ell^\infty}\leq \delta $
and any pair of  functions 
$z, Z \in C([0,\infty);\R)$,
we have the estimate
\begin{equation}\label{eqn:super_and_sub_sols_Rnux2_est}
    \norm{\mathcal{R}_{p_{\nu \nu'}^{\diamond \diamond}}(t)}_{\ell^\infty} \le M \min\left\{\norm{\partial \theta(0)}_{\ell^\infty}, t^{-\frac{3}{2}}\right\}, \qquad
    t > 0.
\end{equation}
\end{lemma}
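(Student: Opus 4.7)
The plan is to mirror the strategy used for $\mathcal{R}_{p_\nu^\diamond}$ in Lemma~\ref{lemma:super_and_sub_sols:Rp_nu_diamond}, exploiting the structural simplification that the prefactor $\pi^{\diamond\diamond}_{l;\nu\nu'}\theta(t)$ is already a \emph{second} difference and therefore decays at rate $O(t^{-1})$ by Proposition~\ref{prop:lde:analysis_theta:differences}. First I would expand the discrete Laplacian appearing in $\mathcal{J}_{p_{\nu\nu'}^{\diamond\diamond}}$ as
\begin{equation*}
[\Delta^\times  [\pi^{\diamond \diamond}_{\nu\nu'}\theta(t)] p^{\diamond \diamond}_{\nu\nu'}(\xi(t))]_{n,l}
=  [\pi^{\diamond\diamond}_{l+\sigma_{\nu''};\nu\nu'}\theta(t)]\, [T_{\nu''} p^{\diamond\diamond}_{\nu\nu'}]\bigl(\xi_{n,l+\sigma_{\nu''}}(t)\bigr) - 4 [\pi^{\diamond\diamond}_{l;\nu\nu'}\theta(t)]\, p^{\diamond\diamond}_{\nu\nu'}\bigl(\xi_{n,l}(t)\bigr),
\end{equation*}
where the $\nu''$-index is summed. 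I would then split the shifted prefactor via $\pi^{\diamond\diamond}_{l+\sigma_{\nu''};\nu\nu'}\theta = \pi^{\diamond\diamond}_{l;\nu\nu'}\theta + (S^{\sigma_{\nu''}} - I) \pi^{\diamond\diamond}_{\nu\nu'}\theta$ (the latter being a third difference of $\theta$) and Taylor-expand $[T_{\nu''} p^{\diamond\diamond}_{\nu\nu'}](\xi_{n,l+\sigma_{\nu''}}(t))$ to first order around $\xi_{n,l}(t)$, which introduces a factor of $\pi^\diamond_{l;\nu''}\theta(t)$. All the resulting cross terms pair a first difference with a second difference, or feature a standalone third difference, and hence are absorbed into the residual thanks to the estimates \eqref{eqn:analysis_theta:second_diff_nu}--\eqref{eqn:analysis_theta:first_diff_second_diff_nu} of Lemma~\ref{lem:th:bnd:pi:fncs} combined with Proposition~\ref{prop:lde:analysis_theta:differences}.

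After this reduction, the Laplacian is replaced, modulo $O(t^{-3/2})$ terms, by the simpler expression
\begin{equation*}
[\pi^{\diamond\diamond}_{l;\nu\nu'}\theta(t)]\Bigl(\sum_{\nu''=1}^4 [T_{\nu''} p^{\diamond\diamond}_{\nu\nu'}]\bigl(\xi_{n,l}(t)\bigr) - 4 p^{\diamond\diamond}_{\nu\nu'}\bigl(\xi_{n,l}(t)\bigr)\Bigr).
\end{equation*}
The remaining time-derivative pieces of $\mathcal{J}_{p_{\nu\nu'}^{\diamond\diamond}}$ are handled in turn: the term $[\pi^{\diamond\diamond}_{l;\nu\nu'}\dot{\theta}(t)]\, p^{\diamond\diamond}_{\nu\nu'}(\xi_{n,l}(t))$ is dealt with directly by the bound \eqref{eqn:super_and_sub_sols_Rtheta_est:deriv:ii} of Proposition~\ref{prop:analysis_theta:theta_approx_quadratic}, while in the term $\dot{\theta}_l(t)[\pi^{\diamond\diamond}_{l;\nu\nu'}\theta(t)] \tfrac{d}{d\xi}p^{\diamond\diamond}_{\nu\nu'}(\xi_{n,l}(t))$ I would replace $\dot{\theta}_l(t)$ by $c_*$, the remainder $\dot{\theta}_l(t) - c_* = O(\|\partial \theta\|_{\ell^\infty})$ combining with the $O(t^{-1})$ factor $[\pi^{\diamond\diamond}_{l;\nu\nu'}\theta]$ to give $O(t^{-3/2})$.

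Collecting what survives yields precisely
\begin{equation*}
-[\pi^{\diamond\diamond}_{l;\nu\nu'}\theta(t)]\Bigl(c_* \tfrac{d}{d\xi}p^{\diamond\diamond}_{\nu\nu'} + \sum_{\nu''=1}^4 T_{\nu''} p^{\diamond\diamond}_{\nu\nu'} - 4 p^{\diamond\diamond}_{\nu\nu'}\Bigr)\bigl(\xi_{n,l}(t)\bigr) = [\pi^{\diamond\diamond}_{l;\nu\nu'}\theta(t)]\bigl(-\mathcal{L}_0 p^{\diamond\diamond}_{\nu\nu'} + g'(\Phi_*) p^{\diamond\diamond}_{\nu\nu'}\bigr)\bigl(\xi_{n,l}(t)\bigr),
\end{equation*}
by the definition \eqref{eq:mr:def:l:omega} of $\mathcal{L}_0$, and this matches $\mathcal{J}_{p_{\nu\nu'}^{\diamond\diamond};\mathrm{apx}}$ on the nose, so the difference $\mathcal{R}_{p_{\nu\nu'}^{\diamond\diamond}}$ is exhausted by the residual terms we have already controlled. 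The $\min$-form of the bound follows because every cofactor in the residual terms is simultaneously bounded by $\|\partial\theta(0)\|_{\ell^\infty}$ and by the stated negative power of $t$ through Proposition~\ref{prop:lde:analysis_theta:differences}. I do not expect a genuine obstacle; the only delicate point is the bookkeeping of the Taylor remainders and shifted indices, ensuring that all cross terms that escape the cancellation with $\mathcal{J}_{p_{\nu\nu'}^{\diamond\diamond};\mathrm{apx}}$ contain at least three cumulative orders of discrete differentiation of $\theta$.
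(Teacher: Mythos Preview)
Your proposal is correct and follows essentially the same route as the paper: expand the discrete Laplacian, split off the third-difference piece $(S^{\sigma_{\nu''}}-I)\pi^{\diamond\diamond}_{\nu\nu'}\theta$, Taylor-expand $[T_{\nu''}p^{\diamond\diamond}_{\nu\nu'}](\xi_{n,l+\sigma_{\nu''}})$ about $\xi_{n,l}$, and then use Propositions~\ref{prop:lde:analysis_theta:differences}--\ref{prop:analysis_theta:theta_approx_quadratic} together with Lemma~\ref{lem:th:bnd:pi:fncs} to control the resulting residuals and the time-derivative terms. The only cosmetic difference is that the paper keeps the entire Taylor remainder as a single integral rather than isolating the first-order term, but since that term is immediately absorbed into $\tilde{\mathcal{R}}_{p_{\nu\nu'}^{\diamond\diamond}}$ anyway, the two write-ups are equivalent.
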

\begin{proof}
For a fixed pair $(\nu, \nu') \in \left\{1, 2, 3, 4\right\}^2$, we write the discrete Laplacian in the form
\begin{align*}
    \Delta^\times\Big[ [ \pi^{\diamond \diamond}_{\nu\nu'}\theta(t) ] p^{\diamond \diamond}_{\nu\nu'}\big(\xi(t)\big)\Big]_{n,l}
    & = \pi^{\diamond \diamond}_{l+\sigma_{\nu''};\nu\nu'}\theta(t) [\tau_{\nu''}p^{\diamond \diamond}_{\nu\nu'}]\big(\xi_{n,l+\sigma_{\nu''}} (t)\big)     - 4 \pi^{\diamond \diamond}_{l;\nu\nu'}\theta(t) p^{\diamond \diamond}_{\nu\nu'}\big(\xi_{n,l}(t)\big),
\end{align*}
summing the first term over the indices $\nu''\in \{1, 2 , 3, 4\}$.
Adding and subtracting  $[\pi^{\diamond \diamond}_{l;\nu\nu'}\theta(t)] [\tau_{\nu''}p^{\diamond \diamond}_{\nu\nu'}]\big(\xi_{n,l+\sigma_{\nu''}} (t)\big) $ 
while also  expanding $[\tau_{\nu''}p^{\diamond \diamond}_{\nu\nu'}]\big(\xi_{n,l+\sigma_{\nu''}} (t)\big)$ around $[\tau_{\nu''}p^{\diamond \diamond}_{\nu\nu'}]\big(\xi_{n,l} (t)\big)$,
we obtain
\begin{equation}\label{eqn:super_and_sub_sols:laplace_p_nunu'}
    \begin{aligned}
    \Delta^\times\Big[ [ \pi^{\diamond \diamond}_{\nu\nu'}\theta(t) ] p^{\diamond \diamond}_{\nu\nu'}\big(\xi(t)\big)\Big]_{n,l}
      &= [\pi^{\diamond \diamond}_{l;\nu\nu'}\theta(t)]\Big( [T_{\nu''} p^{\diamond \diamond}_{\nu\nu'}]\big(\xi_{n,l+\sigma_{\nu''}} (t)\big) - 4  p^{\diamond \diamond}_{\nu\nu'}\big(\xi_{n,l}(t)\big)\Big)
      \\ 
    &\indent  + [\pi^{\diamond}_{l;\nu''} \pi^{\diamond \diamond}_{l;\nu\nu'}\theta(t) ] [T_{\nu''} p^{\diamond \diamond}_{\nu\nu'}]\big(\xi_{n,l+\sigma_{\nu''}} (t)\big) \\
    &=[\pi^{\diamond \diamond}_{l;\nu\nu'}\theta(t)]\Big( [T_{\nu''}  [p^{\diamond \diamond}_{\nu\nu'}]\big(\xi_{n,l}(t)\big)  - 4 p^{\diamond \diamond}_{\nu\nu'}\big(\xi_{n,l}(t)\big)
    \Big)
    +  [\tilde{\mathcal{R}}_{p_{\nu \nu'}^{\diamond \diamond}}(t)]_{n,l}.
\end{aligned}
\end{equation}
Here $ \tilde{\mathcal{R}}_{p_{\nu \nu'}^{\diamond \diamond}}(t)$ is equal to
\begin{align*}
   [ \tilde{\mathcal{R}}_{p_{\nu \nu'}^{\diamond \diamond}} (t)]_{n,l} &= [\pi^{\diamond}_{l;\nu''} \pi^{\diamond \diamond}_{l;\nu\nu'}\theta(t) ] [T_{\nu''} p^{\diamond \diamond}_{\nu\nu'}]\big(\xi_{n,l+\sigma_{\nu''}} (t)\big)  \\
   &\indent  + \pi^{\diamond \diamond}_{l;\nu\nu'}\theta(t) \int_0^{-\pi_{l;\nu'}\theta(t)} \frac{d}{d\xi} [T_{\nu''} p_{\nu \nu'}^{\diamond \diamond}](\xi_{n,l+\sigma_{\nu'}}(t) - u)\, du,
\end{align*}
which satisfies the bound~\eqref{eqn:super_and_sub_sols_Rnux2_est}
on account of
Proposition 
\ref{prop:lde:analysis_theta:differences} and Lemma~\ref{lem:th:bnd:pi:fncs}.

Combining~\eqref{eqn:super_and_sub_sols:laplace_p_nunu'} with the definitions~$\eqref{eqn:super_sub_sols:all_Js}_3$ and \eqref{eqn:super_sub_sols:J_p_nu_nu'} yields
\begin{align*}
    [{\mathcal{R}}_{p_{\nu \nu'}^{\diamond \diamond}}]_{n,l}(t) & = [\pi^{\diamond\diamond}_{l;\nu\nu'}\dot{\theta}(t)] p^{\diamond\diamond}_{\nu\nu'}\big(\xi_{n,l}(t)\big) - [\pi^{\diamond\diamond}_{l;\nu \nu'}\theta(t)] \dot{\theta_l}(t)   \dfrac{d}{d\xi}p^{\diamond\diamond}_{\nu \nu'}\big(\xi_{n,l}(t)\big)
    \\
    &\qquad  + [\pi^{\diamond\diamond}_{l;\nu \nu'}\theta(t)] \Big(  -[T_{\nu''}  [p^{\diamond \diamond}_{\nu\nu'}]\big(\xi_{n,l}(t)\big)  + 4 p^{\diamond \diamond}_{\nu\nu'}\big(\xi_{n,l}(t)\big) + [\mathcal{L}_0 p^{\diamond \diamond}_{\nu\nu'}]\big(\xi_{n,l}(t)\big) \Big) \\
    &\qquad  -[\pi^{\diamond\diamond}_{l;\nu \nu'}\theta(t)] g'\big(\Phi_*\big(\xi_{n,l} (t)\big)\big) p^{\diamond \diamond}_{\nu\nu'}\big(\xi_{n,l}(t)\big) 
    \\
    & \qquad - [\tilde{\mathcal{R}}_{p_{\nu \nu'}^{\diamond \diamond}}]_{n,l}(t).
\end{align*}
Exploiting
the definition \eqref{eq:mr:def:l:omega} for
the operator $\mathcal{L}_0$ once more,
the second and third row sum to  $  c_* [\pi^{\diamond\diamond}_{l;\nu \nu'}\theta(t)]\dfrac{d}{d\xi} p_{\nu \nu'}^{\diamond\diamond}\big(\xi_{n,l}(t)\big)$.
This allows us to write
\begin{align*}
    [{\mathcal{R}}_{p_{\nu \nu'}^{\diamond \diamond}}]_{n,l}(t) & = [\pi^{\diamond\diamond}_{l;\nu\nu'}\dot{\theta}(t)] p^{\diamond\diamond}_{\nu\nu'}\big(\xi_{n,l}(t)\big) - [\pi^{\diamond\diamond}_{l;\nu \nu'}\theta(t)] \Big(\dot{\theta_l}(t)  - c_*\Big)   \dfrac{d}{d\xi}p^{\diamond\diamond}_{\nu \nu'}\big(\xi_{n,l}(t)\big)
    - [\tilde{\mathcal{R}}_{p_{\nu \nu'}^{\diamond \diamond}}]_{n,l}(t).
\end{align*}
The  estimate~\eqref{eqn:super_and_sub_sols_Rnux2_est}  now follows from Propositions~\ref{prop:lde:analysis_theta:differences} and
\ref{prop:analysis_theta:theta_approx_quadratic} in combination with
Lemma~\ref{lem:th:bnd:pi:fncs}.
\end{proof}

\begin{lemma}\label{lemma:super_and_sub_sols:Rq_nu_diamondx2}
Consider the setting of Proposition~\ref{prop:super_and_sub_sol:supersolution_proposition}.
Then there exist constants $\delta>0$, $M > 0$
so that for any $\theta \in C^1\big([0,\infty);\ell^\infty(\Z)\big)$
that satisfies the LDE~\eqref{eqn:main:main_eqn_for_theta}
with 
$[\theta(0)]_{\mathrm{dev}} < R$ and $\norm{\partial \theta(0)}_{\ell^\infty}\leq \delta$
and any pair of  functions 
$z, Z \in C([0,\infty);\R)$,
we have the estimate
\begin{equation}\label{eqn:super_and_sub_sols:Rqnux2_est}
    \norm{\mathcal{R}_{q_{\nu \nu'}^{\diamond \diamond}}(t)}_{\ell^\infty} \le M \min\left\{\norm{\partial \theta(0)}_{\ell^\infty}, t^{-\frac{3}{2}}\right\}, \qquad
    t > 0.
\end{equation}
\end{lemma}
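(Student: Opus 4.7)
The strategy mirrors that of Lemmas~\ref{lemma:super_and_sub_sols:Rp_nu_diamond} and~\ref{lemma:super_and_sub_sols:Rp_nu_diamondx2}: expand the discrete Laplacian term in $\mathcal{J}_{q_{\nu\nu'}^{\diamond\diamond}}$ by means of the product rule for shift operators and Taylor expansion, identify the terms that combine with the remaining pieces of $\mathcal{J}_{q_{\nu\nu'}^{\diamond\diamond}}$ to reproduce $\mathcal{J}_{q_{\nu\nu'}^{\diamond\diamond};\mathrm{apx}}$ (using the structural identity involving $\mathcal{L}_0$), and control what is left over as a remainder that can be bounded by the right-hand side of \eqref{eqn:super_and_sub_sols:Rqnux2_est}.

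Concretely, I would first write
\begin{equation*}
\Delta^\times \Big[[\pi^{\diamond}_{\nu}\theta(t)][\pi^{\diamond}_{\nu'}\theta(t)] q^{\diamond\diamond}_{\nu\nu'}\big(\xi(t)\big)\Big]_{n,l}  = \sum_{\nu''=1}^4 [\pi^{\diamond}_{l+\sigma_{\nu''};\nu}\theta(t)][\pi^{\diamond}_{l+\sigma_{\nu''};\nu'}\theta(t)][T_{\nu''} q^{\diamond\diamond}_{\nu\nu'}]\big(\xi_{n,l+\sigma_{\nu''}}(t)\big) - 4 [\pi^{\diamond}_{l;\nu}\theta(t)][\pi^{\diamond}_{l;\nu'}\theta(t)] q^{\diamond\diamond}_{\nu\nu'}\big(\xi_{n,l}(t)\big) .
\end{equation*}
Using the product rule $(S^{\sigma_{\nu''}}-I)(fg) = [S^{\sigma_{\nu''}}f](S^{\sigma_{\nu''}}-I)g + [(S^{\sigma_{\nu''}}-I)f] g$ applied twice, each factor $\pi^{\diamond}_{l+\sigma_{\nu''};\nu}\theta$ is replaced by $\pi^{\diamond}_{l;\nu}\theta$ plus a second-difference $\pi^{\diamond\diamond}_{l;\nu\nu''}\theta$. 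Similarly I Taylor-expand $[T_{\nu''}q^{\diamond\diamond}_{\nu\nu'}]\big(\xi_{n,l+\sigma_{\nu''}}(t)\big)$ around $\xi_{n,l}(t)$. The leading contribution after this expansion is
\begin{equation*}
[\pi^{\diamond}_{l;\nu}\theta(t)][\pi^{\diamond}_{l;\nu'}\theta(t)]\Big([T_{\nu''}q^{\diamond\diamond}_{\nu\nu'}]\big(\xi_{n,l}(t)\big) - 4 q^{\diamond\diamond}_{\nu\nu'}\big(\xi_{n,l}(t)\big)\Big),
\end{equation*}
which together with the first three terms of $\mathcal{J}_{q_{\nu\nu'}^{\diamond\diamond}}$ and the identity
\[[\mathcal{L}_0 q^{\diamond\diamond}_{\nu\nu'}](\xi) = c_* \tfrac{d}{d\xi}q^{\diamond\diamond}_{\nu\nu'}(\xi) + \sum_{\nu''}[T_{\nu''} q^{\diamond\diamond}_{\nu\nu'}](\xi) - 4 q^{\diamond\diamond}_{\nu\nu'}(\xi) + g'(\Phi_*(\xi)) q^{\diamond\diamond}_{\nu\nu'}(\xi) \]
reassembles $\mathcal{J}_{q_{\nu\nu'}^{\diamond\diamond};\mathrm{apx}}$ modulo the factor $[\pi^{\diamond}_{l;\nu}\theta][\pi^{\diamond}_{l;\nu'}\theta]\big(\dot{\theta}_l(t)-c_*\big)\frac{d}{d\xi}q^{\diamond\diamond}_{\nu\nu'}$.

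The residual $\mathcal{R}_{q_{\nu\nu'}^{\diamond\diamond}}$ is therefore a sum of terms of the following three types: (a) products like $[\pi^{\diamond}_{l;\nu}\dot\theta(t)][\pi^{\diamond}_{l;\nu'}\theta(t)]q^{\diamond\diamond}_{\nu\nu'}(\xi_{n,l}(t))$, in which the first factor obeys \eqref{eqn:super_and_sub_sols_Rtheta_est:deriv:i} and the second is $O(\|\partial\theta\|_{\ell^\infty})$; (b) products with one factor $[\pi^{\diamond\diamond}\theta]$ and one factor $[\pi^{\diamond}\theta]$, controlled using \eqref{eqn:analysis_theta:first_diff_nu} and \eqref{eqn:analysis_theta:second_diff_nu} of Lemma~\ref{lem:th:bnd:pi:fncs}; and (c) a term containing the factor $\dot\theta_l(t)-c_*-\mathrm{(cmp\ linearization)}$, which reduces via $\mathcal{R}_\theta$ and \eqref{eqn:super_and_sub_sols_Rtheta_est} to something at least $O(t^{-3/2})$ after multiplication by the two first-difference factors that are each $O(t^{-1/2})$ or better. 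Finally, the Taylor-remainder integrals coming from $[T_{\nu''}q^{\diamond\diamond}_{\nu\nu'}]\big(\xi_{n,l+\sigma_{\nu''}}(t)\big)-[T_{\nu''}q^{\diamond\diamond}_{\nu\nu'}]\big(\xi_{n,l}(t)\big)$ pick up an extra $\pi^\diamond_{l;\nu''}\theta$, giving triple products that are easily $O(t^{-3/2})$.

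All individual pieces are then controlled by Propositions~\ref{prop:lde:analysis_theta:differences} and~\ref{prop:analysis_theta:theta_approx_quadratic} together with Lemma~\ref{lem:th:bnd:pi:fncs}, and each bound takes the form $\min\{\|\partial\theta(0)\|_{\ell^\infty}, t^{-3/2}\}$ up to a constant. Summing yields \eqref{eqn:super_and_sub_sols:Rqnux2_est}. The main nuisance, rather than any deep obstacle, is bookkeeping: three distinct sources of first-difference factors ($\pi^\diamond_{l;\nu}\theta$, $\pi^\diamond_{l;\nu'}\theta$, and the one generated by shifting the base point of $q^{\diamond\diamond}_{\nu\nu'}$) must each be tracked so that after combining the right two we always retain at least the equivalent of two $\partial\theta$-factors and one ``extra'' factor yielding the additional $t^{-1/2}$ decay beyond the naive $t^{-1}$.
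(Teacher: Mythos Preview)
Your proposal is correct and follows essentially the same route as the paper: expand the discrete Laplacian of $[\pi^\diamond_\nu\theta][\pi^\diamond_{\nu'}\theta]q^{\diamond\diamond}_{\nu\nu'}(\xi)$ via the shift product rule and a first-order Taylor expansion of $q^{\diamond\diamond}_{\nu\nu'}$, recognize the leading piece as $[\pi^\diamond_{l;\nu}\theta][\pi^\diamond_{l;\nu'}\theta]\big([T_{\nu''}q^{\diamond\diamond}_{\nu\nu'}]-4q^{\diamond\diamond}_{\nu\nu'}\big)$, invoke the $\mathcal{L}_0$ identity, and bound the leftover terms with Propositions~\ref{prop:lde:analysis_theta:differences}, \ref{prop:analysis_theta:theta_approx_quadratic} and Lemma~\ref{lem:th:bnd:pi:fncs}. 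One small simplification relative to your write-up: for the term $[\pi^\diamond_{l;\nu}\theta][\pi^\diamond_{l;\nu'}\theta](\dot\theta_l-c_*)\tfrac{d}{d\xi}q^{\diamond\diamond}_{\nu\nu'}$ the paper does not pass through $\mathcal{R}_\theta$; since $\dot\theta_l-c_*=O(\|\partial\theta\|_{\ell^\infty})$ directly, the product of three first-difference-sized factors already gives the required $\min\{\|\partial\theta(0)\|_{\ell^\infty},t^{-3/2}\}$.
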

\begin{proof}
Proceeding as in Lemma \ref{lemma:super_and_sub_sols:Rp_nu_diamondx2}, we first fix a pair
$(\nu, \nu') \in \left\{1, 2, 3, 4\right\}^2$
and write
\begin{align*}
    \Delta^\times\left[ [\pi^{\diamond }_{\nu}\theta(t)][ \pi^{\diamond}_{\nu'}\theta(t)] q^{\diamond \diamond}_{\nu\nu'}\big(\xi(t)\big)
    \right]_{n,l}  
    & = [\pi^{\diamond }_{l;\nu}\theta(t)][ \pi^{\diamond}_{l;\nu'}\theta(t)]  \Big([T_{\nu''} q^{\diamond \diamond}_{\nu\nu'}]\big(\xi_{n,l}(t)\big) - 4q^{\diamond \diamond}_{\nu\nu'} \big(\xi_{n,l}(t)\big)\Big)
     \\
    &\indent  + [\tilde{\mathcal{R}}_{q_{\nu \nu'}^{\diamond \diamond}}]_{n,l}(t),
\end{align*}
Here we sum over $\nu''\in \{1, 2, 3, 4\}$ and use the expression
\begin{align*}
   [ \tilde{\mathcal{R}}_{q_{\nu \nu'}^{\diamond \diamond}} (t)]_{n,l}&= \pi^{\diamond }_{l;\nu''}\left[[\pi^{\diamond }_{l;\nu}\theta(t)][ \pi^{\diamond}_{l;\nu'}\theta(t)] \right]q^{\diamond \diamond}_{\nu\nu'}(\xi_{n,l}(t)]   \\
   &\indent  + [\pi^{\diamond }_{l;\nu}\theta(t)][ \pi^{\diamond}_{l;\nu'}\theta(t)]  \int_0^{-\pi_{l;\nu'}\theta(t)} \frac{d}{d\xi} [T_{\nu''} q_{\nu \nu'}^{\diamond \diamond}](\xi_{n,l+\sigma_{\nu'}} - u)\, du,
\end{align*}
which satisfies the bound~\eqref{eqn:super_and_sub_sols:Rqnux2_est} on account of
Proposition 
\ref{prop:lde:analysis_theta:differences} and Lemma~\ref{lem:th:bnd:pi:fncs}.

 Combining this with the definitions~$\eqref{eqn:super_sub_sols:all_Js}_4$ and~\eqref{eqn:super_sub_sols:R_q_nu_nu'}, we obtain
\begin{align*}
   [\mathcal{R}_{q_{\nu\nu'}^{\diamond \diamond}} (t)]_{n,l}
    &
    = \Big([\pi^{\diamond}_{l;\nu}\dot{\theta}(t)][\pi^{\diamond}_{l;\nu'}\theta(t)] + [\pi^{\diamond}_{l;\nu}\theta(t)][\pi^{\diamond}_{l;\nu'}\dot{\theta}(t)]\Big)q^{\diamond\diamond}_{\nu\nu'}\big(\xi_{n,l}(t)\big)   \\
     &\qquad  -
    [\pi^{\diamond}_{l;\nu}\theta(t)][\pi^{\diamond}_{l;\nu'}\theta(t)] \dot{\theta_l}(t)  \dfrac{d}{d\xi}q^{\diamond\diamond}_{\nu \nu'}\big(\xi_{n,l}(t)\big)    
    \\
    &\qquad  + [\pi^{\diamond }_{l;\nu}\theta(t)][ \pi^{\diamond}_{l;\nu'}\theta(t)]  \Big( [\mathcal{L}_0 q^{\diamond \diamond}_{\nu\nu'}]\big(\xi_{n,l}(t)\big) - [T_{\nu''} q^{\diamond \diamond}_{\nu\nu'}]\big(\xi_{n,l}(t)\big) + 4q^{\diamond \diamond}_{\nu\nu'} \big(\xi_{n,l}(t)\big)\Big)
     \\
      &
    \qquad - [\pi^{\diamond}_{l;\nu}\theta(t)][\pi^{\diamond}_{l;\nu'}\theta(t)]  g'\big(\Phi_*\big(\xi_{n,l} (t)\big)\big) q^{\diamond \diamond}_{\nu\nu'}\big(\xi_{n,l}(t)\big) \\
      &\qquad  - [\tilde{\mathcal{R}}_{q_{\nu \nu'}^{\diamond \diamond}}]_{n,l}(t) .
\end{align*}
Applying the
definition \eqref{eq:mr:def:l:omega} for
the operator $\mathcal{L}_0$
to simplify the third and fourth row, we arrive at
\begin{align*}
   [\mathcal{R}_{q_{\nu\nu'}^{\diamond \diamond}} (t)]_{n,l}
    &
    = \Big([\pi^{\diamond}_{l;\nu}\dot{\theta}(t)][\pi^{\diamond}_{l;\nu'}\theta(t)] + [\pi^{\diamond}_{l;\nu}\theta(t)][\pi^{\diamond}_{l;\nu'}\dot{\theta}(t)]\Big)q^{\diamond\diamond}_{\nu\nu'}\big(\xi_{n,l}(t)\big)   \\
     &\qquad  -
    [\pi^{\diamond}_{l;\nu}\theta(t)][\pi^{\diamond}_{l;\nu'}\theta(t)] \Big(\dot{\theta_l}(t)  - c_*\Big)\dfrac{d}{d\xi}q^{\diamond\diamond}_{\nu \nu'}\big(\xi_{n,l}(t)\big)    \\
      &\qquad  - [\tilde{\mathcal{R}}_{q_{\nu \nu'}^{\diamond \diamond}}(t)]_{n,l},
\end{align*}
The statement now follows from 
Propositions~\ref{prop:lde:analysis_theta:differences} and
\ref{prop:analysis_theta:theta_approx_quadratic} in combination with
Lemma~\ref{lem:th:bnd:pi:fncs}.
\end{proof}

\subsection{Final splitting}

Defining the aggregate quantities 
\begin{align}
    \mathcal{J}_{\text{apx}} &= \mathcal{J}_{\Phi;\text{apx}} + \mathcal{J}_{p_\nu^\diamond;\text{apx}} + \mathcal{J}_{p_{\nu\nu'}^{\diamond\diamond};\text{apx}} + \mathcal{J}_{q_{\nu\nu'}^{\diamond\diamond};\text{apx}}, \\
     \mathcal{R} &= \mathcal{R}_{\Phi} + \mathcal{R}_{p_\nu^\diamond} + \mathcal{R}_{p_{\nu\nu'}^{\diamond\diamond}} + \mathcal{R}_{q_{\nu\nu'}^{\diamond\diamond}}, \label{eqn:super_and_sub_sols:residual_R}
\end{align}
the results in \S\ref{subsec:super_sub_sols:J_phi}-\S\ref{subsec:super_sub_sols:J_qnux2} provide 
the decomposition
\begin{equation}\label{eqn:super_sub_sols:J(u)}
    \mathcal{J}[u^+] =  \mathcal{J}_{\text{apx}} + \mathcal{J}_{\text{glb}} + \mathcal{R},
\end{equation}
together with the explicit expression 
\begin{align*}
     [\mathcal{J}_{\text{apx}}(t)]_{n,l}  &= \Phi_*'\big(\xi_{n,l}(t)\big)\Big(-\dot{\theta}_l(t) + c_*\Big) \\
      &\indent   + [\pi^\diamond_{l;\nu}\theta(t)] \Big(- [\mathcal{L}_0 p_\nu^\diamond]\big(\xi_{n,l}(t)\big)  + [\tau_\nu \Phi_*']\big(\xi_{n,l}(t)\big)  \Big)\\
    &\qquad  + [ \pi^{\diamond \diamond}_{l;\nu \nu' }\theta(t)] \Big( \alpha_{p;\nu}^\diamond p_{\nu'}^\diamond \big(\xi_{n,l}(t)\big) - [T_{\nu'} p_{\nu}^\diamond]\big(\xi_{n,l}(t)\big)-[\mathcal{L}_0 p^{\diamond \diamond}_{\nu\nu'}]\big(\xi_{n,l}(t)\big)   \Big) \\
   &\qquad
    + [ \pi^\diamond_{l;\nu}\theta(t)] [ \pi^\diamond_{l;\nu'}\theta(t)] \left(      -    \alpha_{p;\nu'}^{\diamond} \dfrac{d}{d\xi}p_{\nu}^{\diamond}\big(\xi_{n,l}(t)\big)  +
 \dfrac{d}{d\xi} [T_{\nu'} p_{\nu}^\diamond] \big(\xi_{n,l}(t)\big) 
      \right) 
      \\
    & \qquad
    +[ \pi^\diamond_{l;\nu}\theta(t)] [ \pi^\diamond_{l;\nu'}\theta(t)] \left(- \dfrac{1}{2} \textbf{1}_{\nu = \nu'}[T_{\nu}  \Phi_*'']\big(\xi_{n,l}(t)\big) -[\mathcal{L}_0 q^{\diamond \diamond}_{\nu\nu'}]\big(\xi_{n,l}(t)\big) \right)
    \\
     &\qquad  + \Big( u^+_{n,l}(t) - \Phi_*\big(\xi_{n,l}(t)\big) - z(t) \Big)g'\big(\Phi_*\big(\xi_{n,l} (t)\big)\big)  + g\big(\Phi_*\big(\xi_{n,l}(t)\big)\big).
       \end{align*}
Recalling the MFDEs~\eqref{eqn:main:eqns_for_p}
we can reduce $\mathcal{J}_\mathrm{apx}$ to
\begin{equation}\label{eqn:super_sub_sols:J_apx_splitting1}
\begin{aligned}
     \ [\mathcal{J}_{\text{apx}}(t) ]_{n,l}  &= \Phi_*'\big(\xi_{n,l}(t)\big)\Big(-\dot{\theta}_l(t) +   \alpha_{p;\nu}^{\diamond } \pi^{\diamond }_{l;\nu} \theta(t) + \alpha_{p;\nu\nu'}^{\diamond \diamond} \pi^{\diamond \diamond}_{l;\nu\nu'} \theta(t) + \alpha_{q;\nu\nu'}^{\diamond\diamond} [\pi^\diamond_{l;\nu}\theta(t)] [\pi^\diamond_{l;\nu'} \theta(t)] + c_*\Big) 
     \\
    & \qquad
    +\dfrac{1}{2} [ \pi^\diamond_{l;\nu}\theta(t)] [ \pi^\diamond_{l;\nu'}\theta(t)] g''\big(\Phi_*\big(\xi_{n,l}(t)\big)\big)p_{\nu}^\diamond\big(\xi_{n,l}(t)\big)p_{\nu'}^\diamond\big(\xi_{n,l}(t)\big)
    \\
    &\indent  + \Big(u^+_{n,l}(t) - \Phi_*\big(\xi_{n,l}(t)\big) - z(t) \Big)
        g'\big(\Phi_*\big(\xi_{n,l} (t)\big)\big) + g\big(\Phi_*\big(\xi_{n,l}(t)\big)\big) . 
\end{aligned}
\end{equation}
Recalling \eqref{eq:th:def:r:theta}, 
the first row of~\eqref{eqn:super_sub_sols:J_apx_splitting1} can be recognized as the expression $-\Phi_*'\big(\xi_{n,l}(t)\big)[\mathcal{R}_\theta(t)]_{n,l}$. Grouping the terms related to the function $g$ in $\mathcal{J}_{\mathrm{apx}}$ and $\mathcal{J}_{\mathrm{glb}}$, we introduce
the new function
\begin{align*}
    [\mathcal{J}_g(t)]_{n,l} &=  - g\big(u^+_{n,l}(t)\big)  +  \Big(u^+_{n,l}(t) - \Phi_*\big(\xi_{n,l}(t)\big) - z(t) \Big)
        g'\big(\Phi_*\big(\xi_{n,l} (t)\big)\big) + g\big(\Phi_*\big(\xi_{n,l}(t)\big)\big) 
        \\[0.3cm]
         & \qquad +\dfrac{1}{2} [ \pi^\diamond_{l;\nu}\theta(t)] [ \pi^\diamond_{l;\nu'}\theta(t)] g''\big(\Phi_*\big(\xi_{n,l}(t)\big)\big)p_{\nu}^\diamond\big(\xi_{n,l}(t)\big)p_{\nu'}^\diamond(\xi_{n,l}\big(t)\big) \\[0.3cm]
        & = - g\big(u^+_{n,l}(t)\big) + [\mathcal{J}_{\text{apx}}(t) ]_{n,l} + \Phi_*'\big(\xi_{n,l}(t)\big) [\mathcal{R}_\theta (t)]_{n,l}. \\
\end{align*}
Together with the residual
\begin{align*}
    [\mathcal{R}_\mathrm{rest}(t)]_{n,l}
    &= [\mathcal{J}_{\mathrm{glb}}(t)]_{n,l} + g\big(u^+_{n,l}(t)\big) -\Phi_*'\big(\xi_{n,l}(t)\big) [\mathcal{R}_\theta (t)]_{n,l} \\[0.3cm]
     &= \dot{Z}(t) \Big( \Phi_*'\big(\xi_{n,l}(t)\big)+ B_{n,l}(t) \Big)
         + \dot{z}(t) -\Phi_*'\big(\xi_{n,l}(t)\big) [\mathcal{R}_\theta (t)]_{n,l}
         \\
\end{align*}
this leads to the decomposition
\begin{equation}\label{eqn:super_and_sub_sols:J_apx_new_splitting}
    \mathcal{J}_{\mathrm{apx}} + \mathcal{J}_{\mathrm{glb}} = \mathcal{J}_g +  \mathcal{R}_\mathrm{rest}. \\
\end{equation}

Expanding $g\big(u^+_{n,l}(t)\big)$ around $g\big(\Phi_*\big(\xi_{n,l}(t)\big)\big)$ up to third order, we obtain the further reduction
\begin{equation}
    [\mathcal{J}_g(t)]_{n,l} =  [G^a(t)]_{n,l} + [G^b(t)]_{n,l}  - z(t) 
        g'\big(\Phi_*\big(\xi_{n,l} (t)\big)\big) ,
\end{equation}
where we have introduced the expressions
\begin{align*}
    G_{n,l}^a(t) &= \dfrac{1}{2} [ \pi^\diamond_{l;\nu}\theta(t)] [ \pi^\diamond_{l;\nu'}\theta(t)] g''\big(\Phi_*\big(\xi_{n,l}(t)\big)\big)p_{\nu}^\diamond\big(\xi_{n,l}(t)\big)p_{\nu'}^\diamond\big(\xi_{n,l}(t)\big) 
        \\
        & \qquad -\dfrac{1}{2} g''\big(\Phi_*\big(\xi_{n,l}(t)\big)\big) \Big(u^+_{n,l}(t) - \Phi^*\big(\xi_{n,l}(t)\big)\Big)^2, 
        \\
    G^b_{n,l}(t) &=  - \dfrac{1}{6} g'''\big( s_{n,l}(t)\big) \Big(u^+_{n,l}(t) - \Phi_*\big(\xi_{n,l}(t)\big)\Big)^3 
\end{align*} 
for an appropriate function $s_{n,l}(t) \in [\Phi_*\big(\xi_{n,l}(t)\big), u^+_{n,l}(t)]$.

In the following lemma, we  formulate an appropriate factorization for these new sequences $G^a(t)$ and $G^b(t)$. 
\begin{lemma}\label{lemma:super_sub_sols:splitting_for_G}
Consider the setting of Proposition~\ref{prop:super_and_sub_sol:supersolution_proposition}.
Then there exist constants $\delta>0$, $M > 0$ 
so that for any $\theta \in C^1([0,\infty);\ell^\infty)$
that satisfies the LDE~\eqref{eqn:main:main_eqn_for_theta}
with 
$[\theta(0)]_{\mathrm{dev}} < R$ and $\norm{\partial \theta (0)}_{\ell^{\infty}} \leq \delta $
and any pair of  functions 
$z, Z \in C([0,\infty);\R)$, with $\norm{z}_{L^\infty}\leq 1$, the following holds true. 
\begin{enumerate}[(i)]
    \item\label{item:super_sub_sols:splittings_G} For any $t > 0$ there exist sequences $H^a(t)$, $H^b(t)$, $\mathcal{R}^a(t)$ and $\mathcal{R}^b(t)$ in $\ell^\infty(\Z^2_\times)$ such that 
    the identities
\begin{align}
       G^a_{n,l}(t)  &= z(t) H^a_{n,l}(t) + \mathcal{R}^a_{n,l}(t), \label{eqn:super_sub_sols:splitting_G1}\\[0.3cm]
       G^b_{n,l}(t)  &= z(t) H^b_{n,l}(t) + \mathcal{R}^b_{n,l}(t) \label{eqn:super_sub_sols:splitting_G2}
\end{align}
    hold for all $(n,l) \in \Z^2_\times$.

    \item\label{item:super_sub_sols:splittings_R_estimates}
    For any $t > 0$ we have the estimate
    \begin{equation}\label{eqn:super_sub_sols:almost_final_splitting}
    \max\left\{\norm{\mathcal{R}^a(t)}_{\ell^\infty(\Zs)}, \norm{\mathcal{R}^b(t)}_{\ell^\infty(\Zs)}\right\} \le M \min\left\{\norm{\partial \theta(0)}_{\ell^\infty}, t^{-\frac{3}{2}}\right\}.
\end{equation}
\item \label{item:super_sub_sols:bounds_H}  For any $t > 0$ the sequences $H^a(t)$ and $H^b(t)$ satisfy the bound
\begin{align*}
    \max\left\{\norm{H^a(t)}_{\ell^\infty(\Zs)}, \norm{H^b(t)}_{\ell^\infty(\Zs)}\right\} &\leq M(\norm{z}_{L^\infty} + \delta).
    \end{align*}
\end{enumerate}
\end{lemma}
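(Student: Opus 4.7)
The plan is to extract both splittings directly from the algebraic structure of $G^a$ and $G^b$, exploiting a single key cancellation in the quadratic piece, and then to bound the residuals via the decay rates in Proposition~\ref{prop:lde:analysis_theta:differences}.

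First I would introduce the shorthand
\begin{equation*}
   P_{n,l}(t) := u^+_{n,l}(t) - \Phi_*\big(\xi_{n,l}(t)\big) - z(t), \qquad A_{n,l}(t) := \pi^\diamond_{l;\nu}\theta(t)\, p^\diamond_\nu\big(\xi_{n,l}(t)\big),
\end{equation*}
so that the definition~\eqref{eqn:super_and_sub_sol:def_u_plus} of $u^+$ yields
\begin{equation*}
    P = A + B + C, \qquad B := \pi^{\diamond\diamond}_{l;\nu\nu'}\theta\, p^{\diamond\diamond}_{\nu\nu'}(\xi), \qquad C := \pi^\diamond_{l;\nu}\theta\,\pi^\diamond_{l;\nu'}\theta\, q^{\diamond\diamond}_{\nu\nu'}(\xi),
\end{equation*}
and $u^+ - \Phi_* = P + z$. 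The crucial observation is that under the summation convention
\begin{equation*}
   \tfrac{1}{2}\pi^\diamond_{l;\nu}\theta\,\pi^\diamond_{l;\nu'}\theta\, p^\diamond_\nu(\xi)p^\diamond_{\nu'}(\xi) = \tfrac{1}{2}A^2,
\end{equation*}
which is \emph{exactly} the leading piece of $\tfrac{1}{2}P^2$. Expanding $(P+z)^2 = P^2 + 2Pz + z^2$, the first line of $G^a$ hence cancels the $A^2$ contribution to $\tfrac{1}{2}g''(\Phi_*) P^2$, leaving
\begin{equation*}
    G^a = -\tfrac{1}{2} g''(\Phi_*)\big[P^2 - A^2\big] - g''(\Phi_*)\,z\,P - \tfrac{1}{2} g''(\Phi_*)\, z^2.
\end{equation*}
From this I read off the splitting \eqref{eqn:super_sub_sols:splitting_G1} with
\begin{equation*}
   H^a_{n,l} = -g''\big(\Phi_*(\xi_{n,l})\big)\big(P_{n,l} + \tfrac{1}{2} z\big), \qquad \mathcal{R}^a_{n,l} = -\tfrac{1}{2}g''\big(\Phi_*(\xi_{n,l})\big)\big[P^2_{n,l} - A^2_{n,l}\big].
\end{equation*}
For $G^b$ I simply expand $(P+z)^3 = P^3 + 3P^2 z + 3Pz^2 + z^3$, take $\mathcal{R}^b = -\tfrac{1}{6}g'''(s)P^3$, and put the rest into $H^b = -\tfrac{1}{2}g'''(s)\big(P^2 + Pz + \tfrac{1}{3}z^2\big)$.

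The bounds in item~(iii) are then immediate from Lemma~\ref{lem:th:bnd:pi:fncs} combined with Proposition~\ref{prop:lde:analysis_theta:differences}, which give $\|P\|_{\ell^\infty} \le K\|\partial\theta\|_{\ell^\infty} \le K\delta$, so that $\|H^a\|_{\ell^\infty} \le M(\|z\|_{L^\infty} + \delta)$ and, after using $\|z\|_{L^\infty}\le 1$, also $\|H^b\|_{\ell^\infty} \le M(\|z\|_{L^\infty} + \delta)$.

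For the residual bound \eqref{eqn:super_sub_sols:almost_final_splitting} on $\mathcal{R}^a$, I factor
\begin{equation*}
   P^2 - A^2 = 2A(B+C) + (B+C)^2
\end{equation*}
and apply Lemma~\ref{lem:th:bnd:pi:fncs} to get the componentwise bound
\begin{equation*}
 |P^2-A^2| \le K \|\partial \theta\|_{\ell^\infty}\big(\|\partial^{(2)}\theta\|_{\ell^\infty} + \|\partial\theta\|^2_{\ell^\infty}\big).
\end{equation*}
Proposition~\ref{prop:lde:analysis_theta:differences} gives $\|\partial\theta(t)\|_{\ell^\infty}\sim t^{-1/2}$ and $\|\partial^{(2)}\theta(t)\|_{\ell^\infty}\sim t^{-1}$ for large $t$, producing the $t^{-3/2}$ decay; for the alternative $\|\partial\theta(0)\|$ bound I use $\|\partial\theta(t)\|^2 \le \delta\|\partial\theta(t)\| \le M\delta\|\partial\theta(0)\|$. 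The bound on $\mathcal{R}^b$ is even simpler: $|P^3| \le K\|\partial\theta\|^3_{\ell^\infty} \le K M^3\min\{\|\partial\theta(0)\|, t^{-1/2}\}^3$, which lies below $M\min\{\|\partial\theta(0)\|,t^{-3/2}\}$ after absorbing a factor $\delta^2$.

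There is no serious obstacle here; the lemma is essentially a bookkeeping exercise once the algebraic cancellation is spotted. The only point that deserves emphasis is that this cancellation is precisely the reason the quadratic corrector $q^{\diamond\diamond}_{\nu\nu'}$ in the ansatz~\eqref{eqn:super_and_sub_sol:def_u_plus} was constrained in Lemma~\ref{lemma:main_results:coeff_diamond} to carry a $\tfrac{1}{2}g''(\Phi_*)p_\nu^\diamond p_{\nu'}^\diamond$ inhomogeneity: it is what ensures that the leading quadratic-in-$\partial\theta$ contribution to $g(u^+)$ is matched by the $-\tfrac{1}{2}g''(\Phi_*) A^2$ piece of the Taylor expansion of the ansatz itself.
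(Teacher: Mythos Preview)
Your argument is correct and mirrors the paper's proof essentially line for line: the paper introduces the same shorthand (calling your $P$ by $K$), makes the same key cancellation $A^2 = [\pi^\diamond_{l;\nu}\theta\,p^\diamond_\nu]^2$ against the leading part of $K^2$, and defines $H^a$, $H^b$, $\mathcal{R}^a$, $\mathcal{R}^b$ identically (up to an irrelevant sign convention in $H^b,\mathcal{R}^b$). Your closing remark about the role of the $\tfrac{1}{2}g''(\Phi_*)p^\diamond_\nu p^\diamond_{\nu'}$ inhomogeneity in Lemma~\ref{lemma:main_results:coeff_diamond} is a nice piece of context the paper leaves implicit.
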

\begin{proof} 
For convenience, we introduce the shorthand
\begin{align*}
    K_{n,l}(t) :&= u^+_{n,l}(t) - \Phi_*\big(\xi_{n,l}(t)\big) - z(t) 
    \\
    &=[\pi^\diamond_{l;\nu}\theta(t)] p^\diamond_\nu\big(\xi_{n,l}(t)\big) + [\pi^{\diamond\diamond}_{l;\nu \nu'}\theta(t)] p^{\diamond\diamond}_{\nu \nu'}\big(\xi_{n,l}(t)\big) + 
    [\pi^{\diamond}_{l;\nu}\theta(t)][\pi^{\diamond}_{l;\nu'}\theta(t)] q^{\diamond\diamond}_{\nu \nu'}\big(\xi_{n,l}(t)\big),
\end{align*}
which allows us to rewrite $G^a_{n,l}(t)$ as
\begin{align*}
      G_{n,l}^a(t) &= \dfrac{1}{2}g''\big(\Phi_*\big(\xi_{n,l}(t)\big)\big)\Big( [ \pi^\diamond_{l;\nu}\theta(t)] [ \pi^\diamond_{l;\nu'}\theta(t)] p_{\nu}^\diamond\big(\xi_{n,l}(t)\big)p_{\nu'}^\diamond\big(\xi_{n,l}(t)\big) 
       - \big(K_{n,l}(t) + z(t)\big)^2 \Big) .
\end{align*}
The expression
\begin{equation*}
    \tilde{\mathcal{R}}^a_{n,l}(t) :=  [\pi^\diamond_{l;\nu}\theta(t)] [\pi^\diamond_{l;\nu'}\theta(t)]  p^\diamond_\nu\big(\xi_{n,l}(t)\big) p^\diamond_{\nu'}\big(\xi_{n,l}(t)\big)  - \big(K_{n,l}(t)\big)^2 
\end{equation*}
satisfies the estimate~\eqref{eqn:super_sub_sols:almost_final_splitting} by
Proposition 
\ref{prop:lde:analysis_theta:differences} and Lemma~\ref{lem:th:bnd:pi:fncs},
which in turn gives the splitting~\eqref{eqn:super_sub_sols:splitting_G1} upon defining
\begin{equation}\label{eqn:super_sub_sols:def_H1_R1}
    \begin{aligned}
      H_{n,l}^a(t) &= -\dfrac{1}{2}g''\big(\Phi_*\big(\xi_{n,l}(t)\big)\big)\Big(  z(t) + 2K_{n,l}(t) \Big)  ,
      \\
      \mathcal{R}_{n,l}^a(t) &= \dfrac{1}{2}g''\big(\Phi_*\big(\xi_{n,l}(t)\big)\big) \tilde{\mathcal{R}}^a_{n,l}(t) .
\end{aligned}
\end{equation}

To obtain the splitting~\eqref{eqn:super_sub_sols:splitting_G2}, we first notice that  $\big(K_{n,l}(t)\big)^3$ already satisfies the estimate~\eqref{eqn:super_sub_sols:almost_final_splitting} by
Proposition 
\ref{prop:lde:analysis_theta:differences} and Lemma~\ref{lem:th:bnd:pi:fncs}.
In order to establish items (i) and (ii), it therefore suffices to write
\begin{equation}\label{eqn:super_sub_sols:def_H2_R2}
\begin{aligned}
      H_{n,l}^b(t) &= \dfrac{1}{6} g'''\big( s_{n,l}(t)\big)\Big(  z^2(t) + 3K_{n,l}(t) z(t) + 3\big(K_{n,l}(t)\big)^2 \Big),
      \\
      \mathcal{R}_{n,l}^b(t) &= \dfrac{1}{6} g'''\big( s_{n,l}(t)\big) \big(K_{n,l}(t)\big)^3 .
\end{aligned}
\end{equation}
 Item~\textit{(\ref{item:super_sub_sols:bounds_H})} finally follows from the definitions of $H^a$ and $H^b$ and the fact that the functions $g''$ and $g'''$ are bounded on  compact intervals. 
\end{proof}
We are now finally ready to define our final splitting. 
Setting 
\begin{equation}\label{eqn:super_sub_sols:formula_H}
    H(t)  = H^a(t) + H^b(t)
\end{equation} 
we write
\begin{equation}\label{eqn:super_sub_sols:final_splitting}
    \mathcal{J}[u^+] = \mathcal{J}_{\mathrm{apx;fin}} + \mathcal{R}_\mathrm{fin},
\end{equation}
where the quantities $\mathcal{J}_{\mathrm{apx;fin}} $ and $\mathcal{R}_\mathrm{fin}$ are defined by
\begin{align}
    [\mathcal{J}_{\mathrm{apx;fin}}]_{n,l}(t) &=  \dot{Z}(t) \Big( \Phi_*'\big(\xi_{n,l}(t)\big) + B_{n,l}(t) \Big) + z(t) \Big(-g'\big(\Phi_*\big(\xi_{n,l}(t)\big)\big) + H_{n,l} (t)\Big)+ \dot{z}(t), \label{eqn:super_and_sub_sol:final_split_J_apx}
    \\[0.2cm]
    \mathcal{R}_\mathrm{fin}(t) &= \mathcal{J}(t) - \mathcal{J}_{\mathrm{apx;fin}}(t).
    \label{eqn:super_and_sub_sol:final_split_R}
\end{align}

\begin{lemma}\label{lemma:super_and_sub_sols:R_final}
Consider the setting of Proposition~\ref{prop:super_and_sub_sol:supersolution_proposition}.
Then there exist constants $\delta>0$, $M > 0$
so that for any $\theta \in C^1\big([0,\infty);\ell^\infty(\Z)\big)$
that satisfies the LDE~\eqref{eqn:main:main_eqn_for_theta}
with 
$[\theta(0)]_{\mathrm{dev}} < R$ and $\norm{\partial \theta(0)}_{\ell^\infty} < R$
and any pair of  functions 
$z, Z \in C([0,\infty);\R)$ with $\norm{z}_{L^\infty} \leq 1 $,
we have the estimate
\begin{equation}
    \norm{\mathcal{R}_\mathrm{fin}(t)}_{\ell^\infty(\Zs)} \le M \min\left\{\norm{\partial \theta(0)}_{\ell^\infty}, t^{-\frac{3}{2}}\right\},
    \qquad 
    t > 0.
\end{equation}
\end{lemma}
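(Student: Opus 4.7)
The plan is to show that $\mathcal{R}_\mathrm{fin}$ collapses into a sum of quantities whose bounds have already been established in the preceding subsections, at which point the result follows from a triangle inequality. The only real work is careful bookkeeping, so I would not introduce any new analytic tools.

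First I would trace through the chain of splittings. Starting from the initial decomposition~\eqref{eqn:super_sub_sols:J(u)} we have
\begin{equation*}
\mathcal{J}[u^+] = \mathcal{J}_{\mathrm{apx}} + \mathcal{J}_{\mathrm{glb}} + \mathcal{R},
\end{equation*}
and the intermediate identity~\eqref{eqn:super_and_sub_sols:J_apx_new_splitting} recasts this as
\begin{equation*}
\mathcal{J}[u^+] = \mathcal{J}_g + \mathcal{R}_{\mathrm{rest}} + \mathcal{R}.
\end{equation*}
Next, using the pointwise expansion of $\mathcal{J}_g$ together with the splittings~\eqref{eqn:super_sub_sols:splitting_G1}-\eqref{eqn:super_sub_sols:splitting_G2} from Lemma~\ref{lemma:super_sub_sols:splitting_for_G} and the definition~\eqref{eqn:super_sub_sols:formula_H} of $H = H^a + H^b$, I would rewrite
\begin{equation*}
[\mathcal{J}_g(t)]_{n,l} = z(t)\bigl( H_{n,l}(t) - g'\bigl(\Phi_*(\xi_{n,l}(t))\bigr)\bigr) + \mathcal{R}^a_{n,l}(t) + \mathcal{R}^b_{n,l}(t).
\end{equation*}
Inserting the explicit definition of $\mathcal{R}_{\mathrm{rest}}$, the terms $\dot{Z}(\Phi_*' + B)$ and $\dot{z}$ precisely cancel the corresponding contributions from $\mathcal{J}_{\mathrm{apx;fin}}$ in~\eqref{eqn:super_and_sub_sol:final_split_J_apx}, and the $z(H - g'(\Phi_*))$ contribution cancels as well. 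What remains is
\begin{equation*}
\mathcal{R}_{\mathrm{fin}} = \mathcal{R}^a + \mathcal{R}^b - \Phi_*'(\xi)\,\mathcal{R}_\theta + \mathcal{R}_\Phi + \mathcal{R}_{p_\nu^\diamond} + \mathcal{R}_{p_{\nu\nu'}^{\diamond\diamond}} + \mathcal{R}_{q_{\nu\nu'}^{\diamond\diamond}}.
\end{equation*}

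Once this identification is made, each summand on the right-hand side is controlled by a previous estimate: the $\mathcal{R}^a, \mathcal{R}^b$ terms by item~(\ref{item:super_sub_sols:splittings_R_estimates}) of Lemma~\ref{lemma:super_sub_sols:splitting_for_G}, the four residuals $\mathcal{R}_\Phi, \mathcal{R}_{p_\nu^\diamond}, \mathcal{R}_{p_{\nu\nu'}^{\diamond\diamond}}, \mathcal{R}_{q_{\nu\nu'}^{\diamond\diamond}}$ by Lemmas~\ref{lemma:super_and_sub_sols:RPhi}, \ref{lemma:super_and_sub_sols:Rp_nu_diamond}, \ref{lemma:super_and_sub_sols:Rp_nu_diamondx2} and~\ref{lemma:super_and_sub_sols:Rq_nu_diamondx2} respectively, and finally $\Phi_*'(\xi)\,\mathcal{R}_\theta$ by~\eqref{eqn:super_and_sub_sols_Rtheta_est} in Proposition~\ref{prop:analysis_theta:theta_approx_quadratic} together with the uniform bound $\norm{\Phi_*'}_{L^\infty} < \infty$. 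Every one of these estimates has the form $M\min\{\norm{\partial\theta(0)}_{\ell^\infty},t^{-3/2}\}$, so a triangle inequality in $\ell^\infty(\Zs)$ with an enlarged constant $M$ yields the stated bound.

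The main (very mild) obstacle is verifying the cancellation at the $\mathcal{J}_g$-versus-$\mathcal{J}_{\mathrm{apx;fin}}$ step: one must check that the pieces involving $\dot{Z}$, $\dot{z}$, $B$ and $z\,H$ really cancel identically and not only modulo acceptable errors. This follows directly from comparing~\eqref{eqn:super_and_sub_sol:final_split_J_apx} with the expression for $\mathcal{R}_{\mathrm{rest}}$ introduced just below~\eqref{eqn:super_and_sub_sols:J_apx_new_splitting}, since both feature the same linear combination of $\dot{Z}$, $\dot{z}$ and $B$. Once that bookkeeping is done, the proof is complete and no new smallness assumption on $\delta$ beyond those already required by Lemmas~\ref{lemma:super_and_sub_sols:RPhi}--\ref{lemma:super_sub_sols:splitting_for_G} is needed.
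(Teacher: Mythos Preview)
Your proposal is correct and follows essentially the same approach as the paper: both proofs identify $\mathcal{R}_{\mathrm{fin}}$ explicitly as $-\Phi_*'(\xi)\,\mathcal{R}_\theta + \mathcal{R} + \mathcal{R}^a + \mathcal{R}^b$ (with $\mathcal{R}$ as in~\eqref{eqn:super_and_sub_sols:residual_R}) by tracing the chain of splittings~\eqref{eqn:super_sub_sols:J(u)}, \eqref{eqn:super_and_sub_sols:J_apx_new_splitting} and~\eqref{eqn:super_and_sub_sol:final_split_J_apx}, and then invoke Proposition~\ref{prop:analysis_theta:theta_approx_quadratic} and Lemmas~\ref{lemma:super_and_sub_sols:RPhi}--\ref{lemma:super_sub_sols:splitting_for_G}. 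Your write-up is slightly more explicit about the cancellation bookkeeping, but the argument is the same.
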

\begin{proof}
Comparing equations \eqref{eqn:super_sub_sols:J(u)}, \eqref{eqn:super_and_sub_sols:J_apx_new_splitting} and ~\eqref{eqn:super_and_sub_sol:final_split_J_apx} we can explicitly identify $\mathcal{R}_\mathrm{fin}(t)$ as
\begin{equation*}
    [\mathcal{R}_\mathrm{fin}(t) ]_{n,l}= -\Phi_*'\big(\xi_{n,l}(t)\big) [\mathcal{R}_\theta (t)]_{n,l} + \mathcal{R}_{n,l}(t) + \mathcal{R}_{n,l}^a(t) + \mathcal{R}_{n,l}^b(t).
\end{equation*}
The statement now follows from Proposition~\ref{prop:analysis_theta:theta_approx_quadratic} 
in combination with
the definition~\eqref{eqn:super_and_sub_sols:residual_R}
and Lemmas~\ref{lemma:super_and_sub_sols:RPhi}, ~\ref{lemma:super_and_sub_sols:Rp_nu_diamond}, ~\ref{lemma:super_and_sub_sols:Rp_nu_diamondx2}, \ref{lemma:super_and_sub_sols:Rq_nu_diamondx2} and \ref{lemma:super_sub_sols:splitting_for_G}.
\end{proof}

\subsection{Proof of Proposition~\ref{prop:super_and_sub_sol:supersolution_proposition}}
We are now finally ready to prove Proposition~\ref{prop:super_and_sub_sol:supersolution_proposition}. As a first step, we show how to pick all the constants and functions appearing in the statement.  
Without loss of generality, we assume
that the constant $M$
from Lemma~\ref{lemma:super_and_sub_sols:R_final}
satisfies 
\begin{equation}\label{eqn:super_and-sub_sols:maxM}
    M\geq\max\{1, 52 D, \sup_{s\in [0,1]} |g''(s)|, \sup_{s\in [0,1]} |g'''(s)| 
    \},
\end{equation}
where the constant $D$ is defined by
\begin{equation*}
    D = \max\{\norm{p_\nu ^\diamond}_{L^\infty(\R)}, \norm{p_{\nu \nu'} ^{\diamond \diamond}}_{L^\infty(\R)}, \norm{q_{\nu \nu'} ^{\diamond \diamond}}_{L^\infty(\R)}, \norm{ [p_\nu ^\diamond]'}_{L^\infty(\R)}, \norm{  [p_{\nu \nu'} ^{\diamond \diamond}]'}_{L^\infty(\R)}, \norm{ [q_{\nu \nu'} ^{\diamond \diamond}]'}_{L^\infty(\R)} \}.
\end{equation*}
We pick a constant $m \in ( 3\epsilon, \frac{1}{2}] $ in such a way that
\begin{equation}\label{eqn:super_and_sub_sols:bound_g_prime}
    -g'(s) \geq 2m > 0, \text{ for } s\in [-\epsilon, \epsilon] \cup [1-\epsilon, 1+\epsilon],
\end{equation}
reducing $\epsilon$ if needed. Next, we define the positive constants
$$C_{\epsilon} = \max\{1, \dfrac{2m + 
M
}{\min_{\Phi_*\in [\epsilon, 1-\epsilon]}\Phi_*'}\}, \quad \qquad \delta_{\epsilon} = \dfrac{\epsilon^3m^3}{6^3 M^3 C_{\epsilon}^3},
\qquad \qquad
\nu_{\epsilon} =
\dfrac{\epsilon^3 m^2 }{3 \cdot 6^3 M^2C_{\epsilon}^3} = \dfrac{M\delta_{\epsilon}}{3m},
$$
together with the positive function
\begin{equation}
    \begin{array}{lcl}
     K_{\epsilon}:[0,\infty)\to \R,
& & t \mapsto  M \min\left\{\delta_{\epsilon}, t^{-\frac{3}{2}} \right\}.
    \end{array}
\end{equation}
We now choose a function $z\in C^\infty\big( [0, \infty) ;\mathbb{R}\big)$ that satisfies
$$K_{\epsilon}(t)\leq m z(t) \leq 2K_{\epsilon}(t), \qquad \qquad m|\dot{z}(t)|\leq 2\tilde{K}_{\epsilon}(t),$$ where $\tilde{K}_{\epsilon}$ is defined by
\begin{equation}
    \tilde{K}_{\epsilon}(t) = \begin{cases}
    0, \quad &t\leq \delta_{\epsilon}^{-\frac{2}{3}},
    \\
    \frac{3}{2}M t^{-\frac{5}{2}},  &t> \delta_{\epsilon}^{-\frac{2}{3}},
    \end{cases}
\end{equation}
which we recognize as the absolute value of the weak derivative of the function $K_{\epsilon}$. 
In addition, we define the function $Z\in C^\infty\left[0, \infty\right)$ by
$$ Z(t) = C_{\epsilon}\int_0^t z(s)\, ds. $$ 
\begin{proof}[Proof of Proposition~\ref{prop:super_and_sub_sol:supersolution_proposition}]

The functions $z$ and $Z$ are clearly nonnegative, with
\begin{align*}
z(0) - \norm{p_\nu^\diamond}_{L^\infty} \delta_{\epsilon} - 2\norm{p_{\nu \nu'}^{\diamond\diamond}}_{L^\infty} \delta_{\epsilon} - \norm{q_{\nu \nu'}^{\diamond\diamond}}_{L^\infty} \delta_{\epsilon}^2 \geq \dfrac{M\delta_{\epsilon}}{m} - 52D\delta_{\epsilon} 
\geq \dfrac{M\delta_{\epsilon}}{m}\left(1 - m \right) 
\geq \dfrac{M\delta_{\epsilon}}{2m} > \nu_{\epsilon}.
\end{align*}
Furthermore, we have $z(t) \leq \dfrac{2M\delta_{\epsilon}}{m}\leq \epsilon$,
together with
$$Z(t)\leq \dfrac{2C_{\epsilon}}{m}\int_0^\infty K_{\epsilon}(s) ds \leq \dfrac{6C_{\epsilon}}{m} M \delta_{\epsilon}^{\frac{1}{3}} = \epsilon.$$ In particular, items (ii)-(iv) are satisfied.
In addition,
using the fact that $z(t)\leq \frac{2M\delta_\epsilon}{m}$ in combination with item \textit{(\ref{item:super_sub_sols:bounds_H})} of Lemma~\ref{lemma:super_sub_sols:splitting_for_G},  
we obtain the crude a-priori bound
\begin{equation}
\label{eq:sub:sup:a:priori:bnd}
     \norm{H(t)}_{\ell^\infty(\Z^2_\times)}  \le \epsilon, \quad \text{for all } t\geq 0.  
\end{equation}
Turning to (i), Lemma~\ref{lemma:super_and_sub_sols:R_final}
implies that it suffices
to show that the 
residual \eqref{eqn:super_and_sub_sol:final_split_J_apx}
satisfies
$\mathcal{J}_{\mathrm{apx;fin}}(t) \geq K_{\epsilon}(t)$.
Introducing the notation
\begin{equation*}
   \mathcal{I}_A(t) = \frac{\dot{Z}(t)}{z(t)} \Phi_*'\big(\xi(t)\big),
   \quad 
    \mathcal{I}_B(t) = \frac{\dot{Z}(t)}{z(t)} B(t),
   \quad  
    \mathcal{I}_C(t) = H(t) ,  
    \quad 
    \mathcal{I}_D(t) = -g'\big(\Phi_*(\xi(t))\big),
    \quad 
    \mathcal{I}_E(t) = \frac{\dot{z}(t)}{z(t)} 
\end{equation*}
we see that
\begin{equation*}
    \mathcal{J}_{\mathrm{apx;fin}}
      =  
     z\big( \mathcal{I}_A 
      + \mathcal{I}_B + \mathcal{I}_C + \mathcal{I}_D + \mathcal{I}_E \big).
\end{equation*}
Using the observation 
\begin{equation}
    \frac{|\dot{z}(t)|}{z(t)} 
    \le \begin{cases} 0, \quad &t\leq \delta_{\epsilon}^{-\frac{2}{3}}, \\
    3t^{-1}, &t>\delta_{\epsilon}^{-\frac{2}{3}},
    \end{cases}
  \end{equation}
we obtain the global bounds
\begin{equation}
     \begin{array}{lclcl}
   |\mathcal{I}_B(t)|
       & \leq & C_{\epsilon}  M \delta_{\epsilon} \leq 
       & \leq & \dfrac{m}{3} ,
     \\[0.2cm]
     |\mathcal{I}_C(t)|  &\leq & \epsilon& \leq & \dfrac{m}{3} ,
     \\[0.2cm]
     \big|\mathcal{I}_E (t)\big|
	    & \le & 3 \delta_{\epsilon} ^{2/3}
       & \le & \dfrac{m}{3}.
     \end{array}
 \end{equation}

 When  $\Phi_*(\xi) \in (0,\epsilon] \cup[1-\epsilon, 1) $, we
 may use \eqref{eqn:super_and_sub_sols:bound_g_prime} to obtain the lower bound
 \begin{equation}
    \mathcal{I}_D
     \ge  2m.
 \end{equation}
 Together with $\mathcal{I}_A \ge 0$,
 this allows us
 to conclude
 \begin{equation}
 \label{eq:sub:sup:fin:bnd:j:apx}
     \mathcal{J}_{\mathrm{apx}}  \ge
    m z(t) \geq K_{\epsilon}(t).
 \end{equation}
 On the other hand,
 when 
 $\Phi_*(\xi) \in [ \epsilon, 1-\epsilon]$, we 
 have 
 \begin{equation}
     | \mathcal{I}_A | 
     \ge
     C_{\epsilon} \dfrac{2m + M}{C_{\epsilon}}
     \ge 2m + M,
     \qquad 
     | \mathcal{I}_D | 
     \le M,
 \end{equation}
 which again yields \eqref{eq:sub:sup:fin:bnd:j:apx}.
In a similar manner one can show that $\mathcal{J}[u^-] \leq 0$.     
\end{proof}

\section{Phase approximation and stability results}
\label{sec:asymp}
In this section we show that $\gamma$ can be well-approximated by $\theta$ after allowing sufficient time
for the interface to `flatten'.
This is achieved using
the sub- and super-solutions
constructed in {\S}\ref{sec:sub:sup}
and allows us to establish
Theorem \ref{thm:main_result:gamma_mcf}
and Theorem \ref{thm:main_results:gamma_approx_u}. In view of the preparatory work in {\S}\ref{sec:theta}-\ref{sec:sub:sup}
which accounts for the transition from horizontal to general rational propagation directions,
we can here simply appeal to the corresponding results in 
\cite[{\S}8-9]{jukic2019dynamics} to a large extent.

The main idea for our proof of
Theorem \ref{thm:main_result:gamma_mcf}
is to
compare the information on $\gamma$
resulting from the asymptotic description \eqref{eqn:main:gamma_approx_u}
with the phase information that can be derived
from \eqref{eqn:super_and_sub_sol:def_u_plus}-\eqref{eqn:super_and_sub_sol:def_u_minus}. In particular,
we capture the solution $u$
between the sub- and super-solutions
constructed in {\S}\ref{sec:sub:sup}
and exploit the monotonicity properties of $\Phi_*$.

\begin{lemma}
\label{lem:phase:theta:vs:gamma}
Assume that (H$g$), (H$\Phi$), (H$0$), (HS$)_1$ and (HS$)_2$ all hold and let $u$ be a solution of \eqref{eqn:main:discrete_AC_new} with the initial condition \eqref{eqn:main:initial_condition_new}. Then for every $\epsilon >0$, there exists a constant $\tau_\epsilon > 0$ so that 
for any $\tau \ge \tau_{\epsilon}$ the solution $\theta$ of the
LDE \eqref{eqn:main:main_eqn_for_theta}
with the initial value 
$\theta(0) = \gamma(\tau)$
satisfies 
\begin{equation}
\label{eqn:asymp:phi:bnds}
    |\Phi_*\big(n-\gamma_l(t) \big)  
    - \Phi_*\big(n -\theta_l(t-\tau)
    \big)| \le \epsilon \\
\end{equation}    
for all $(n,l) \in \Zs$ and $t \ge \tau$.
\end{lemma}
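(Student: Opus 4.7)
The plan is to use the comparison principle for the discrete Allen-Cahn LDE~\eqref{eqn:main:discrete_AC_new}: I would sandwich $u(\tau+\cdot)$ between the sub- and super-solutions $u^\pm$ built from $\theta$ in Proposition~\ref{prop:super_and_sub_sol:supersolution_proposition}, and then transfer the sandwich back to the phase $\gamma$ via Proposition~\ref{thm:main_results:gamma_approx_u} and the monotonicity of $\Phi_*$.

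Fix $\epsilon > 0$. I would first apply Proposition~\ref{prop:super_and_sub_sol:supersolution_proposition} with a sufficiently small parameter $\epsilon' \le \epsilon/4$, obtaining constants $\delta, \nu > 0$ together with the corrector functions $z, Z$; the radius $R$ is chosen as $R = 2M$, where $M$ is the bound from Lemma~\ref{lemma:phase_gamma:gamma-ct_bounded}, so that $[\gamma(\tau)]_{\mathrm{dev}} \le R$ holds automatically for large $\tau$. Next I would choose $\tau_\epsilon$ large enough that, for every $\tau \ge \tau_\epsilon$, we have (a) $\|\partial\gamma(\tau)\|_{\ell^\infty} < \delta$ by Proposition~\ref{prop:large_time_behaviour:l_differences_gamma}, and (b) $\sup_{(n,l)} |u_{n,l}(t) - \Phi_*(n - \gamma_l(t))| < \nu/2$ for all $t \ge \tau_\epsilon$ by Proposition~\ref{thm:main_results:gamma_approx_u}. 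Letting $\theta$ solve the LDE~\eqref{eqn:main:main_eqn_for_theta} with $\theta(0) = \gamma(\tau)$, Proposition~\ref{prop:super_and_sub_sol:supersolution_proposition} then produces the sub-/super-solutions $u^\pm$.

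The crucial step is the initial comparison at $s=0$. Since $Z(0) = 0$ and $\theta(0) = \gamma(\tau)$, item~(iii) of Proposition~\ref{prop:super_and_sub_sol:supersolution_proposition} combined with $\|\partial\gamma(\tau)\|_{\ell^\infty} < \delta$ yields
\begin{equation*}
u^+_{n,l}(0) - \Phi_*\bigl(n - \gamma_l(\tau)\bigr) \;\ge\; z(0) - \delta\|p^\diamond_\nu\|_{L^\infty} - 2\delta\|p^{\diamond\diamond}_{\nu\nu'}\|_{L^\infty} - \delta^2 \|q^{\diamond\diamond}_{\nu\nu'}\|_{L^\infty} \;>\; \nu,
\end{equation*}
and together with (b) this gives $u_{n,l}(\tau) < u^+_{n,l}(0)$ pointwise; the symmetric inequality $u^-_{n,l}(0) < u_{n,l}(\tau)$ follows identically. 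The standard comparison principle for~\eqref{eqn:main:discrete_AC_new} now delivers $u^-_{n,l}(s) \le u_{n,l}(\tau + s) \le u^+_{n,l}(s)$ for all $s \ge 0$.

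To convert this into the phase estimate, I would use the explicit form of $u^\pm$ to write $u^\pm_{n,l}(s) = \Phi_*\bigl(n - \theta_l(s) \pm Z(s)\bigr) + \mathcal{E}^\pm_{n,l}(s) \pm z(s)$, where $\mathcal{E}^\pm$ is a sum of bounded functions $p^\diamond,p^{\diamond\diamond},q^{\diamond\diamond}$ multiplied by first and second discrete differences of $\theta$. Proposition~\ref{prop:lde:analysis_theta:differences} bounds these differences by $\delta$ uniformly in $s$, while $|Z(s)|,|z(s)| \le \epsilon'$. The uniform continuity of $\Phi_*$ then gives a tolerance $\eta \le \epsilon/4$ (guaranteed by choosing $\epsilon'$ and $\delta$ small at the outset) such that $|\,u_{n,l}(\tau + s) - \Phi_*(n - \theta_l(s))\,| \le \eta$. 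Combining with (b) via the triangle inequality yields $|\Phi_*(n - \gamma_l(t)) - \Phi_*(n - \theta_l(t-\tau))| \le \epsilon$ as desired. The main obstacle is the simultaneous calibration of these constants: the initial comparison forces $z(0)$ to dominate the $\delta$-sized correction at $s=0$, while the long-time estimate requires $z(s)$, $Z(s)$, and $\mathcal{E}^\pm(s)$ to all remain bounded by fractions of $\epsilon$; the limit $\nu(\epsilon') \to 0$ asserted at the end of Proposition~\ref{prop:super_and_sub_sol:supersolution_proposition} is precisely what makes this calibration possible.
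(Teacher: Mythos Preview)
Your proposal is correct and follows essentially the same route as the paper's proof: fix $R$ via Lemma~\ref{lemma:phase_gamma:gamma-ct_bounded}, apply Proposition~\ref{prop:super_and_sub_sol:supersolution_proposition} to obtain $\delta,\nu,z,Z$, choose $\tau_\epsilon$ large enough that both the interface approximation of Proposition~\ref{thm:main_results:gamma_approx_u} and the flatness of Proposition~\ref{prop:large_time_behaviour:l_differences_gamma} kick in, verify the initial ordering $u^-(0)\le u(\tau)\le u^+(0)$ via item~(iii), invoke the comparison principle, and then strip away the corrector terms using Proposition~\ref{prop:lde:analysis_theta:differences} and the bounds on $z,Z$. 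One minor point: when you write that Proposition~\ref{prop:lde:analysis_theta:differences} bounds the discrete differences of $\theta$ by $\delta$, the actual bound is $M\delta$ for a constant $M=M(R)$; this does not affect the argument since $\delta$ can still be driven to zero by shrinking $\epsilon'$, but it is worth stating precisely.
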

\begin{proof}
The proof is adapted from~\cite[Lemma 8.2]{jukic2019dynamics}. We restrict our attention to the upper bound
$\Phi_*\big(n-\gamma_l(t) \big)  
    \le \Phi_*\big(n -\theta_l(t-\tau)
    \big) + \epsilon$,
    noting that the lower bound follows in the same way. 
    
    Without loss of generality, we assume that $0 < \epsilon < 1$. Recalling the constant $\nu_{\epsilon}$
from Proposition~\ref{prop:super_and_sub_sol:supersolution_proposition}, 
 Theorem~\ref{thm:main_results:gamma_approx_u}
 and Lemma 
\ref{lemma:phase_gamma:gamma-ct_bounded}
 allow us to find 
 $\tau_\epsilon>0$ and $R >0$ for which
 the bounds
\begin{equation}\label{eqn:asymp:comp:init:bnd:u:i:j}
    \left| u_{n,l}(t) - \Phi_*\big(n-\gamma_l(t)\big) \right| \leq \frac{1}{2}\nu_{\epsilon}, 
    \qquad \qquad
    [\gamma(t)]_{\mathrm{dev}} \le R
\end{equation}
hold for all $(n,l)\in \Zs$
and $t \ge \tau_{\epsilon}$. 
We now recall the constant $\delta>0$ and the functions $z$ and $Z$ 
that arise by
applying Proposition~\ref{prop:super_and_sub_sol:supersolution_proposition} with our pair $(\epsilon, R)$. Decreasing $\delta$ if necessary, we may assume that $\epsilon > \delta$. After possibly increasing
$\tau_{\epsilon}$,
we may use Proposition~\ref{prop:large_time_behaviour:l_differences_gamma} to obtain 
\begin{equation}
\norm{\partial \gamma (\tau)}_{\ell^\infty}\leq \delta, \qquad \tau\geq \tau_\epsilon.
\end{equation}

We now recall the super-solution
$u^+$ defined in \eqref{eqn:super_and_sub_sol:def_u_plus}.
Our choice for $\theta(0)$ together with 
the
bounds \eqref{eq:int:sub:sup:bnd:init:p:zero}
and \eqref{eqn:asymp:comp:init:bnd:u:i:j}
imply that
\begin{equation}
\begin{aligned}
    u_{n,l}(\tau) &\leq
     \Phi_*\big(n-\gamma_l(\tau)\big) + p_\nu^\diamond\big(n-\gamma_l(\tau)\big)
      [\pi_{l;\nu}^\diamond \gamma(\tau)] +
      p_{\nu \nu'}^{\diamond \diamond}\big(n-\gamma_l(\tau)\big)
      [\pi_{l;\nu\nu'}^{\diamond \diamond} \gamma(\tau)] \\
      &\qquad +q_{\nu \nu'}^{\diamond \diamond}\big(n-\gamma_l(\tau)\big)
      [\pi_{l;\nu}^\diamond \gamma(\tau)] [\pi_{l;\nu'}^\diamond \gamma(\tau)] + z(0)
     \\
     &= u^+_{n,l}(0) 
     .
     \end{aligned}
\end{equation}    
In particular,
the comparison principle
for the
LDE~\eqref{eqn:main:discrete_AC_new} together with
the bound \eqref{eqn:asymp:comp:init:bnd:u:i:j}
implies that
\begin{equation}
 \Phi_*\big(n - \gamma_l(t) \big)
 \le 
 u_{n,l}(t) + \frac{1}{2} \nu(\epsilon) \leq
    u^+_{n,l}(t - \tau)
    + \frac{1}{2} \nu_{\epsilon},
    \qquad 
    \qquad
    t \ge \tau.
\end{equation}
On the other hand, Corollary
\ref{prop:lde:analysis_theta:differences} in combination with~\eqref{eqn:super_and_sub_sol:def_u_plus} allows us to obtain a constant $C>0$ for which we have
\begin{equation}
    u^+_{n,l}(t)
    - \Phi_*\big( n - \theta_l(t) \big)
    \le C \epsilon,
    \qquad
    \qquad
    t \ge 0.
\end{equation}
In particular, we see that
\begin{equation}
    \Phi_*\big( n  - \gamma_l(t) \big)
     \le \Phi_*\big( n - \theta_l(t - \tau) \big)
     + \frac{1}{2} \nu_{\epsilon}
     + C \epsilon,
     \qquad \qquad
     t \ge \tau,
\end{equation}
from which the statement can
readily be obtained.
\end{proof}

\begin{proof}[Proof of Theorem 
\ref{thm:main_result:gamma_mcf}
]
The result can be obtained by
following the proof of Proposition 8.1 in \cite{jukic2019dynamics}.
\end{proof}

\begin{proof} [Proof of Theorem~\ref{thm:mr:periodicity+decay:stability}]
The proof can be copied almost verbatim from \cite[\S 9]{jukic2019dynamics} up to the notational changes that we exhibited in the proof of Lemma~\ref{lem:phase:theta:vs:gamma}.

\end{proof}

\bibliographystyle{klunumHJ}
\bibliography{refs}
\end{document}